\documentclass[a4paper, 10pt]{article}
\usepackage[utf8]{inputenc}
\usepackage{a4wide}
\usepackage{amsmath, amssymb, amsthm, amsfonts}
\usepackage[pdftex]{graphicx}
\usepackage[all, poly]{xy}
\usepackage{mathtools}
\usepackage{layout}
\usepackage{verbatim}
\usepackage{bbold}
\numberwithin{equation}{section}
\usepackage{fancyhdr}
\usepackage{comment}
\usepackage{setspace}
\usepackage{mathrsfs}
\usepackage{eufrak}
\usepackage{bbm}
\usepackage{ stmaryrd }
\DeclareMathAlphabet{\mathpzc}{OT1}{pzc}{m}{it}
\usepackage{tikz}
\usepackage{wasysym}

\newcommand{\foo}[1]{%
\begin{tikzpicture}[#1]%
\draw (0,0) -- (1ex,1.8ex);%
\draw (1ex,1.8ex) -- (2ex,0);%
\draw (0,0) -- (2ex,0);%
\draw (0.5ex,0) -- (1.25ex,1.3ex);%
\end{tikzpicture}%
}

\newcommand{\fooo}[1]{%
\begin{tikzpicture}[#1]%
\draw (0,0) -- (0.7ex,1.26ex);%
\draw (0.7ex,1.26ex) -- (1.4ex,0);%
\draw (0,0) -- (1.4ex,0);%
\draw (0.25ex,0) -- (0.875ex,0.91ex);%
\end{tikzpicture}%
}
\newcommand{\tot}[1]{%
\begin{tikzpicture}[#1]%
\draw (0.1,0) arc (-65:245:0.13cm);
\draw (0.1,0) -- (0.23,0);
\draw (-0.13,0) -- (0.00,0);
\draw (0,0) -- (0,0.23);
\end{tikzpicture}%
}
\newcommand{\pullback}[0]{{}_{d_1}\underset{\X_0}{\kern-\scriptspace{\times}}{}_{d_0}}
\def\barroman#1{\sbox0{#1}\dimen0=\dimexpr\wd0+1pt\relax
  \makebox[\dimen0]{\rlap{\vrule width\dimen0 height 0.06ex depth 0.06ex}%
    \rlap{\vrule width\dimen0 height\dimexpr\ht0+0.03ex\relax 
            depth\dimexpr-\ht0+0.09ex\relax}%
    \kern.5pt#1\kern.5pt}}

\DeclareMathOperator{\ca}{\mathcal{C}}
\DeclareMathOperator{\tr}{\text{\normalfont Tr}}
\DeclareMathOperator{\C}{\mathbb{C}}
\DeclareMathOperator{\p}{\mathcal{P}}
\DeclareMathOperator{\Lie}{\mathcal{L}}
\DeclareMathOperator{\m}{\mathcal{M}}
\DeclareMathOperator{\R}{\mathbb{R}}
\DeclareMathOperator{\X}{\mathpzc{X}}
\DeclareMathOperator{\Y}{\mathpzc{Y}}

\DeclareMathOperator{\fg}{\mathfrak{g}}

\DeclareMathOperator{\n}{\mathcal{N}}

\DeclareMathOperator{\fX}{\mathfrak{X}}
\DeclareMathOperator{\fa}{\mathfrak{a}}
\DeclareMathOperator{\fY}{\mathfrak{Y}}

\DeclareMathOperator{\N}{\mathbb{N}}
\DeclareMathOperator{\Z}{\mathbb{Z}}

\numberwithin{equation}{section}
\theoremstyle{plain} 
\newtheorem{theorem}[subsubsection]{Theorem}

\newtheorem{lemma}[subsubsection]{Lemma}
\newtheorem{corollary}[subsubsection]{Corollary}

\theoremstyle{definition}
\newtheorem{definition}[subsubsection]{Definition}
\newtheorem{example}[subsubsection]{Example}

\theoremstyle{remark} 
\newtheorem{remark}[subsubsection]{Remark}
\newtheorem{construction}[subsubsection]{Construction}

\title{Classification Theory}
\author{A. H. G. Milor}

\begin{document}
\setlength{\headheight}{28pt}

\pagestyle{fancyplain}

\lhead{\fancyplain{}{\thepage}}
\chead{}
\rhead{\fancyplain{}{\textit{\leftmark}}}
\lfoot{}
\cfoot{}
\rfoot{}
\onehalfspacing

\begin{titlepage}
\begin{center}

\textsc{\LARGE About extending the Chern-Weil classification to the simplicial principal bundles}

\vspace{2cm}

\includegraphics[width=0.3\textwidth]{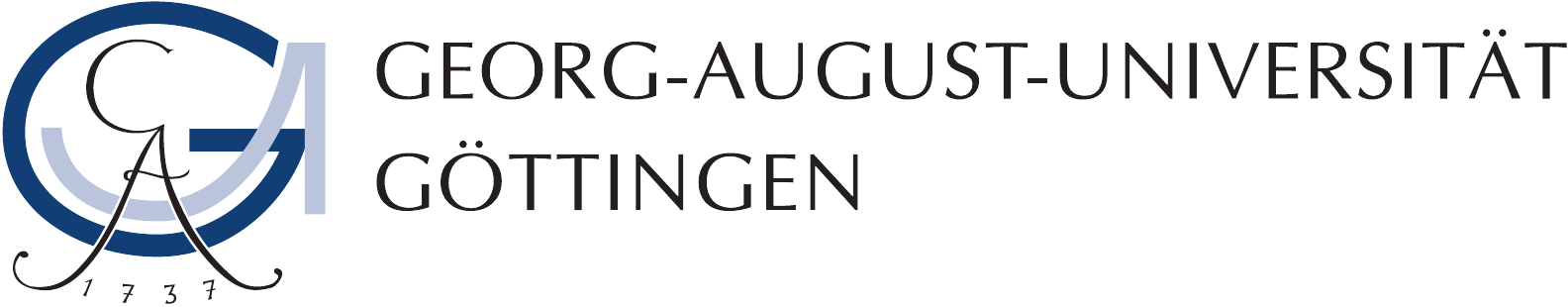}

\vspace{1.5cm}

\Large Master thesis in mathematics \\
\Large submited to the faculty of mathematics and informatic\\
\Large of the Georg-August-university Göttingen\\
the December 17, 2022

\vspace{1cm}
\small{of}\\

\large{Abel Henri Guillaume Milor}
\vspace{1cm}

\small{First advisor}\\

\large{Pr. Chenchang Zhu}

\vspace{1cm}

\small{Second advisor}\\

\large{Dr. Miquel Cueca Ten}

\end{center}
\end{titlepage}

\tableofcontents
\begin{center}
    This paper contains 8 self-made pictures
\end{center}
\newpage
\begin{abstract}
    After introducing the simplicial manifolds, such as the different ways of defining the differential forms on them, we summarized a canonical way of calculating the characteristic classes of a $G$-principal bundle by computing them on the classifying bundle $EG\longrightarrow BG$. Finally, we calculated the first Pontryagin class on the classifying bundle of the Lie matrix groups and showed that for certain of them, the computed form is equal to the symplectic form on $BG$ given by some authors (Cueca, Zhu\cite{zhu} and Weinstein\cite{weinstein}), up to a constant coefficient. 
\end{abstract}
\section{Introduction}
In geometry, the \textit{simplex} is the simplest polytope in a certain finite real space, i.e. the one generated by $n+1$ different vertices in $\R^n$. In $\R$, it is simply a segment, then a triangle in $\R^2$, a tetrahedron in $\R^3$... This is a very basic structure with a lot of applications in mathematics and particularly in geometry, like the triangulation, or the singular (co)homology. \\
That is the reason why S. Eilenberg and A. Zilber introduced the concept of \textit{simplicial set}, in order to describe as a category a topological space that can be decomposed in simplices. It is a collection of sets $\{\X_i\}_{i\in \N_0}$ where each element of $\X_n$ represents an $n$-simplex. Since the faces of each simplex are simplices of lower degree, we have a set of descending maps $d_i: \X_n\longrightarrow \X_{n-1}$ showing which $n-1$-simplices are the faces of which $n$-simplex. In the same way, we have ascending maps showing how we can degenerate each simplex. The original topological space is really encoded by this structure since we can reconstruct it with the \textit{geometric realization}, which is a glueing of the simplices with respect to these ascending and descending maps.\\
Such a construction has a categorical interest when we consider the \textit{nerve} of a (small) category: The composition of $n$ morphisms, with all the possible subcompositions, can be associated with an $n$-simplex. This allows a new approach to the categorical world with simplicial sets and we can construct a corresponding topological space using the geometric realization. If such a structure can be constructed with sets, we can do the same with other kinds of objects. Particularly, if $\text{Obj}(C)$ and $\text{Morph}(C)$ have a manifold structure, we might sometimes have a simplicial manifold structure on the nerve. It is for example the case if $C$ is a Lie group or a \v{C}ech groupoid. \\
As we introduce the concept of simplicial manifolds, some natural questions shall arise: Is it an extension of the concept of manifolds? It can be easily thought of in that way, as we will see it. In this regard, another question appears: How to extend the concept of differential forms on it? Unfortunately, there is no canonical way for extending them. We can define the differential forms in different ways which are not equivalent to each other: we will present here two of them. The first one is considering the topological space resulting from the geometric realization as a glueing of the manifolds that are the simplices. In this case, the idea is thus to define differential forms on them, such that they vary smoothly and can be glued along the simplices. This leads to a complex analogous to the De Rham complex. The second one is inspired by the \v{C}ech double complex: we obtain this double cochain by considering the De Rham complex for each $\m_i$, with a horizontal derivative given by the descending maps. Then, the differential forms are defined as the elements of the total complex. These two definitions are nevertheless related: the second one can be obtained by integrating along the simplices of the first one. This map is a quasi-isomorphism: although these definitions are not equivalent, they have to give rise to the same cohomology.\\
But if the simplicial structure can be constructed with manifolds, we can also use it with $G$-principal bundles for a given Lie group $G$. We obtain in this way the definition of simplicial $G$-principal bundles and we can also extend some constructions on the $G$-principal bundles. In particular, S. Chern and A. Weil constructed a homomorphism which, for a certain invariant homogeneous polynomial on $\fg$ (the Lie algebra of $G$), and using a connection of the bundle, gives a certain cohomology class of the base manifold. They also proved that this class only depends on the isomorphy class of the principal bundle, and can thus be used for classifying the bundles. This homomorphism such as these characteristic classes can be extended to the simplicial principal bundles. \\
Among the simplicial principal bundles, there is a particular one which is denoted $EG\longrightarrow BG$ and is called the classifying bundle. This simplicial principal bundle has the property that, knowing the characteristic class corresponding to a certain invariant homogeneous polynomial on it, there is a canonical way to calculate the characteristic class of any $G$-principal bundle for the same polynomial. Shulman, in his thesis \cite{shulman}, proposed a way to compute the characteristic classes on the classifying bundle by introducing a canonical connection on it. This gives a general canonical way for computing the characteristic classes that was missing before since the choice of the connection is in general not canonical. \\
Surely, the most useful and common Lie groups are the Lie matrix groups, which are all the Lie subgroups of $GL(n)$. Furthermore, very common polynomials for computing the characteristic classes of matrix Lie groups are the Pontryagin and the Chern polynomials, which are related to each other. That is the reason why we want to present in the last part of this paper a calculation, using the connection presented by Shulman, of the first Pontryagin class of the matrix Lie groups, such as the Spin group. \\
Finally, M. Cueca and C. Zhu in \cite{zhu}, along with A. Weinstein in \cite{weinstein} also worked on the base $BG$ of the classifying bundle, also called the classifying space. They described a canonical way to define a symplectic form on this space. We want to finish this paper by proving that in certain cases, this symplectic form is equal to the characteristic class of the first Pontryagin class on the classifying bundle, up to a constant coefficient depending on the group. \\
The main aim of this paper is to reach this last equality, but also to make a presentation of all the topics mentioned in this introduction. The paper was thought to be understandable to a reader having the usual knowledge of differential geometry. 

\section*{Acknowledgement}
I would like to thank Chenchang Zhu and Miquel Cueca for having given to me the possibility to do this master thesis, and for having led me to discover these new mathematical notions.\\
Thanks also to Federica Pasqualone for having advised me on the book \textit{Curvature and characteristic classes} of Johan Dupont.\\
Thanks also to my parents for their continuous presence beside me. \\
I also want to give special thanks to Moataz Dawor, for all the time spent working together, although on different topics. 
\newpage
\section{Simplicial sets, simplicial manifold and nerves}
The main objects we will consider in this paper are the simplicial manifolds and the simplicial sets. They can be thought of as a collection of simplices of different dimensions, satisfying some nice arrangement. We will give in this first section a brief overview of these objects with some interesting properties and examples.
\subsection{Simplicial objects}
Each simplicial object is a categorical construction, built with the same structure, which is implied by the simplex category. 
\begin{definition}
We call the \underline{simplex category}, denoted $\Delta$, the category having the sets $[n]:=\{0, 1, 2, \dots , n\}$ as objects and the order-preserving maps as morphisms. 
\end{definition}
Some morphisms of $\Delta$ are of particular interest. We have: 
$$\sigma_i^n:[n+1]\longrightarrow [n], i\mapsto 
\begin{dcases}
j ,& j\leq i\\
j-1 ,& j>i
\end{dcases}\qquad \forall n\in\N_0, \; i\in [0,n]$$ 
$$\delta_i^n:[n-1]\longrightarrow [n], i\mapsto 
\begin{dcases}
j ,& j< i\\
j+1 ,& j\geq i
\end{dcases}\qquad \forall n\in\N, \; i\in [0,n]$$
\begin{remark}
If the $n$ is clear with the context, we simply write $\sigma_i$ and $\delta_i$.
\end{remark}

These morphisms are elementary for the simplex category $\Delta$ in the sense that we can construct all the morphisms of the category from them.
\begin{lemma}
All the morphisms of $\Delta$ can be written as a composition of the following sort: 
$$\delta_{i_k}\circ \dots \circ \delta_{i_1}\circ \sigma_{j_1}\circ \dots\circ \sigma_{j_l}$$
for $i_1\leq  \dots \leq i_k$ and $j_1\leq\dots \leq j_l$. 
\end{lemma}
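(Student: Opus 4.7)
The plan is to invoke the epi-mono factorization in $\Delta$. Any order-preserving $f:[n]\to[m]$ factors uniquely as $f = \iota\circ\pi$ where $\pi:[n]\to[k]$ is an order-preserving surjection, $\iota:[k]\to[m]$ is an order-preserving injection, and $k+1$ is the cardinality of $\text{Im}(f)$; concretely, $\iota$ is the increasing enumeration of $\text{Im}(f)$ and $\pi$ records the value of $f$ as an index into $\text{Im}(f)$. This reduces the lemma to two parallel claims: every order-preserving surjection is a composition of $\sigma_j$'s in the form of the statement, and every order-preserving injection is a composition of $\delta_i$'s in the form of the statement.

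For the surjection $\pi:[n]\to[k]$, I would consider its \emph{collapse set} $J = \{j\in[n-1] \mid \pi(j)=\pi(j+1)\}$. Order-preservation together with surjectivity force $|J| = n-k =: l$, and enumerating $J = \{j_1 < \dots < j_l\}$, I claim $\pi = \sigma_{j_1}\circ\dots\circ\sigma_{j_l}$. This I would prove by induction on $l$: since $\pi$ identifies $j_l$ with $j_l+1$, it factors as $\pi = \pi'\circ\sigma_{j_l}$ where $\pi':[n-1]\to[k]$ is again surjective and order-preserving, and one checks that the collapse set of $\pi'$ equals $\{j_1,\dots,j_{l-1}\}$, because these indices sit strictly below $j_l$ and so are unaffected by $\sigma_{j_l}$.

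Symmetrically, for the injection $\iota:[k]\to[m]$, I would consider the \emph{gap set} $I = [m]\setminus\text{Im}(\iota) = \{i_1 < \dots < i_{k'}\}$ with $k' = m-k$, and prove by induction on $k'$ that $\iota = \delta_{i_{k'}}\circ\dots\circ\delta_{i_1}$. The inductive step writes $\iota = \delta_{i_{k'}}\circ\iota'$ where $\iota':[k]\to[m-1]$ has gap set $\{i_1,\dots,i_{k'-1}\}$, using that $i_{k'}$ is the maximal missing index so inserting it last does not disturb the earlier gaps.

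The main obstacle is simply index bookkeeping: checking that $\sigma_{j_l}$ really preserves the positions $j_1,\dots,j_{l-1}$ of the earlier collapses, and similarly that $\delta_{i_{k'}}$ does not shift the earlier gaps. Both are routine consequences of the explicit formulas for $\sigma_i$ and $\delta_i$. Care is also required to match the orderings of the lemma: the $\sigma$'s are listed with their smallest index on the left, while the $\delta$'s are listed with their largest index on the left. Beyond these combinatorial verifications, no deeper machinery than the epi-mono factorization is required.
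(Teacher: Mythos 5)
Your proposal is correct and follows essentially the same route as the paper: factor the map as a surjection followed by an injection, realize the surjection as the composite of the $\sigma_j$'s indexed by the positions where adjacent values coincide, and realize the injection as the composite of the $\delta_i$'s indexed by the complement of the image. The only difference is presentational — you make the inductive verification explicit where the paper leaves it as "one can easily check."
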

\begin{proof}
Let $\mu:[n]\longrightarrow[m]$ be an order-preserving morphism, i.e. for all $i\in [n]$ we have $\mu(i)\leq \mu(i+1)$. Then, consider:
$$\hat{\sigma}:=\sigma_{j_1}\circ \dots\circ \sigma_{j_l}\qquad \forall\, j_x\in [n]\; \text{such that}\; \mu(j_x)= \mu(j_x+1)$$
with $j_1\leq\dots \leq j_l$. Thus $\mu(j_x)= \mu(j_x+1)$ implies that $\hat{\sigma}(j_x)= \hat{\sigma}(j_x+1)$. Since $\mu$ is order-preserving, from $\mu(i)= \mu(j)$ for $i<j$ we obtain $\mu(i)= \mu(k)= \mu(j)$ for all $i\leq k\leq j$. We can conclude that $\mu(i)=\mu(j)$ implies $\hat{\sigma}(i)=\hat{\sigma}(j)$. Contrariwise, $\hat{\sigma}(i)=\hat{\sigma}(i+1)$ gives $\mu(i)=\mu(i+1)$ and since $\hat{\sigma}$ is preserving the order, we get that $\hat{\sigma}(i)=\hat{\sigma}(j)$ implies $\mu(i)=\mu(j)$.\\
There is a canonical map $\hat{\delta}: [n-l]\longrightarrow [m]$, such that $\hat{\delta}\circ \hat{\sigma}= \mu$, and clearly, $\hat{\delta}$ is injective. Since both $\hat{\sigma}$ and $\mu$ are order-preserving and $\hat{\sigma}$ is surjective, so is $\hat{\delta}$. And an injective order-preserving map can be uniquely defined by the elements of the target set which do not belong to the image. Thus, define:
$$\hat{\delta}:= \delta_{i_k}\circ \dots \circ \delta_{i_1}\qquad \forall\, i_x\in [m]\; \text{such that}\; i_x\notin \text{Im}(\mu)$$
for $i_1\leq  \dots \leq i_k$. One can easily check that this definition fulfils $\hat{\delta}\circ \hat{\sigma}= \mu$. So we proved our statement. 
\end{proof}
We can now introduce the functorial definition of a simplicial object:
\begin{definition}
Let $\ca$ be a category. Then a \underline{simplicial object} in $\ca$ is a contravariant functor $F: \Delta^{opp} \longrightarrow \ca$. The simplicial object is generally denoted by $\X_{\bullet}$ and the image of $[n]$ is denoted by $\X_{n}$. We denote $d_i$, called the \underline{face maps}, and $s_i$, called the \underline{degeneracy maps}, the respective images of $\delta_i$ and $\sigma_i$. \\
If we have a covariant functor instead, i.e. $F: \Delta \longrightarrow \ca$, we call it a \underline{cosimplicial object} in $\ca$. 
\end{definition}
This definition might however be unsuitable for a lot of calculations and considerations we are used to do with simplicial objects. That's why the following equivalent definition can be more useful in certain contexts:
\begin{lemma}\label{simplicial equations}
Let $\left\{\X_n\right\}_{n\in \N_0}$ be a collection of objects in a certain category $\ca$, with morphisms $d_i^n: \X_n \longrightarrow \X_{n-1}\; \forall n>0, i\in [0,n]$ and $s_i^n: \X_n \longrightarrow \X_{n+1}\; \forall n, i\in [0,n]$. Then $\left\{\X_n\right\}$ is (the image of) a simplicial object iff the following equations are fulfilled:
\begin{equation}\label{eq: simplicial equation}
\begin{aligned}
    d_i\circ d_j =& d_{j-1}\circ d_i \qquad \text{if}\;\; i<j\\
    s_i\circ s_j =& s_{j}\circ s_{i-1} \qquad \text{if}\;\; i>j\\
    d_i\circ s_j=&
    \begin{dcases}
    s_{j-1}\circ d_i &\qquad \text{if}\;\; i<j\\
    \text{\normalfont id} &\qquad \text{if} \;\; i\in \{j, j+1\}\\
    s_{j}\circ d_{i-1} &\qquad \text{if}\;\; i>j+1\\
    \end{dcases}
\end{aligned}
\end{equation}
They are called the \textit{\underline{simplicial equations}}.
\end{lemma}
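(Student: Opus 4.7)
The plan is to establish both directions separately. For the easy direction, assume $\{\X_n\}$ comes from a contravariant functor $F:\Delta^{\text{opp}}\to\ca$. I would first verify by a direct calculation on elements of $[n]$ that the generators of $\Delta$ satisfy the ``dual'' identities
\[
\delta_j\circ\delta_i = \delta_i\circ\delta_{j-1}\;(i<j),\qquad \sigma_j\circ\sigma_i = \sigma_{i-1}\circ\sigma_j\;(i>j),
\]
together with the three cases of $\sigma_j\circ\delta_i$. Each case is just a matter of tracking where an integer $k\in [n]$ is sent. Applying the contravariant $F$ then transports every such equality in $\Delta$ to the corresponding equality in (\ref{eq: simplicial equation}) for $d_i=F(\delta_i)$ and $s_i=F(\sigma_i)$, with the order of composition reversed.

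For the converse, I would lean on the previous lemma: any morphism $\mu\in\Delta$ admits a decomposition
\[
\mu = \delta_{i_k}\circ\cdots\circ\delta_{i_1}\circ\sigma_{j_1}\circ\cdots\circ\sigma_{j_l},
\]
and in fact this normal form is uniquely determined by $\mu$, since the $i_x$ are forced to be the elements of $[m]\setminus\operatorname{Im}(\mu)$ and the $j_x$ are forced to be the indices where $\mu$ is constant. Given data $(\X_n, d_i, s_i)$ satisfying the simplicial identities, I would define a candidate functor $F$ by $F([n]):=\X_n$ and, on each $\mu$ written in normal form,
\[
F(\mu) := s_{j_l}\circ\cdots\circ s_{j_1}\circ d_{i_1}\circ\cdots\circ d_{i_k}.
\]
Uniqueness of the normal form makes this unambiguous on individual morphisms; what remains is functoriality.

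To check that $F(\mu'\circ\mu)=F(\mu)\circ F(\mu')$, the strategy is to concatenate the normal forms of $\mu$ and $\mu'$ and reduce the resulting word in the $\delta$'s and $\sigma$'s back to normal form, using only the dual simplicial identities in $\Delta$: one moves every $\sigma$ past every $\delta$ (using the three mixed cases), then sorts the $\delta$'s and the $\sigma$'s among themselves. Each elementary rewrite in $\Delta$ has, by hypothesis, an exact counterpart on the $\ca$-side among the $d_i, s_i$, so the $\ca$-composition transforms in lockstep with the $\Delta$-composition and ends up equal to $F$ applied to the normal form of $\mu'\circ\mu$.

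The main obstacle is precisely this bookkeeping: checking that every application of a simplicial identity in $\Delta$ can be mirrored, with the correct index shifts and in the reversed order of composition, on the side of $\ca$. Once one is convinced that the simplicial identities constitute a complete set of relations for producing the normal form, functoriality follows and the lemma is proved.
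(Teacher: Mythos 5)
Your proposal is correct and follows essentially the same route as the paper: the forward direction by verifying the cosimplicial identities among the generators of $\Delta$ and applying the contravariant functor, and the converse by defining $F$ on the normal form $\delta_{i_k}\circ\cdots\circ\delta_{i_1}\circ\sigma_{j_1}\circ\cdots\circ\sigma_{j_l}$ and checking functoriality by rewriting the concatenated word back to normal form while mirroring each elementary rewrite on the $\ca$-side. Your explicit remark that the normal form is unique (the $i_x$ being the elements missing from the image and the $j_x$ the indices where $\mu$ is constant) is a useful point the paper only makes implicitly.
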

\begin{proof}
Let us consider the morphisms in $\Delta$. Because of the definition of $\delta_i$ and $\sigma_i$, it can be easily checked that:
\begin{equation}\label{eq:cosimplicial equation}
\begin{aligned}
    \delta_j \circ \delta_i =& \delta_i\circ \delta_{j-1}\qquad \text{if}\;\; i<j\\
    \sigma_j\circ  \sigma_i =& \sigma_{i-1}\circ \sigma_j \qquad \text{if}\;\; i>j\\
    \sigma_j\circ \delta_i =&
    \begin{dcases}
    \delta_i\circ \sigma_{j-1} &\qquad \text{if}\;\; i<j\\
    \text{id} &\qquad \text{if} \;\; i\in \{j, j+1\}\\
    \delta_{i-1}\circ \sigma_j &\qquad \text{if}\;\; i>j+1\\
    \end{dcases}
\end{aligned}
\end{equation}
Then, applying the contravariant functor of the simplicial object, we deduce that the image of the functor will fulfil the equations of the statement, which are dual to the equations (\ref{eq: simplicial equation}). \\
Let us consider the other direction: suppose that $\X_n, d_i, s_i$ have the simplicial equations given in the statement. We can easily construct the contravariant functor $F: \Delta^{\text{opp}}\longrightarrow \ca$, given by $F([n])= \X_n$, $F(\delta_i^k)= d_i^k$ and $F(\sigma_i^k)= s_i^k$ and:
\begin{align*}
    \text{id}_{[n]}&\mapsto \text{id}_{\X_n}\\
    \sigma_{j_1}\circ \dots\circ \sigma_{j_l} &\mapsto s_{j_l}\circ \dots \circ s_{j_1}\\
    \delta_{i_k}\circ \dots \circ \delta_{i_1}&\mapsto d_{i_1}\circ \dots\circ d_{i_k}\\
    \delta_{i_k}\circ \dots \circ \delta_{i_1}\circ \sigma_{j_1}\circ \dots\circ \sigma_{j_l}  &\mapsto s_{j_l}\circ \dots \circ s_{j_1}\circ d_{i_1}\circ \dots\circ d_{i_k}
\end{align*}
for $i_1\leq \dots \leq i_k$ and $j_1\leq\dots \leq j_l$. We need to prove that it is a functor, i.e. that the composition is preserved. Let $\phi$ and $\psi$ be two maps of $\Delta$ such that $\phi\circ \psi$ is well-defined. We can write them so:
$$\phi= \delta_{i_k}\circ \dots \circ \delta_{i_1}\circ \sigma_{j_1}\circ \dots\circ \sigma_{j_l}\qquad \psi= \delta_{p_r}\circ \dots \circ \delta_{p_1}\circ \sigma_{q_1}\circ \dots\circ \sigma_{q_s}$$
Then the combination of both will be a certain sequence of $\sigma_i$ and $\delta_i$. But using the third commutation rule of $\sigma$ and $\delta$ in $\Delta$ in (\ref{eq:cosimplicial equation}) we can move all the $\sigma$'s on the right side of the composition and all the $\delta$'s on the left side. Remark that again because of the third rule, the number of $\sigma$ and $\delta$ might decrease since $\sigma_i\circ \delta_i= \text{id}=\sigma_i\circ \delta_{i+1}$, and we will suppose this happens $n$ times. Furthermore, using the first rule, we can order the $\delta$ in decreasing order and using the second one, we can order all the $\sigma$ in increasing order. We get something like: 
\begin{equation}\label{ordered maps}
\phi\circ \psi= \delta_{a_{r+k-n}}\circ \dots \circ \delta_{a_{1}}\circ \sigma_{b_{1}}\circ \dots\circ \sigma_{b_{l+s-n}}
\end{equation}
with $a_1\leq \dots \leq a_{r+k-n}$ such as $b_1\leq \dots \leq b_{s+l-n}$. Thus, the image of $\phi\circ \psi$ through $F$ is given by: 
$$F(\phi\circ\psi)=s_{b_{l+s-n}}\circ \dots\circ s_{b_{1}}\circ  d_{a_{1}}\circ \dots \circ d_{a_{r+k-n}}$$
Then, using the properties of $\X_n, d_i, s_i$, which are dual to the commutation properties of $\Delta$, we can reverse the commutations we did for obtaining $(\ref{ordered maps})$, and we get:
\begin{align*}
    F(\phi\circ\psi)&= s_{q_s}\circ\dots\circ s_{q_1}\circ d_{p_1}\circ\dots\circ d_{p_r} \circ  s_{j_l}\circ \dots \circ s_{j_1}\circ d_{i_1}\circ \dots \circ d_{i_k}\\
    &= F(\psi)\circ F(\phi)
\end{align*}
Thus the contravariant functor $F$ is well-defined, and $(\X_n, d_i, s_i)$ represents correctly a simplicial object in $\ca$.
\end{proof}
\begin{corollary}\label{cosimplicial equation}
    Let $\left\{\X_n\right\}_{n\in \N_0}$ be a collection of objects in a certain category $\ca$, with morphisms $d_i^n: \X_n \longrightarrow \X_{n+1}\; \forall n, i\in [0,n+1]$ and $s_i^n: \X_n \longrightarrow \X_{n-1}\; \forall n>0, i\in [0,n-1]$. Then $\left\{\X_n\right\}$ is (the image of) a cosimplicial object iff the following equations are fulfilled:
\begin{equation}\label{eq: real cosimplicial equations}
\begin{aligned}
    d_j \circ d_i =& d_i\circ d_{j-1}\qquad \text{if}\;\; i<j\\
    s_j\circ  s_i =& s_{i-1}\circ s_j \qquad \text{if}\;\; i>j\\
    s_j\circ d_i =&
    \begin{dcases}
    d_i\circ s_{j-1} &\qquad \text{ if}\;\; i<j\\
    \text{\normalfont id} &\qquad \text{if} \;\; i\in \{j, j+1\}\\
    d_{i-1}\circ s_j &\qquad \text{if}\;\; i>j+1\\
    \end{dcases}
\end{aligned}
\end{equation}
They are called the \textit{\underline{cosimplicial equations}}.
\end{corollary}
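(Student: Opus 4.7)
The plan is to reduce this corollary directly to Lemma \ref{simplicial equations} by duality. A cosimplicial object in $\ca$ is, tautologically, a simplicial object in the opposite category $\ca^{\text{opp}}$: a covariant functor $\Delta \longrightarrow \ca$ is the same datum as a contravariant functor $\Delta^{\text{opp}} \longrightarrow \ca^{\text{opp}}$. Under this identification, the face and degeneracy maps $d_i, s_i$ of the cosimplicial object become morphisms in $\ca^{\text{opp}}$ with the source and target swapped (so $s_i^n: \X_n\to\X_{n-1}$ in $\ca$ plays the role of a face map $\X_{n-1}\to\X_n$ in $\ca^{\text{opp}}$, and similarly $d_i$ plays the role of a degeneracy). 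Applying Lemma \ref{simplicial equations} in $\ca^{\text{opp}}$ yields the simplicial equations (\ref{eq: simplicial equation}) for these ``swapped'' maps, and reversing the order of every composition to move back from $\ca^{\text{opp}}$ to $\ca$ turns them into exactly the equations (\ref{eq: real cosimplicial equations}).

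Alternatively, one can give a direct proof that parallels the proof of Lemma \ref{simplicial equations} without invoking duality. The forward direction is immediate: the relations (\ref{eq:cosimplicial equation}) between the $\delta_i$'s and $\sigma_i$'s in $\Delta$ were already established in the proof of Lemma \ref{simplicial equations}, and applying a covariant functor $F: \Delta \to \ca$ with $F(\delta_i)=d_i$ and $F(\sigma_i)=s_i$ preserves compositions in the same order, so (\ref{eq: real cosimplicial equations}) holds.

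For the converse, given $(\X_n, d_i, s_i)$ satisfying the cosimplicial equations, define a candidate functor $F:\Delta\to\ca$ on a general morphism using the normal form from the beginning of this subsection by
\[
F\bigl(\delta_{i_k}\circ \dots \circ \delta_{i_1}\circ \sigma_{j_1}\circ \dots\circ \sigma_{j_l}\bigr) := d_{i_k}\circ \dots \circ d_{i_1}\circ s_{j_1}\circ \dots\circ s_{j_l},
\]
with $i_1\leq\dots\leq i_k$ and $j_1\leq\dots\leq j_l$. Functoriality is verified exactly as in the proof of Lemma \ref{simplicial equations}: given a composition $\phi\circ\psi$, use the third relation of (\ref{eq:cosimplicial equation}) to push all the $\delta$'s to the left and all the $\sigma$'s to the right, then the first and second to reorder them into normal form; applying the equations (\ref{eq: real cosimplicial equations}) in $\ca$ allows one to undo these moves on the image side, yielding $F(\phi\circ\psi) = F(\phi)\circ F(\psi)$. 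Note that this time, because $F$ is covariant rather than contravariant, no reversal of composition order occurs.

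The main obstacle is purely bookkeeping: one has to be vigilant about the direction of composition when passing between the covariant setting here and the contravariant setting of Lemma \ref{simplicial equations}. The duality approach hides this behind a single sentence, whereas the direct approach requires one to carefully check that the orientations in (\ref{eq: real cosimplicial equations}) are precisely the ones that make the covariant functoriality argument go through.
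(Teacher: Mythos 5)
Your second, direct argument is exactly what the paper intends: its proof of this corollary is the single line ``Analogous to the proof of Lemma \ref{simplicial equations}'', and your sketch of the covariant functor built from the normal form $\delta_{i_k}\circ\dots\circ\delta_{i_1}\circ\sigma_{j_1}\circ\dots\circ\sigma_{j_l}$, with functoriality checked by the same shuffling of $\delta$'s and $\sigma$'s (and no reversal of composition order this time), is a faithful rendering of that. Your first argument, by duality through $\ca^{\text{opp}}$, is a genuinely different and shorter route that the paper does not take: it derives the corollary formally from the lemma instead of repeating its proof, at the cost of the bookkeeping you rightly flag. One correction to that duality argument as written: you have the roles reversed. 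In $\ca^{\text{opp}}$ the coface $d_i^n:\X_n\to\X_{n+1}$ becomes a degree-lowering morphism $\X_{n+1}\to\X_n$ and hence plays the role of the \emph{face} map, while the codegeneracy $s_i^n:\X_n\to\X_{n-1}$ becomes degree-raising and plays the role of the \emph{degeneracy} map --- not the other way around. This is not cosmetic: if one literally substituted the $s$'s into the face slots of (\ref{eq: simplicial equation}), the resulting relation would be $s_j\circ s_i = s_i\circ s_{j-1}$ for $i<j$ rather than the stated $s_j\circ s_i = s_{i-1}\circ s_j$ for $i>j$, which is a different identity. With the identification corrected, reversing every composition in (\ref{eq: simplicial equation}) does produce exactly (\ref{eq: real cosimplicial equations}), and the duality proof goes through.
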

\begin{proof}
    Analogous to the proof of lemma \ref{simplicial equations}.
\end{proof}
\begin{remark}
Using this definition, we can conclude that a natural transformation $\alpha$ between two simplicial objects $\X_{\bullet}$ and $\Y_{\bullet}$ of $\ca$ can be described as morphisms $\alpha_n : \X_n\longrightarrow \Y_n$ in $\ca$ such that $\alpha_{n-1}\circ d_i^{\X}= d_i^{\Y}\circ \alpha_n$ and $\alpha_{n+1}\circ s_i^{\X}= s_i^{\Y}\circ \alpha_n$ for all $n$ and $i$, where $d_i^{\X}, s_i^{\X}$ and $d_i^{\Y}, s_i^{\Y}$ are the face and degeneracy maps of respectively $\X_{\bullet}$ and $\Y_{\bullet}$. We call such a natural transformation a \underline{simplicial morphism}. 
\end{remark}
\begin{definition} Intuitively, we have:
\begin{itemize}
    \item A \underline{simplicial set} is a simplicial object in the category $\text {Set}$ of sets.
    \item A \underline{simplicial manifold} is a simplicial object in the category $\text {Mfd}$ of manifolds.
\end{itemize}
\end{definition}
We can deduce the following properties for the face and degeneracy maps:
\begin{lemma}
Let $\X_{\bullet}$ be a simplicial set. Then, the face maps $d_i$ are surjective and the degeneracy maps $s_i$ are injective for all $i$. If $\X_{\bullet}$ is a simplicial manifold, $s_i$ is furthermore an immersion. 
\end{lemma}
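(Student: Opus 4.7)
The plan is to read everything off the third family of simplicial identities in Lemma \ref{simplicial equations}, namely $d_i \circ s_j = \text{id}$ whenever $i \in \{j, j+1\}$. This single equation supplies a left inverse for every degeneracy and a right inverse for every face map, which immediately settles injectivity and surjectivity at the level of sets; the manifold statement will follow by differentiating.

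First I would fix $n \geq 0$ and $s_j : \X_n \longrightarrow \X_{n+1}$ with $j \in [0,n]$. Applying the simplicial identity with $i = j$ gives $d_j \circ s_j = \text{id}_{\X_n}$, so $s_j$ admits $d_j$ as a left inverse and is therefore injective. Next I would take $d_i : \X_n \longrightarrow \X_{n-1}$ with $i \in [0,n]$ and split into two cases: for $i \leq n-1$ the identity $d_i \circ s_i = \text{id}_{\X_{n-1}}$ (using the same degeneracy $s_i : \X_{n-1} \longrightarrow \X_n$, legal since $i \in [0, n-1]$) shows $d_i$ has the right inverse $s_i$; for the remaining boundary index $i = n$ I would instead invoke the $i = j+1$ case with $j = n-1$, giving $d_n \circ s_{n-1} = \text{id}_{\X_{n-1}}$. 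Either way every face map is surjective.

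For the manifold case, both sides of $d_j \circ s_j = \text{id}_{\X_n}$ are smooth maps between manifolds, so I can apply the chain rule at an arbitrary point $x \in \X_n$:
\[
T_{s_j(x)} d_j \circ T_x s_j = \text{id}_{T_x \X_n}.
\]
The linear map $T_x s_j$ thus has a left inverse, hence is injective, and since $x$ was arbitrary this is precisely the definition of $s_j$ being an immersion.

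There is really no hard step here: the whole argument is an unpacking of the simplicial identity $d_i \circ s_j = \text{id}$ plus the elementary fact that a map with a one-sided inverse is injective (resp.\ surjective). The one mildly delicate point is making sure every admissible index is covered on the face-map side, which is why the case split between $i \leq n-1$ and $i = n$ (using the $j$ vs.\ $j+1$ branches of the identity) is necessary.
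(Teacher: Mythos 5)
Your proof is correct and follows essentially the same route as the paper: read injectivity of $s_j$ and surjectivity of the face maps off the identity $d_i\circ s_j=\mathrm{id}$, then differentiate to get the immersion claim. Your explicit handling of the boundary index $i=n$ via $d_n\circ s_{n-1}=\mathrm{id}$ is a small but welcome refinement over the paper's version, which writes $d_i\circ s_i=\mathrm{id}$ without addressing that the degeneracy $s_n:\X_{n-1}\to\X_n$ is not among the allowed indices.
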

\begin{proof}
We prove first these statements for $\X_{\bullet}$ a simplicial set. \\We know from Lemma \ref{simplicial equations} that $d_i\circ s_i= \text{id}$. Thus $d_i(\text{im}(s_i))=\X_n$, implying the surjectivity of $d_i$, such as the injectivity of $s_i$.\\
Consider now a simplicial manifold, with smooth face and degeneracy maps. Taking the derivative, we have:
$$T_{s_i(p)}(d_i)\circ T_p(s_i)=T_p(\text{id})= \text{id}_{T_p \X_n}\qquad \qquad \forall p\in \X_n$$
Thus $T_p(s_i)$ is injective for all $p$, making of $s_i$ an immersion. 
\end{proof}

\subsection{Examples of simplicial sets and manifolds}
\begin{example}\label{simplicial simplex}
Let us see a formal example. For this, define the simplicial set $\Delta[n]$ with $\Delta[n]_k$ being the set of order-preserving maps from $[k]$ to $[n]$, where the  face and degeneracy maps, for $f:[k]\longrightarrow [n]$, are given by:
$$d_i(f): x\mapsto
\begin{dcases}
f(x) ,& x<i \\
f(x+1) ,& x\geq i
\end{dcases} \qquad s_i(f): x\mapsto
\begin{dcases}
f(x) ,& x\leq i \\
f(x-1) ,& x > i
\end{dcases}$$
This simplicial set is called the \underline{Yoneda $n$-simplex}, denoted $\Delta[n]$. The origin of this name comes from the fact that by applying the Yoneda lemma, we have for each simplicial set $\X_{\bullet}$ the property:
$$\text{Hom}_{nt}(\Delta[n], \X_{\bullet})\cong \X_n$$
where $\text{Hom}_{nt}$ is the set of natural transformations. 
\end{example}
\begin{example}\label{horn}
Another example of a simplicial set that we commonly find in the literature is the \underline{Horn}, denoted $\Lambda[n, p]$, for $0\leq p \leq n$. It is a subset of $\Delta[n]$, where 
$$\Lambda[n, p]_k= \left\{f\in \Delta[n]_k \middle| \{0, \dots, p-1, p+1, \dots , n\}\nsubseteq \{f(0), \dots , f(k)\}\right\}$$
The face and degeneracy maps are the same as the ones of $\Delta[n]$, restricted to $\Lambda[n, p]_k$. 
\end{example}
\begin{example}
Let us see here a trivial example of a simplicial manifold, called the \underline{\textit{constant simplicial}} \underline{\textit{ manifold}}. Let $\m$ be a manifold, then we define $\underline{\m}_{\bullet}$ as:
$$\underline{\m}_n=\m\qquad d_i=\text{id}_{\m}\qquad s_i=\text{id}_{\m}\qquad \forall \; n, i, j$$
With this example, we do now see in which sense the simplicial manifolds are an extension of the world of manifolds. 
\end{example}
\begin{example}\label{ex geo 1}
Let us see here a visual example of a simplicial manifold. We will not define it rigorously but give a picture of what a simplicial manifold could be like. Let us consider the rotation of a triangle along a cylinder, as represented in the following image:\\
\begin{center}
   \includegraphics[width=0.3\textwidth]{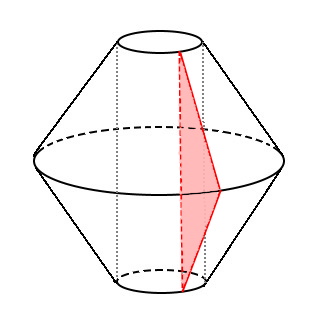} 
\end{center}
This can be seen as a set of triangles: each position of the triangle during the rotation is a new triangle (see in red). For each triangle, there are its corresponding vertices and edges. Let $\X_0$ be the set of vertices of all the triangles, $\X_1$ the set of all the edges, and $\X_2$ the set of all the triangles. All these sets include the degenerate edges or triangles. Generally, let $\X_n$ be the set of all $n$-simplices (they are but all degenerate for $n>2$). The face maps send to the faces of the simplex, and the degeneracy maps ``degenerate" the simplex by splitting one of its vertices. If we want to represent the manifolds $\X_0, \X_1$ and $\X_2$, we get the following picture, where the dotted lines represent the degenerate elements: \\
\begin{center}
     \includegraphics[width=1\textwidth]{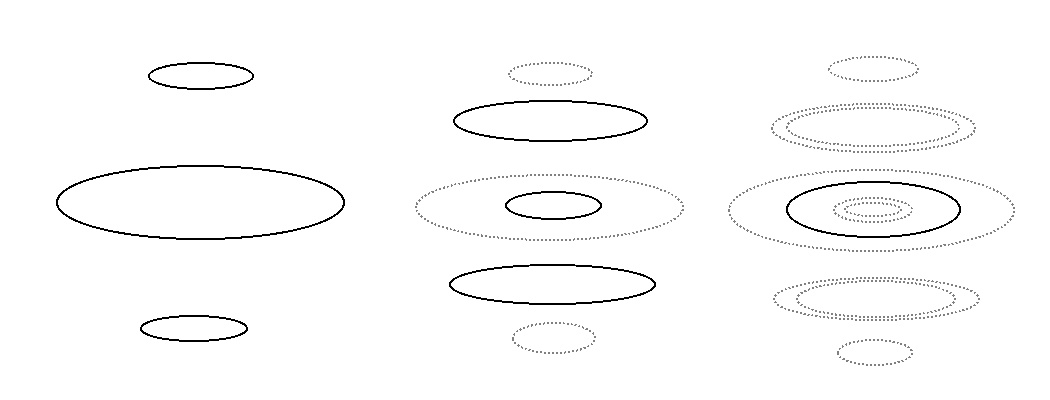} 
\end{center}
$$\qquad \X_0\quad\qquad\qquad\qquad \qquad \qquad \qquad \X_1\qquad\qquad\qquad\qquad\qquad \qquad \X_2$$
\end{example}

\subsection{Nerves}
In this subsection, we will focus on another example of a simplicial set/ simplicial manifold, which is of particular importance in the following developments of this paper. 
\begin{definition}
Let $\ca$ be a small category. We can then define the simplicial set $\n\ca_{\bullet}$, called the \underline{nerve of $\ca$}, as: 
$$\n\ca_0= Obj(\ca)\quad \n\ca_{n}=\left\{(f_1, \dots, f_n)\in Morph(\ca)^n\middle|f_n\circ\dots\circ f_1\; \; \text{is a valid composition}\right\} \; \; \forall n>0$$

The degeneracy maps, for $n=1$, are defined as follows: $d_0(f)$ is the target of $f$, and $d_1(f)$ is its source. For $n\geq 2$, they are defined as:
\begin{align*}
    d_0((f_1, f_2, \dots, f_n))=(f_2, \dots, f_n) \;,&\qquad d_n((f_1, \dots, f_{n-1}, f_n))=(f_1, \dots, f_{n-1})\\
    d_i((f_1,\dots, f_i, f_{i+1},\dots , f_n))&= (f_1, \dots, f_{i+1}\circ f_{i},\dots , f_n)\;\; \forall i\neq 0, n
\end{align*}
The degeneracy map, for $n=0$, is defined as $s_0(x)= \text{id}_x$, and for $n\geq 1$, is defined as:
$$s_i((f_1,\dots, f_i, f_{i+1},\dots , f_n))= (f_1,\dots, f_i,\text{id}, f_{i+1},\dots , f_n)$$
\end{definition}
\begin{remark}
In the literature, we can find some other definition of a nerve: Especially, in \cite{zhu}, the nerve is defined the reverse of our definition: 
$$\n\ca_{n}=\left\{(f_n, \dots, f_1)\in Morph(\ca)^n\middle|f_n\circ\dots\circ f_1\; \; \text{is a valid composition}\right\} \; \; \forall n>0$$
Because of this, all the face and degeneracy maps does also act reversely: for $n=1$, $d_0(f)$ is the source and $d_1(f)$ is the target... As we will see it in the last section, this can produces changes in formulas, and need thus to be checked conscientiously. 
\end{remark}
\begin{remark}\label{ex geo 2}
We can have a very geometric picture of the nerve of a category $\ca$. Indeed, we can represent the sequence $(f_1, \dots, f_n)$ as a sequence of directed edges connecting $n+1$ vertices (the sources and targets of the $f_i$'s), and we can add the edges representing each possible composition of $f_i$'s. We obtain then an $n$-simplex, as shown in the picture below. The face map $d_i$, with the composition of $f_i$ and $f_{i+1}$, allows us to ``skip" the $n$-th vertex (we start counting at $0$) and we obtain a face of the $n$-simplex. The degeneracy map $s_i$ is splitting the $i$-th vertex with the map id, giving us a degenerate $n+1$-simplex. All these transformations are represented below:
\begin{center}
   \includegraphics[width=1\textwidth]{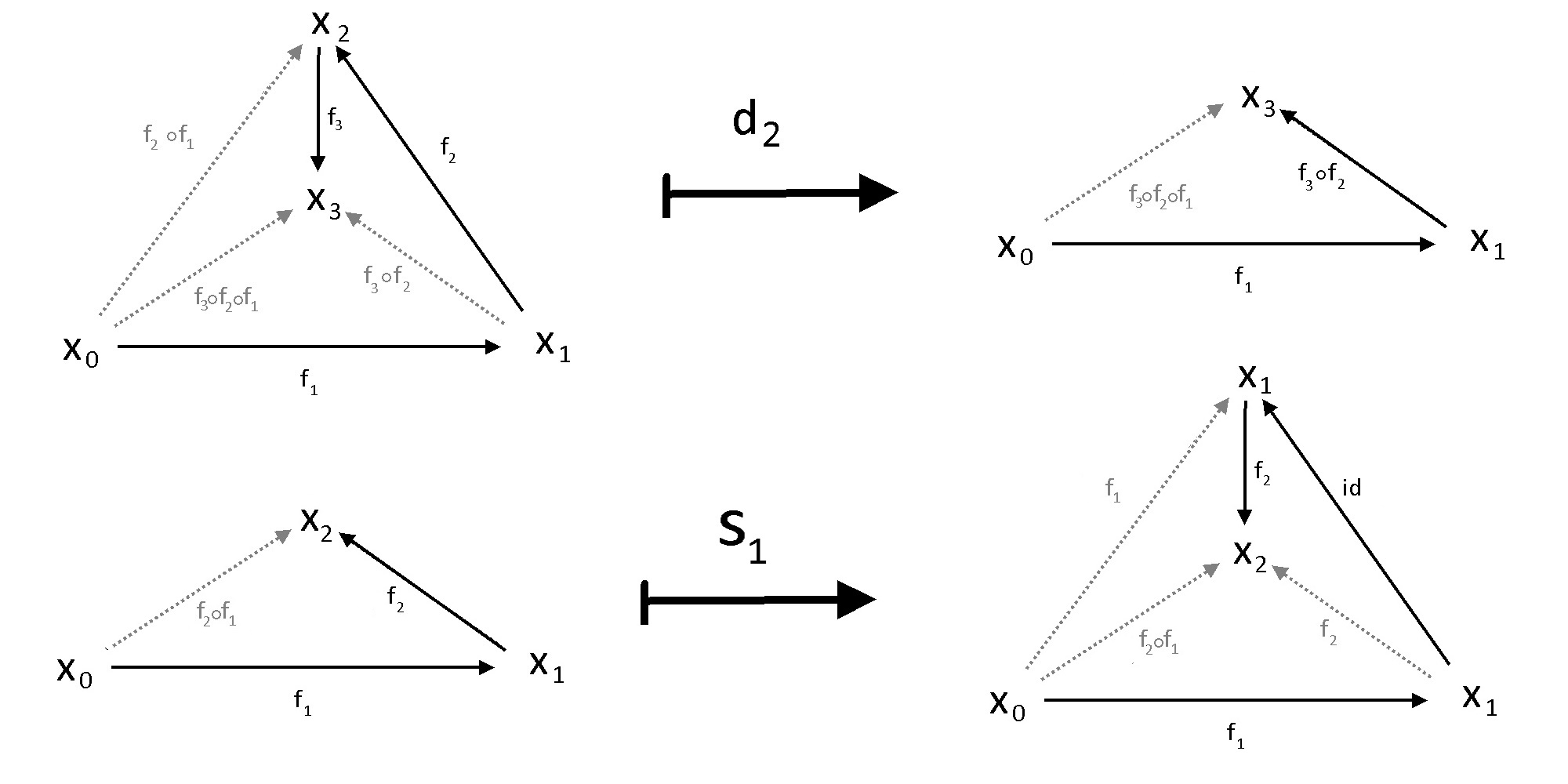}
\end{center}
\end{remark}
The reason why the nerves are particularly interesting is that they contain the whole information of the category. We can thus apprehend most parts of the category theory using specific simplicial sets. We get for example the following characterizations:
\begin{lemma}
Let $\X_{\bullet}$ be a simplicial set. Then, $\X_{\bullet}$ is isomorphic to the nerve of a certain category $\ca$ iff it satisfies the following condition (called the \underline{Segal condition}):
$$\X_{n}\cong \underbrace{\X_1 \pullback \dots \pullback \X_1}_{n\text{-times}}$$
\end{lemma}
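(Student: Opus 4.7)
The plan is to prove the two implications separately. The forward implication $(\Rightarrow)$ essentially unpacks the definition of the nerve: an element of $\n\ca_n$ is a tuple $(f_1,\dots,f_n)$ of morphisms with the target of $f_i$ coinciding with the source of $f_{i+1}$, and since on $\n\ca_1$ the faces $d_0$ and $d_1$ are target and source, this matching is precisely the defining condition of the iterated pullback $\X_1 \pullback \dots \pullback \X_1$. The Segal isomorphism in this direction is thus simply the identity on tuples.

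The reverse implication $(\Leftarrow)$ is the substantive one and requires reconstructing a small category $\ca$ from $\X_\bullet$. I would set $\text{Obj}(\ca) := \X_0$, $\text{Morph}(\ca) := \X_1$, use $d_1$ and $d_0$ for source and target, $s_0$ for identities, and define composition as follows: given a composable pair $(f,g)\in \X_1 \pullback \X_1$, the Segal isomorphism at level $2$ yields a unique $\sigma \in \X_2$ with $d_2\sigma = f$ and $d_0\sigma = g$, and I would set $g\circ f := d_1(\sigma)$. That the assignment $\sigma\mapsto(d_2\sigma, d_0\sigma)$ really lands in the pullback follows from the simplicial identity $d_0 d_2 = d_1 d_0$ of Lemma~\ref{simplicial equations}.

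Verifying the category axioms is then a direct exploitation of further simplicial identities. The unit laws follow by taking $\sigma = s_0(f)$ and $\sigma = s_1(f)$: the identities $d_0 s_0 = d_1 s_0 = \text{id}$ and $d_2 s_0 = s_0 d_1$, together with their $s_1$-analogues, show that these two $2$-simplices realise $f \circ \text{id} = f$ and $\text{id} \circ f = f$ respectively. For associativity, the Segal condition at level $3$ yields, for any composable triple $(f_1,f_2,f_3)$, a unique $\tau\in\X_3$; the faces $d_1\tau$ and $d_2\tau$ are the $2$-simplices witnessing the two bracketings $(f_2\circ f_1,\,f_3)$ and $(f_1,\,f_3\circ f_2)$, and applying the simplicial identity $d_1 d_1 = d_1 d_2$ to $\tau$ forces $f_3\circ(f_2\circ f_1) = (f_3\circ f_2)\circ f_1$.

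It remains to upgrade the levelwise Segal bijections $\X_n \cong \n\ca_n$ into an actual simplicial isomorphism, by checking that every $d_i^{\X}$ and $s_i^{\X}$ corresponds under the identification to the face and degeneracy map of the nerve, i.e. to the deletion of an endpoint arrow, the composition of two consecutive arrows, or the insertion of an identity. I expect this last step to be the main obstacle: the inner face maps $d_i$ for $0<i<n$ require iterating the associativity-style argument inside $\X_n$, locating the relevant $2$-subsimplex via Segal and invoking Lemma~\ref{simplicial equations} once more to recognise that the induced composition is indeed the correct face. Uniqueness in the Segal condition then guarantees that the two constructions are mutually inverse simplicial morphisms.
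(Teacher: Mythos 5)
The paper does not actually prove this lemma --- it simply refers the reader to the cited note of Zou --- so your proposal is not ``the same as'' or ``different from'' the paper's argument; it supplies the standard proof that the paper outsources, and in outline it is correct. The forward direction is indeed just the observation that the spine map $\sigma\mapsto(d_n\cdots,\dots,d_0\cdots)$ is the identity on tuples for a nerve, and your reconstruction in the converse (objects $\X_0$, morphisms $\X_1$, source $d_1$, target $d_0$, identities $s_0$, composition $g\circ f:=d_1\sigma$ for the unique filler $\sigma$ of $(f,g)$) is the standard one; the simplicial identities you invoke ($d_0d_2=d_1d_0$, $d_is_j$ relations, $d_1d_1=d_1d_2$) are exactly the right ones and are consistent with the paper's convention that $d_0$ is the target and $d_1$ the source. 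Two points deserve emphasis if you write this out in full. First, the lemma as stated only asserts an abstract isomorphism $\X_n\cong\X_1\pullback\dots\pullback\X_1$, which is too weak to be true; your proof silently (and correctly) reads the Segal condition as saying that the \emph{canonical} spine map --- the one induced by the face maps --- is a bijection, and you should make that reading explicit, since the uniqueness of the filler $\sigma$ with prescribed $d_2\sigma$ and $d_0\sigma$ is what drives every subsequent step. Second, your associativity argument has a small hidden lemma: to know that $d_1\tau$ has spine $(f_2\circ f_1,\,f_3)$ you must first identify the edge $[0,2]$ of $\tau$ as $d_1(d_3\tau)=f_2\circc f_1$, which requires relating the level-$3$ Segal bijection to the level-$2$ one via the simplicial identities; this is the same bookkeeping you flag in your final paragraph for the inner faces of $\X_n$, and it is genuinely the bulk of the work, but it presents no obstruction.

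\begingroup
\catcode`\c=12
\endgroup
% (Note: "\circc" above is a typo for "\circ"; read $d_1(d_3\tau)=f_2\circ f_1$.)
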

\begin{proof}
See \cite{segal cond}. 
\end{proof}

\begin{lemma}
Let $\X_{\bullet}$ be a simplicial set. Then, $\X_{\bullet}$ is isomorphic to the nerve of a groupoid iff for all $n\geq 2$ and for all $0\leq i\leq n$, the following maps are bijections:
$$\text{\normalfont Hom}_{nt}(\Delta[n], \X_{\bullet})\longrightarrow \text{\normalfont Hom}_{nt}(\Lambda[n, i], \X_{\bullet})$$
\end{lemma}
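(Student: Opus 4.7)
The plan is to prove both directions using the Yoneda identification $\text{Hom}_{nt}(\Delta[n], \X_{\bullet}) \cong \X_n$ from Example \ref{simplicial simplex}. Under this identification, a natural transformation $\Lambda[n,i] \to \X_{\bullet}$ corresponds to a tuple of $n$ simplices in $\X_{n-1}$ (one for each face of $\Delta[n]$ except the $i$-th) satisfying the obvious matching relations coming from the simplicial equations of Lemma \ref{simplicial equations}. So the statement says exactly: every such ``horn'' in $\X_{\bullet}$ admits a unique filler, for every $n \geq 2$ and every $0 \leq i \leq n$.

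For the forward direction, assume $\X_{\bullet} \cong \n\ca_{\bullet}$ for some groupoid $\ca$. An $n$-simplex of $\n\ca_{\bullet}$ is determined by its spine of composable edges $x_0 \to x_1 \to \dots \to x_n$. For an inner horn ($0 < i < n$), all the spine edges are already read off from the faces $d_0, \dots, d_{i-1}, d_{i+1}, \dots, d_n$ in the horn (each $d_j$ with $j \neq i, i-1$ retains the edge $x_{i-1}\to x_i$ or $x_i\to x_{i+1}$ directly, and the remaining ones appear in the faces $d_{i-1}, d_{i+1}$), so the filling exists uniquely using composition in $\ca$. For an outer horn ($i=0$ or $i=n$) the adjacent edge is missing, but it can be recovered as an inverse: for instance for $\Lambda[2,0]$ with given edges $f:x_0 \to x_1$ and $g:x_0 \to x_2$, the unique filler is the triangle with third edge $g\circ f^{-1}$, and this argument extends to higher $n$ because the remaining spine edges are determined by inner faces. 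Uniqueness follows from the fact that $\n\ca_n$ embeds into $\n\ca_1^n$ via the spine.

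For the reverse direction, I would first extract from the inner horn condition the Segal property $\X_n \cong \X_1 \pullback \dots \pullback \X_1$. Indeed, the inclusion of the spine of $\Delta[n]$ factors through a chain of inner horn inclusions $\Lambda[k,j] \hookrightarrow \Delta[k]$ for $k \leq n$ and $0<j<k$, so iterating the hypothesized bijections yields that restriction to the spine is a bijection $\X_n \to \X_1 \pullback \dots \pullback \X_1$. Then I would define a category $\ca$ with $\text{Obj}(\ca) = \X_0$, $\text{Morph}(\ca) = \X_1$, source and target given by $d_1$ and $d_0$, identities by $s_0$, and composition by $d_1$ composed with the inverse of the Segal isomorphism $\X_2 \cong \X_1 \pullback \X_1$. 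The unit and associativity axioms follow from the simplicial equations together with the Segal identification $\X_3 \cong \X_1 \pullback \X_1 \pullback \X_1$; the Segal property moreover yields an isomorphism $\X_{\bullet} \cong \n\ca_{\bullet}$ of simplicial sets. Finally I would use the outer horn condition for $n=2$, $i=0$ to produce inverses: given $f \in \X_1$ with $d_1(f) = a$, the horn in $\Lambda[2,0]$ consisting of $f$ and $s_0(a)$ has a unique filler whose $d_0$-face gives a morphism $g: b \to a$, and the $\Lambda[2,2]$ condition symmetrically provides a left inverse; the two coincide by the usual argument, so $\ca$ is a groupoid.

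The main obstacle is the reverse direction, and specifically the verification that the inner horn condition implies the Segal condition in a form strong enough to reconstruct $\X_{\bullet}$ (not merely its low-dimensional part) as the nerve of $\ca$. This is the step where one has to argue that the iterated horn fillings organize coherently into an isomorphism with $\n\ca_{\bullet}$; it essentially amounts to the statement that a simplicial set satisfying unique inner horn filling is the nerve of a category, which is the classical result referenced in \cite{segal cond}.
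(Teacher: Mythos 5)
The paper does not actually prove this lemma: it simply points to the introduction of \cite{zhu2}, so your sketch is by construction a different route --- you are outlining the classical argument rather than citing it. As a sketch of that argument it is essentially sound. The forward direction (unique horn fillers for nerves of groupoids via the spine, with the outer $2$-horns handled by $g\circ f^{-1}$ and the higher outer horns by the observation that every spine edge lies in at least two faces) is correct, modulo the routine check that the unique simplex with the prescribed spine really restricts to the given horn faces. In the reverse direction you correctly isolate where the real work is: passing from unique inner horn filling to the Segal condition and to a coherent isomorphism $\X_{\bullet}\cong \n\ca_{\bullet}$. Two remarks there. First, this intermediate statement is exactly the paper's preceding lemma (Segal condition $\Leftrightarrow$ nerve of a category, cited to \cite{segal cond}), so you could invoke it directly instead of re-deriving it; what you would then still owe is the implication ``unique inner horn fillers $\Rightarrow$ Segal,'' which rests on the fact that the spine inclusion is a composite of \emph{pushouts} of inner horn inclusions --- your phrase ``factors through a chain of inner horn inclusions'' is slightly imprecise on this point. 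Second, your use of the outer horns $\Lambda[2,0]$ and $\Lambda[2,2]$ to manufacture two-sided inverses, reconciled by the usual uniqueness argument, is the standard and correct way to upgrade the category to a groupoid. So the proposal is a legitimate, more self-contained alternative to the paper's citation, at the cost of leaving the one genuinely technical step (the anodyne/Segal bookkeeping) at the level of a sketch.
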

\begin{proof}
See introduction of \cite{zhu2}.
\end{proof}
\begin{remark}
In the specific case of a Lie group $G$, since all the morphisms are composable with the other ones, it follows that the nerve will have the form $\n G_0=\{\ast\}$, $\n G_n=G^n$. Since $G$ has a manifold structure, it follows that $\n G_{\bullet}$ is a simplicial manifold. (one easily checks that the face and degeneracy maps are smooth: it follows from the smoothness of the composition on a Lie group). 
\end{remark}
\subsection{Realization of a simplicial set}
In the example \ref{ex geo 1} and in the remark \ref{ex geo 2}, we associated several times the structure of the simplicial set to a geometrical structure, made of simplices glued together. In this subsection, we shall first extend our intuitive geometrical comprehension, and then introduce a formal construction corresponding to it, called the realization. Indeed, the intuition of example \ref{ex geo 1} and remark \ref{ex geo 2} can be generalized:
\begin{remark}\label{way of seeing simp set}
A nice way to figure out the simplicial sets is to represent each element of $\X_n$ by a copy of the $n$-simplex. The face map $d_i$ is sending to the copy of the $n-1$-simplex corresponding to the face of the $n$-simplex opposed to the $i-th$ vertex while the degeneracy map $s_i$ is sending to the (degenerate) $n+1$-simplex where we duplicate the $i$-th vertex (giving us a degenerated edge). Thus, degeneracy and face maps imply a numbering of the vertices and one can remark that the order of this numbering is preserved by reducing to a face of the simplex, using the commuting the face maps, as seen in Lemma \ref{simplicial equations}. Often, it might be useful to ignore the degenerate faces, since they can be deduced from the non-degenerate faces and because it simplifies the picture. 
\end{remark}
\begin{example}
Using this remark, some of the constructions presented before make more sense. Indeed, the Yoneda $n$-simplex $\Delta[n]$ presented in Example \ref{simplicial simplex} can now be considered as the usual $n$-simplex. Analogously, the horn $\Lambda[n,p]$, presented in Example \ref{horn}, can be understood as the $n$-simplex without its interior and without the interior of its $p$-th face (face opposed to the $p$-th vertex). 
\end{example}
These considerations should have improved the comprehension of the reader unused to these concepts. Let us now consider a more precise and rigorous construction, following this idea. We need first a clear definition of the $n$-simplex. 

\begin{definition}
We define the \underline{n-simplex} as being the n-dimensional subset of $\R^{n+1}$ given by:
$$\left\{(t_0, \dots, t_n)\middle| \sum_{i=0}^{n}t_i = 1 \;\; \text{and}\;\; t_i\geq 0\; \forall i
\right\}$$
We denote it $\foo{}^n$.
\end{definition}
\begin{center}
    \includegraphics[width=0.4 \textwidth]{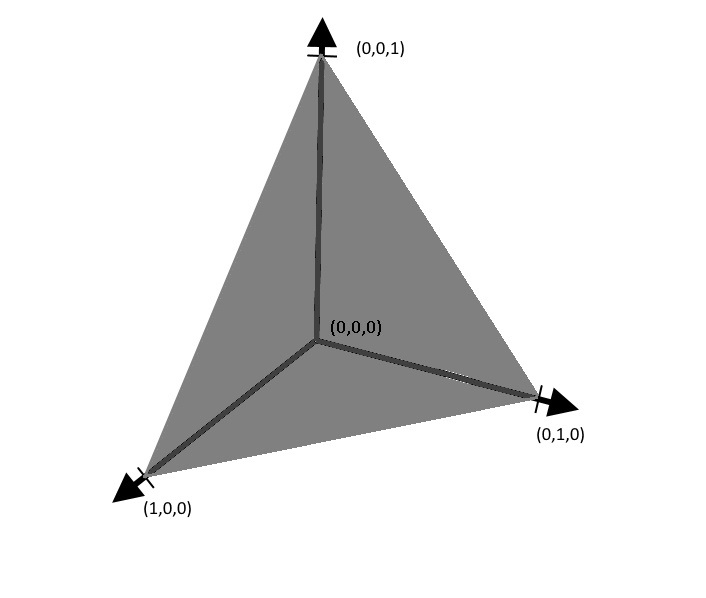}
\end{center}
\begin{remark}
We call the coordinates implied by the $t_i$'s on $\foo{}^n$ the \underline{barycentric coordinates}. Intuitively, $t_i$ is near 1 when the point is near the $i$-th vertex and near 0 when it is near the opposite face. Remark that these coordinates are not linearly independent.
\end{remark} 

\begin{construction}
Consider the sets $\left\{\foo{}^n\right\}_{n\in\N_0}$. Then, there is a collection of descending maps given by $\eta_i$ and of ascending maps given by $\tau_i$, where:
\begin{equation}\label{eq: maps on the simplex}
    \begin{aligned}
    \tau_i: \foo{}^n\longrightarrow \foo{}^{n+1},\; i\in [0, n+1]:&\qquad  (t_0, \dots, t_{i-1}, t_{i},t_{i+1},\dots , t_n) \mapsto(t_0, \dots, t_{i-1},0,  t_i , t_{i+1},\dots , t_n)\\
    \eta_i: \foo{}^n\longrightarrow \foo{}^{n-1} ,\; i\in [0, n-1]:& \qquad  (t_0, \dots, t_{i-1}, t_{i},t_{i+1},\dots , t_n)\mapsto (t_0, \dots, t_{i-1}, t_{i} + t_{i+1},\dots , t_n)
\end{aligned}
\end{equation}

\end{construction}
\begin{remark}
Let the $i$-th vertex of a simplex be $e_{i+1}$, element of the usual basis of $\R^{n+1}$. Intuitively, the map $\tau_i$ is the inclusion on the $i$-th face of a higher simplex, while $\eta_i$ is a projection on the face opposed to the $(i+1)$-th vertex. For example, $\eta_1$ on $\foo{}^2$ is the projection obtained by contracting the edge $[e_2, e_3]$:
\begin{center}
    \includegraphics[width=0.3 \textwidth]{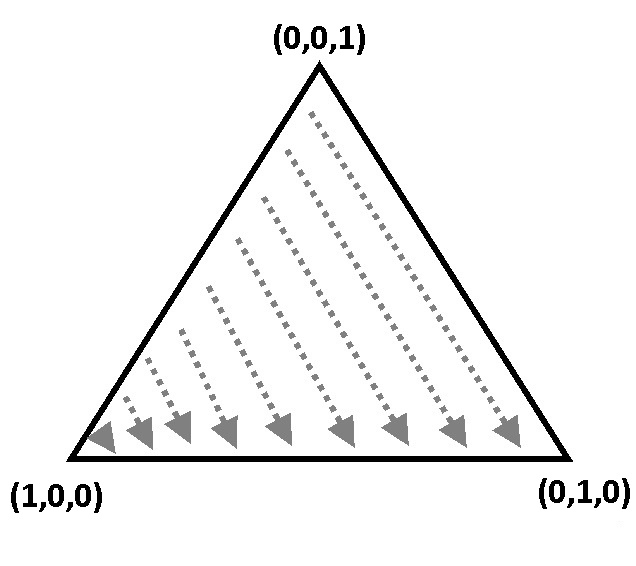}
\end{center}
It can be easily checked that $\left\{\foo{}^n\right\}_{n\in\N_0}$ with the maps (\ref{eq: maps on the simplex}) is a cosimplicial set (and even a cosimplicial manifold). 
\end{remark}
We can now construct the following structure:
\begin{definition}\label{pseudorealization}
Let us consider $\X_{\bullet}$ to be a simplicial set, with face maps $d_i$ and degeneracy maps $s_i$. We obtain the \underline{pseudo-realization}, denoted $\hat{\X}_{\bullet}$, which is defined as:
$$\left\{\hat{\X}_n:= \foo{}^n\times \X_n\right\}_{n\in \N_0} $$

\end{definition}
Remark that the union $\bigsqcup_n \foo{}^n\times \X_n$ does not represent the simplicial sets in the way we wanted it in remark \ref{way of seeing simp set}. Indeed, the $i$-th face of $\foo{}^n$ associated to the element $x\in \X_n$ is different than the simplex $\foo{}^{n-1}$ associated to $d_i(x)\in \X_{n-1}$. Furthermore, the degenerate simplices have nothing to do with the original (non-degenerate) simplex: That's why we need to associate these different elements together by an equivalence relation. This is the purpose of the following definition:
\begin{definition}\label{gluing realization}
Let $\X_{\bullet}$ be a simplicial set. We can then construct the following set, given by:
$$\bigsqcup_n \hat{\X}_n/\sim\; = \bigsqcup_n  \foo{}^n\times \X_n /\sim$$
where the equivalence relation $\sim$ is given by:
\begin{equation}\label{eq: glue the face}
(\tau_i(y), x) \sim (y, d_i(x))
\end{equation}
with $x\in \X_n$ and $y\in \foo{}^{n-1}$ for all $i\in [0, n]$ and all $n>1$, and
\begin{equation}\label{eq: glue the degenerate}
(\eta_i(y), x)\sim (y, s_i(x))
\end{equation}
with $x\in \X_n$ and $y\in \foo{}^{n+1}$ for all $i\in [0, n]$ and all $n>0$.\\
This set is called the \underline{geometric realization} of $\X_{\bullet}$, and is denoted $|\X_{\bullet}|$. 
\end{definition}
\begin{remark}
The geometric realization has a canonical topology in the case where $\X_{\bullet}$ is a simplicial manifold, and thus generate a topological space. 
\end{remark}
\begin{remark}
This construction corresponds to the remark \ref{way of seeing simp set}. Indeed, the condition (\ref{eq: glue the face}) ensures to ``glue" each simplex with the simplices representing its faces, and (\ref{eq: glue the degenerate}) removes the degenerate faces. \\
However some authors prefer to consider the topological construction of \ref{gluing realization} only with the glueing of the faces (\ref{eq: glue the face}), and thus without the one of the degenerates (\ref{eq: glue the degenerate}). This construction is called the \underline{fat realization} of $\X_{\bullet}$ and is denoted $||\X_{\bullet}||$. Most of the constructions we will encounter with the geometric realization can be constructed analogously with the fat realization. 
\end{remark}
\begin{remark}
The pseudo-realization (Definition \ref{pseudorealization}) is a pre-construction of the geometric realization: each object on the geometric realization will be first defined on the pseudo-realization, and we will require that the object respects the glueing properties (\ref{eq: glue the face}) and (\ref{eq: glue the degenerate}).
\end{remark}

\section{De Rahm complex of simplicial manifolds}
The aim of this section is to construct two definitions for the differential forms on simplicial manifolds, extending the one of manifolds, and to introduce a cochain complex of differential forms on them. We shall see that the first definition might be understood as an extension of the \v{C}ech complex, while the second one is more like the usual De Rham complex, defined on the geometric realization. The both are not equivalent to each other but give rise to the same cohomology.  
\subsection{Construction of the cohomology}
In this part, we define the differential forms and their complex for a simplicial manifold. The reason why we define them in this way might not seem obvious to the unfamiliar reader, that is why we gave its relation with the \v{C}ech complex. The relation to the second definition, presented in subsection \ref{other part}, will also make this one more intuitive. 

\begin{definition}
We call a \underline{cosimplicial cochain complex} a cosimplicial object in the category of cochain complex (of vector spaces over $\R$ or $\C$). It is denoted by $V^{\bullet, \ast}$, where $V^{n, \ast}$ is the cochain complex being the image of $[n]\in \Delta$. 
\end{definition}
\begin{remark}
The De Rham functor is a contravariant functor $\Omega^{\ast}: \text{Mnf}^{opp}\longrightarrow \text{CochCompl}$. Thus, by composing it with a simplicial manifold $\m_{\bullet}$ (seen as a contravariant functor), we obtain a covariant functor from $\Delta$ to $\text{CochComp}$ (and thus a cosimplicial cochain complex). In this case, $V^{n, k}= \Omega^k(\m_n)$. We denote by ${s}^*_i: V^{n+1, \ast}\longrightarrow V^{n, \ast}$ and ${d}^*_i: V^{n, \ast}\longrightarrow V^{n+1, \ast}$ the cosimplicial structure maps, given by the pullbacks of the degeneracy and face maps of $\m_{\bullet}$. 
\end{remark}
By this consideration, we obtain canonically a grid $V^{\bullet, \ast}$. But in order to obtain a double chain complex, we will need to define a horizontal differential. We will also reduce subsets of $V^{\bullet, \ast}$, in order to allow us to ignore the degenerate faces. 
\begin{definition}\label{definition of normalized complex}
Let $V^{\bullet, *}$ be a cosimplicial cochain complex with $d_j^*$ the coface maps and $s_j^*$ the codegeneracy maps. We define the \underline{normalized complex} of $V^{\bullet, \ast}$ as the double complex $C^{\ast, \ast}(V)$, described in the following way:
$$C^{p,q}(V)= 
\begin{dcases}
V^{0,q} \qquad \qquad\qquad\qquad\;\;\,\qquad \quad\qquad \text{if}\;\; p=0\\
\bigcap_{i=0}^{p-1}\text{Ker}\left({s}^*_i: V^{p, q}\longrightarrow V^{p-1, q}\right)\; \; \qquad\text{if} \;\; p>0
\end{dcases}$$
where the vertical differential is $d: C^{p, q}(V)\longrightarrow C^{p, q+1}(V)$, the differential of the cochain complex $V^{p, *}$ (not to be confused with the face map $d_i$). The horizontal differential is given by $\delta: C^{p, q}(V)\longrightarrow C^{p+1, q}(V)$, defined as:
$$\delta =\sum^{p+1}_{j=0}(-1)^{j}({d_j}^*)_{|C^{p,q}(V)}$$
An element of $V^{\bullet, \ast}$ is said to be \underline{normalized} if it belongs to $C^{\ast, \ast}(V)$.
\end{definition}
\begin{lemma}
This definition is correct, i.e. that it fulfils the properties of a double complex.
\end{lemma}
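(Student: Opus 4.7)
The plan is to verify the three structural requirements of a double complex for the pair $(d,\delta)$ acting on $C^{\ast,\ast}(V)$: that both differentials preserve the bigraded subspace, that each one squares to zero, and that they (anti)commute.

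Two of these checks are essentially free. Since $V^{\bullet,\ast}$ is a cosimplicial \emph{cochain complex}, each coface $d_j^*$ and each codegeneracy $s_i^*$ is a chain map in the $q$-direction. In particular $s_i^* \circ d = d \circ s_i^*$, so if $s_i^*\omega = 0$ then $s_i^* d\omega = 0$, which gives the well-definedness of $d:C^{p,q}\to C^{p,q+1}$; and $d^2 = 0$ is inherited from the cochain complex $V^{p,\ast}$. The same chain-map property applied to every $d_j^*$ gives $d \circ \delta = \delta \circ d$ on $C^{\ast,\ast}$, which is the commutation of the two differentials (up to a sign $(-1)^q$ one can absorb into $\delta$ if one insists on strict anticommutation).

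The main step is that $\delta$ sends $C^{p,q}$ into $C^{p+1,q}$. I would take $\omega\in C^{p,q}$ and $j\in[0,p]$ and expand
\begin{equation*}
s_j^*\,\delta\omega \;=\; \sum_{k=0}^{p+1}(-1)^k\, s_j^* d_k^*\,\omega,
\end{equation*}
then split the sum according to the third cosimplicial relation in Corollary \ref{cosimplicial equation}. For $k<j$ (vacuous when $j=0$) the relation gives $s_j^*d_k^* = d_k^* s_{j-1}^*$, which annihilates $\omega$ because $j-1\in[0,p-1]$. For $k>j+1$ the relation gives $s_j^*d_k^* = d_{k-1}^* s_j^*$, which annihilates $\omega$ when $j\in[0,p-1]$; the borderline case $j=p$ is vacuous, since it would require $k>p+1$, outside the summation range. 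The two middle terms $k=j$ and $k=j+1$ each reduce to $\omega$ with signs $(-1)^j$ and $(-1)^{j+1}$ and therefore cancel. Hence $s_j^*\delta\omega = 0$ for every admissible $j$, so $\delta\omega\in C^{p+1,q}$.

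Finally, $\delta^2=0$ is the classical cosimplicial computation: one expands $\delta^2\omega = \sum_{k,\ell}(-1)^{k+\ell}d_k^* d_\ell^*\omega$ and uses the first cosimplicial relation $d_k^* d_\ell^* = d_\ell^* d_{k-1}^*$ for $\ell<k$ to pair each term with its sign-opposite. The only real obstacle in the lemma is the boundary-index bookkeeping in the $\delta$-invariance of the normalization, in particular ruling out the borderline case $j=p$ as vacuous; once that is observed, the argument is entirely mechanical.
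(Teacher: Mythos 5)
Your proposal is correct and follows essentially the same route as the paper: the well-definedness of $\delta$ via the three-way split of $s_j^*d_k^*$ using the cosimplicial relations, $\delta^2=0$ by the standard pairing/reindexing, and $d\delta=\delta d$ from the cochain-map property of the cofaces. Your explicit handling of the borderline case $j=p$ (and of $d$ preserving the normalized subspace) is a slightly more careful version of what the paper leaves implicit, but it is not a different argument.
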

\begin{proof}
It is clear that $(C^{n , \ast}(V), d)$ is a cochain complex for all $n$, since $d$ is simply the differential of $V^{n, *}$. \\
To check that $(C^{\ast, n}(V), \delta)$ is a chain complex, it is enough to prove that $\delta$ is well defined and that $\delta\circ\delta=0$.\\
Let $x\in \bigcap_{i=0}^{p-1}\text{Ker}\left({s}^*_i: V^{p, n}\longrightarrow V^{p-1, n}\right)$. Then, $\delta(x)=\sum^{p+1}_{j=0}(-1)^{j}{d_j}^*(x) $. Recall that the coface and codegeneracy maps are cochain maps, and thus are linear. Then for any $i$:
\begin{align*}
    {s}^*_i\circ \delta(x)&= \sum_{j=0}^{i-1}\left( (-1)^j s_i^{\ast}\circ d_j^{\ast}(x)\right) + (-1)^is_i^{\ast}\circ d_i^{\ast}(x) -(-1)^is_i^{\ast}\circ d_{i+1}^{\ast}(x) + \sum_{j=i+2}^{p+1} (-1)^j s_i^{\ast}\circ d_j^{\ast}(x)\\
    &\underset{(1)}{=} \sum_{j=0}^{i-1} (-1)^j d_j^{\ast}\circ s_{i-1}^{\ast}(x) + (-1)^i(x) -(-1)^i(x) + \sum_{j=i+2}^{p+1} (-1)^j d_{j-1}^{\ast}\circ s_i^{\ast}(x)\\
   &\underset{(2)}{=} 0
\end{align*}
where the equation $(1)$ holds because of the cosimplicial equations (see Corollary \ref{cosimplicial equation}) and the equation $(2)$ holds because $x\in \bigcap_{i=0}^{p-1}\text{Ker}\left({s}^*_i\right)$. So we proved that $\delta(x)\in \bigcap_{i=0}^{p}\text{Ker}\left({s}^*_i\right) $, and thus the well-definiteness of $\delta$ follows. \\
The next thing to show is that $\delta^2=0$. We have:
\begin{align*} 
    \delta\circ \delta (x)&= \sum_{i=0}^{p+2}\sum_{j=0}^{p+1}(-1)^{i+j} d_{i}^*\circ d_{j}^*(x)\\
    &= \sum_{i=0}^{p+2}\sum_{j=0}^{i-1}(-1)^{i+j} d_{i}^*\circ d_{j}^* + \sum_{i=0}^{p+2}\sum_{j=i}^{p+1}(-1)^{i+j} d_{i}^*\circ d_{j}^*(x) \\
     &\underset{(3)}{=} \sum_{i=1}^{p+2}\sum_{j=0}^{i-1}(-1)^{i+j} d_{j}^*\circ d_{i-1}^*(x) + \sum_{i=0}^{p+1}\sum_{j=i}^{p+1}(-1)^{i+j} d_{i}^*\circ d_{j}^*(x)\\
     &\underset{(4)}{=} \sum_{j=0}^{p+1}\sum_{i=0}^{j}(-1)^{i+j-1} d_{i}^*\circ d_{j}^*(x) +\sum_{i=0}^{p+1}\sum_{j=i}^{p+1}(-1)^{i+j} d_{i}^*\circ d_{j}^*(x)\\
    &\underset{(5)}{=} \sum_{i=0}^{p+1}\sum_{j=i}^{p+1}(-1)^{i+j-1} d_{i}^*\circ d_{j}^*(x) + \sum_{i=0}^{p+1}\sum_{j=i}^{p+1}(-1)^{i+j} d_{i}^*\circ d_{j}^*(x)\\
    &=0
\end{align*}
where $(3)$ is true because of the cosimplicial equations (Corollary \ref{cosimplicial equation}), $(4)$ is obtained by changing the index ($i-1 \mapsto j,\; j\mapsto i$), and $(5)$ is obtained because the first sum of each line is equal to the sum $\sum_{0\leq i\leq j\leq p+1}(\dots)$.
Finally, we need to check that $\delta\circ d= d\circ \delta$. It follows easily from the fact that the coface is a cochain map, and thus commutes with the differential:
$$\delta\circ d= \sum^{p+1}_{j=0}(-1)^{j}d_j^{\ast}\circ d= \sum^{p+1}_{j=0}(-1)^{j}d\circ d_j^{\ast}= d\circ \delta $$
\end{proof}
\begin{remark}
Intuitively, we can apply this construction on the cosimplicial cochain complex implied by the De Rham complex on a simplicial manifold. In this case, $d$ is the De Rham exterior derivative, $d_i^*$ and $s_i^*$ are the pullbacks of the face and degeneracy maps. For simplification, we denote the normalized double complex obtained in this way by $C^{*,*}(\m_{\bullet})$ for $\m_{\bullet}$ a simplicial manifold. 
\end{remark}
To understand better the definition \ref{definition of normalized complex}, let us introduce this construction:
\begin{definition}\label{cech groupoid}
Let $\m$ be a manifold and $\{U_i\}$ an open covering of $\m$. We obtain the following category:
$$Obj(\check{C})= \bigsqcup_i U_i\qquad Morph(\check{C})= \bigsqcup_{i, j}U_i\cap U_j$$
where the source and the target of $(x_i, x_j)$ are respectively $x_i$ and $x_j$, and $(x_j, x_k)\circ (x_i, x_j)=(x_i, x_k)$. This is easily checked to be a groupoid, called the \v{C}ech groupoid.\\ Intuitively, the morphisms go from the representation of $x$ on $U_i$ to the one on $U_j$ as illustrated below:
\end{definition}
\begin{center}
    \includegraphics[width=0.3 \textwidth]{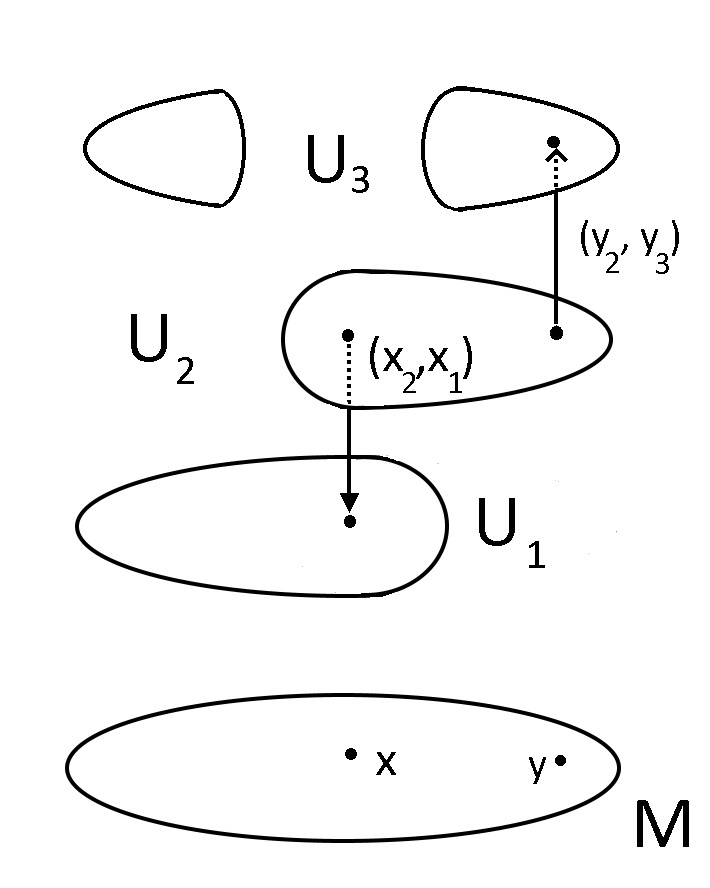}
\end{center}
\begin{construction}\label{cech nerve}
We can now construct the nerve of the \v{C}ech groupoid, which is given by:
$$\n\check{C}_{n}= \bigsqcup_{i_0, \dots, i_n} U_{i_0}\cap \dots \cap U_{i_n}$$
where, for $x\in U_{i_0}\cap \dots \cap U_{i_n}$, the face maps are given by $d_j(x)=x \in U_{i_0}\cap \dots\cap\hat{U}_{i_j}\cap\dots  \cap U_{i_n}$ and $s_j(x)=x\in U_{i_0}\cap \dots U_{i_j}\cap U_{i_j}\dots \cap U_{i_n} $. This nerve is called the \v{C}ech nerve and it can be checked to be a simplicial manifold. 
\end{construction}
Then we come to the following observation:
\begin{remark}\label{extension of cech complex}
The construction of the normalized complex can be understood as a generalization of the \v{C}ech complex. Indeed, let us consider the normalized complex of the \v{C}ech nerve. The condition $\bigcap_{i=0}^{p-1}\text{Ker}\left({s}_i\right)$ can be understood as ignoring the differential forms on $U_{i_1}\cap \dots \cap U_{i_n}$ if there is $i_j=i_k$ for $j\neq k$. We obtain a double chain complex given by: $$V^{p, q}= \Omega^{q}\left(\underset{\text{all different}}{\bigsqcup_{i_0, \dots, i_p}} U_{i_0}\cap \dots \cap U_{i_p} \right)$$
with $$\left(\delta(\phi)\right)_{i_0, \dots, i_p+1}=\sum^{p+1}_{j=0} (-1)^j d_j^{\ast} \phi_{i_0, \dots, \hat{i_j}, \dots, i_p+1} $$
where $\psi_{i_0, \dots, i_p}$ is the restriction of $\psi$ on $ U_{i_0}\cap \dots \cap U_{i_p}$ and $d_j$ is the face map, equal here to the inclusion. So we get the \v{C}ech double chain complex, and the normalized complex can thus be understood as a generalization of it. 
\end{remark}

Following this remark, we can, as for the \v{C}ech double chain complex, calculate the total complex of the normalized chain complex.
\begin{definition}
Given a double complex $C^{\ast, \ast}$, we can construct the \underline{\textit{total complex}} \tot{} defined as:
$$\tot{}^n=\bigoplus_{k+l=n}C^{k, l}$$
The differential, denoted $D$, is defined as $D= \partial_{hor}+(-1)^{\text{horizontal degree}}\partial_{vert}$. This means concretely that for $(\alpha_0, \alpha_1, \dots, \alpha_n)\in C^{0,n}\oplus C^{1, n-1}\oplus \dots \oplus C^{n, 0} =\tot{}^n$, we have:
$$D((\alpha_0,
\dots, \alpha_n))= (\partial_{vert}(\alpha_0), \partial_{hor}(\alpha_0)+(-1)\cdot\partial_{vert}(\alpha_1), \dots, \partial_{hor}(\alpha_{n-1})+(-1)^{n}\partial_{vert}(\alpha_{n}), \partial_{hor}(\alpha_n))$$
\end{definition}
As before, we can apply it to the simplicial manifolds by calculating the total complex of $C^{*,*}(\m_{\bullet})$. We denote it $\tot{}^{n}(\m_{\bullet})$. 
\begin{definition}\label{lalaoui}
The elements of $\tot{}^{n}(\m_{\bullet})$ are called the \underline{differential $n$ forms of $\m_{\bullet}$}. 
\end{definition}

In our specific case, we can then construct the total complex of our normalized double complex constructed from a simplicial manifold $\m_{\bullet}$. In this case, the elements will have the form:
$$(\alpha_0, \dots, \alpha_{p}, \dots, \alpha_n)\in \tot{}^n(\m_{\bullet})= \Omega^n(\m_0)\oplus \dots \oplus \Omega^{n-p}(\m_p)\oplus\dots\oplus \Omega^0(\m_n)$$
such that $s_i^*(\alpha_j)=0\;\; \forall \; i, j$.

This extends the differential forms of simplicial manifolds. Indeed:

\begin{remark}\label{simplicial form on usual manifolds}
In the case of the constant simplicial manifold $\underline{\m}_{\bullet}$, $\tot{}^{\ast}(\underline{\m}_{\bullet})= \Omega^{\ast}(\m)$. This follows easily from the fact that for all $n\geq1$, $\underline{\m}_n=\m$ and for any $\phi$ differential form on $\m$, $\phi= \text{id}^{\ast}(\phi)$, with $s_i:=\text{id}$. Thus, $C^{p, q}(\underline{\m}_{\bullet})=0$ as soon as $p\neq 0$, and $\tot{}^q(\underline{\m}_{\bullet})= C^{0, q}(\underline{\m}_{\bullet})= \Omega^q(\m)$. This point allows us to see in which sense the cohomology $\tot{}$ extends the usual de Rham complex. 
\end{remark}
\begin{definition}\label{pullback on total spac}
    For a morphism of simplicial manifolds, $f_{\bullet}:=\{f_n\}: \m_{\bullet}\longrightarrow \m_{\bullet}'$, and for $\phi\in \tot{}^k(\m_{\bullet}')$, we define the pullback as:
    $$f_{\bullet}^*\phi= (f_0^*\phi_0, \dots, f_{i}^*\phi_i, \dots, f_k^*\phi_k)\in \tot{}^k(\m_{\bullet})= \Omega^k(\m_0)\oplus \dots \oplus \Omega^{k-i}(\m_i)\oplus\dots\oplus \Omega^0(\m_k)$$
\end{definition}
Let us finally introduce a last structure:
\begin{definition}
Let $C$ be a category and $\X$ be an object of $C$. We call \underline{\textit{filtration}} of $\X$ any sequence of monomorphisms :
$$\dots \longrightarrow \X^{n+1}\longrightarrow \X^{n}\longrightarrow \dots \longrightarrow \X^1\longrightarrow \X$$
The object $\X$ is called the \underline{filtered object}. 
\end{definition}
\begin{definition}
Let $\m$ be a manifold and $\Omega^{\ast}(\m)$ be its De Rham cochain complex. Then, there exists a canonical filtration of $\Omega^{\ast}(\m)$ in the category of cochain complex sometimes called the ``\underline{filtration bête}" (french for ``silly filtration"), defined as:
$$\dots \longrightarrow F^i\Omega^*(\m) \longrightarrow F^{i-1}\Omega^*(\m)\longrightarrow\dots\longrightarrow F^1\Omega^*(\m) \longrightarrow \Omega^*(\m)$$
$$F^i\Omega^k(\m)=
\begin{dcases}
0, \qquad \qquad & k<i\\
\Omega^k(\m) , & k\geq i
\end{dcases}$$
and the monomorphisms of the filtration are simply given by the canonical inclusions, either\\ $\text{id}: \Omega^k(\m)\longrightarrow \Omega^k(\m)$ or $0\hookrightarrow \Omega^k(\m)$. 
\end{definition}
Clearly, $F^i\Omega^{\ast}(\m)$ is again a cochain complex, and the monomorphisms of the filtration are cochain morphisms. 
We can apply this ``filtration bête" on the total complex $\tot{}^{\ast}$ constructed over a simplicial manifold $\m_{\bullet}$. We obtain then:
$$F^i\tot{}^n(\m_{\bullet})=\bigoplus_{k+l=n}F^iC^{l,k}(\m_{\bullet}) $$
and where an element of $F^i\tot{}^n(\m_{\bullet})$ has the form:
$$(\alpha_0, \dots, \alpha_{p}, \dots, \alpha_{n-i})\in \tot{}^n(\m_{\bullet})= \Omega^n(\m_0)\oplus \dots \oplus \Omega^{n-p}(\m_p)\oplus\dots\oplus \Omega^i(\m_{n-i})$$
such that $s_i^*(\alpha_j)=0\;\; \forall \; i, j$.

\subsection{Relation with the realization}\label{other part}
The construction made in the preceding part may be unintuitive, despite its relation to the \v{C}ech complex. We want to see now a second way for defining the differential forms on simplicial manifolds, and although the two definitions are not equivalent, we want to show that they are strongly related.  \\
Before we start, we need some calculating considerations, so let us start with the following definition/recall:
\begin{definition}[Integration along the fiber]\label{Integration along the fibers}
Let $\pi:\p\longrightarrow\m$ be a fibre bundle with fibre $F$ of dimension $l$ being compact and orientable. Let $\phi$ be a differential $k$-form on $\p$, with $k\geq l$. We define:
\begin{itemize}
    \item For given $u_1, \dots, u_{k-l}\in T_p\m$, $p\in\m$, the differential $l$-form $\Bar{\phi}_{u_1, \dots, u_{k-l}}$ on $F$, given by:
    $$\Bar{\phi}_{u_1, \dots, u_{k-l}}(v_1, \dots, v_l)= \phi(v_1, \dots, v_l, \Tilde{u}_1, \dots, \Tilde{u}_{k-l})$$
    with $\Tilde{u}_i\in (\pi_*)^{-1}(u_i)$ any lift of $u_i$. If ${u_1, \dots, u_{k-l}}$ are clear with the context, or if they are let as variables, we simply write $\bar{\phi}$. 
    \item We define the \underline{integration along the fibers} for $\phi$ as the $k-l$ differential form on $\m$ defined as:
    $$\left(\int_F\phi\right)_p(u_1, \dots, u_{k-l})= \int_{\pi^{-1}(p)}\Bar{\phi}_{u_1, \dots, u_{k-l}}, \qquad \text{with $\pi^{-1}(p)\cong F$}$$
\end{itemize}
\end{definition}
\begin{remark}
    In the case where the integral is not well-defined, that is that $l>k$, we set the result of the integral to be $0$.
\end{remark}
The following lemma is necessary to understand why the definition \ref{Integration along the fibers} is well-defined:
\begin{lemma}\label{linear dependence}
The definition of $\bar{\phi}$ does not depend on the choice of the lifts $\Tilde{u}_i\in \pi^{-1}(u_i)$. 
\end{lemma}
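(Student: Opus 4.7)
The plan is to reduce the statement to a dimension-count observation on the alternating multilinear form $\phi$ and the vertical tangent space. Concretely, given a base point $p\in\m$ and a fiber point $q\in\pi^{-1}(p)$, the vertical subspace at $q$ is $T_q\pi^{-1}(p)=\ker(\pi_{*,q})$, which has dimension $l$ equal to the fiber dimension. The vectors $v_1,\dots,v_l$ on which we evaluate $\bar\phi$ all live in this $l$-dimensional vertical subspace.

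First, I would note that any two lifts $\tilde u_i,\tilde u_i'\in (\pi_*)^{-1}(u_i)$ at the same point $q$ satisfy $\pi_{*,q}(\tilde u_i-\tilde u_i')=u_i-u_i=0$, hence $w_i:=\tilde u_i-\tilde u_i'\in T_q\pi^{-1}(p)$ is vertical. Then, by a standard telescoping argument, it suffices to prove independence when a single lift is changed, since we can replace $\tilde u_i$ by $\tilde u_i'$ one index at a time, each step being controlled by the same type of estimate.

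Next, by the multilinearity of $\phi$, the difference resulting from one such replacement, say at the first slot, reads
\begin{equation*}
\phi(v_1,\dots,v_l,\tilde u_1,\tilde u_2,\dots,\tilde u_{k-l})-\phi(v_1,\dots,v_l,\tilde u_1',\tilde u_2,\dots,\tilde u_{k-l})=\phi(v_1,\dots,v_l,w_1,\tilde u_2,\dots,\tilde u_{k-l}).
\end{equation*}
The key observation is then that the $l+1$ vectors $v_1,\dots,v_l,w_1$ all lie in the $l$-dimensional subspace $T_q\pi^{-1}(p)$, so they must be linearly dependent. Since $\phi$ is an alternating $k$-form, any evaluation on a linearly dependent tuple of arguments vanishes; hence the right-hand side above is zero.

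Iterating this replacement over all indices $i\in\{1,\dots,k-l\}$ yields equality of $\phi(v_1,\dots,v_l,\tilde u_1,\dots,\tilde u_{k-l})$ and $\phi(v_1,\dots,v_l,\tilde u_1',\dots,\tilde u_{k-l}')$, which is exactly the claimed independence. There is no real obstacle here: the only subtlety is to recognize that the alternation of $\phi$ together with the dimension equality $\dim T_q\pi^{-1}(p)=l$ is precisely what forces the vertical perturbations to contribute nothing, and to organize the induction on the slot being replaced so that at each step the $v_j$'s and the extra vertical vector are all in the same $l$-dimensional subspace.
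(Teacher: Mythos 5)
Your proof is correct and follows essentially the same route as the paper: both arguments rest on the observation that the difference of two lifts is vertical, so that together with $v_1,\dots,v_l$ one obtains $l+1$ vectors confined to the $l$-dimensional vertical subspace, and alternation of $\phi$ then kills the discrepancy. The only (harmless) organizational difference is that you invoke vanishing of an alternating form on any linearly dependent tuple directly, whereas the paper splits into the cases where the $v_i$ are dependent or form a basis of the fiber tangent space and expands the vertical perturbation in that basis.
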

\begin{proof}
Let $l$ be the dimension of the fibre. Indeed:
\begin{itemize}
    \item If $v_1, \dots, v_l$ are not linearly independent, then w.l.o.g. $\exists\lambda_i$ such that $v_1= \sum_{i=2}^l \lambda_i\cdot v_i$. Thus:
    $$\bar{\phi}(v_1, \dots, v_l)= \sum_{i=2}^l\lambda_i\cdot \underbrace{\bar{\phi}(v_i, v_2, \dots, v_l)}_{=0}=0$$
    \item If $v_1, \dots, v_l$ are linearly independent, any vertical vector can be written as a linear combination of them. Thus for $\Tilde{u}_i^1$ and $\Tilde{u}_i^2$ two lifts of $u_i$, $\exists \lambda_j$ such that $\Tilde{u}_i^2- \Tilde{u}_i^1= \sum_{j=1}^l \lambda_j\cdot v_j$. Thus:
    \begin{align*}
        \phi(v_1, \dots, v_l,& \Tilde{u}_1, \dots, \Tilde{u}_i^1,\dots,  \Tilde{u}_{k-l})\\= &\phi(v_1, \dots, v_l, \Tilde{u}_1, \dots, \Tilde{u}_i^1,\dots,  \Tilde{u}_{k-l})+\sum_{j=1}^l \lambda_j \cdot \underbrace{\phi(v_1, \dots, v_l, \Tilde{u}_1, \dots, -v_j,\dots,  \Tilde{u}_{k-l})}_{=0\; \text{since $v_j$ comes twice}}\\ =& \phi(v_1, \dots, v_l, \Tilde{u}_1, \dots, \Tilde{u}_i^2,\dots,  \Tilde{u}_{k-l})
    \end{align*}
\end{itemize}
\end{proof}
\begin{construction}
Let us consider $\m_{\bullet}$ a simplicial manifold. Then $\foo{}^n\times \m_n$ is a manifold (with boundary) for each $n$. It means thus that each element of the pseudo-realization of $\m_{\bullet}$ has a canonical manifold structure. Remark but that it does not necessarily descend to a manifold structure on $|\m_{\bullet}|$, the geometric realization of $\m_{\bullet}$.\\
But since $|\m_{\bullet}|$ results from the glueing of differential manifolds, we can intuitively extend the definition of differential forms, in the sense that we define a differential form on each $\foo{}^n\times \m_n$, and we expect it to agree accordingly to the glueing used for $|\m_{\bullet}|$. 
\end{construction}
This intuitive construction can be set more formally under the following definition:
\begin{definition}\label{pseudo-real form}
Let $\m_{\bullet}$ be a simplicial manifold and $\hat{\m}_{\bullet}$ be its pseudo-realization. Then, we define a \underline{differential $p$-form on $|\m_{\bullet}|$} as being a collection of $p$-forms $\phi=\{\phi_i\}_{i\in \N_0}$, with $\phi_i$ differential $p$-form on $\foo{}^i\times \X_i$, and such that the following equations are fulfilled:
\begin{align*}
    (\tau_j\times \text{id})^{\ast}\phi_i&= (\text{id}\times d_j)^{\ast} \phi_{i-1}\qquad \text{on $\foo{}^{i-1}\times \X_i$}\\
    (\eta_j\times \text{id})^{\ast}\phi_{i-1}&= (\text{id}\times s_j)^{\ast} \phi_{i}\qquad\;\;\;\; \text{on $\foo{}^{i}\times \X_{i-1}$}
\end{align*}
for all the $i$ and $j$ for which it is well-defined. We denote $\Omega^k(|\m_{\bullet}|)$ the set of the differential forms on $|\m_{\bullet}|$ of degree $k$.
\end{definition}

Since both the exterior product and the exterior derivative commute with the pullback, we get the following well-defined extended concepts:
\begin{definition}\label{ext prod and deri for pseudo simplicial}
Let $\m_{\bullet}$ be a simplicial manifold and let $\phi=\{\phi_i\}$ and $\psi=\{\psi_i\}$ be differential forms on $|\m_{\bullet}|$. We can then define:
\begin{itemize}
    \item $\phi\wedge\psi:= \{\phi_i\wedge\psi_i\}_{i\in \N_0}$
    \item $d\phi:=\{d\phi_i\}_{i\in \N_0}$
\end{itemize}
The well-definiteness follows from:
$$(\tau_j\times \text{id})^*(\phi_i\wedge \psi_i)= (\tau_j\times \text{id})^* \phi_i\wedge (\tau_j\times \text{id})^*\psi_i= ( \text{id}\times d_j)^*\phi_{i-1}\wedge ( \text{id}\times d_j)^*\psi_{i-1}= ( \text{id}\times d_j)^*(\phi_{i-1}\wedge \psi_{i-1})$$
It is then analogous for $(\eta_j\times \text{id})$. With the same justification, we see that the exterior derivative is also well-defined.
The usual properties of the exterior product and the exterior derivative follow immediately. 
\end{definition}
\begin{lemma}
Let $\m$ be a manifold, and let $\underline{\m}_{\bullet}$ be the corresponding constant simplicial manifold. Then, there is an isomorphism of cochain complexes, giving us:
$$\Omega^k(|\underline{\m}_{\bullet}|)\cong \Omega^k(\m)$$
\end{lemma}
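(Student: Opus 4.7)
The plan is to write down the obvious isomorphism $\Phi: \Omega^k(\m) \to \Omega^k(|\underline{\m}_{\bullet}|)$, namely $\Phi(\omega) = \{\pi_i^{\ast}\omega\}_{i \in \N_0}$ where $\pi_i: \foo{}^i \times \m \to \m$ is the projection onto the second factor, and to show that it is bijective and commutes with $d$. Since for the constant simplicial manifold every face and degeneracy map equals $\text{id}_{\m}$, the compatibility conditions of Definition \ref{pseudo-real form} collapse to
\[
(\tau_j \times \text{id})^{\ast}\phi_i = \phi_{i-1}, \qquad (\eta_j \times \text{id})^{\ast}\phi_{i-1} = \phi_i,
\]
which is what we will exploit.

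First I would verify that $\Phi$ lands in $\Omega^k(|\underline{\m}_{\bullet}|)$. This is just the functoriality of pullback: $\pi_{i-1} \circ (\tau_j \times \text{id}) = \pi_i$ on $\foo{}^{i-1} \times \m$ and $\pi_i \circ (\eta_j \times \text{id}) = \pi_{i-1}$ on $\foo{}^i \times \m$ (both reduce to ``project onto $\m$''), which directly give the two required identities. Then I would define the candidate inverse $\Psi: \Omega^k(|\underline{\m}_{\bullet}|) \to \Omega^k(\m)$ by $\Psi(\{\phi_i\}) = \phi_0$, using $\foo{}^0 \times \m \cong \m$. The composition $\Psi \circ \Phi$ is the identity because $\pi_0$ is (under this identification) the identity map.

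The main point to check is $\Phi \circ \Psi = \text{id}$, i.e.\ that any compatible family is determined by its $0$-component via $\phi_i = \pi_i^{\ast}\phi_0$. I would prove this by induction on $i$. The base case is trivial. For the inductive step, pick any $j \in [0, i-1]$ and use the degeneracy compatibility $\phi_i = (\eta_j \times \text{id})^{\ast}\phi_{i-1}$; by induction $\phi_{i-1} = \pi_{i-1}^{\ast}\phi_0$, and then $(\eta_j \times \text{id})^{\ast}\pi_{i-1}^{\ast}\phi_0 = \pi_i^{\ast}\phi_0$ since $\pi_{i-1} \circ (\eta_j \times \text{id}) = \pi_i$. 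Note the equation holds for every $j$, and I only need one choice to conclude; consistency across different $j$'s is automatic from the fact that the right-hand side depends only on $\phi_0$.

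Finally, the cochain-complex statement follows because, by Definition \ref{ext prod and deri for pseudo simplicial}, the differential on $\Omega^{\ast}(|\underline{\m}_{\bullet}|)$ is componentwise $d$, and the exterior derivative commutes with pullback, so $\Phi(d\omega) = \{\pi_i^{\ast}d\omega\}_i = \{d\pi_i^{\ast}\omega\}_i = d\Phi(\omega)$. There is no real obstacle here: the proof is essentially the observation that the degeneracy compatibility forces each $\phi_i$ to be constant along the simplex direction $\foo{}^i$, after which the face compatibility is automatic.
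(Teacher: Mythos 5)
Your proposal is correct and follows essentially the same route as the paper: both exhibit the mutually inverse maps $\omega\mapsto\{\pi_i^{\ast}\omega\}$ and $\{\phi_i\}\mapsto\phi_0$, and both prove $\phi_i=\pi_i^{\ast}\phi_0$ by induction using the degeneracy compatibility $(\eta_j\times\text{id})^{\ast}\phi_{i-1}=(\text{id}\times s_j)^{\ast}\phi_i=\phi_i$ together with $\pi_{i-1}\circ(\eta_j\times\text{id})=\pi_i$. Your write-up is in fact slightly more complete, since you also verify explicitly that $\Phi$ lands in $\Omega^k(|\underline{\m}_{\bullet}|)$ and that it commutes with $d$.
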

\begin{proof}
To prove it, we have to prove that all the elements of $\Omega^k(|\underline{\m}_{\bullet}|)$ have the form $(\pi^n_{\m})^*\phi$, for $\pi_{\m}^n: \foo{}^n\times \m\longrightarrow \m$ the projection and $\phi\in \Omega^k(\m)$.\\
Take $\{\phi_i\}\in\Omega^k(|\underline{\m}_{\bullet}|) $. Then, since $\underline{\m}_0=\m$, $\phi_0\in \Omega^k(\m)$. Suppose now that for $\underline{\m}_n$, $\phi_n=(\pi^n_{\m})^*\phi_0$, i.e. $\phi_n((v_1^1, v_2^1), \dots, (v_1^k, v_2^k)) = \phi_0(v_2^1, \dots, v_2^k)$ for $(v_1^i, v_2^i)\in T_t\foo{}^n\times T_p\m\cong T_{(t,p)}\foo{}^n\times \m$. Then, we have on $\foo{}^{n+1}\times \m$ the form $(\eta_i\times \text{id})^*(\pi^n_{\m})^*\phi_0$ This form is equal to $(\pi^{n+1}_{\m})^*\phi_0$. Furthermore, by the definition of a differential form on $|\underline{\m}_{\bullet}|$, $(\pi^{n+1}_{\m})^*\phi_0= (\eta_i\times \text{id})^*\phi_n = (\text{id}\times s_i)^*\phi_{n+1}= \phi_{n+1}$ since $s_i=\text{id}$. So we proved inductively that for all $n$, $\phi_n=(\pi^{n}_{\m})^*\phi_0$n (the induction's start is $\phi_0= \text{id}^* \phi_0$). Thus, the differential form on $\Omega^k(|\underline{\m}_{\bullet}|)$ is uniquely defined by the differential form $\phi \in \Omega^k(\m)$. We get then two isomorphisms, inverse to each other:
$$\phi\in \Omega^k(\m)\mapsto \{(\pi^i_{\m})^*\phi\}\in\Omega^k(|\underline{\m}_{\bullet}|)  \qquad \{\phi_i\}\in \Omega^k(|\underline{\m}_{\bullet}|)\mapsto \phi_0\in \Omega^k(\m)$$
\end{proof}

\begin{remark}\label{form for constant simplicial}
We can see that the definition \ref{pseudo-real form} also extends the De Rham complex to the world of simplicial manifolds. It follows then that we have two extensions of the definition of differential forms to the world of simplicial manifolds: the one given in Definition \ref{lalaoui} and the one in Definition \ref{pseudo-real form}.
\end{remark}
\begin{remark}
Since the exterior derivative of forms defined in \ref{pseudo-real form} is given by the exterior derivative of each component, we can conclude that the cohomology classes of differential p-forms on $|\X_{\bullet}|$ are contained in the following set:
$$\left\{\rho_{\bullet}=\{\rho_i\}\in \prod_{i=0}^{\infty}H^p(\foo{}^i\times \m_i)\middle| (\tau_j\times \text{id})^{\ast}\rho_i= (\text{id}\times d_j)^{\ast} \rho_{i-1}, (\eta_j\times \text{id})^{\ast}\rho_{i-1}= (\text{id}\times s_j)^{\ast} \rho_{i}\; \forall i,j\right\}$$
In this notation, the cohomology class of $\phi_{\bullet}= \{\phi_i\}$ is $\rho_{\bullet}=\{\rho_i\}$, with $\rho_i$ the cohomology class of $\phi_i$ in $\m_i$. 
\end{remark}
\begin{definition}\label{pullback on geom reali}
Let $\m_{\bullet}$ and $\m_{\bullet}'$ be two simplicial manifolds and $f:\m_{\bullet}\longrightarrow \m_{\bullet}'$ be a simplicial morphism between them which decomposes in $f_n: \m_n\longrightarrow\m_n'$ at each level. Let furthermore $\phi_{\bullet}=\{\phi_i\}$ be a differential form on $|\m_{\bullet}'|$. Then, we define:
$$f^*\phi_{\bullet}:= \{(\text{id}_{\fooo{}^{i}}\times f_i)^*\phi_i\}$$
\end{definition}
\begin{lemma}
This definition is well-defined.
\end{lemma}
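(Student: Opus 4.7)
The plan is to verify directly the two compatibility conditions of Definition \ref{pseudo-real form} for the collection $\{(\text{id}_{\fooo{}^i} \times f_i)^* \phi_i\}$ with respect to the face maps $d_j^{\m}$ and degeneracy maps $s_j^{\m}$ of $\m_{\bullet}$. Three facts will do all the work: (i) the functoriality of the pullback, $(g \circ h)^* = h^* \circ g^*$; (ii) the hypothesis that $\phi_\bullet$ already satisfies the glueing relations on $|\m'_\bullet|$, i.e. $(\tau_j \times \text{id})^* \phi_i = (\text{id} \times d_j^{\m'})^* \phi_{i-1}$ and the dual statement with $\eta_j, s_j^{\m'}$; and (iii) the defining relations of a simplicial morphism, $f_{i-1} \circ d_j^{\m} = d_j^{\m'} \circ f_i$ and $f_{i+1} \circ s_j^{\m} = s_j^{\m'} \circ f_i$, as recalled in the remark after Lemma \ref{simplicial equations}.

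Concretely, for the face condition I would compute the two forms $(\tau_j \times \text{id}_{\m_i})^*(\text{id}_{\fooo{}^i} \times f_i)^* \phi_i$ and $(\text{id}_{\fooo{}^{i-1}} \times d_j^{\m})^*(\text{id}_{\fooo{}^{i-1}} \times f_{i-1})^* \phi_{i-1}$ by collapsing each composition into a single product map $\fooo{}^{i-1} \times \m_i \to \fooo{}^i \times \m'_i$ and $\fooo{}^{i-1} \times \m_i \to \fooo{}^{i-1} \times \m'_{i-1}$ respectively. The first collapses via (i) to $(\tau_j \times f_i)^* \phi_i$, which factors again as $(\text{id} \times f_i)^* (\tau_j \times \text{id})^* \phi_i$. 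Applying (ii) rewrites this as $(\text{id} \times f_i)^*(\text{id} \times d_j^{\m'})^* \phi_{i-1} = (\text{id} \times (d_j^{\m'} \circ f_i))^* \phi_{i-1}$. The second side, by (i), equals $(\text{id} \times (f_{i-1} \circ d_j^{\m}))^* \phi_{i-1}$, and the two agree by (iii). That is precisely the required identity for $f^*\phi_{\bullet}$.

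The degeneracy condition is proved by exactly the same three-step procedure, with $\eta_j, s_j$ and the second glueing relation of Definition \ref{pseudo-real form} in place of $\tau_j, d_j$ and the first. There is no real obstacle here; the only care needed is scrupulous bookkeeping on which of the two simplicial manifolds each face or degeneracy map lives on, so I would keep the superscripts $\m$ and $\m'$ on every $d_j$ and $s_j$ throughout the computation to avoid conflating them.
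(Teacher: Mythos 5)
Your argument is correct and is essentially the paper's own proof: both reduce the two glueing conditions to the commutation of the product maps $\tau_j\times\text{id}$, $\eta_j\times\text{id}$, $\text{id}\times d_j$, $\text{id}\times s_j$ with $\text{id}\times f_i$, then invoke the glueing relations satisfied by $\phi_{\bullet}$ on $|\m'_{\bullet}|$ and the simplicial-morphism identities $d_j\circ f_i = f_{i-1}\circ d_j$, $s_j\circ f_i = f_{i+1}\circ s_j$. The only cosmetic difference is that you collapse compositions into single product maps such as $\tau_j\times f_i$, while the paper keeps them factored; the bookkeeping of which simplicial manifold each face/degeneracy map lives on is exactly the point you flag, and you handle it correctly.
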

\begin{proof}
This comes from the fact that, since $f$ is a morphism of simplicial manifolds, $d_j\circ f_n= f_{n-1}\circ d_j$ and $s_j\circ f_n= f_{n+1}\circ s_i$. Thus:
\begin{align*}
    (\text{id}\times d_j) \circ (\text{id}\times f_n)=   (\text{id}\times f_{n-1})\circ (\text{id}\times d_j),\quad& (\text{id}\times s_j) \circ (\text{id}\times f_n)=   (\text{id}\times f_{n+1})\circ (\text{id}\times s_j)\\
    (\tau_j\times \text{id})\circ (\text{id}\times f_n)= (\text{id}\times f_n) \circ (\tau_j\times \text{id}),\quad&  (\eta_j\times \text{id})\circ (\text{id}\times f_n)= (\text{id}\times f_n) \circ (\eta_j\times \text{id})
\end{align*}
With the pullbacks, we obtain:
\begin{align*}
    (\tau_j\times \text{id})^*(\text{id}\times f_i)^*\phi_i = (\text{id}\times f_i)^*(\tau_j\times \text{id})^*\phi_i &= (\text{id}\times f_i)^*(\text{id}\times d_j)^*\phi_{i-1} \\&= (\text{id}\times d_j)^*(\text{id}\times f_{i-1})^*\phi_{i-1}\\
    (\eta_j\times \text{id})^*(\text{id}\times f_{i-1})^*\phi_{i-1} = (\text{id}\times f_{i-1})^*(\eta_j\times \text{id})^*\phi_{i-1} &= (\text{id}\times f_{i-1})^*(\text{id}\times s_j)^*\phi_{i} \\&= (\text{id}\times s_j)^*(\text{id}\times f_{i})^*\phi_{i}
\end{align*}
So we conclude that the definition is well-defined.
\end{proof}
\begin{construction}
With definition \ref{pullback on geom reali}, we can intuitively try to bring the differential form $\phi=\{\phi_i\}$ to a form on each $\m_n$ by integrating along $\foo{}^n$. 
\end{construction}
Concretely, we get the following lemma:
\begin{lemma}\label{well defined integral}
Let $\m_{\bullet}$ be a simplicial manifold and $\phi=\{\phi_i\}$ be a differential $p$-form on $|\m_{\bullet}|$. Then:
$$\left\{\int_{\fooo{}^i}\phi_i\right\}\in \tot{}^p(\m_{\bullet})$$
where the integration along $\foo{}^n$ is made using the orientation of the $n$-simplex given by $(t_1, \dots, t_n)$.  Let us denote this map by $S$. 
\end{lemma}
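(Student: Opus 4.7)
The plan is to verify that the collection $\alpha_n := \int_{\fooo{}^n}\phi_n$ sits inside the normalized total complex $\tot{}^p(\m_\bullet)$. First, since $\phi_n$ is a $p$-form on the trivial fibre bundle $\fooo{}^n\times \m_n \longrightarrow \m_n$ with $n$-dimensional compact oriented fibre, Definition \ref{Integration along the fibers} yields $\alpha_n \in \Omega^{p-n}(\m_n)$, with the convention that $\alpha_n = 0$ whenever $n > p$. Hence $(\alpha_0, \dots, \alpha_p)$ already lies in $\Omega^p(\m_0)\oplus \dots \oplus \Omega^0(\m_p)$, which is the correct graded shape for $\tot{}^p(\m_\bullet)$.

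The only substantive check is normalization: for each $n\geq 1$ and each $j \in \{0, \dots, n-1\}$, one must verify that $s_j^*\alpha_n = 0$. I would invoke the Fubini-type naturality of fibre integration applied to the trivial bundle $\fooo{}^n\times\m_n\to\m_n$ pulled back along $s_j:\m_{n-1}\to \m_n$, which reads
$$s_j^*\int_{\fooo{}^n}\phi_n \;=\; \int_{\fooo{}^n}(\mathrm{id}\times s_j)^*\phi_n.$$
Then the degeneracy gluing relation from Definition \ref{pseudo-real form}, namely $(\mathrm{id}\times s_j)^*\phi_n = (\eta_j\times \mathrm{id})^*\phi_{n-1}$, rewrites the right-hand side as $\int_{\fooo{}^n}(\eta_j\times \mathrm{id})^*\phi_{n-1}$.

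A bidegree argument then closes the proof. Decomposing any form on a product into bihomogeneous parts of type $(\text{horizontal, vertical})$, the form $\phi_{n-1}$ on $\fooo{}^{n-1}\times \m_{n-1}$ carries no component of horizontal degree $\geq n$, simply because $\dim \fooo{}^{n-1} = n-1$. The pullback along $\eta_j\times \mathrm{id}$ preserves bidegrees, so $(\eta_j\times \mathrm{id})^*\phi_{n-1}$ still vanishes in horizontal degree $n$. Integration along the $n$-dimensional fibre $\fooo{}^n$ extracts precisely the $(n, p-n)$-part, which is zero, hence $s_j^*\alpha_n = 0$ as required.

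The main obstacle is the final bidegree vanishing step: it relies on the general fact that the pullback of a $k$-form along a smooth map of rank strictly less than $k$ is zero, combined with the observation that the horizontal factor $\eta_j:\fooo{}^n\to\fooo{}^{n-1}$ cannot exceed rank $n-1$. The naturality of fibre integration is standard but deserves a brief justification since it is not cited explicitly in the excerpt; one can read it off directly from the pointwise definition in Lemma \ref{linear dependence}, since the lifts and the oriented fibrewise integral are unchanged under the diffeomorphism of fibres induced by $\mathrm{id}\times s_j$.
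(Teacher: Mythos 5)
Your proposal is correct and follows essentially the same route as the paper's proof: both reduce $s_j^*\int_{\fooo{}^n}\phi_n$ to $\int_{\fooo{}^n}(\eta_j\times\mathrm{id})^*\phi_{n-1}$ via base-change naturality of fibre integration together with the degeneracy gluing relation, and then kill this integral by the dimension count $\dim\fooo{}^{n-1}=n-1<n$. The only difference is presentational: the paper carries out the naturality step by hand (choosing the lifts $0\times u_j$ explicitly) and phrases the vanishing as linear dependence of the $n$ pushed-forward vertical vectors, whereas you package the same facts as bidegree preservation under the product map, which is a legitimate and slightly cleaner way to say the same thing.
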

\begin{proof}
Because of the definition of the integration along the fibre, it is clear that: $$\left\{\int_{\fooo{}^i}\phi_i\right\}\in \Omega^p(\m_0)\otimes \Omega^{p-1}(\m_1)\otimes \dots \otimes \Omega^0(\m_p)$$
Furthermore, we know that $(\eta_j\times \text{id})^{\ast}\phi_{i-1}= (\text{id}\times s_j)^{\ast} \phi_{i}$. For $\phi$ a $p=r+t$ form on $|\m_{\bullet}|$ it implies that:
\begin{align*}
     s_i^{\ast}\left(\int_{\fooo{}^{r}}\phi_r\right) (u_1, \dots, u_t)&= \left(\int_{\fooo{}^{r}}\phi_r\right) ((s_i)_{\ast}u_1, \dots, (s_i)_{\ast}u_t)\\
     &= \int_{\fooo{}^r} \left(\bar{\phi}_r\right)_{(s_i)_{\ast}u_1, \dots, (s_i)_{\ast}u_t}
\end{align*}
where $\bar{\phi}$ has the same meaning as in Definition \ref{Integration along the fibers}. Remark then the following: for given vertical tangent vectors $v_1, \dots, v_r$ at a point on $\foo{}^r\times \m_{t}$, we have:
$$\left(\bar{\phi}_r\right)_{(s_i)_{\ast}u_1, \dots, (s_i)_{\ast}u_t}(v_1, \dots, v_r)= \phi_r (v_1, \dots, v_r, \widetilde{(s_i)_{\ast}u_1}, \dots, \widetilde{(s_i)_{\ast}u_t})$$
Since the bundle is trivial $\foo{}^r\times \m_{t}$, a lift $\widetilde{(s_i)_{\ast}u_j}$ of $(s_i)_{\ast}u_j$ can simply be given by: $$0\times (s_i)_{\ast}u_j= (\text{id}_{\fooo{}^{r}}\times s_i)_{\ast} (0\times u_j) $$
Since $0\times u_j $ is a lift of $u_j$, let us denote it by $\tilde{u_j}$. Furthermore, since $v_i$ is vertical, we have $(\text{id}_{\fooo{}^{r}}\times s_i)_{\ast} v_t= v_t$ and thus: 
\begin{align*}
    \left(\bar{\phi}_r\right)_{(s_i)_{\ast}u_1, \dots, (s_i)_{\ast}u_t}(v_1, \dots, v_r)&= (\text{id}_{\fooo{}^{r}}\times s_i)_{\ast} \phi_r(v_1, \dots, v_r, \tilde{u_1}, \dots, \tilde{u_t})\\
    &= (\eta_i\times \text{id}_{\m_{r-1}})^{\ast}\phi_{r-1} (v_1, \dots, v_r, \tilde{u_1}, \dots, \tilde{u_t})
\end{align*}
Since we set $\tilde{u_j}= 0\times u_j$, then $(\eta_i\times \text{id}_{\m_{r-1}})_{\ast} (0\times u_j) = 0\times u_j $. On the other hand, $(\eta_i\times \text{id}_{\m_{r-1}})_{\ast} v_i$ will be a vertical vector of $\foo{}^{r-1}\times \m_{t}$. But the fiber of $\foo{}^{r-1}\times \m_{t}$ is $\foo{}^{r-1}$, which has dimension $r-1$, and we have $r$ vertical tangent vectors as entries:
$$(\eta_i\times \text{id}_{\m_{r-1}})_{\ast} v_1, \dots, (\eta_i\times \text{id}_{\m_{r-1}})_{\ast} v_r$$
Thus, the entries of $\phi_{r-1}$ are linear dependent, and thus:
$$(\eta_i\times \text{id}_{\m_{r-1}})^{\ast}\phi_{r-1} (v_1, \dots, v_r, \tilde{u_1}, \dots, \tilde{u_t})=0$$
It implies then:
$$s_i^{\ast}\left(\int_{\fooo{}^{r}}\phi_r\right) (u_1, \dots, u_t)= \int_{\fooo{}^{r}}\left(\bar{\phi}_r\right)_{(s_i)_{\ast}u_1, \dots, (s_i)_{\ast}u_t} =0$$
Thus: $$\int_{\fooo{}^i}\phi_i\in \bigcap_i \text{Ker}(s_i)$$
We can thus conclude the proof of the lemma. 
\end{proof}
The map that we defined in lemma \ref{well defined integral} is more than a map between sets: the following lemma shows that it is even a map between chain complexes:
\begin{lemma}\label{chain complex map}
Let $\m_{\bullet}$ be a simplicial manifold, $\phi=\{\phi_i\}$ be a differential $p$-form on $|\m_{\bullet}|$ and $\left\{\int_{\fooo{}^i}\phi_i\right\}\in \tot{}^p(\m_{\bullet})$. Then:
$$D\left(\left\{\int_{\fooo{}^i}\phi_i\right\}\right)= \left\{\int_{\fooo{}^i}d\phi_i\right\}$$
\end{lemma}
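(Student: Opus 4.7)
The plan is to verify the identity componentwise. Writing $\alpha_i := \int_{\fooo{}^i}\phi_i \in \Omega^{p-i}(\m_i)$, I would compare the $k$-th component of $D(\{\alpha_i\})$ with $\int_{\fooo{}^k} d\phi_k$ for each $k \geq 0$. Unwinding the definition of the total differential recalled just before the statement, the $k=0$ component of $D(\{\alpha_i\})$ is $d\alpha_0$, which trivially equals $\int_{\fooo{}^0} d\phi_0$ since $\foo{}^0$ is a point. For $k\geq 1$, the required identity is
\[
\int_{\fooo{}^k} d\phi_k \;=\; \delta\alpha_{k-1} \;+\; (-1)^k\, d\alpha_k.
\]

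The first step is to apply Stokes' theorem for integration along the fiber, to the trivial bundle $\foo{}^k \times \m_k \to \m_k$. With the standard sign convention for fiber integration of a $p$-form over an oriented fiber of dimension $k$ with boundary, this gives
\[
\int_{\fooo{}^k} d\phi_k \;=\; (-1)^k\, d\int_{\fooo{}^k}\phi_k \;+\; \int_{\partial \fooo{}^k} \phi_k,
\]
which immediately accounts for the $(-1)^k d\alpha_k$ term.

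The second step is to identify the boundary contribution with $\delta \alpha_{k-1}$. I would use the standard decomposition $\partial \foo{}^k = \sum_{j=0}^k (-1)^j \tau_j(\foo{}^{k-1})$ (with the induced boundary orientation being $(-1)^j$ times the natural orientation of the $j$-th face $\tau_j(\foo{}^{k-1})$), together with the glueing condition from Definition \ref{pseudo-real form}: $(\tau_j\times\text{id})^*\phi_k = (\text{id}\times d_j)^*\phi_{k-1}$. Then fiber integration commutes with pullback in the base direction (the pullback $\text{id}\times d_j$ modifies only the $\m_{k-1}$ factor, leaving the fiber $\foo{}^{k-1}$ untouched), so
\[
\int_{\partial \fooo{}^k}\phi_k \;=\; \sum_{j=0}^{k}(-1)^j \int_{\fooo{}^{k-1}} (\tau_j\times\text{id})^*\phi_k \;=\; \sum_{j=0}^{k}(-1)^j d_j^*\alpha_{k-1} \;=\; \delta \alpha_{k-1},
\]
completing the identification.

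The main obstacle is a careful bookkeeping of signs: the exact sign in the fiber-Stokes formula depends on the degree of $\phi_k$, on the dimension of $\foo{}^k$, and on the convention placing fiber directions last in the fiber-integral; this has to be reconciled with the alternating boundary orientations of $\partial \foo{}^k$ and with the signs in the definition of $D$. To make this rigorous I would split $\phi_k$ in local coordinates on $\foo{}^k \times \m_k$ into a sum of monomials whose pure $\foo{}^k$-part has top degree $k$ (these are the only ones surviving fiber integration of $\phi_k$) versus degree $k-1$ (surviving after taking $d$), and check the signs directly on each type; the remaining forms contribute trivially on both sides. Once these signs match, the identity holds pointwise on each $\m_k$ and the lemma follows.
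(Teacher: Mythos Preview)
Your proposal is correct and follows essentially the same route as the paper: the paper proves exactly the fiber-Stokes identity $\int_{\fooo{}^n} d\theta = \sum_{i}(-1)^i\int_{\fooo{}^{n-1}}(\tau_i\times\text{id})^*\theta + (-1)^n d\int_{\fooo{}^n}\theta$ by the local-coordinate monomial splitting you outline at the end, invokes Lemma~\ref{integrating on foo} for the boundary decomposition with the $(-1)^j$ signs, and then applies the gluing relation $(\tau_j\times\text{id})^*\phi_n = (\text{id}\times d_j)^*\phi_{n-1}$ just as you do. The only cosmetic difference is that the paper derives the fiber-Stokes formula from scratch rather than quoting it, which is precisely the sign-verification step you flag as the main obstacle.
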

To prove this lemma, we need to prove first another lemma which will allow us to apply Stokes' theorem on $\foo{}^n$:
\begin{lemma}\label{integrating on foo}
Consider the simplex $\foo{}^n$, and $\partial\foo{}^n$ the boundary of the simplex. Furthermore, let $\phi$ be a differential $(n-1)$-form on $\foo{}^n$. Then:
$$\int_{\partial \fooo{}^n}\phi = \sum_{i=0}^n(-1)^{i} \int_{\fooo{}^{n-1}}\tau_i^{\ast}\phi$$
with the orientation of $\foo{}^n$ being given by $(t_1, \dots, t_n)$. 
\end{lemma}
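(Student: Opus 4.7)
The plan is to decompose $\partial \fooo{}^n$ into its $n+1$ faces $F_i := \tau_i(\foo{}^{n-1}) = \{t_i = 0\} \cap \foo{}^n$ and show, with respect to the orientation induced from $\foo{}^n$ by the outward-normal convention, that $\int_{F_i}\phi = (-1)^i\int_{\fooo{}^{n-1}}\tau_i^{\ast}\phi$ on each face; summing over $i$ then yields the claim. Throughout I would work in the free coordinates $(t_1,\dots,t_n)$ on $\foo{}^n$, where $t_0 = 1 - t_1 - \cdots - t_n$ and the given orientation is represented by $dt_1 \wedge \cdots \wedge dt_n$.

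For each face $F_i$ with $i \geq 1$, I would note that the outward normal is $-\partial_{t_i}$, so contracting produces the induced orientation form $(-1)^i\, dt_1 \wedge \cdots \wedge \widehat{dt_i} \wedge \cdots \wedge dt_n$. Since $\tau_i$ sends the free coordinates $(s_1,\dots,s_{n-1})$ of $\foo{}^{n-1}$ to $(t_1,\dots,t_n)$ by $t_j = s_j$ for $j < i$, $t_i = 0$, and $t_j = s_{j-1}$ for $j > i$, this form pulls back to $(-1)^i\, ds_1 \wedge \cdots \wedge ds_{n-1}$, producing exactly the desired sign.

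The main obstacle I anticipate is the face $F_0 = \{t_1 + \cdots + t_n = 1\}$, where the outward normal is not a coordinate vector but instead points in the direction $N_0 = \partial_{t_1} + \cdots + \partial_{t_n}$. Contracting the volume form yields a sum of $n$ terms,
\[
\iota_{N_0}(dt_1 \wedge \cdots \wedge dt_n) = \sum_{j=1}^{n} (-1)^{j-1}\, dt_1 \wedge \cdots \wedge \widehat{dt_j} \wedge \cdots \wedge dt_n.
\]
Since $\tau_0^{\ast} dt_1 = -ds_1 - \cdots - ds_{n-1}$ and $\tau_0^{\ast} dt_{j+1} = ds_j$ for $j \geq 1$, in the pullback of the $j$-th summand only the $-ds_{j-1}$ part of $\tau_0^{\ast} dt_1$ survives the wedge with the remaining $ds_k$'s; after reordering, this contributes exactly $ds_1 \wedge \cdots \wedge ds_{n-1}$, independently of $j$. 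The $n$ summands therefore combine into $n \cdot ds_1 \wedge \cdots \wedge ds_{n-1}$, a positive multiple of the standard orientation form, so the induced orientation on $F_0$ agrees with the standard one with sign $+1 = (-1)^0$.

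Assembling the two cases gives $\int_{\partial\fooo{}^n}\phi = \sum_{i=0}^n(-1)^i\int_{\fooo{}^{n-1}}\tau_i^{\ast}\phi$. The only delicate point is the orientation bookkeeping on $F_0$, where the cancellation of signs among the $n$ pullback summands is what makes them combine into a single copy of the standard orientation on $\foo{}^{n-1}$.
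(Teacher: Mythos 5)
Your proof is correct and follows essentially the same route as the paper's: both decompose $\partial\foo{}^n$ into the faces $\tau_i(\foo{}^{n-1})$, determine the orientation discrepancy $(-1)^i$ face by face, and treat the $0$-th face separately via the relation $t_0=1-\sum_{j\geq 1}t_j$. The only difference is bookkeeping --- you obtain the induced boundary orientation by contracting $dt_1\wedge\cdots\wedge dt_n$ with an outward normal, whereas the paper reads it off from oriented boundary charts and their Jacobian determinants; these are equivalent formulations of the same convention, and your sign computations (including the cancellation that makes the $n$ summands on the face $t_0=0$ each contribute $+\,ds_1\wedge\cdots\wedge ds_{n-1}$) check out.
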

The idea of this lemma is that the inclusion given by $\tau_i$ implies canonically an orientation on each face, which is not necessarily the same as the orientation implied by $(t_1, \dots, t_n)$. For example, on $\foo{}^3$ it will give the following face orientations:
\begin{center}
    \includegraphics[width=0.3 \textwidth]{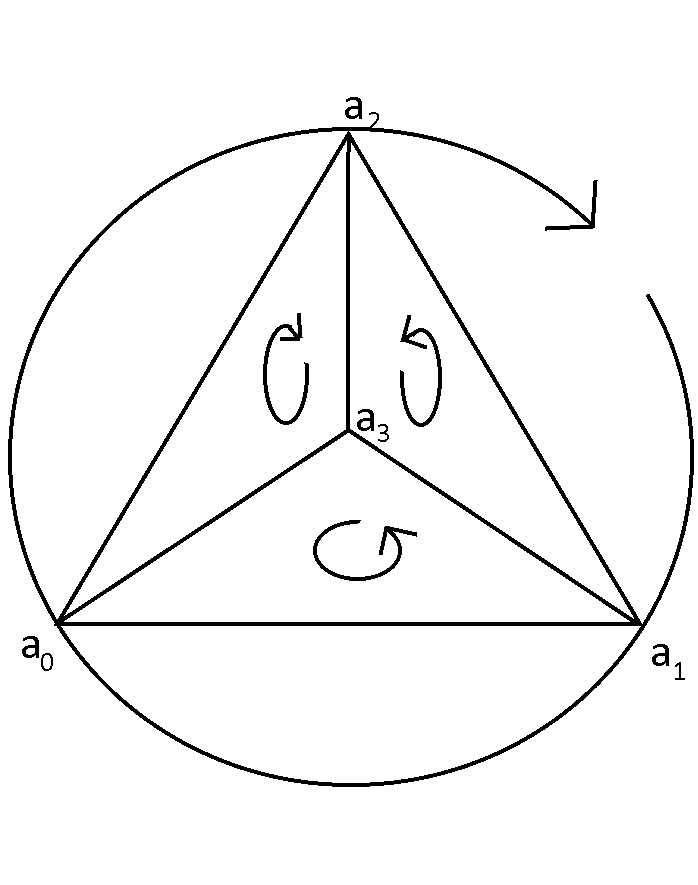}
\end{center}
where the outer circle shows the orientation of the outer face $(a_0, a_1, a_2)$. Let us now come to a precise proof following this intuition:
\begin{proof}
By the properties of the integral, it is clear that:
$$\int_{\partial \fooo{}^n}\phi = \sum_{i=0}^n \int_{\tau_i(\fooo{}^{n-1})}\phi$$
It follows:
$$\int_{\tau_i(\fooo{}^{n-1})}\phi = \pm \int_{\fooo{}^{n-1}} \tau_i^{\ast}\phi $$
since $\tau_i: \foo{}^{n-1}\longrightarrow \tau_i(\foo{}^{n-1})$ is an isomorphism, and the sign depends on the orientation. Indeed, the orientation of $\tau_i(\foo{}^{n-1})\cong \foo{}^{n-1}$ is implied by the orientation on $\foo{}^n$. 
So let us have specific attention to the orientation. In the following, we often denote $\tau_i(\foo{}^{n-1})$ by $\partial^i\foo{}^n$. \\
Let us recall that on the boundary of a manifold $\m$ of dimension $n$, the orientation is given in the following way: let $\gamma: U\subseteq \m \longrightarrow \R^{\geq 0} \times \R^{n-1} $ be an oriented boundary chart, that is an oriented chart such that:
$$\gamma(p)\in  \{0\}\times\R^{n-1}\Leftrightarrow p\in \partial\m \qquad \forall p\in U $$
Then, the orientation of $\partial U\subseteq \partial \m$ is given by the opposite orientation of $\{0\}\times\R^{n-1}\cong \R^{n-1}$. \\
On $\foo{}^n$, the i-th face $\partial^i\foo{}^{n-1}$ is the one delimited by the points $e_0, \dots, \hat{e}_i, \dots, e_n$ with $\{e_j\}$ being the usual basis of $\R^{n+1}$ and where the circumflex means that the element has been omitted. Suppose first that $i\geq 1$. Then, we consider the chart $\mu_i:  \foo{}^n \longrightarrow \R^{\geq 0}\times \R^{n-1}   ,\; (t_1, \dots, t_{n})\mapsto (t_i, t_1, \dots, t_{i-1}, t_{i+1}, \dots, t_{n})$. Clearly, this is a boundary map containing the $i$-th face, since a point is in $\{0\}\times \R^{n-1}$ iff $t_i=0$ (which is equivalent to be in the $i$-th face). The Jacobian matrix of this transformation is given by:

$$
\begin{pmatrix}
0 &\cdots&0&1&0&\cdots&0 \\
1&&&\vdots&&&\\
&\ddots&&\vdots&&& \\
&&1&0&&&\\
&&&0&1&&\\
&&&&&\ddots&\\
&&&&&&1
\end{pmatrix}
$$
We get easily that the determinant of this matrix is $(-1)^{i-1}$. Thus, the chart given by $\hat{\mu}_i: \foo{}^n\longrightarrow \R^{\geq 0}\times\R^{n-1}, \; (t_1, \dots, t_n)\mapsto (t_i, (-1)^{i-1}t_1,\dots, t_{i-1}, t_{i+1}, \dots, t_{n})$ is also a boundary chart, but which is necessarily well-oriented. Since the orientation on $\partial^i\foo{}^n$ is the opposite orientation to the one given on $\{0\}\times \R^{n-1}$, we get that the orientation on the $i$-th face is given by $((-1)^{i}t_1,\dots, t_{i-1}, t_{i+1}, \dots, t_{n})$. The orientation on $\foo{}^{n-1}$ is given by $(t_1, \dots, t_{n-1})$, and thus the orientation on $\partial^i\foo{}^n$ induced by $\tau_i: \foo{}^{n-1}\longrightarrow\partial^i\foo{}^n$ is $( t_1, \dots, t_{i-1}, t_{i+1}, \dots, t_{n})$. Thus, the two orientations differ by a factor $(-1)^i$.\\
In the case where $i=0$, we can consider the coordinates $(t_0, \dots, t_{n-1})$. Then, as before, we consider the chart $\mu_0: \foo{}^n \longrightarrow \R^{\geq 0}\times \R^{n-1}, \; (t_1, \dots, t_n) \mapsto (t_0, t_1, \dots, t_{n-1})$, with $t_0=1-\sum_{i=1}^{n}t_i$. As before, it is a boundary map containing the $0$-th face. If we consider now the Jacobian matrix of this transformation we get:

\begin{equation}\label{eq: matrix}
  \begin{pmatrix}
 -1 & -1 &\dots &-1\\
   1 & 0&&\\
 &\ddots &\ddots&\\
 &&1&0
\end{pmatrix}  
\end{equation}

We check easily that the determinant of this matrix is $(-1)^n$, and thus we can consider the chart $\hat{\mu}_0: \foo{}^n \longrightarrow \R^{\geq 0}\times \R^{n-1}, \; (t_1, \dots, t_n) \mapsto (t_0, (-1)^n t_1, \dots, t_{n-1})$ which is an oriented boundary chart. Thus, the canonical orientation of $\partial^0\foo{}^n$ is the opposite of the one on $\{0\}\times \R^{n-1}$, that is $((-1)^{n-1} t_1, \dots, t_{n-1})$. The orientation $(t_1, \dots, t_{n-1})$ of $\foo{}^{n-1}$ will be sent through $\tau_0$ to the orientation $(t_2, \dots, t_n)$ on $\partial^0\foo{}^n$. The Jacobian matrix of the transformation from $(t_2, \dots, t_n)$ to $((-1)^{n-1} t_1, \dots, t_{n-1})$ is the following, knowing that on $\partial^0\foo{}^n$ we have $t_1=1-\sum_{i=2}^n t_i$:

$$\begin{pmatrix}
 (-1)^{n-1}&(-1)^{n}&\dots &(-1)^{n}&(-1)^{n}\\
 1&0&&&\\
 &\ddots&\ddots&&\\
 &&1&0&\\
 &&&1&0
\end{pmatrix}$$

We can check that the determinant of this matrix is $1$ (since it is the same determinant as the matrix (\ref{eq: matrix}), but the matrix is now of dimension $n-1\times n-1$ with a $(-1)^{n-1}$ factor: it gives the determinant $(-1)^{n-1}\cdot (-1)^{n-1}= (-1)^{2n-2}=1$). Thus on $\partial^0\foo{}^n$, the orientation implied by $\tau_0$ and the one implied by $\foo{}^n$ are the same. 

In a conclusion:
$$\int_{\tau_i(\fooo{}^{n-1})}\phi = (-1)^{i} \int_{\fooo{}^{n-1}} \tau_i^{\ast}\phi$$
And the statement follows from it. 
\end{proof}
Let us come back to the proof of Lemma \ref{chain complex map}:
\begin{proof} \textit{(of Lemma \ref{chain complex map})}
Let $\{(U_i, \phi_i)\}$ be an atlas of $\X_n$ with a corresponding partition of unity $\alpha_i$. It gives rise canonically to a partition of unity on $\foo{}^n\times\X_n $ with $\alpha_i\circ \pi$, where $\pi: \foo{}^n\times\X_n\longrightarrow \X_n$ is the projection. If we consider a certain differential $k$-form $\theta$ on $\foo{}^n\times\X_n $, the restriction of it to $\foo{}^n\times U_i$ can be written as:
$$\sum_{\xi\in \N} a_{\xi} dy_{\xi(1)}\wedge \dots \wedge dy_{\xi(p)}\wedge dx_{\xi(p+1)}\wedge\dots \wedge dx_{\xi(p+q)}\qquad \text{with}\; p+q=k$$
where $\{y_i\}_{i\in [ 1, n]}$ are coordinates in $\foo{}^n$ (with the same orientation as $(t_1, \dots, t_n)$) and $\{x_i\}_{i\in [ 1, m]}$ are coordinates of $\X_n$. Since the sum and the integral such as the sum and the exterior derivative commute, it is enough to check the equality for $a_{\xi} dy_{\xi(1)}\wedge \dots \wedge dy_{\xi(p)}\wedge dx_{\xi(p+1)}\wedge\dots \wedge dx_{\xi(p+q)}\; \text{with}\; p+q=k$. Let us denote it $\theta_{\xi}$.
Then we have:
$$d\theta_{\xi}= \sum_{j=1}^n \frac{\partial a_{\xi}}{\partial y_j} dy_j\wedge d\bar{y}\wedge d\bar{x} + \sum_{j=1}^m \frac{\partial a_{\xi}}{\partial x_j} dx_j\wedge d\bar{y}\wedge d\bar{x}$$
with $d\bar{y}= dy_{\xi(1)}\wedge \dots \wedge dy_{\xi(p)}$ and $d\bar{x}=dx_{\xi(p+1)}\wedge\dots \wedge dx_{\xi(p+q)} $ Thus we have:
\begin{equation}\label{eq: first of proof}
   \int_{\fooo{}^n}d\theta_{\xi}= \int_{\fooo{}^n}\sum_{j=1}^n \frac{\partial a_{\xi}}{\partial y_j} dy_j\wedge d\bar{y}\wedge d\bar{x} + \int_{\fooo{}^n}\sum_{j=1}^m \frac{\partial a_{\xi}}{\partial x_j} dx_j\wedge d\bar{y}\wedge d\bar{x} 
\end{equation}
Let us consider the first part of the right side of (\ref{eq: first of proof}). Then, if we want to know the value of the integral at $w_{1}, \dots, w_{k+1-n}$ (tangent vectors on $\m_n$), it means that we will need to consider and integrate:
$$\frac{\partial a_{\xi}}{\partial y_j} dy_j\wedge d\bar{y}\wedge d\bar{x}(v_1, \dots, v_n, \tilde{w}_1, \dots, \tilde{w}_{k+1-n})$$
where $\tilde{w}_i=(0, w_i)\in T\foo{}^n\times T\m_n$ and where the $v_i$ are vertical vectors (that is in $T\foo{}^n$). Since $dy_j(v)=0$ for any horizontal vector, it follows, using the definition of the wedge product (Definition \ref{wedge products}), that it is equal to:
$$\frac{1}{k!}\sum_{i=1}^n(-1)^{i+1}\frac{\partial a_{\xi}}{\partial y_j} dy_j(v_i)\cdot \left(\underbrace{d\bar{y}\wedge d\bar{x}( v_{1}, \dots, \hat{v}_i,\dots,  v_{n}, \tilde{w}_1, \dots, \tilde{w}_{k+1-n})}_{=\bar{\theta}_{\xi}(v_{1}, \dots, \hat{v}_i,\dots,  v_{n}, \tilde{w}_1, \dots, \tilde{w}_{k+1-n})}\right)$$
This whole is then equal to $d\bar{\theta}_{\xi}$, with $\bar{\theta}_{\xi}$ as defined in Definition \ref{Integration along the fibers}, relatively to ${w}_1, \dots, {w}_{k+1-n}$. Then, because of Lemma \ref{integrating on foo}, we have:

\begin{align*}
    \int_{\fooo{}^n}\sum_{j=1}^n \frac{\partial a_{\xi}}{\partial y_j} dy_j\wedge d\bar{y}\wedge d\bar{x}(w_1, \dots, w_{k+1-n})&= \int_{\foo{}^n} d\bar{\theta}_{\xi} \underset{\text{Stokes}}{=} \int_{\partial\foo{}^n} \bar{\theta}_{\xi}\\
    &= \sum_{i=0}^n(-1)^{i} \int_{\fooo{}^{n-1}}\tau_i^{\ast}\bar{\theta}_{\xi}\\ &= \sum_{i=0}^n(-1)^{i} \int_{\fooo{}^{n-1}}(\tau_i\times \text{id}_{\m_n})^{\ast}\theta_{\xi}(w_1, \dots, w_{k+1-n})
\end{align*}
Let us now consider the second part of (\ref{eq: first of proof}). Remark that the integration will lead us to consider (and integrate):
$$\sum_{j=1}^m \frac{\partial a_{\xi}}{\partial x_j} dx_j\wedge d\bar{y}\wedge d\bar{x}(v_1, \dots, v_n, \tilde{w}_1, \dots, \tilde{w}_{k+1-n})$$
with the $v_i$ being vertical tangent vectors, and the $\tilde{w}_i$ being horizontal. Since $dx_i(v)=0$ for $v$ vertical, it follows that $d\bar{x}(v_1, \dots, v_{q})=0$ if any $v_i$ is vertical. By the same way, $d\bar{y}(v_1, \dots, v_{p})=0$ if any $v_i$ is horizontal. It follows then that, if $p\neq n$ (recall that $p$ is the number of $dy_j$ in $d\bar{y}$, see the first formula at the start of the proof), we have:
$$dx_j\wedge d\bar{y}\wedge d\bar{x}(v_1, \dots, v_n, \tilde{w}_1, \dots, \tilde{w}_{k+1-n})=0$$
In this case, we can also see that:
$$d\bar{y}\wedge d\bar{x}(v_1, \dots, v_{n}, \tilde{w}_1, \dots, \tilde{w}_{k-n})=0$$
and thus: 
$$\int_{\fooo{}^n} \theta_{\xi} = 0 $$
and finally:
$$d\int_{\fooo{}^n} \theta_{\xi} = 0 =  \int_{\fooo{}^n}\sum_{j=1}^m \frac{\partial a_{\xi}}{\partial x_j} dx_j\wedge d\bar{y}\wedge d\bar{x}$$
In the case where $p=n$, then, using the definition of the wedge product:
\begin{align*}
    dx_j\wedge d\bar{y}\wedge d\bar{x}(v_1, \dots, v_n, \tilde{w}_1, \dots, \tilde{w}_{k+1-n})&= (-1)^n d\bar{y}\wedge dx_j\wedge  d\bar{x}(v_1, \dots, v_n, \tilde{w}_1, \dots, \tilde{w}_{k+1-n})\\
    &=(-1)^n d\bar{y}(v_1, \dots, v_n)\cdot dx_j\wedge  d\bar{x}( \tilde{w}_1, \dots, \tilde{w}_{k+1-n})
\end{align*}
with the $v_i$ being vertical tangent vectors, and the others being horizontal. Then:
\begin{equation}\label{eq: second of the proof}
    \int_{\fooo{}^n}\sum_{j=1}^m \frac{\partial a_{\xi}}{\partial x_j} dx_j\wedge d\bar{y}\wedge d\bar{x}= (-1)^n \sum_{j=1}^m dx_j\wedge  d\bar{x} \cdot \int_{\fooo{}^n} \frac{\partial a_{\xi}}{\partial x_j} d\bar{y}
\end{equation}
Finally, we have:
$$\int_{\fooo{}^n} \frac{\partial a_{\xi}}{\partial x_j} d\bar{y} = \int_{I_1}\dots \int_{I_n} \frac{\partial a_{\xi}}{\partial x_j} dy_{\xi(1)}\wedge\dots \wedge dy_{\xi(n)} \underset{(1)}{=} \frac{\partial}{\partial x_j} \int_{I_1}\dots \int_{I_n}  a_{\xi} dy_{\xi(1)}\wedge\dots \wedge dy_{\xi(n)}= \frac{\partial}{\partial x_j} \int_{\fooo{}^n} a_{\xi}d\bar{y}$$
where the $I_i$ are subsets of $\R$, independent on $x_j$ and where the equality $(1)$ is true by the Leibniz integral rule. We get thus: 
\begin{align*}
    (-1)^n \sum_{j=1}^m dx_j\wedge  d\bar{x} \cdot \int_{\fooo{}^n} \frac{\partial a_{\xi}}{\partial x_j} d\bar{y} = (-1)^n \sum_{j=1}^m dx_j\wedge  d\bar{x} \frac{\partial}{\partial x_j} \int_{\fooo{}^n} a_{\xi}d\bar{y} =& d\left( (-1)^n  \left(\int_{\fooo{}^n} a_{\xi}d\bar{y}\right) \cdot d\bar{x}  \right)\\ =&(-1)^n d\left(   \int_{\fooo{}^n} \underbrace{a_{\xi}d\bar{y}\wedge d\bar{x}}_{=\theta_{\xi}}  \right)
\end{align*}
where the last equation has the same justification as for (\ref{eq: second of the proof}). Thus, having considered the case $p=n$ such as $p\neq n$, we have the following equation:
$$(-1)^n d\int_{\fooo{}^n} \theta_{\xi} = \int_{\fooo{}^n}\sum_{j=1}^m \frac{\partial a_{\xi}}{\partial x_j} dx_j\wedge d\bar{y}\wedge d\bar{x}$$
Compiling all the results, we have:
$$\int_{\fooo{}^n}d\theta_{\xi}= \sum_{i=0}^n(-1)^{i} \int_{\fooo{}^{n-1}}(\tau_i\times \text{id}_{\m_n})^{\ast}\theta_{\xi} + (-1)^{n}d \int_{\foo{}^n}\theta_{\xi}$$
This local equation can be extended globally using the partition of unity we introduced at the beginning. We get then:
$$\int_{\fooo{}^n}d\theta= \sum_{i=0}^n(-1)^{i} \int_{\fooo{}^{n-1}}(\tau_i\times \text{id}_{\m_n})^{\ast}\theta + (-1)^{n}d \int_{\foo{}^n}\theta$$
Now, if we consider $\{\phi_n\}$ a differential form on $|\m_{\bullet}|$, we have:
\begin{align*}
    \left\{\int_{\fooo{}^n}d\phi_n\right\}&=\left\{\sum_{i=0}^n(-1)^{i} \int_{\fooo{}^{n-1}}\underbrace{(\tau_i\times \text{id}_{\m_n})^{\ast}\phi_n}_{=(\text{id}_{\fooo{}^{n-1}}\times d_i)^*\phi_{n-1}} + (-1)^{n}d \int_{\fooo{}^n}\phi_n\right\}\\&= \left\{\sum_{i=0}^n(-1)^{i} d_i^* \int_{\fooo{}^{n-1}}\phi_{n-1} + (-1)^{n}d \int_{\fooo{}^n}\phi_n\right\}= D \left\{\int_{\fooo{}^n}\phi_n\right\}
\end{align*}

So the statement follows.
\end{proof}

\begin{theorem}
The map described in Lemma $\ref{chain complex map}$ is a quasi-isomorphism, that is an isomorphism of cohomologies. 
\end{theorem}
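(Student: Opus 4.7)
The plan is to prove that $S$ is a quasi-isomorphism by a spectral sequence comparison, equipping both complexes with compatible filtrations coming from a natural bidegree decomposition on each product $\fooo{}^i \times \m_i$.

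First I would introduce a bidegree on the source. Locally on $\fooo{}^i \times \m_i$, every form splits into contributions of ``simplex-fiber'' degree $a$ (number of $dy_j$'s from $\fooo{}^i$) and ``base'' degree $b$ (number of $dx_k$'s from $\m_i$). Since the gluing maps $\tau_j \times \mathrm{id}$ and $\eta_j \times \mathrm{id}$ act only on the simplex factor, while $\mathrm{id} \times d_j$ and $\mathrm{id} \times s_j$ act only on the manifold factor, this bidegree is preserved by the compatibility conditions of Definition \ref{pseudo-real form}, yielding a bigrading $\Omega^n(|\m_\bullet|) = \bigoplus_{a+b=n} \Omega^{(a,b)}(|\m_\bullet|)$ with $d = d_v + d_h$ of bidegrees $(1,0)$ and $(0,1)$. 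I then filter the source by simplex-fiber degree, $F^p \Omega^n(|\m_\bullet|) = \bigoplus_{a \geq p} \Omega^{(a, n-a)}(|\m_\bullet|)$, and the target analogously by $F^p \tot{}^n(\m_\bullet) = \bigoplus_{k \geq p} C^{k, n-k}(\m_\bullet)$. Because $\int_{\fooo{}^i}$ of a bidegree-$(a,b)$ form on $\fooo{}^i \times \m_i$ vanishes unless $a = i$ (and then lands in $C^{i,b}$), the map $S$ preserves these filtrations and induces a morphism of the associated spectral sequences.

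Next I would compare the $E_1$ pages. On the target, $E_0^{p,q} = C^{p,q}(\m_\bullet)$ with $d_0 = d$, so $E_1^{p,q} = H^q(C^{p,\ast}(\m_\bullet), d)$. On the source, $E_0^{p,q} = \Omega^{(p,q)}(|\m_\bullet|)$ with $d_0 = d_h$. The key claim is that the $d_h$-cohomology of $\Omega^{(p,\ast)}(|\m_\bullet|)$ is again $H^q(C^{p,\ast}(\m_\bullet), d)$. At level $i = p$, bidegree forces $\phi_p$ to be locally $\mu \wedge \pi^\ast \omega$ with $\mu$ a top form on $\fooo{}^p$ and $\omega \in \Omega^q(\m_p)$; since $\phi_{p-1}$ has bidegree $(p,q)$ on an $(p{-}1)$-dimensional simplex factor and hence vanishes, the degeneracy relation $(\eta_j \times \mathrm{id})^\ast\phi_{p-1} = (\mathrm{id} \times s_j)^\ast\phi_p$ forces $s_j^\ast \omega = 0$, which is exactly the normalization condition defining $C^{p,q}(\m_\bullet)$. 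For $i > p$, the Poincar\'e lemma on the contractible simplex $\fooo{}^i$, combined with face compatibility, shows that the data at higher levels is determined up to $d_h$-exact corrections by the data at level $p$. Under this identification, $S$ induces multiplication by $\int_{\fooo{}^p}\mu$, a nonzero constant, which is an isomorphism on $E_1$. The Eilenberg--Moore comparison theorem for convergent filtered spectral sequences (convergence is immediate since each filtration is finite in any given total degree) then upgrades the $E_1$-isomorphism to a quasi-isomorphism on the abutments, proving the theorem.

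The hardest step will be the precise identification of the source $E_1$ page. One must show that the tower of bidegree-$(p,\ast)$ forms across all levels $i \geq p$, subject to face and degeneracy compatibility, collapses under $d_h$-cohomology exactly to the normalized De Rham complex of $\m_p$, with no redundant structure. This requires an inductive argument combining Poincar\'e-lemma primitives on each $\fooo{}^i$ with the relations imposed by $\tau_j$ and $\eta_j$, organized so that exactly the normalization condition $s_j^\ast \omega = 0$ appears on the source $E_1$. Arranging all the cancellations in a way that matches the target $E_1$ on the nose, rather than only up to an auxiliary quasi-isomorphism, is where the bookkeeping is most delicate.
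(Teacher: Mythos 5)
Your overall architecture (the bigrading by simplex-fiber degree and manifold degree, the fact that the gluing maps preserve it, that $S$ is filtration-preserving, and that convergence is automatic since the filtrations are finite in each total degree) is sound, and the spectral-sequence route is genuinely different from the paper's, which simply invokes the explicit Whitney quasi-inverse and chain homotopy of \cite{artdupont}. However, the key step — the identification of the source $E_1$ page — fails for the filtration you chose. Filtering by the simplex-fiber degree $a$ makes the $E_0$ differential equal to $d_h$, the de Rham differential along the $\m_i$ factors only; the Poincar\'e lemma on the contractible simplex $\fooo{}^i$ is a statement about the \emph{other} differential $d_v$ and gives no control over $d_h$-cohomology. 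The data at levels $i>p$ is \emph{not} determined up to $d_h$-exact corrections by the data at level $p$, and the claimed isomorphism $H^q(\Omega^{(p,\ast)}(|\m_\bullet|),d_h)\cong H^q(C^{p,\ast}(\m_\bullet),d)$ is false. Concretely, take $\m_\bullet=BU(1)_\bullet$, $p=0$, $q=1$, and the compatible family $\phi_n=\sum_{0\le j<k\le n}t_j t_k\,\alpha_{jk}$, where $\alpha_{jk}$ is the pullback of the closed, non-exact form $d\theta$ on $U(1)$ along the edge map $(g_1,\dots,g_n)\mapsto g_k\cdots g_{j+1}$. One checks this lies in $\Omega^{(0,1)}(|\m_\bullet|)$, is $d_h$-closed, and is not $d_h$-exact (a primitive in $\Omega^{(0,0)}$ would make $d\theta$ exact on $U(1)$ at interior values of $t$), yet $S$ sends it to $0\in\Omega^1(\m_0)=\Omega^1(\mathrm{pt})$. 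So the source $E_1^{0,1}$ is nonzero while the target $E_1^{0,1}=H^1(\mathrm{pt})=0$. (The class is killed on a later page, since $d_v\phi\neq0$ contributes to $d_1$; the theorem survives, but your $E_1$ comparison does not.)

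The repair inside your framework is to filter by the \emph{manifold} form degree $b$ instead. Then the $E_0$ differential on the source is $d_v$, the de Rham differential in the barycentric coordinates, and the $E_1$ comparison becomes: for each fixed $q$, integration over the simplices induces an isomorphism $H^\ast\bigl(\Omega^{(\ast,q)}(|\m_\bullet|),d_v\bigr)\cong H^\ast\bigl(C^{\ast,q}(\m_\bullet),\delta\bigr)$. This is the classical simplicial de Rham comparison with coefficients in the cosimplicial vector space $[i]\mapsto\Omega^q(\m_i)$, and here the Poincar\'e lemma on $\fooo{}^i$ really is the right tool: Whitney's elementary forms give an explicit section of the integration map, and an explicit contracting homotopy acting only in the $t$-variables (hence commuting with $d_h$) proves the isomorphism. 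Note that this homotopy is precisely the ingredient that \cite{artdupont} constructs and that the paper cites directly; the corrected spectral-sequence argument cleanly separates the simplex directions from the manifold directions, but it does not let you avoid building that homotopy.
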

\begin{proof}
The proof can be made using chain homotopy: see \cite{artdupont} for the definition of the homotopy and the inverse maps. 
\end{proof}

To end this section, we want to make a short remark about the pullbacks:
\begin{lemma}\label{switch the pullback}
    Let $f_{\bullet}: \m_{\bullet}\longrightarrow\m_{\bullet}'$ be a morphism of simplicial manifolds, and $\phi=\{\phi_i\}$ a differential form on $|\m_{\bullet}'|$. Then, we have:
    $$\left\{\int_{\fooo{}^i} (\text{id}_{\fooo{}^i}\times f_i)^*\phi_i\right\}= f^*_{\bullet}\left\{\int_{\fooo{}^i}\phi_i\right\}$$
    So, the definitions \ref{pullback on geom reali} for differential forms on $|\m_{\bullet}'|$ and \ref{pullback on total spac} for $\tot{}^*(\m_{\bullet}')$ commute with the integration and agree in the common cohomology. 
\end{lemma}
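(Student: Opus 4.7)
The plan is to reduce the identity to a level-wise statement and then exploit the triviality of the bundle $\fooo{}^i\times \m_i \to \m_i$ so that the pullback by $\text{id}_{\fooo{}^i}\times f_i$ commutes with integration along the fiber. By Definition \ref{pullback on total spac}, the right-hand side expands component-wise to $\{f_i^*\int_{\fooo{}^i}\phi_i\}$, so it suffices to prove, for each fixed $i$ and each $k$-form $\phi_i$ on $\fooo{}^i\times\m_i'$, the equality
$$\int_{\fooo{}^i}(\text{id}_{\fooo{}^i}\times f_i)^*\phi_i \;=\; f_i^*\int_{\fooo{}^i}\phi_i$$
of $(k-i)$-forms on $\m_i$.

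I would prove this pointwise. Fix $q\in \m_i$ and tangent vectors $u_1,\ldots,u_{k-i}\in T_q\m_i$. Evaluated on these, the right-hand side is the integral over $\fooo{}^i\times\{f_i(q)\}$ of $\bar{\phi}_{i,(f_i)_*u_1,\ldots,(f_i)_*u_{k-i}}$, where the bar notation is that of Definition \ref{Integration along the fibers} and the canonical lifts of the $(f_i)_*u_j$ on the trivial bundle $\fooo{}^i\times\m_i'$ can be taken as $(0,(f_i)_*u_j)$. For the left-hand side, I choose the canonical lifts $\tilde{u}_j=(0,u_j)$ in $\fooo{}^i\times\m_i$; then $(\text{id}_{\fooo{}^i}\times f_i)_*(0,u_j)=(0,(f_i)_*u_j)$, and any vertical tangent vector $v$ at a point of $\fooo{}^i\times\m_i$ is carried by $(\text{id}_{\fooo{}^i}\times f_i)_*$ to the same vertical vector viewed inside $\fooo{}^i\times\m_i'$. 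Consequently the integrand $\overline{(\text{id}\times f_i)^*\phi_i}_{u_1,\ldots,u_{k-i}}$ on $\fooo{}^i\times\{q\}$ coincides, under the identification of fibers given by $\text{id}_{\fooo{}^i}$, with $\bar{\phi}_{i,(f_i)_*u_1,\ldots,(f_i)_*u_{k-i}}$ on $\fooo{}^i\times\{f_i(q)\}$.

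Since the restriction of $\text{id}_{\fooo{}^i}\times f_i$ to a fiber is an orientation-preserving diffeomorphism (in fact the identity on $\fooo{}^i$), a change of variables identifies the two fiber integrals, yielding the claimed equality. The only mild subtlety is that Definition \ref{Integration along the fibers} permits arbitrary lifts of the $u_j$ and $(f_i)_*u_j$, not just the canonical horizontal ones I chose; this is handled by Lemma \ref{linear dependence}, which guarantees that $\bar{\phi}$ is independent of the choice of lifts, so any consistent choice on either side is legitimate. Finally, the closing claim of the lemma, that the two pullback operations agree in the common cohomology, is immediate from this equality together with Lemma \ref{chain complex map}, which ensures that the map $S$ of Lemma \ref{well defined integral} descends to a morphism on cohomology groups.
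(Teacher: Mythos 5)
Your proof is correct and is essentially the paper's own argument: the paper's one-line computation is exactly your observation that on the trivial bundle $\fooo{}^i\times\m_i$ one may take horizontal lifts $(0,u_j)$, which $(\text{id}_{\fooo{}^i}\times f_i)_*$ sends to horizontal lifts of $(f_i)_*u_j$ while fixing vertical vectors, so the fiber integrands coincide under the identity of $\fooo{}^i$. You merely spell out the details (independence of lifts via Lemma \ref{linear dependence} and the orientation of the fiber identification) that the paper leaves implicit.
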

\begin{proof}
    It follows easily from:
    \begin{align*}
        \left(\int_{\fooo{}^i}(\text{id}_{\fooo{}^i\times f_i})^*\phi_i\right)(v_1, \dots, v_{k-i})&= \int_{\fooo{}^i}\phi_i(\square, \dots, \square,f_i^*v_1, \dots, f_i^*v_{k-i} )\\ &=  \left(\int_{\fooo{}^i}\phi_i\right)(f_i^*v_1, \dots, f_i^*v_{k-i})
    \end{align*}
\end{proof}

\section{Chern-Weil theory and Classifying space}
In this section, we want to introduce the Chern-Weil theory, by describing the construction of the Chern-Weil homomorphism and explaining the classification of the principal bundles which is implied by it. We want furthermore to extend it to the simplicial principal bundles, to find a canonical way of calculating the classes implied by the Chern-Weil theory using a specific simplicial principal bundle called the classifying space.
\subsection{Lie algebras}
Before introducing the Chern-Weil Theory, let us recall some facts and constructions about the Lie algebras. 
\begin{definition}
Let $G$ be a Lie group. Then, we define the \underline{Lie algebra of $G$}, denoted as $\fg$, as the set of all left-invariant vector fields on $G$, that is the vector fields $\fX$ such that $(\Lie_{g})_{\ast}\fX= \fX$ for all $g\in G$, where $\Lie_g: G\longrightarrow G, \;\; h\mapsto g\cdot h$ is the left multiplication by $g\in G$. The Lie brackets of this Lie algebra are given by the Lie brackets of vector fields $[\fX, \fY]$.
\end{definition}
This set can be expressed in an easier formulation:
\begin{remark}\label{asso de MaurerCartan}
Since the vector fields are left-invariant, it is quite easy to see that for a given tangent vector $v$ at a point $g\in G$, there is exactly one element $\fX\in\fg$ such that $\fX_g=v$. It implies that the Lie algebra $\fg$ (as a vector space) is isomorphic to $T_g G$ for any $g\in G$, and especially to $T_e G$, with $e$ the neutral element of $G$. Often, in the literature such as in this paper, we will amalgamate $\fg$ and $T_g G$. 
\end{remark}
We introduce now two canonical maps that occur often when working with the Lie algebras:
\begin{definition}\label{adjoint representation}[Adjoint representation]
We can define on a Lie group $G$ the \underline{conjugation map}:
$$\psi(g): G\longrightarrow G, \qquad h\mapsto g\cdot h\cdot g^{-1}$$
We can then define the map $Ad(g): \fg\longrightarrow \fg$ as being the derivative of $\psi(g)$. This map $Ad: G\longrightarrow \text{Aut}(\fg)$ is called the \underline{adjoint representation} of $G$. 
\end{definition}
\begin{definition}\label{def of maurer cartan}
On each Lie group $G$ there is a canonical $\fg$-valued 1-form, called the \\*\underline{\textit{Maurer-Cartan form}}, and denoted in this paper with $\omega$. This form sends each tangent vector $v$ at $g$ to the corresponding element of $\fg$, as explained in Remark \ref{asso de MaurerCartan}. If we consider $\fg$ as $T_e G$, it is given by:
$$\omega (v)= (\Lie_{g^{-1}})_{\ast}v$$
\end{definition}
As said in the introduction, the following work will concentrate mostly on the category of $G$-principal bundles. Let us recall the following definition of such bundles such as the definition of morphisms between them:
\begin{definition}
Let $G$ be a Lie group. A \underline{$G$-principal bundle} is a surjective morphism $\p\longrightarrow \m$ such that $G$ acts smoothly on the right of $P$ and preserves the fibres, and such that the restriction of this action on the fibre is free and transitive .\\
It implies then that $\p$ has locally the form $\m\times G$.
\end{definition}
\begin{remark}
Since $\pi: \p \longrightarrow \m$ is a submersion, it means that if we have a map $g:\m'\longrightarrow\m$, the pullback of the diagram $\m'\longrightarrow\m\longleftarrow \p$ does always exist. 
\end{remark}
\begin{definition}
Let $\pi: \p\longrightarrow \m$ and $\pi': \p' \longrightarrow\m'$ be two $G$-principal bundles. We say that $(f_1, f_2)$ is a $G$-principal bundles morphism from $\pi$ to $\pi'$ if $f_1: \m \longrightarrow \m'$ is a differential map and $f_2: \p\longrightarrow
\p'$ is another  differential map which is also invariant under the $G$ right action, such that the following diagram commutes:
$$\xymatrix{
 \p \ar[r]^{f_2}\ar[d]_{\pi'} & \p'\ar[d]^{\pi'} \\
 \m\ar[r]_{f_1}& \m'
}$$
\end{definition}

We can introduce a so-called ``connection 1-form" on a $G$-principal bundle, which is expressing the notion of parallel on the bundle. 

\begin{definition}\label{definition of connection}
Let $\pi: \p\longrightarrow \m$ be a $G$-principal bundle. Then, we define a connection as a $\fg$-valued $1$-form $\theta$ on $\p$, fulfilling the following conditions:
\begin{itemize}
    \item $\theta\circ \varphi_x =\text{id}$ for all $x\in P$, where $\varphi_x: \fg= T_{e}G\longrightarrow T_x\p$ is the derivative of $g\mapsto x\cdot g$.
    \item $Ad_g(R_g^{\ast}(\theta))=\theta\;\; \forall  g\in G$, where $R_g:\p\longrightarrow\p$ is the automorphism given by the right multiplication with $g$. 
\end{itemize}
\end{definition}
Intuitively, at each point $x$ of $P$, $\theta$ is projecting the tangent vectors to the space of vertical tangent vectors of $x$, which can be associated with the tangent space of $G$ and thus to $\fg$. Thus, the kernel of $\theta$ can be thought of as horizontal vectors, and so $\theta$ defines a notion of parallel to $M$ at $x$. This intuition is presented in the picture below: 
\begin{center}
    \includegraphics[width=0.4 \textwidth]{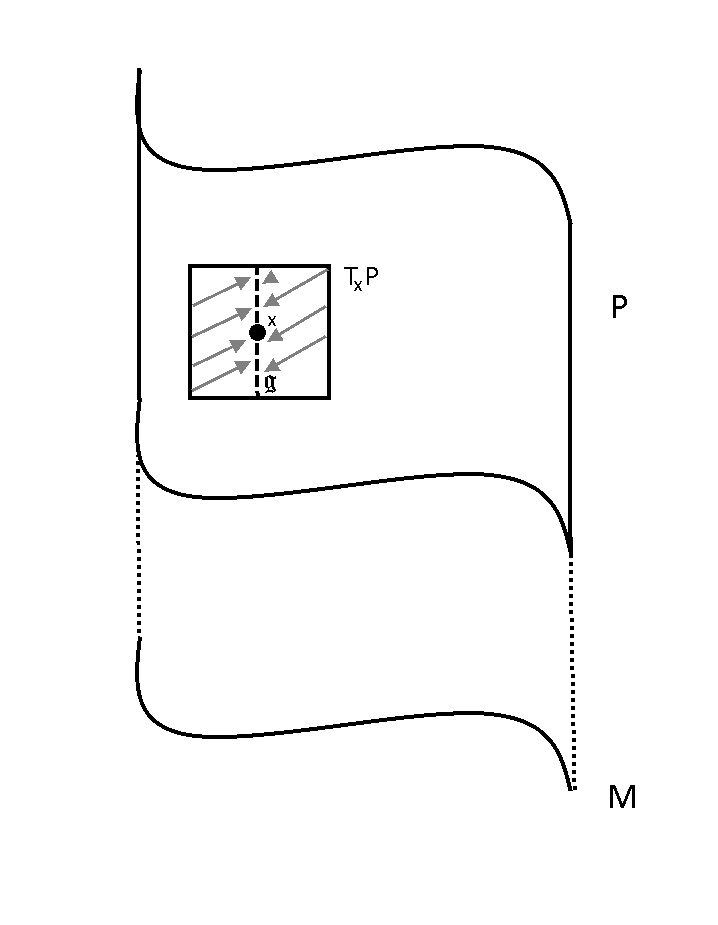}
\end{center}
\begin{lemma}\label{pullback of connection}
Let $\pi: \p\longrightarrow \m$ and $\pi': \p' \longrightarrow\m'$ be two $G$-principal bundles. Let furthermore $(f_1, f_2)$ be a $G$-principal bundles morphism from $\pi$ to $\pi'$. Let $\theta$ be a connection on $\p'$. Then, $f_2^{\ast}\theta$ is again a connection on $\p$.
\end{lemma}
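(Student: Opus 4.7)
The plan is to verify directly that $f_2^{\ast}\theta$ satisfies the two defining properties of a connection listed in Definition \ref{definition of connection}. The essential input is the $G$-equivariance of $f_2$, namely $f_2\circ R_g = R_g\circ f_2$ for every $g\in G$, which is part of what it means for $(f_1,f_2)$ to be a morphism of $G$-principal bundles.

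First I would check the first condition, $(f_2^{\ast}\theta)\circ \varphi_x = \mathrm{id}_{\fg}$. The key observation is that, differentiating the equivariance $f_2(x\cdot g) = f_2(x)\cdot g$ in the $g$-variable at $g=e$, one obtains $(f_2)_{\ast}\circ \varphi_x = \varphi_{f_2(x)}$. Hence for $v\in\fg$,
\[
(f_2^{\ast}\theta)_x(\varphi_x(v)) \;=\; \theta_{f_2(x)}\bigl((f_2)_{\ast}\varphi_x(v)\bigr) \;=\; \theta_{f_2(x)}\bigl(\varphi_{f_2(x)}(v)\bigr) \;=\; v,
\]
using the first axiom for $\theta$ in the last step.

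Next I would check the equivariance condition $Ad_g\bigl(R_g^{\ast}(f_2^{\ast}\theta)\bigr) = f_2^{\ast}\theta$. Using $f_2\circ R_g = R_g\circ f_2$ again, the pullbacks commute:
\[
R_g^{\ast}(f_2^{\ast}\theta) \;=\; (f_2\circ R_g)^{\ast}\theta \;=\; (R_g\circ f_2)^{\ast}\theta \;=\; f_2^{\ast}(R_g^{\ast}\theta).
\]
Since $Ad_g$ acts pointwise on the $\fg$-values and therefore commutes with the pullback $f_2^{\ast}$, we get
\[
Ad_g\bigl(R_g^{\ast}(f_2^{\ast}\theta)\bigr) \;=\; f_2^{\ast}\bigl(Ad_g(R_g^{\ast}\theta)\bigr) \;=\; f_2^{\ast}\theta,
\]
where the last equality uses the second axiom for $\theta$ on $\p'$.

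I don't expect any real obstacle here; the whole proof is a bookkeeping exercise in pullbacks and equivariance. The only subtle point is the identity $(f_2)_{\ast}\circ \varphi_x = \varphi_{f_2(x)}$, which is perhaps worth stating explicitly since it is what translates the fibrewise-level behaviour of $\theta$ along $f_2$; everything else is naturality of pullback combined with the fact that $Ad_g$ is a $\fg$-endomorphism applied pointwise.
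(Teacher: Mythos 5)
Your proof is correct and follows essentially the same route as the paper's: both verify the two connection axioms directly, using the $G$-equivariance of $f_2$ to show $(f_2)_{\ast}\circ\varphi_x=\varphi_{f_2(x)}$ for the first axiom and to commute $R_g^{\ast}$ past $f_2^{\ast}$ for the second. The only difference is presentational — the paper phrases the same computations in terms of representing curves rather than pullback identities.
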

\begin{proof}
Clearly, $f_2^*\theta$ is a $\fg$-valued 1-form on $\p$. Let us consider $\varphi_x:\fg \longrightarrow T_x\p$ on $\p$ as in Definition \ref{definition of connection}. Take $\overline{g(t)}\in \fg$, represented by the curve $g(t)$. Here, the over-lining of a curve means the tangent vector corresponding to it. Then, $\varphi_x(\overline{g(t)})= \overline{x\cdot g(t)}$. If we apply $f_2^*\theta$ on this result, we obtain:
$$\theta(\overline{f_2(x\cdot g(t))})=\theta(\overline{f_2(x)\cdot g(t)})$$
since $f$ is $G$-invariant. This last formula is equal to $\theta\circ\varphi'_{f_2(x)}(\overline{g(t)})= \overline{g(t)}$, where $\varphi'$ is as in Definition \ref{definition of connection} on $\p'$. Thus $f_2^*\theta \circ \varphi_x =\text{id}$.\\
Secondly, let $\overline{x(t)}$ be a tangent vector on $\p$ represented by the curve $x(t)$. Then we have:
\begin{align*}
    Ad_g\circ R_g^*(f_2^*\theta)(\overline{x(t)})&= Ad_g\theta(\overline{f_2(x(t)\cdot g)})= Ad_g\theta(\overline{f_2(x(t))\cdot g})\\&= Ad_g\circ R_g^*\theta(\overline{f_2(x(t))})= \theta(\overline{f_2(x(t))})= f_2^*\theta(\overline{x(t)})
\end{align*}
Thus $Ad_g\circ R_g^*(f_2^*\theta)=f_2^*\theta$ and it follows that $f_2^*\theta$ is a connection on $\p$. 
\end{proof}

Since we work with vector/Lie algebra-valued differential forms, it is necessary to adapt the concept of exterior product and exterior derivative, as follows:
\begin{definition}\label{wedge products}
Let $\m$ be a manifold. We introduce two new sorts of wedge products:
\begin{itemize}
    \item If $\phi$ is a $V$-valued $j$-form and $\psi$ a $W$-valued $k$-form, $V$ and $W$ both $\R$-(or $\C$-)vector spaces, the following is a generalization of the usual wedge product for vector-valued differential forms:
    $$(\phi\wedge \psi)(v_1, \dots , v_{j+k})= \frac{1}{j!\cdot k!}\sum_{\sigma\in \mathfrak{S}_{j+k}}\text{sign}(\sigma)\phi(v_{\sigma(1)}, \dots, v_{\sigma(j)})\otimes \psi(v_{\sigma(j+1)}, \dots, v_{\sigma(k+j)})$$
    Thus, we have a map $*\wedge*: \Omega^j(\m, V)\times \Omega^k(\m, W)\longrightarrow \Omega^{j+k}(\m, V\otimes W)$. Remark that the usual exterior product is the same as this one since $\R\otimes \R\cong \R$ and $\otimes$ can then be replaced by the multiplication.
    \item If $\phi$ is a $j$-form and $\psi$ a $k$-form, both $\fa$-valued where $\fa$ is a Lie algebra, we can define the following wedge product, denoted $[\phi\wedge\psi]$:
    $$[\phi\wedge\psi](v_1, \dots , v_{j+k})= \frac{1}{j!\cdot k!} \sum_{\sigma\in \mathfrak{S}_{j+k}}\text{sign}(\sigma)\left[\phi(v_{\sigma(1)}, \dots, v_{\sigma(j)}), \psi(v_{\sigma(j+1)}, \dots, v_{\sigma(k+j)})\right]$$
    This time, we have a map $[\ast\wedge\ast]: \Omega^j(\m, \fa)\times \Omega^k(\m, \fa)\longrightarrow \Omega^{j+k}(\m, \fa)$.
\end{itemize}
\end{definition}
Contrariwise to the usual exterior product, $[*\wedge*]$ fulfills the following rule:
\begin{lemma}\label{commutation of wedge product}
If $\phi$ is a $j$-form and $\psi$ a $k$-form, both $\fa$-valued where $\fa$ is a Lie algebra, we have:
$$[\phi\wedge\psi]= (-1)^{j\cdot k+1}[\psi\wedge\phi]$$
\end{lemma}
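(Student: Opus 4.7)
The plan is to exploit two ingredients: the antisymmetry of the Lie bracket, $[a,b]=-[b,a]$, together with a well-chosen reindexing of the sum over $\mathfrak{S}_{j+k}$. Starting from the definition of $[\phi\wedge\psi]$, I would first apply antisymmetry of the bracket termwise, turning each summand $[\phi(v_{\sigma(1)},\dots,v_{\sigma(j)}),\psi(v_{\sigma(j+1)},\dots,v_{\sigma(j+k)})]$ into $-[\psi(v_{\sigma(j+1)},\dots,v_{\sigma(j+k)}),\phi(v_{\sigma(1)},\dots,v_{\sigma(j)})]$, extracting a global factor $-1$. This already puts the expression very close to the shape of $[\psi\wedge\phi]$, except that the arguments fed to $\psi$ occupy positions $\sigma(j+1),\dots,\sigma(j+k)$ rather than $\sigma(1),\dots,\sigma(k)$.

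To match the form of $[\psi\wedge\phi]$, I introduce the block-shuffle permutation $\rho\in\mathfrak{S}_{j+k}$ defined by $\rho(i)=i+k$ for $i\leq j$ and $\rho(i)=i-j$ for $i>j$, and perform the substitution $\sigma=\tau\circ\rho$ in the sum. Under this relabeling, $(\sigma(j+1),\dots,\sigma(j+k))$ becomes $(\tau(1),\dots,\tau(k))$ and $(\sigma(1),\dots,\sigma(j))$ becomes $(\tau(k+1),\dots,\tau(k+j))$, so that the summand is precisely the generic term of $[\psi\wedge\phi](v_1,\dots,v_{j+k})$.

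It remains to track the sign. Because $\tau\mapsto\tau\circ\rho$ is a bijection of $\mathfrak{S}_{j+k}$, the sum is unchanged apart from the factor $\mathrm{sign}(\sigma)=\mathrm{sign}(\tau)\cdot\mathrm{sign}(\rho)$. The sign of $\rho$ is computed by counting inversions: the two blocks $\{1,\dots,j\}$ and $\{j+1,\dots,j+k\}$ are each mapped order-preservingly to $\{k+1,\dots,j+k\}$ and $\{1,\dots,k\}$ respectively, while every cross pair $(a,b)$ with $a\leq j<b$ becomes inverted, producing exactly $jk$ inversions and hence $\mathrm{sign}(\rho)=(-1)^{jk}$. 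Combined with the $-1$ coming from antisymmetry of the bracket, this yields the coefficient $(-1)^{jk+1}$.

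I do not anticipate any serious obstacle; the argument is purely combinatorial and each step is forced by the structure of the definition. The one point demanding care is the bookkeeping of indices under $\sigma=\tau\circ\rho$, to ensure that the arguments of $\psi$ and $\phi$ truly land in the positions $(1,\dots,k)$ and $(k+1,\dots,k+j)$ respectively, and the careful inversion count for $\rho$; both are routine but must be written out explicitly to justify the sign $(-1)^{jk}$.
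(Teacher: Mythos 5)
Your proposal is correct and follows essentially the same route as the paper: reindex the sum over $\mathfrak{S}_{j+k}$ by the block-shuffle permutation (which the paper realizes as the $k$-th power of a cyclic shift, with the same sign $(-1)^{jk}$ that you obtain by counting the $jk$ cross-block inversions) and combine this with the antisymmetry of the bracket to produce the extra factor $-1$. The only differences are the order of the two steps and the method of computing the permutation's sign, both immaterial.
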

\begin{proof}
Let $\tau_0$ be the cycle permutation in $\mathfrak{S}_{j+k}$ such that $\tau_0(n)= n+ 1 \text{\;mod\;} j+k$, where we associate $0$ to $j+k$. We can see that its sign is $(-1)^{j+k-1}$: We permute $1$ and $2$, then $2$ and $3$, ... until permuting $n-1$ and $n$. Now, $\tau(n)= n+ k \text{\;mod\;} j+k = (\tau_0)^k(n)$. Thus, the sign of $\tau$ is $(-1)^{(j+k-1)\cdot k}= (-1)^{jk + k^2  - k}=(-1)^{jk}$ since $k^2-k$ is always even. Thus:
\begin{align*}
    [\phi\wedge\psi](v_1, \dots , v_{j+k})=& \frac{1}{j!\cdot k!} \sum_{\sigma\in \mathfrak{S}_{j+k}}\text{sign}(\sigma)\left[\phi(v_{\sigma(1)}, \dots, v_{\sigma(j)}), \psi(v_{\sigma(j+1)}, \dots, v_{\sigma(j+k)})\right]\\
    =& \frac{1}{j!\cdot k!} \sum_{\sigma\in \mathfrak{S}_{j+k}}\text{sign}(\sigma)\cdot (-1)^{jk} \left[\phi(v_{\sigma(k+1)}, \dots, v_{\sigma(j+k)}), \psi(v_{\sigma(1)}, \dots, v_{\sigma(k)})\right]\\
    =& \frac{1}{j!\cdot k!} \sum_{\sigma\in \mathfrak{S}_{j+k}}\text{sign}(\sigma)\cdot (-1)^{jk+1} \left[ \psi(v_{\sigma(1)}, \dots, v_{\sigma(k)}), \phi(v_{\sigma(k+1)}, \dots, v_{\sigma(j+k)}) \right]\\
    =&  (-1)^{j\cdot k+1}\cdot [\psi\wedge\phi](v_1, \dots , v_{j+k})
\end{align*}
\end{proof}

\begin{definition}
Let $\phi$ be a $V$-valued differential form on $\m$, with $V$ a vector space. Then, we define the exterior derivative $d$ as the usual exterior derivative being applied component-wise. It means, for a basis $\{e_1, \dots, e_n\}$ of $V$, $\phi$ can be written as: $\sum^n_{i=1}\phi_i\cdot e_i$, with $\phi_i$ being usual differential forms on $\m$. Thus:
$$d\phi:= \sum^n_{i=1}d\phi_i\cdot e_i$$
\end{definition}
\begin{remark}
This definition is independent of the choice of the basis $\{e_1, \dots, e_n\}$ of $V$, because of the linearity of $d$.
\end{remark}
\begin{lemma}
This new definition of the exterior derivative has the usual properties of the exterior derivative, that is:
\begin{enumerate}
    \item $d(\phi+\psi)= d\phi+ d\psi$
    \item $d(\phi\wedge\psi)= d(\phi)\wedge\psi + (-1)^{\txt{deg}(\phi)}\phi\wedge d\psi$
    \item $dd\phi =0 $
\end{enumerate}
\end{lemma}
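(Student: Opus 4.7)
The strategy is to reduce each identity to the corresponding scalar identity by expanding in a chosen basis, since the author's definition of $d$ on vector-valued forms is component-wise with respect to a basis of $V$.

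For (1), fix a basis $\{e_1,\dots,e_n\}$ of $V$ and write $\phi = \sum_i \phi_i\cdot e_i$ and $\psi = \sum_i \psi_i\cdot e_i$. Then $\phi+\psi = \sum_i (\phi_i+\psi_i)\cdot e_i$, and applying the ordinary scalar $d$ componentwise together with the linearity of scalar $d$ gives the claim immediately.

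For (2), the work is really just bookkeeping. Choose bases $\{e_i\}$ of $V$ and $\{f_j\}$ of $W$ and write $\phi=\sum_i\phi_i\cdot e_i$, $\psi=\sum_j\psi_j\cdot f_j$. By unpacking the generalized wedge from Definition \ref{wedge products}, one has $\phi\wedge\psi = \sum_{i,j}(\phi_i\wedge\psi_j)\cdot(e_i\otimes f_j)$, where $\phi_i\wedge\psi_j$ is a scalar-valued form and $e_i\otimes f_j\in V\otimes W$ is a constant. Then
\begin{align*}
d(\phi\wedge\psi) &= \sum_{i,j} d(\phi_i\wedge\psi_j)\cdot(e_i\otimes f_j)\\
&= \sum_{i,j}\Bigl(d\phi_i\wedge\psi_j + (-1)^{\deg\phi}\phi_i\wedge d\psi_j\Bigr)\cdot(e_i\otimes f_j)\\
&= d\phi\wedge\psi + (-1)^{\deg\phi}\,\phi\wedge d\psi,
\end{align*}
using the scalar Leibniz rule in the middle step and recollecting the basis expansions at the end. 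For the Lie-bracket wedge $[\phi\wedge\psi]$ with $\phi,\psi$ valued in $\mathfrak{a}$, the same argument applies with $e_i\otimes f_j$ replaced by the constant vector $[e_i,f_j]\in\mathfrak{a}$, since the bracket enters only as bilinear pairing of the fixed basis elements and pulls out of $d$.

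For (3), writing $\phi = \sum_i \phi_i\cdot e_i$ we get $d\phi = \sum_i d\phi_i\cdot e_i$ and hence $dd\phi = \sum_i dd\phi_i\cdot e_i = 0$ by the scalar $d^2=0$. Basis-independence of $d$ was already recorded in the preceding remark, so the three identities hold intrinsically. The only mild obstacle is confirming that the numerical prefactor $\frac{1}{j!\,k!}$ in Definition \ref{wedge products} is consistent with the scalar Leibniz rule under the basis expansion; this is a direct computation using that $\phi_i\wedge\psi_j$ built with the scalar convention $\frac{1}{j!k!}\sum_\sigma \mathrm{sign}(\sigma)\phi_i(\cdots)\psi_j(\cdots)$ coincides with the standard antisymmetrized product, so no extra combinatorial factors appear.
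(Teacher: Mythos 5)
Your proof is correct and follows essentially the same route as the paper: points (1) and (3) reduce immediately to the componentwise definition, and point (2) is handled by expanding $\phi\wedge\psi$ in bases as $\sum_{i,j}(\phi_i\wedge\psi_j)\cdot(e_i\otimes f_j)$ and applying the scalar Leibniz rule, which is exactly the paper's computation. Your extra remarks on the bracket wedge and the $\frac{1}{j!\,k!}$ normalization are harmless additions not present in the paper.
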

\begin{proof}
The first and the third points are obvious and follow from the definition and the fact that the usual exterior differential also fulfils these equations. For the second one: Consider $\phi$ a $V=\text{Span}(e_1,\dots, e_m)$-valued differential $j$-form and $\psi$ a $W=\text{Span}(\hat{e}_1, \dots, \hat{e}_n)$-valued differential $k$-form. We can thus write them as $\phi=\sum_{i=1}^m\phi_i\cdot e_i$ and analogous for $\psi$. Recall that:
\begin{align*}
    &(\phi\wedge \psi)(v_1, \dots , v_{j+k})= \frac{1}{j!\cdot k!}\sum_{\sigma\in \mathfrak{S}_{j+k}}\text{sign}(\sigma)\phi(v_{\sigma(1)}, \dots, v_{\sigma(j)})\otimes \psi(v_{\sigma(j+1)}, \dots, v_{\sigma(k+j)})\\
    &= \sum_{(p,q)\in [1, m] \times[1, n] }\frac{1}{j!\cdot k!}\sum_{\sigma\in \mathfrak{S}_{j+k}}\text{sign}(\sigma)\phi_{p}(v_{\sigma(1)}, \dots, v_{\sigma(j)})\cdot \psi_{q} (v_{\sigma(j+1)}, \dots, v_{\sigma(k+j)})\cdot e_p\otimes \hat{e}_q\\
    &=\sum_{(p,q)\in [1, m] \times[1, n] } \phi_{p}\wedge\psi_{q} (v_1,\dots, v_{j+k})\cdot e_p\otimes \hat{e}_q
\end{align*}
Once we have that, the statement follows this way:
\begin{align*}
    d(\phi\wedge\psi)=& \sum_{(p,q)\in [1, m] \times[1, n] } d(\phi_{p}\wedge\psi_{q})\cdot e_p\otimes \hat{e}_q\\
    =& \sum_{(p,q)\in [1, m] \times[1, n] } \left( d\phi_p \wedge \psi_q + (-1)^{j}\phi_p\wedge d\psi_q\right) \cdot e_p\otimes \hat{e}_q\\
    =& d(\phi)\wedge\psi + (-1)^{\txt{deg}(\phi)}\phi\wedge d\psi
\end{align*}
\end{proof}

Finally, we introduce now the notion of curvature of a connection form:
\begin{definition}\label{definition of curvature}
For a given connection $\theta$, we define the corresponding curvature form $\kappa$ as:
$$\kappa= d\theta +\frac{1}{2} [\theta \wedge \theta]$$
In the case where the curvature form is equal to $0$, we say that the connection is flat. 
\end{definition}
\begin{remark}
In the literature, the curvature form is generally written $\Omega$. We write it here $\kappa$ to make a clearer difference with $\Omega^k$, the elements of the De Rham cochain complex. 
\end{remark}

\subsection{Chern-Weil theory}
In this section, we want to construct step by step the Chern-Weil homomorphism. We will not prove the properties of this map, but they can be found in \cite{dupont}. After that, we will show that we can extend this Chern-Weil construction to the simplicial $G$-principal bundles. \\
Before the construction of the morphism itself, let us introduce the following definition:
\begin{definition}
Let $V$ be a $\R$-(or $\C$-)vector space. We say that $f$ is an \underline{homogeneous polynomial} on $V$ of degree $k$ if $f: \prod^k V \longrightarrow\R$ (or $\C$) is multilinear and symmetric. The space of such maps is denoted $\R^k(V)$ (or $\C^k(V)$). 
\end{definition}
\begin{remark}\label{notation tensor poly}
This definition is equivalent to the set of maps $\Tilde{f}: S^k(V) \longrightarrow\R$ where $S^k(V)$ is the symmetric algebra. In this case, $\Tilde{f}$ is the linear map induced by $f$ and by the universal property of the tensor product. We will sometimes amalgamate $f$ and $\Tilde{f}$ in the following when the context makes it clear which one should be used.
\end{remark}
\begin{remark}\label{polynomials}
We call such maps ``homogeneous polynomials" because they are intuitively ``simulating" the multiplication of vectors. More formally, there is a bijection between the space $\R^k(V)$ and $\R[x_1, \dots, x_n]^k$ with $(e_1, \dots, e_n)$ basis of $V$, given by:
$$f\in \R(V)\mapsto \bar{f}\in \R[x_1, \dots, x_n]^k \qquad \text{with}\qquad \bar{f}(x_1, \dots, x_n)= f(v, \dots, v),\;\; v:=\sum^{n}_{i=1} e_i\cdot x_i$$
Its inverse map is called the polarization, and is given by:
$$\bar{f}\in \R[x_1, \dots, x_n]^k\mapsto f\in \R(V)\qquad \text{with}\qquad f(v_1, \dots, v_k)= \frac{1}{k!}\frac{\partial}{\partial \lambda_1}\dots \frac{\partial}{\partial \lambda_k}\bar{f}(\lambda_1\cdot v_1+\dots + \lambda_k\cdot v_k)$$
where $\bar{f}(w)$ for $w=(w_1, \dots, w_n)\in \R^n$ means $\bar{f}(w_1,\dots, w_n)$. \\
The fact that these two maps are inverse to each other is only computational and will not be discussed here. But let us remark that this implies a bijection between $f\in \R(V)$ and $v\mapsto f(v, \dots, v)$. Thus, it is enough to define the values of $f(v, \dots, v)$ to define $f$. 
\end{remark}
This sort of polynomials is still a little too general for us, and we want to restrict ourselves to the following ones:
\begin{definition}
Let $\fg$ be the Lie algebra of the Lie group $G$. We denote by $I^k(\fg)$ the set of homogeneous polynomials of degree $k$ on $\fg$ which are adjoint-invariant, that is:
$$f(Ad_{(g)}(v_1),  \dots, Ad_{(g)}(v_n))= f(v_1, \dots, v_n)\qquad \qquad \forall \; g\in G$$
Such polynomials are called the \underline{invariant polynomials} of $\fg$. 
\end{definition}
Let us now start with the construction of the Chern-Weil homomorphism: 
\begin{construction}\label{construction of CW-homo}
Let $\pi: \p \longrightarrow \m$ be a $G$-principal bundle. Let $\fg$ be the Lie algebra corresponding to $G$ and $I^k(\fg)$ be the set of invariant polynomials of degree $k$ on $\fg$. Let $\theta$ be a connection in $\p$ and $\kappa= d\theta+[\theta\wedge\theta] \in \Omega^2(\p, \fg)$ be its curvature form. Considering $\bigwedge^k \kappa\in \Omega^{2k}(\p, \bigotimes^k \fg)$, we obtain a homomorphism:
\begin{equation}\label{eq: start of cw}
    \xi(\p, \theta): I^k(\fg) \longrightarrow \Omega^{2k}(\p)\qquad \qquad f\mapsto {f}\circ \bigwedge^k \kappa
\end{equation}
\end{construction}
\begin{remark}\label{notation of polynomial}
In the rest of this paper, we will sometimes use the following notation: Let $V$ be a vector space, let $\phi_1, \dots, \phi_k$ be $V$-valued differential forms, not necessarily of the same degree, and let $f\in \R^k(V)$. Then, we write:
$$f(\phi_1, \dots, \phi_k):= {f}\circ \bigwedge_{i=1}^k \phi_i$$
So the map (\ref{eq: start of cw}) can be written as:
$$f\mapsto f(\kappa, \dots, \kappa)$$
\end{remark}
\begin{theorem}[Chern-Weil theorem, part 1]\label{chern weil theorem teil 1}
Let us consider the map $\xi(\p, \theta)$ of the construction \ref{construction of CW-homo}. Then, for all $f\in I^k(\fg)$, $\xi(f)= f\circ \bigwedge^k \kappa$ is in the image of $\pi^{\ast}: \Omega^{\ast}(\m)\longrightarrow \Omega^{\ast}(\p)$, i.e. that it is the lift of a certain $\phi_{CW}(f,\theta)\in \Omega^{2k}(\m)$ (with $CW$ for Chern-Weil). Furthermore, the form $\phi_{CW}(f, \theta)$ is closed.
\end{theorem}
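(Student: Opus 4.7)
The plan is to prove both parts by showing that $\xi(f)$ is a basic form on $\p$, i.e.\ horizontal and $G$-invariant. Since $\pi:\p\longrightarrow\m$ is a surjective submersion, basic forms on $\p$ are precisely the pullbacks of forms on $\m$, and $\pi^{\ast}$ is injective and commutes with the exterior derivative; this will simultaneously give the descent $\xi(f) = \pi^{\ast}\phi_{CW}(f,\theta)$ and reduce the closedness claim to verifying $d\xi(f)=0$ upstairs.

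For horizontality, I would first check that $\kappa$ vanishes as soon as one entry is vertical. Writing such a vertical vector as $\varphi_x(X)$ and extending to the fundamental vector field $\hat X$, so that $\theta(\hat X)=X$ is constant, one evaluates $\kappa(\hat X,Y)$ using Cartan's formula for $d\theta$ together with the identity $\tfrac{1}{2}[\theta\wedge\theta](u,v) = [\theta(u),\theta(v)]$ (direct from Definition \ref{wedge products}). Extending the second entry either by a $G$-invariant horizontal field or by another fundamental field $\hat Y_2$ (in which case $[\hat X,\hat Y_2]=\widehat{[X,Y_2]}$) both yield $\kappa(\hat X,Y)=0$, so $\xi(f)=f(\kappa,\dots,\kappa)$ is horizontal. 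For $G$-invariance, the second axiom of Definition \ref{definition of connection} rearranges to $R_g^{\ast}\theta = Ad_{g^{-1}}\theta$; since $Ad_{g^{-1}}$ is linear (so commutes with $d$) and a Lie-algebra morphism (so commutes with $[\cdot\wedge\cdot]$), this propagates to $R_g^{\ast}\kappa = Ad_{g^{-1}}\kappa$, and the $Ad$-invariance of $f$ then gives $R_g^{\ast}\xi(f)=\xi(f)$. Hence $\xi(f)$ descends uniquely to a form $\phi_{CW}(f,\theta)\in\Omega^{2k}(\m)$.

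For closedness, the key ingredient is the Bianchi identity $d\kappa = [\kappa\wedge\theta]$, which I would derive from $d\kappa = \tfrac{1}{2}d[\theta\wedge\theta] = [d\theta\wedge\theta]$ (using Lemma \ref{commutation of wedge product} to get $[\theta\wedge d\theta]=-[d\theta\wedge\theta]$) together with the Jacobi-identity vanishing $[[\theta\wedge\theta]\wedge\theta]=0$, after substituting $d\theta=\kappa-\tfrac{1}{2}[\theta\wedge\theta]$. Because $\kappa$ is an even-degree form, the graded Leibniz rule applied to $f(\kappa,\dots,\kappa)$ produces no extra signs and
\[
d\xi(f) \;=\; \sum_{i=1}^{k} f\bigl(\kappa,\dots,[\kappa\wedge\theta]_{(i)},\dots,\kappa\bigr),
\]
where the bracket sits in the $i$-th slot. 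The main obstacle is showing this sum vanishes: I would differentiate $f(Ad_{\exp(tX)}v_1,\dots,Ad_{\exp(tX)}v_k)=f(v_1,\dots,v_k)$ at $t=0$ to extract the infinitesimal invariance $\sum_i f(v_1,\dots,[X,v_i],\dots,v_k)=0$ for all $X\in\fg$ and $v_i\in\fg$, then evaluate the display on tangent vectors and expand each $[\kappa\wedge\theta]_{(i)}$ as the antisymmetrized bracket. The only delicate point is matching the permutation signs produced by $[\cdot\wedge\cdot]$ against the symmetry of $f$; once the bookkeeping is aligned, every term of the sum reduces to the pointwise Lie-algebraic identity above and cancels, yielding $d\xi(f)=0$ and hence $d\phi_{CW}(f,\theta)=0$ by the injectivity of $\pi^{\ast}$.
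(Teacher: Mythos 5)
Your argument is correct, but note that the paper does not actually prove this statement: both Theorem \ref{chern weil theorem teil 1} and Theorem \ref{chern weil theorem 2} are deferred wholesale to Dupont \cite{dupont}, so there is no internal proof to compare against. What you have written is the classical argument (Dupont, Kobayashi--Nomizu) transplanted into the paper's conventions: horizontality of $\kappa$ plus $R_g^{\ast}\kappa=Ad_{g^{-1}}\kappa$ and the $Ad$-invariance of $f$ make $\xi(f)$ basic, hence a pullback along the surjective submersion $\pi$; the Bianchi identity plus invariance of $f$ give closedness. Your normalizations are the right ones: $\tfrac{1}{2}[\theta\wedge\theta](u,v)=[\theta(u),\theta(v)]$ does hold under the $\tfrac{1}{j!\,k!}$ convention of Definition \ref{wedge products}, and you correctly use $\kappa=d\theta+\tfrac12[\theta\wedge\theta]$ from Definition \ref{definition of curvature} rather than the coefficient-free expression that appears (as a typo) in Construction \ref{construction of CW-homo}. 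One simplification for the step you flag as delicate: having already established that $\xi(f)$ is basic, $d\xi(f)=\pi^{\ast}d\phi_{CW}(f,\theta)$ is determined by its values on tuples of horizontal vectors, and on such tuples every term of $\sum_{i}f(\kappa,\dots,[\kappa\wedge\theta]_{(i)},\dots,\kappa)$ contains a factor $\theta(w_j)=0$; so the sum vanishes with no permutation bookkeeping and no appeal to the infinitesimal identity $\sum_i f(v_1,\dots,[X,v_i],\dots,v_k)=0$. Your route through that identity also works, but the horizontality shortcut is cheaper and is the one usually taken in \cite{dupont}.
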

Let us now finish the construction:
\begin{definition}[Chern-Weil homomorphism]
We define the following homomorphism:
$$\Phi_{CW}^{\p}: I^k(\fg)\longrightarrow H^{2k}(\m) \qquad \qquad f\mapsto [\phi_{CW}(f, \theta)]$$
where $H^{2k}(\m)$ is the $2k$-th de Rham cohomology class of $\m$ and $[\phi_{CW}(f, \theta)]$ the cohomology class of $\phi_{CW}(f, \theta)$. This homomorphism is called the \underline{\textit{Chern-Weil homomorphism}}, and $[\phi_{CW}(f, \theta)]$ the \underline{characteristic class} of $f$ and $\p$.
\end{definition}
Using the following theorem, we understand in which sense the Chern-Weil homomorphism allows a classification:
\begin{theorem}[Chern-Weil theorem, part 2]\label{chern weil theorem 2}
The Chern-Weil homomorphism has the following main properties:
\begin{itemize}
    \item The homomorphism is independent of the choice of the connection $\theta$ we used in the Construction \ref{construction of CW-homo}. 
    \item As a consequence of the first point, the map is depending on the total space $\p$ only. If $\p$ and $\p'$ are isomorphic as $G$-principal bundles over $\m$, they will produce the same characteristic class for a given $f$. 
    \item The map $\Phi_{CW}^{\p}: I^{\ast}(\fg)\longrightarrow H^{\ast}(\m)$ is an algebra homomorphism.
    \item The homomorphism $\Phi_{CW}^{\ast}: \{\text{$G$-principal bundles over $\m$}\}\longrightarrow \text{Hom}(I^{\ast}(\fg), H^{\ast}(\m)) $ commutes with the pullback. Let $g:\m'\longrightarrow \m$ be a morphism of manifolds, and $f\in I^{\ast}$, then: 
    $$g^{\ast}\Phi_{CW}^{\p}(f)= \Phi_{CW}^{g^{\ast}\p}(f)$$
\end{itemize}
\end{theorem}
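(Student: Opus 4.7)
The plan is to address the four claims in turn, with the first (independence of the connection) being the substantive step; the remaining three follow by short naturality arguments.

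\emph{Independence of the connection.} Given two connections $\theta_0,\theta_1$ on $\p$, the idea is to interpolate them on the product $G$-principal bundle $\p\times I\longrightarrow \m\times I$, where $I=[0,1]$, by the convex combination $\tilde\theta=(1-t)\pi_{\p}^*\theta_0+t\,\pi_{\p}^*\theta_1$. The vertical fibres and the $G$-action on $\p\times I$ are those of $\p$ extended trivially in the $I$-factor, and both conditions in Definition \ref{definition of connection} are affine, so $\tilde\theta$ is again a connection. By Theorem \ref{chern weil theorem teil 1} there is a closed form $\phi_{CW}(f,\tilde\theta)\in\Omega^{2k}(\m\times I)$ to which $f(\tilde\kappa,\dots,\tilde\kappa)$ descends. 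The inclusions $i_t:\m\hookrightarrow \m\times I,\;m\mapsto(m,t)$ for $t=0,1$ are covered by bundle maps pulling $\tilde\theta$ back to $\theta_t$, and Lemma \ref{pullback of connection} together with naturality of $d$ and of the bracket wedge product $[\cdot\wedge\cdot]$ gives $i_t^*\phi_{CW}(f,\tilde\theta)=\phi_{CW}(f,\theta_t)$. Since $i_0$ and $i_1$ are smoothly homotopic, they induce equal maps on de Rham cohomology, whence $[\phi_{CW}(f,\theta_0)]=[\phi_{CW}(f,\theta_1)]$ in $H^{2k}(\m)$.

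\emph{Naturality, isomorphism invariance, and algebra structure.} Point 4 follows from the fact that for $g:\m'\longrightarrow\m$ and the pullback bundle $g^*\p\longrightarrow\m'$ with its canonical bundle map $\tilde g:g^*\p\longrightarrow\p$, Lemma \ref{pullback of connection} yields a connection $\tilde g^*\theta$ on $g^*\p$ whose curvature is $\tilde g^*\kappa$; since $f$ acts pointwise on the values of $\kappa$ and $\tilde g$ covers $g$, one has $\phi_{CW}(f,\tilde g^*\theta)=g^*\phi_{CW}(f,\theta)$, and taking classes gives the claim. Point 2 is a direct consequence: an isomorphism $\psi:\p\longrightarrow\p'$ over $\text{id}_{\m}$ and any connection $\theta'$ on $\p'$ produce $\Phi_{CW}^{\p}(\psi^*\theta')=\Phi_{CW}^{\p'}(\theta')$, and point 1 removes the apparent dependence on the choice. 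For point 3, under the identification of Remark \ref{notation tensor poly}, the product $fh\in I^{k+l}(\fg)$ of $f\in I^k(\fg)$ and $h\in I^l(\fg)$ is the symmetric polynomial characterised on the diagonal by $\overline{fh}(v)=\bar f(v)\cdot\bar h(v)$; since $\kappa$ has even degree and therefore commutes with itself under $\wedge$, unwinding Definition \ref{wedge products} yields $(fh)(\kappa,\dots,\kappa)=f(\kappa,\dots,\kappa)\wedge h(\kappa,\dots,\kappa)$, and descending to $\m$ gives the algebra property.

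The main obstacle is point 1: one must confirm that $\tilde\theta$ is a genuine connection (easy but requires checking both defining conditions) and that the pulled-back forms really coincide with $\phi_{CW}(f,\theta_t)$ on the nose, so that subsequent homotopy invariance of de Rham cohomology suffices. A sharper alternative would be to produce an explicit transgression form $T(\theta_0,\theta_1)\in\Omega^{2k-1}(\m)$ with $\phi_{CW}(f,\theta_1)-\phi_{CW}(f,\theta_0)=dT$, as is done in \cite{dupont}, but the homotopy route yields the required cohomological statement directly.
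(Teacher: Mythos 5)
Your argument is correct, but it is worth noting that the paper does not actually prove this theorem: its ``proof'' consists of a citation to \cite{dupont}, together with a remark tracking how the coefficient conventions there differ from the ones used here. You instead give a self-contained argument, and it is the standard one: for connection-independence you interpolate $\theta_0$ and $\theta_1$ on $\p\times I\to\m\times I$ (the convex combination is a connection by exactly the mechanism of Lemma \ref{addition of connections}), pull the closed Chern--Weil form on $\m\times I$ back along the two ends, and invoke homotopy invariance of de Rham cohomology; naturality and isomorphism-invariance then follow from Lemma \ref{pullback of connection} plus the fact that curvature, the invariant polynomial, and descent along the submersion $\pi$ all commute with pullback (injectivity of $\pi^*$ is what lets you identify the descended forms on the nose); and the algebra property reduces to the observation that $\bigwedge^{k+l}\kappa$ is symmetric in its tensor factors because $\kappa$ has even degree, so the symmetrised product $fh$ and $f\otimes h$ agree on it. What your route buys is independence from Dupont's Kobayashi-style normalisations, which the paper must otherwise reconcile by hand; what the paper's route buys is brevity and a pointer to the sharper transgression form $T(\theta_0,\theta_1)$ with $\phi_{CW}(f,\theta_1)-\phi_{CW}(f,\theta_0)=dT$, which you correctly flag as the alternative. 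The only points I would tighten are cosmetic: either work over $\m\times\R$ or say explicitly that de Rham theory on the manifold-with-boundary $\m\times I$ causes no trouble, and spell out that the bundle map covering $i_t$ is $p\mapsto(p,t)$, so that its pullback of $\tilde\theta$ is literally $(1-t)\theta_0+t\theta_1$.
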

\begin{proof}
    The proof of the two last theorems can be found in \cite{dupont}. Remark that J. Dupont used there Kobayashi's convention, i.e. the exterior differential of a form $\phi$ is equal to ours up to the coefficient $\frac{1}{\text{deg}(\phi)+1}$ and the exterior products we defined in Definition \ref{wedge products} have a coefficient $\frac{1}{j!+k!}$ instead of $\frac{1}{j!\cdot k!}$ for us. It means, depending on the convention, the Chern-Weil homomorphism can be checked to change by a coefficient (using Kobayashi's convention gives a further $\frac{1}{k\cdot 2^{k+1}}$ in comparison to us). Since the two definitions are proportional to each other, it does not change the properties of the homomorphism.
\end{proof}
\begin{remark}
This means that the Chern-Weil homomorphism allows us to make a classification of the $G$-principal bundles over $\m$. That is the reason why $\Phi_{CW}^{\p}(f)$ is called the characteristic class of $\p$ corresponding to $f$. 
\end{remark}

We can extend that to the simplicial manifolds. From now on, consider a simplicial $G$-principal bundle, that is:
\begin{definition}
We say that $(\pi_{\bullet}: \p_{\bullet} \longrightarrow\m_{\bullet}, d_i, s_i)$ is a simplicial $G$-principal bundle if it is a simplicial object in the category of $G$-principal bundles. Concretely, it means that each $\pi_n: \p_n \longrightarrow\m_n$ is a $G$-principal bundle, and each $d_i=((d_i)_1, (d_i)_2)$ and each $s_i=((s_i)_1, (s_i)_2)$ is a morphism of $G$-principal bundles, such that $(\p_{\bullet}, (d_i)_2, (s_i)_2)$ and $(\m_{\bullet}, (d_i)_1, (s_i)_1)$ are simplicial manifolds. 
\end{definition}

We can now define a connection form on a simplicial $G$-principal bundle. Before doing that, let us remark that for a $G$-principal bundle $\pi: \p\longrightarrow \m$, $\text{id}_{\fooo{}^n}\times\pi:\foo{}^n\times \p\longrightarrow\foo{}^n\times\m$ is canonically a principal bundle with the $G$-action given by $(t, p)\cdot g= (t, p\cdot g)$. We come then to:
\begin{definition}
Let $\pi_{\bullet}: \p_{\bullet}\longrightarrow \m_{\bullet}$ be a simplicial $G$-principal bundle. A connection on it is a $\fg$-valued differential form $\theta_{\bullet}=\{\theta_i\}_{i\in \N_0}$ on $|\p_{\bullet}|$ (as in Definition \ref{pseudo-real form}) such that $\theta_n$ is a connection on $\text{id}_{\foo{}^n}\times\pi_n:\foo{}^n\times \p_n\longrightarrow\foo{}^n\times\m_n$ for each $n$. 
\end{definition}

\begin{remark}
With the same idea as for the exterior derivative and the exterior product (see Definition \ref{ext prod and deri for pseudo simplicial}), the Lie algebras' exterior product $[\square\wedge\square]$ is well-defined for differential forms on $|\X_{\bullet}|$. Indeed, we have $f^*[\square\wedge\square]=[f^*\square\wedge f^*\square]$ for any $f$, and so the necessary gluing on $|\X_{\bullet}|$ can be fulfilled. This implies that the operation $d\theta+\frac{1}{2}[\theta\wedge\theta]$ is well-defined for differential forms on $|\X_{\bullet}|$, and that we can define the curvature of a connection form on $|\X_{\bullet}|$, as in Definition \ref{definition of curvature}.  
\end{remark}

We understand also easily that a differential form on $|\p_{\bullet}|$ can be composed with a polynomial of $I^k(\fg)$ without losing the well-definiteness. We obtain:
\begin{definition}[Simplicial Chern-Weil homomorphism] \label{simplicial chern weil}
Let $\theta_{\bullet}$ be a connection of $\pi_{\bullet}: \p_{\bullet}\longrightarrow\m_{\bullet}$. We define the homomorphism given by:
$$\hat{\Phi}^{\p_{\bullet}}_{CW}: I^k(\fg)\longrightarrow H^{2k}(\m_{\bullet})\qquad \qquad f\mapsto [\phi_{CW}(f, \theta_{\bullet})]:=\{[\phi_{CW}^i(f, \theta_i)]\}_i$$
where $\phi_{CW}^n(f, \theta_n)$ is the Chern-Weil closed form corresponding to $f$, $\pi_n: \p_n \longrightarrow \m_n$ and $\theta_n$ as described in Theorem \ref{chern weil theorem teil 1}. The brackets $[*]$ means the cohomology class.
\end{definition}
Remark that this is obtained in the same way that the usual Chern-Weil homomorphism: $\{[\phi_{CW}^i(f, \theta_i)]\}_i$ is the same as the cohomology of the descent of $f\circ \bigwedge^k \kappa_{\bullet}$, with $\kappa_{\bullet}$ the curvature of $\theta_{\bullet}$. We see that it does have the same properties as the usual Chern-Weil homomorphism:

\begin{corollary}
The cohomology class on $|\m_{\bullet}|$ defined in Definition \ref{simplicial chern weil} is independent of the choice of the connection $\theta_{\bullet}$. It only depends on the isomorphy class of $\p_{\bullet}$ for a given $\m_{\bullet}$. 
\end{corollary}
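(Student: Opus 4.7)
The strategy is to adapt the classical proofs of the two parts of Theorem \ref{chern weil theorem 2} to the simplicial setting. The key observation is that every operation used in the Chern-Weil construction, namely the exterior derivative, the two wedge products of Definition \ref{wedge products}, and the evaluation of an invariant polynomial $f\in I^k(\fg)$, is defined componentwise on $|\p_{\bullet}|$ and commutes with the gluing pullbacks of Definition \ref{pseudo-real form}. Hence any identity obtained by natural algebraic manipulations at the level of an ordinary $G$-principal bundle is automatically valid on $|\p_{\bullet}|$ as soon as it holds at every level $\foo{}^n \times \p_n$, and the corresponding $G$-basic forms descend compatibly to $|\m_{\bullet}|$.

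For the independence from the connection, I would run the standard Chern-Simons transgression argument simultaneously at every level. Given two connections $\theta_{\bullet}$ and $\theta'_{\bullet}$ on $\p_{\bullet}$, the difference $\theta'_{\bullet} - \theta_{\bullet}$ is tensorial, so $\theta^t_{\bullet} := (1-t)\theta_{\bullet} + t\theta'_{\bullet}$ is again a connection for every $t\in[0,1]$, with curvature $\kappa^t_{\bullet} := d\theta^t_{\bullet} + \tfrac12[\theta^t_{\bullet}\wedge\theta^t_{\bullet}]$. I would then define the transgression form
\[
T(f;\theta_{\bullet},\theta'_{\bullet}) \;:=\; k\int_0^1 f\bigl(\theta'_{\bullet}-\theta_{\bullet},\,\kappa^t_{\bullet},\dots,\kappa^t_{\bullet}\bigr)\,dt,
\]
with $\kappa^t_{\bullet}$ repeated $k-1$ times. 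The classical identity $dT = f(\kappa'_{\bullet},\dots,\kappa'_{\bullet}) - f(\kappa_{\bullet},\dots,\kappa_{\bullet})$ follows from differentiating under the integral, the Bianchi identity $d\kappa^t = [\kappa^t \wedge \theta^t]$, and the $\mathrm{Ad}$-invariance of $f$; since all these ingredients hold level-by-level and involve only natural operations, they propagate to $|\p_{\bullet}|$. As $T$ is horizontal and $G$-invariant at each level, it descends to a form on $|\m_{\bullet}|$, exhibiting the two Chern-Weil forms as cohomologous and yielding $[\phi_{CW}(f,\theta_{\bullet})] = [\phi_{CW}(f,\theta'_{\bullet})]$.

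For the invariance under isomorphisms, let $F_{\bullet}\colon \p_{\bullet} \to \p'_{\bullet}$ be an isomorphism of simplicial $G$-principal bundles covering $\mathrm{id}_{\m_{\bullet}}$, and pick any connection $\theta'_{\bullet}$ on $\p'_{\bullet}$. By Lemma \ref{pullback of connection} applied at every level, $\theta_{\bullet} := F_{\bullet}^{\ast}\theta'_{\bullet}$ is a connection on $\p_{\bullet}$ whose curvature equals $F_{\bullet}^{\ast}\kappa'_{\bullet}$. Since polynomial evaluation and the wedge products commute with pullback, the Chern-Weil form associated with $(\p_{\bullet},\theta_{\bullet})$ is the $F_{\bullet}$-pullback of the one associated with $(\p'_{\bullet},\theta'_{\bullet})$. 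Descending to $|\m_{\bullet}|$ and using that $F_{\bullet}$ covers $\mathrm{id}_{\m_{\bullet}}$, the two descended forms coincide, so combined with the first part the simplicial Chern-Weil class depends only on the isomorphy class of $\p_{\bullet}$.

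The main obstacle I foresee is the bookkeeping behind the claim that $T(f;\theta_{\bullet},\theta'_{\bullet})$ is a genuine element of $\Omega^{2k-1}(|\p_{\bullet}|)$ in the sense of Definition \ref{pseudo-real form}, that is, that the gluing relations $(\tau_j\times\mathrm{id})^{\ast}T_i = (\mathrm{id}\times (d_j)_2)^{\ast}T_{i-1}$ and their $\eta_j$-counterparts hold. This reduces to commuting a fiber integral over the parameter $t\in[0,1]$ with the simplicial gluing pullbacks, which is a direct transcription of the argument behind Lemma \ref{switch the pullback}, once one records that the integrand is assembled using only the natural operations $d$, $\wedge$, $[\cdot\wedge\cdot]$, and $f$, and that $\theta^t_{\bullet}$ inherits the gluing from $\theta_{\bullet}$ and $\theta'_{\bullet}$.
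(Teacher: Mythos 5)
Your proof is correct, but it takes a genuinely different and more laborious route than the paper. The paper's argument is a one-liner: by Definition \ref{simplicial chern weil} the simplicial characteristic class is \emph{defined} as the tuple $\{[\phi_{CW}^i(f,\theta_i)]\}_i$ of ordinary level-wise Chern--Weil classes, each attached to the ordinary $G$-principal bundle $\foo{}^i\times\p_i\to\foo{}^i\times\m_i$ with connection $\theta_i$; the classical Chern--Weil theorem (Theorem \ref{chern weil theorem 2}) already says each of these is independent of the connection and depends only on the isomorphy class, so the tuple is too. You instead re-run the Chern--Simons transgression globally: you build a single transgression form $T(f;\theta_{\bullet},\theta'_{\bullet})$ compatible with the gluing relations of Definition \ref{pseudo-real form}, which exhibits the two Chern--Weil forms as cohomologous \emph{inside the complex} $\Omega^{\ast}(|\m_{\bullet}|)$. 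This is strictly more than the paper claims or needs under its definition of the class as a tuple of level-wise classes, but it is valuable if one worries that the map from $H^{\ast}(\Omega^{\ast}(|\m_{\bullet}|))$ to compatible tuples of level-wise classes (cf.\ the remark after Definition \ref{ext prod and deri for pseudo simplicial}) need not be injective: your argument shows the difference is a coboundary realized by a genuine form on $|\m_{\bullet}|$. The price is the gluing bookkeeping for $T$, which you correctly identify as the only delicate point and which does go through since $\theta^t_{\bullet}$ is assembled from $\theta_{\bullet}$ and $\theta'_{\bullet}$ by operations commuting with the pullbacks (Lemma \ref{addition of connections} with constant coefficients $1-t,\,t$ guarantees $\theta^t_{\bullet}$ is a connection at each level). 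Both arguments are sound; the paper's is the minimal one for its definition, yours establishes the sharper statement.
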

\begin{proof}
Follows directly from the Definition \ref{simplicial chern weil} and the theorem of Chern-Weil \ref{chern weil theorem 2}.
\end{proof}

\begin{lemma}\label{pullback of simplicial chern weil}
Let $\pi: \p_{\bullet}\longrightarrow \m_{\bullet}$ and $\pi': \p_{\bullet}' \longrightarrow\m_{\bullet}'$ be two simplicial $G$-principal bundles. Let $(f_1, f_2)$ be a simplicial $G$-principal bundles morphism from $\pi$ to $\pi'$. Then: $f_1^*\hat{\Phi}_{CW}^{\p_{\bullet}'}(f)= \hat{\Phi}_{CW}^{\p_{\bullet}}(f)$
\end{lemma}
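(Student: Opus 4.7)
The plan is to reduce this simplicial statement to the classical Chern-Weil naturality (Theorem \ref{chern weil theorem 2}, part 4) applied level-wise, together with the independence of the simplicial Chern-Weil class from the choice of connection.

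First, I would choose a connection $\theta_{\bullet}' = \{\theta_n'\}$ on $\pi': \p_{\bullet}' \longrightarrow \m_{\bullet}'$. Using Lemma \ref{pullback of connection} at each simplicial level (applied to the product bundle $\mathrm{id}_{\fooo{}^n} \times \pi_n'$ together with the $G$-principal bundle morphism $(\mathrm{id} \times (f_1)_n, \mathrm{id} \times (f_2)_n)$), I get that
$$f_2^*\theta_{\bullet}' := \{(\mathrm{id}_{\fooo{}^n} \times (f_2)_n)^* \theta_n'\}_{n \in \N_0}$$
is a collection of connections on $\mathrm{id} \times \pi_n$. I would check briefly that this collection satisfies the gluing conditions of Definition \ref{pseudo-real form} (this is immediate from the fact that $f_{\bullet}$ is a simplicial morphism, so face/degeneracy maps commute with $f_2$, and therefore $f_2^*$ preserves the gluing relations already satisfied by $\theta_{\bullet}'$). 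Hence $f_2^*\theta_{\bullet}'$ is a genuine connection on $\p_{\bullet}$.

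Next, I would apply the classical naturality statement (Theorem \ref{chern weil theorem 2}, fourth bullet) at each simplicial level $n$ to the $G$-principal bundle morphism $(\mathrm{id}_{\fooo{}^n} \times (f_1)_n, \mathrm{id}_{\fooo{}^n} \times (f_2)_n)$ between $\mathrm{id} \times \pi_n$ and $\mathrm{id} \times \pi_n'$. This yields the equality of cohomology classes on $\fooo{}^n \times \m_n$:
$$(\mathrm{id}_{\fooo{}^n} \times (f_1)_n)^* [\phi_{CW}^n(f, \theta_n')] = [\phi_{CW}^n(f, (\mathrm{id}_{\fooo{}^n} \times (f_2)_n)^* \theta_n')].$$
Unpacking Definition \ref{pullback on geom reali} for the pullback on differential forms on $|\m_{\bullet}|$, the left-hand side assembled over all $n$ is precisely $f_1^* \hat{\Phi}_{CW}^{\p_{\bullet}'}(f)$, while the right-hand side is $[\phi_{CW}(f, f_2^*\theta_{\bullet}')]$ in the notation of Definition \ref{simplicial chern weil}.

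Finally, I would invoke the preceding corollary (independence of $\hat{\Phi}_{CW}^{\p_{\bullet}}$ from the choice of connection) to replace $f_2^*\theta_{\bullet}'$ by an arbitrary connection on $\p_{\bullet}$, giving $[\phi_{CW}(f, f_2^*\theta_{\bullet}')] = \hat{\Phi}_{CW}^{\p_{\bullet}}(f)$, which finishes the proof. The only mildly delicate step is the well-definedness check that $f_2^*\theta_{\bullet}'$ really glues into an element of the formalism of Definition \ref{pseudo-real form}, but this follows purely formally from $f_{\bullet}$ being a simplicial morphism and from contravariance of pullback, exactly as in the proof of well-definedness given for Definition \ref{pullback on geom reali}. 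No genuinely new Chern-Weil calculation is required: the lemma is essentially the level-wise assembly of the classical result.
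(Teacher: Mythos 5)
Your proposal is correct and follows essentially the same route as the paper's proof: pick a connection on $\p_{\bullet}'$, use Lemma \ref{pullback of connection} to see that $f_2^*\theta_{\bullet}'$ is a connection on $\p_{\bullet}$, identify $f_1^*[\phi_{CW}(f,\theta_{\bullet}')]$ with the class computed from the pulled-back connection, and conclude by connection-independence. The only cosmetic difference is that you cite the classical naturality statement of Theorem \ref{chern weil theorem 2} level-wise where the paper re-derives it by hand via $f_2^*\kappa_{\bullet} = d f_2^*\theta_{\bullet} + \tfrac{1}{2}[f_2^*\theta_{\bullet}\wedge f_2^*\theta_{\bullet}]$; both are fine.
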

\begin{proof}
Let $\theta_{\bullet}$ be a connection on $\p_{\bullet}'$, and $\kappa_{\bullet}$ its curvature. Then, if we write it precisely, we have: 
$$\hat{\Phi}_{CW}^{\p_{\bullet}'}(f)=[\phi_{CW}(f, \theta_{\bullet})] $$
Thus, we have:
$$f_1^*\hat{\Phi}_{CW}^{\p_{\bullet}'}(f)=f_1^*[\phi_{CW}(f,\theta_{\bullet})] = [f^*_1\phi_{CW}(f,\theta_{\bullet})]$$
Let $v_1, \dots, v_{2k}$ be tangent vectors of $\m_n$ at a certain point. Then, $\phi_{CW}^n(f,\theta_n) (v_1, \dots, v_{2k})= f\circ \bigwedge^k \kappa_n (w_1, \dots, w_{2k})$, where the $w_i$ are tangent vectors of $\p_n$ such that $(\pi_n)_*(w_i)=v_i$. Since $f_1\circ \pi= \pi'\circ f_2$, we obtain:
$$(f_1^n)_*v_i=(f_1^n)_*(\pi_n)_*(w_i)= (\pi_n')_*((f_2^n)_*w_i)$$
Thus, $[f^*_1\phi_{CW}(f, \theta_{\bullet})]$ is equal to the descent on $\m_{\bullet}$ of $[f^*_2 (f\circ \bigwedge^k \kappa_{\bullet})]$. The last one is equal to $[ f\circ \bigwedge^k f^*_2\kappa_{\bullet}]$. Finally, $$f^*_2\kappa_{\bullet}= f^*_2(d\theta_{\bullet}+\frac{1}{2}[\theta_{\bullet}\wedge\theta_{\bullet}])= df^*_2\theta_{\bullet}+\frac{1}{2}[f^*_2\theta_{\bullet}\wedge f^*_2\theta_{\bullet}]$$
Because of Lemma \ref{pullback of connection}, we conclude that $f^*_2\theta_{\bullet}$ is a connection of $\p_{\bullet}$. Since
$f_2^*\hat{\Phi}_{CW}^{\p_{\bullet}'}(f)$ is equal to the cohomology class of the descent of $$ f\circ \bigwedge^k \left(df^*_2\theta_{\bullet}+[f^*_2\theta_{\bullet}\wedge f^*_2\theta_{\bullet}]\right)$$
we deduce that:
$$f_2^*\hat{\Phi}_{CW}^{\p_{\bullet}'}(f)= \hat{\Phi}_{CW}^{\p_{\bullet}}(f)$$
\end{proof}
With this lemma, we extended the last property presented in Theorem \ref{chern weil theorem 2} to the simplicial principal bundles. 
\begin{remark}\label{simp and non simp char class}
This new definition for the simplicial principal bundles is an extension of the original one for the characteristic class. Indeed: having the $G$-principal bundle $\pi: \p\longrightarrow\m$, we can construct a simplicial $G$-principal bundle, which is $p: \underline{\p}_{\bullet}\longrightarrow\underline{\m}_{\bullet}$, with the corresponding constant simplicial manifolds and $p_n= \pi$. As said in Remark \ref{form for constant simplicial}, a connection on $\underline{\p}_{\bullet}$ is the same connection $\theta$ on each $\p_n=\p$. We can thus do the same reasoning with $f\circ \bigwedge^k\theta$, and for $\phi_{CW}(f, \theta)$. We conclude thus that the restriction of the characteristic class for $\p_{\bullet}$ and $f$ on $\m_0$ (or $\m_n$ for any $n$) is equal to the characteristic class for $\p$ and $f$ on $\m$. 
\end{remark}
\subsection{Classifying space}
\begin{construction}\label{simplicial principal bundle}
Let $G$ be a Lie group. There are two main possibilities to represent it as a category. The first one also denoted $G$, is the usual representation of groups in category theory, with only one object, the singleton $\ast$, and $\text{Morph}(G)=G$, with the composition given by the group multiplication $g\circ h= g\cdot h$. \\
The second way to represent a group $G$, denoted $\mathfrak{G}$, is given as follows: the objects of $\mathfrak{G}$ are the elements of $G$, and the morphisms are the elements of $G\times G$, where the source and the target of $(g_0, g_1)$ are respectively $g_0$ and $g_1$. The composition is given by $(g_1, g_2)\circ (g_0, g_1)= (g_0, g_2)$. \\
There is a natural functor $\gamma: \mathfrak{G}\longrightarrow G$, sending each object of $\mathfrak{G}$ to the unique object of $G$, and sending $(g_0, g_1)$ to $g_1\cdot g_0^{-1}$. \\
There is furthermore a free and transitive $G$-action on $\mathfrak{G}$. Indeed, for each $g\in G$, we can define a functor $F_g: \mathfrak{G}\longrightarrow\mathfrak{G}$ sending each object $h$ to $h\cdot g$ and each morphism $(h_0, h_1)$ to $(h_0\cdot g, h_1\cdot g)$. Remark that $\gamma\circ F_g= \gamma$ and that $G$ (as a group) acts freely and transitively in the fibre of each morphism of $G$ (as a category) and of the singleton $\ast$ (trivially).\\
The next step of the construction is to consider the nerves $\n G_{\bullet}$ and $\n\mathfrak{G}_{\bullet}$.
For the first one, the elements of $\n G_n$ can be represented as $(g_1, \dots, g_n)$, where each $g_i$ is a morphism in the category $G$. We obtain then $d_i(g_1, \dots, g_n)= (g_1, \dots,g_{i+1}\cdot  g_i, \dots, g_n)$ if $i\neq 0, n$. We have also $d_0(g_1, \dots, g_n)=(g_2, \dots, g_n)$ and $d_n(g_1, \dots, g_n)=(g_1, \dots, g_{n-1})$. For the degeneracy maps, we have $s_i(g_1, \dots, g_n)= (g_1, \dots, g_i, e, g_{i+1}, \dots, g_n)$, with $e$ the neutral element of $G$.\\
On the other hand, the elements of $\n\mathfrak{G}_n$ can be represented as $((g_0, g_1), (g_1, g_2), \dots, ( g_{n+1}, g_n))$. Then, the face and degeneracy maps are the same as the ones described for $\n G_n$. But it is much easier to represent it as $(g_0, g_1, g_2, \dots, g_n)$, that is, as the succession of sources/targets of the composition of morphisms. Under this form, the face maps are $d_i(g_0,\dots,  g_n)=(g_0,\dots,g_{i-1}, g_{i+1}, \dots,   g_n)$ and the degeneracy maps are given by $s_i(g_0,\dots,  g_n)= (g_0,\dots,g_i, g_i, \dots,  g_n)$.

The first nerve will thus have the form $\n G_0=\{\ast\}$, $\n G_n=G^n$, the second one will be $\n\mathfrak{G}_n=G^{n+1}$. The functor $\gamma$ now implies a collection of projections between manifolds:
$$\gamma_n: \n \mathfrak{G}_n \longrightarrow \n G_n\qquad \qquad (g_0, \dots, g_i, \dots ,g_n)\mapsto(g_1\cdot g_{0}^{-1}, \dots g_i\cdot g_{i-1}^{-1},\dots , g_n\cdot g_{n-1}^{-1})$$
for $n\geq 1$, and the trivial projection for $n=0$. \\
It is easy to check that each $\gamma_n: \n \mathfrak{G}_n \longrightarrow \n G_n$ is a $G$-principal bundle with the $G$ action inherited from the one on $\mathfrak{G}$: $(g_0, \dots, g_n)\cdot g= (g_0\cdot g,\dots,  g_n\cdot g)$. One can also check that we have obtained a simplicial $G$-principal bundle, that is that $\gamma_{n-1}\circ d_i= d_i\circ \gamma_n$ and $s_i\circ \gamma_n=\gamma_{n+1}\circ s_i $. 
\end{construction}
\begin{definition}
The simplicial $G$-principal bundle $\gamma:=\{\gamma_n\}: \n\mathfrak{G}\longrightarrow\n G$ is called the \underline{classifying bundle}, and $\n G$ in particular is called the \underline{classifying space}. In some literature, $\n\mathfrak{G}$ is denoted $EG$ and $\n G$ is denoted $BG$. We will denote it so in the following.
\end{definition}
We come now to another construction:
\begin{construction}
Let $\pi:\p\longrightarrow\m$ be a $G$-principal bundle. Let furthermore $\{U_i\}$ be an open covering of $\m$ which is trivializing the $G$-principal bundle. Let also $\{\pi^{-1}(U_i)\}$ be the corresponding covering on $\p$. We obtain thus the respective \v{C}ech groupoids $\m_{U}$ and $\p_U$ as in Definition \ref{cech groupoid}, such as the corresponding \v{C}ech nerves $\n\m_U$ and $\n\p_U$ as in Construction \ref{cech nerve}. There is canonically a projection $\pi_n: (\n\p_U)_n\longrightarrow(\n\m_U)_n$, given by:
$$x\in \pi^{-1}(U_{i_0})\cap\dots \cap \pi^{-1}(U_{i_n})\subseteq \p \mapsto \pi(x)\in U_{i_0}\cap \dots\cap U_{i_n}\subseteq \m$$
There is also a $G$-right action on $(\n\p_U)_n$ which is free and transitive on the fibers $\pi^{-1}(y)$ for $y\in \m$, directly implied by the $G$-action on $\p$. We also easily check that the map $\{\pi_n\}_n$ commutes with the face and degeneracy maps of the \v{C}ech nerve, as described in Construction \ref{cech nerve}, and that the face and degeneracy maps are $G$-invariant. \\
Thus, we obtain a simplicial $G$-principal bundle $\{\pi_n\}: \n\p_U\longrightarrow\n\m_U$.
\end{construction}
\begin{lemma}\label{first pullback}
For a given $f\in I^k(\fg)$, we can deduce the characteristic class of $\n\p_U$ corresponding to $f$ by a canonical pullback of the characteristic class of $EG$. 
\end{lemma}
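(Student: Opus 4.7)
The plan is to realize the statement as a direct consequence of the functoriality of the simplicial Chern-Weil homomorphism (Lemma \ref{pullback of simplicial chern weil}), once a canonical simplicial morphism of $G$-principal bundles $\psi_\bullet : \n\p_U \longrightarrow EG$ (covering some $\bar\psi_\bullet : \n\m_U \longrightarrow BG$) has been exhibited. Essentially all of the work therefore goes into constructing this morphism in a level-wise fashion from the local trivializations of $\pi:\p\longrightarrow\m$ and then checking the compatibility with the simplicial structure.

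First I would fix, for each index $i$, a trivialization $\varphi_i:\pi^{-1}(U_i)\xrightarrow{\sim} U_i\times G$ of the cover, and write $\varphi_i(x)=(\pi(x),h_i(x))$, so that the $G$-components satisfy $h_i(x\cdot g)=h_i(x)\cdot g$ and $h_j(x)=\tau_{ij}(\pi(x))\cdot h_i(x)$ for the usual transition functions $\tau_{ij}:U_i\cap U_j\longrightarrow G$. I would then define, at level $n$,
\begin{align*}
  \psi_n:(\n\p_U)_n &\longrightarrow (EG)_n=G^{n+1}, & x\in\pi^{-1}(U_{i_0})\cap\dots\cap\pi^{-1}(U_{i_n}) &\longmapsto (h_{i_0}(x),\dots,h_{i_n}(x)),\\
  \bar\psi_n:(\n\m_U)_n &\longrightarrow (BG)_n=G^n, & y\in U_{i_0}\cap\dots\cap U_{i_n} &\longmapsto (\tau_{i_0i_1}(y),\dots,\tau_{i_{n-1}i_n}(y)).
\end{align*}
Smoothness is automatic since the $h_i$ and $\tau_{ij}$ are smooth.

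Next I would verify that $(\psi_n,\bar\psi_n)$ is a morphism of simplicial $G$-principal bundles, i.e. that the squares with $\gamma_n$ and $\pi_n$ commute, that $\psi_n$ is $G$-equivariant, and that both $\psi_\bullet$ and $\bar\psi_\bullet$ commute with face and degeneracy maps. The $G$-equivariance $\psi_n(x\cdot g)=\psi_n(x)\cdot g$ is immediate from $h_i(x\cdot g)=h_i(x)g$; the compatibility with $\gamma_n$ amounts to $h_{i_{j+1}}(x)h_{i_j}(x)^{-1}=\tau_{i_ji_{j+1}}(\pi(x))$, which is the cocycle identity defining the transition functions. Commuting with the face maps of $EG$ follows because $d_j$ on both sides simply omits the $j$-th entry, and for the degeneracies $s_j$ duplicates it; on the base, the face $d_j$ with $0<j<n$ uses the groupoid composition, and commutativity reduces to the further cocycle identity $\tau_{i_{j-1}i_j}\cdot\tau_{i_ji_{j+1}}=\tau_{i_{j-1}i_{j+1}}$, while the extreme cases $d_0$, $d_n$ are obvious from the definitions.

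Finally, having $(\psi_\bullet,\bar\psi_\bullet):\n\p_U\longrightarrow EG$ as a morphism of simplicial $G$-principal bundles, Lemma \ref{pullback of simplicial chern weil} applied to the classifying bundle immediately yields
\[
  \hat{\Phi}_{CW}^{\n\p_U}(f)=\bar\psi_\bullet^*\,\hat{\Phi}_{CW}^{EG}(f)\qquad\forall\,f\in I^k(\fg),
\]
which is the statement of the lemma. I expect the only real obstacle to be bookkeeping: one must keep track of the index conventions for the nerve and of the order of the $\tau_{ij}$'s in the transition cocycle so that the two squares (face and the $\gamma_n$/$\pi_n$ projection) really do match on the nose, rather than up to inversion. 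Once the conventions are fixed this is essentially a routine verification, and the non-trivial content of the lemma — that characteristic classes are preserved under such pullbacks — is already encapsulated in Lemma \ref{pullback of simplicial chern weil}.
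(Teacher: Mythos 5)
Your proposal is correct and follows essentially the same route as the paper: build the classifying morphism level-wise out of the transition functions of a trivializing cover, check it is a morphism of simplicial $G$-principal bundles, and then invoke Lemma \ref{pullback of simplicial chern weil}. Your map on total spaces, $x\mapsto(h_{i_0}(x),\dots,h_{i_n}(x))$, is literally the paper's $f_2^n$ rewritten intrinsically (the paper fixes the $U_{i_0}$-trivialization, so it must treat $d_0$ as a special case, which your formulation avoids); just note that with your convention $h_j=\tau_{ij}h_i$ the cocycle identity reads $\tau_{i_ji_{j+1}}\cdot\tau_{i_{j-1}i_j}=\tau_{i_{j-1}i_{j+1}}$, matching the order $g_{j+1}\cdot g_j$ in the face map of $BG$.
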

\begin{proof}
This lemma can be very easily proved by applying the lemma Lemma \ref{pullback of simplicial chern weil} if we give a well-defined morphism of simplicial $G$-principal bundles from $\n\p_U\longrightarrow\n\m_U$ to $EG_{\bullet}\longrightarrow BG_{\bullet}$. We define it in the following way: With the open covering $\{U_i\}$ on $\m$ which is trivializing $\pi:\p\longrightarrow\m$, we have the smooth transition maps $\Gamma_{i,j}: U_i\cap U_j\longrightarrow G$. Then, we set:
$$f_1^n: (\n\m_U)_{n}\longrightarrow BG_{n}, \qquad y\in U_{i_0}\cap \dots \cap U_{i_n}\mapsto ( \Gamma_{i_0, i_1}(y), \Gamma_{i_1, i_2}(y),\dots, \Gamma_{i_{n-1}, i_n}(y) )$$
On the other side, we define:
\begin{equation}\label{eq: transition maps}
\begin{aligned}
    &f_2^n: (\n\p_U)_n \cong (\n\m_U)_n \times G\longrightarrow EG_n,\\
    &(x,g)\in (U_{i_0}\cap \dots \cap U_{i_n})\times G \mapsto  (g, \Gamma_{i_0, i_1}(x)\cdot g, \Gamma_{i_1, i_2}(x)\cdot \Gamma_{i_0, i_1}(x)\cdot g, \dots,\Gamma_{i_{n-1}, i_n}(x) \cdots  \Gamma_{i_0, i_1}(x)\cdot g  )
\end{aligned}
\end{equation}
\\
The identification $(\n\p_U)_n \cong (\n\m_U)_n \times G$ is given in the following way: On $\pi^{-1}(U_{i_0}\cap \dots \cap U_{i_n})$ we use the trivialization corresponding to $U_{i_0}$ to have the identification $\pi^{-1}(U_{i_0}\cap \dots \cap U_{i_n})\cong(U_{i_0}\cap \dots \cap U_{i_n})\times G  $\\
Recall that the properties of the transition maps give us the equation: $$\Gamma_{i_{n-1}, i_n}(x) \cdots  \Gamma_{i_0, i_1}(x)= \Gamma_{i_0, i_n}(x)$$ We can thus rewrite the map (\ref{eq: transition maps}) as:
$$(x,g)\in (U_{i_0}\cap \dots \cap U_{i_n})\times G \mapsto ( \underbrace{\Gamma_{i_0, i_0}(x)}_{=\text{id}}\cdot g, \Gamma_{i_0, i_1}(x)\cdot g, \Gamma_{i_0, i_2}(x)\cdot g, \dots, \Gamma_{i_0, i_n}(x)\cdot g)$$
We should now prove that the map $(f_1, f_2)=\{(f_1^n, f_2^n)\}$ is a well-defined morphism of simplicial $G$-principal bundles: First of all, we should check that $f_2^n$ is $G$-invariant:
\begin{align*}
    f_2^n((x,g)\cdot h)= f_2^n(x,g\cdot h)&= (\Gamma_{i_0, i_0}(x)\cdot g\cdot h, \dots, \Gamma_{i_0, i_n}(x)\cdot g\cdot h) \\
    &=  (\Gamma_{i_0, i_0}(x)\cdot g, \dots, \Gamma_{i_0, i_n}(x)\cdot g)\cdot h =  f_2^n(x,g)\cdot h
\end{align*}

We should now check that $(f_1^n, f_2^n)$ forms a $G$-principal bundle morphism for each $n$. For $(x,g)\in EG_n$, we have:
\begin{align*}
    f_1^n\circ \pi_n (x,g)= f_1^n(x)&= (\Gamma_{i_0, i_1}(x), \Gamma_{i_1, i_2}(x), \dots, \Gamma_{i_{n-1}, i_n}(x))\\&= \gamma_n (\Gamma_{i_0, i_0}(x)\cdot g, \dots, \Gamma_{i_0, i_n}(x)\cdot g) = \gamma_n\circ f^n_2(x,g) 
\end{align*}

Thus $(f_1^n, f_2^n)$ is a $G$-principal bundle morphism. The last thing to check is that $f_1=\{f_1^n\}$ and $f_2=\{f_2^n\}$ are well-defined simplicial maps. Let $x\in BG_n$, $(x, g)\in EG_n$.
\begin{align*}
    \text{for}\; j\neq 0, n\qquad d_j\circ f_1^n (x)=& (\Gamma_{i_{0}, i_1}(x), \dots,\underbrace{\Gamma_{i_j,i_{j+1}}(x)\cdot \Gamma_{i_{j-1}, i_j}(x)}_{=\Gamma_{i_{j-1}, i_{j+1}}(x)}, \dots , \Gamma_{i_{n-1}, i_n}(x)) = f_1^{n-1}\circ d_j(x) \\
    d_0\circ f_1^n (x)=& (\Gamma_{i_{1}, i_2}(x), \dots,\Gamma_{i_{n-1}, i_n}(x))= f_1^{n-1}\circ d_0(x)\qquad \text{(analogous for $d_n$)}\\
    s_j\circ f_1^n (x)=&(\Gamma_{i_{0}, i_1}(x), \dots,\underbrace{\Gamma_{i_j,i_j}(x)}_{=\text{id}}, \dots , \Gamma_{i_{n-1}, i_n}(x))= f_1^{n+1}\circ s_j(x)\\
    \text{for}\; j\neq 0\qquad d_j\circ f_2^n (x, g)=& (\Gamma_{i_0, i_0}(x)\cdot g,\dots  ,(\widehat{\Gamma_{i_0, i_j}(x)\cdot g}), \dots, \Gamma_{i_0, i_n}(x)\cdot g)= f_2^{n-1}\circ d_j(x, g)\\
    s_j\circ f_2^n (x, g)=& (\Gamma_{i_0, i_0}(x)\cdot g,\dots, \Gamma_{i_0, i_j}(x)\cdot g, \underbrace{\Gamma_{i_0, i_j}(x)}_{=\Gamma_{i_j, i_j}(x)\cdot \Gamma_{i_0, i_j}(x)}\cdot g , \dots, \Gamma_{i_0, i_n}(x)\cdot g)\\=& f_2^{n+1}\circ s_j(x, g)
\end{align*}
Let us finally consider the case of $f_2^{n-1}\circ d_0$. The difference is here that the $g$ is well-defined using the trivialization of the first element of the intersection $U_{i_0}\cap \dots \cap U_{i_n}$. But after using $d_0$, the first element changes, and we should use the trivialization of $U_{i_1}$. Thus:
$$d_0(x, g)= (x, \Gamma_{i_0, i_1}(x)\cdot g)\in U_{i_1}\cap \dots \cap U_{i_n}$$ 
Applying $f_2^{n-1}$ on it, we obtain:
$$(\Gamma_{i_0, i_1}(x)\cdot g, \Gamma_{i_1, i_2}(x)\cdot\Gamma_{i_0, i_1}(x)\cdot g, \dots, \Gamma_{i_1, i_n}(x)\cdot\Gamma_{i_0, i_1}(x)\cdot g) = (\Gamma_{i_0, i_1}(x)\cdot g, \Gamma_{i_0, i_2}(x)\cdot g, \dots \Gamma_{i_0, i_n}(x)\cdot g) $$
which is exactly $d_0\circ f_2^n (x, g)$.\\
So we proved that the map $(f_1, f_2)$ is a well-defined morphism of simplicial $G$-principal bundles, and it gives us a canonical way to pull back the characteristic class of $\gamma_n: EG_{\bullet}\longrightarrow BG_{\bullet}$.
\end{proof}

\begin{construction}\label{rho}
Having the (non-simplicial) $G$-principal bundle $\pi: \p\longrightarrow\m$, we can construct another simplicial $G$-principal bundle, which is $p: \underline{\p}_{\bullet}\longrightarrow\underline{\m}_{\bullet}$ as in Remark \ref{simp and non simp char class}. We obtain then a morphism $(\{\rho_1^n\}, \{\rho_2^n\})$ from $\pi:\n\p_U \longrightarrow \n\m_U$ to $p:\underline{\p}_{\bullet}\longrightarrow\underline{\m}_{\bullet}$ given by the projection $(\rho_1, \rho_2)$ from the \v{C}ech nerves of $\m$ and $\p$ to the manifolds $\m$ and $\p$ themselves. We can easily check that $\rho_i^n\circ d_j= \rho_i^{n+1}$ and $\rho_i^n\circ s_j= \rho_i^{n-1}$, thus both $\rho_1$ and $\rho_2$ are simplicial maps ($d_j$ and $s_j$ are the identities for the constant simplicial manifolds). Also, $(\rho_1^n, \rho_2^n)$ is furthermore a $G$-principal bundles morphism. Thus, $(\rho_1, \rho_2)$ is well-defined as simplicial $G$-principal bundles morphism.
\end{construction}
We can summarize it: 
$$\xymatrix{
EG_{\bullet} \ar[d]_{\gamma} & \n\p_U \ar[d]_{\pi} \ar[l]_{f_2}\ar[r]^{\rho_2} & \underline{\p}_{\bullet}\ar[d]^{p}\\
BG_{\bullet} & \n\m_U \ar[l]^{f_1}\ar[r]_{\rho_1}& \underline{\m}_{\bullet}
}$$
\begin{remark}\label{second pullback}
    Recall from Remark \ref{simplicial form on usual manifolds}, that $\tot{}^*(\underline{\m}_{\bullet})= \Omega^*(\m)$, since $C^{p,q}(\underline{\m}_{\bullet})=0$ as soon as $p\neq 0$. On the other side, as explained in Remark \ref{extension of cech complex}, the complex of forms $\tot{}^*(\n\m_U)$ of $\n\m_U$ is the \v{C}ech total complex of $\m$ and thus the cohomology of $\n\m_U$ is the \v{C}ech cohomology of $\m$. Thus, if we take a form $\phi\in \Omega^k(\m) = \tot{}^k(\underline{\m}_{\bullet})$, and if we pull it back through $\rho_0$, it gives a form on $\Omega^k(B G_0)\times \{0\}\times \dots\times \{0\}$. More precisely, it will be the form $\bigsqcup_i \phi_{|U_i}$. It is well-known that this map implies an isomorphism of cohomology between the \v{C}ech cohomology and the De Rahm cohomology. The proof of it can be found in \cite{botttu}. The inverse map (we denote it $\hat{\rho}$) is also described there as an algorithm. Then, since pulling back the characteristic class of $\p$ for $f$ on $\m$ gives the characteristic class of $\n_U \p$ (Lemma \ref{pullback of simplicial chern weil}), then the image through $\hat{\rho}$ of this characteristic class is the characteristic class of $\p$ ($\hat{\rho}$ is the inverse of the pullback $\rho^*$). Remark that the use of the pullback property of the simplicial Chern-Weil is still correct, although we change from forms on $|BG_{\bullet}|$ to forms on $\tot{}^*(BG_{\bullet})$ because the pullbacks agree on their common cohomology (Lemma \ref{switch the pullback}).
\end{remark}
\begin{theorem}
Let $\pi: \p\longrightarrow\m$ be a $G$-principal bundle.
Given the characteristic class $\Phi^{EG_{\bullet}}(f)$ of $EG_{\bullet}$ for a certain $f\in I^k$, there is a canonical way to calculate the characteristic class of $\p$ relatively to $f$. 
\end{theorem}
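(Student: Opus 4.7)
The plan is to chase the characteristic class around the diagram
$$\xymatrix{
EG_{\bullet} \ar[d]_{\gamma} & \n\p_U \ar[d]_{\pi} \ar[l]_{f_2}\ar[r]^{\rho_2} & \underline{\p}_{\bullet}\ar[d]^{p}\\
BG_{\bullet} & \n\m_U \ar[l]^{f_1}\ar[r]_{\rho_1}& \underline{\m}_{\bullet}
}$$
already constructed in Lemma \ref{first pullback} and Construction \ref{rho}, using the naturality statement of Lemma \ref{pullback of simplicial chern weil} at each step, and then translating back from the \v{C}ech total complex to the ordinary De Rham complex of $\m$.

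First, I would start from $\Phi^{EG_{\bullet}}(f)\in H^{2k}(BG_{\bullet})$ and apply the simplicial Chern-Weil pullback formula of Lemma \ref{pullback of simplicial chern weil} to the $G$-principal simplicial bundle morphism $(f_1, f_2):\n\p_U\to EG_{\bullet}$ built in the proof of Lemma \ref{first pullback}. This yields
$$f_1^*\,\hat{\Phi}_{CW}^{EG_{\bullet}}(f)\;=\;\hat{\Phi}_{CW}^{\n\p_U}(f)\;\in\; H^{2k}(\n\m_U),$$
so the class on $\n\m_U$ is computed entirely out of the datum on $BG_{\bullet}$ together with the transition maps $\Gamma_{i,j}$. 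In the same way, naturality applied to $(\rho_1,\rho_2):\underline{\p}_{\bullet}\to \n\p_U$ (noting that $\rho_2^*$ of a connection on $\underline{\p}_{\bullet}$ is a connection on $\n\p_U$) gives
$$\rho_1^*\,\hat{\Phi}_{CW}^{\underline{\p}_{\bullet}}(f)\;=\;\hat{\Phi}_{CW}^{\n\p_U}(f).$$

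Second, I would invoke Remark \ref{simplicial form on usual manifolds} to identify $\tot{}^*(\underline{\m}_{\bullet})=\Omega^*(\m)$ and, under this identification, Remark \ref{simp and non simp char class} gives $\hat{\Phi}_{CW}^{\underline{\p}_{\bullet}}(f)=\Phi_{CW}^{\p}(f)\in H^{2k}(\m)$. As recalled in Remark \ref{second pullback}, the pullback $\rho_1^*: H^{2k}(\m)\to H^{2k}(\n\m_U)$ agrees with the classical comparison map between the De Rham and \v{C}ech cohomologies of $\m$ for the cover $\{U_i\}$, and is thus an isomorphism whose inverse $\hat\rho$ is furnished by the explicit algorithm of \cite{botttu}. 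Combining with Lemma \ref{switch the pullback} (so that pulling back through $\rho_1$ agrees whether we view classes on $|\underline{\m}_{\bullet}|$ or in $\tot{}^*(\underline{\m}_{\bullet})$), we obtain the canonical chain
$$\Phi_{CW}^{\p}(f)\;=\;\hat\rho\bigl(\,f_1^*\,\hat{\Phi}_{CW}^{EG_{\bullet}}(f)\,\bigr),$$
which is the desired canonical recipe: pull back the universal class along the transition-map cocycle, then invert the \v{C}ech--De Rham comparison.

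The routine algebra is essentially contained in the two cited lemmas; the only delicate point is the last step, where one must be sure that the comparison map $\rho_1^*$ really is an isomorphism and that its inverse $\hat\rho$ is canonical. I would therefore spend most of the write-up carefully stitching together the two naturality squares so that both identifications live in the same cohomology group $H^{2k}(\n\m_U)$, and then simply invert $\rho_1^*$. The conceptual obstacle is not a computation but the bookkeeping: one has to verify that the connections used at the three corners $EG_{\bullet}$, $\n\p_U$, $\underline{\p}_{\bullet}$ can be chosen compatibly (or, using the connection-independence part of Theorem \ref{chern weil theorem 2}, that the resulting cohomology classes do not depend on these choices), so that the three occurrences of $\hat{\Phi}_{CW}^{(\,\cdot\,)}(f)$ really refer to the same class seen from different sides of the diagram.
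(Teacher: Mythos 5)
Your proposal is correct and follows exactly the route the paper takes: pull the universal class back along the morphism $(f_1,f_2)$ of Lemma \ref{first pullback} to land in $H^{2k}(\n\m_U)$, then use the \v{C}ech--De Rham comparison of Remark \ref{second pullback} (via $\rho_1$ and its inverse $\hat\rho$) to descend to $H^{2k}(\m)$. The paper's own proof is just a one-sentence citation of those two steps, so your write-up is essentially an expanded version of the same argument, with the connection-independence and pullback-compatibility bookkeeping made explicit.
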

\begin{proof}
This theorem is already proved with the preceding remarks and constructions: indeed, we found a canonical way to calculate the characteristic class of $\n_U\p$ on $\n_U\m$ using the one on $BG_{\bullet}$ in lemma \ref{first pullback}, and then we found a canonical way to calculate the characteristic class of $\p$ on $\m$ using the one on $\n_U\m$ in remark \ref{second pullback}.
\end{proof}

\subsection{Shulman's construction}
Since we now proved that there is a general relation between the characteristic classes of any $G$-principal bundle and the classifying space of $G$, it is useful to find a canonical way of calculating the characteristic classes of the last. We will see in this part how to do it, by following a construction made by Shulman in his thesis \cite{shulman}.\\
Before we start, we need several small lemmas:
\begin{lemma}\label{addition of connections}
Let $\theta_1, \dots , \theta_n$ be connections on the $G$-principal bundle $\pi: \p\longrightarrow\m$, and $\lambda_1(x), \dots, \lambda_n(x)$ be real-valued functions on $\m$ such that $\sum^n_{i=1}\lambda_i(x)=1\; \forall x\in \m$. Then, $\sum^n_{i=1}\lambda_i\cdot \theta_i$ is again a connection.
\end{lemma}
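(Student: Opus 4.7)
The plan is to check the two defining conditions of a connection from Definition \ref{definition of connection} directly on the candidate $\fg$-valued $1$-form. Since $\theta_1,\dots,\theta_n$ live on $\p$ but the $\lambda_i$ are functions on $\m$, I first interpret the combination $\sum_i \lambda_i \cdot \theta_i$ as the form $\sum_i \pi^*\lambda_i \cdot \theta_i$ on $\p$; this is manifestly a $\fg$-valued $1$-form, so only the two axioms remain to verify.

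For the first axiom, I would fix $x\in\p$ and evaluate on the image of $\varphi_x:\fg\longrightarrow T_x\p$. The key observation is that $\varphi_x$ has its image in the vertical tangent space, so $\pi_*\circ\varphi_x=0$; consequently $(\pi^*\lambda_i)$ contributes only its pointwise value $\lambda_i(\pi(x))$ when contracted against $\varphi_x(v)$. Linearity then gives
\[
\bigl(\textstyle\sum_i\pi^*\lambda_i\cdot\theta_i\bigr)\bigl(\varphi_x(v)\bigr)
=\sum_i \lambda_i(\pi(x))\cdot\theta_i\bigl(\varphi_x(v)\bigr)
=\Bigl(\sum_i\lambda_i(\pi(x))\Bigr)\cdot v = v,
\]
using $\theta_i\circ\varphi_x=\mathrm{id}$ and the partition-of-unity-type condition $\sum_i\lambda_i\equiv 1$.

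For the second axiom, I exploit $\pi\circ R_g=\pi$, which yields $R_g^*\pi^*\lambda_i=\pi^*\lambda_i$. Because $Ad_g$ is $\R$-linear on $\fg$, it commutes with scalar multiplication by any real function, and hence
\[
Ad_g\,R_g^*\!\Bigl(\textstyle\sum_i\pi^*\lambda_i\cdot\theta_i\Bigr)
=\sum_i\pi^*\lambda_i\cdot Ad_g\,R_g^*\theta_i
=\sum_i\pi^*\lambda_i\cdot\theta_i,
\]
where the last step is the equivariance of each $\theta_i$. Combining the two paragraphs shows $\sum_i\pi^*\lambda_i\cdot\theta_i$ satisfies Definition \ref{definition of connection} and is thus a connection. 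There is no real obstacle here beyond being careful about where each object lives; the only point that could confuse a reader is the implicit pullback of the $\lambda_i$, which I would flag explicitly at the beginning of the proof.
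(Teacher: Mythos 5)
Your proof is correct and follows essentially the same route as the paper: verify both axioms of Definition \ref{definition of connection} directly, using $\sum_i\lambda_i=1$ for the first and the linearity of $Ad_g$ and $R_g^*$ for the second. Your version is in fact slightly more careful than the paper's, since you make explicit the pullback $\pi^*\lambda_i$ and the identity $R_g^*\pi^*\lambda_i=\pi^*\lambda_i$ coming from $\pi\circ R_g=\pi$, which the paper leaves implicit.
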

\begin{proof}
Clearly, it is again a $\fg$-valued $1$-form. Furthermore:
$$\left(\sum^n_{i=1}\lambda_i\cdot \theta_i\right)\circ \varphi_x= \sum^n_{i=1}\lambda_i\cdot \text{id}= \text{id}$$
Secondly, since the derivatives of maps are linear:
$$Ad_g\left((R_g)^*\left(\sum^n_{i=1}\lambda_i\cdot \theta_i\right)\right)=\sum^n_{i=1}\lambda_i\cdot Ad_g((R_g)^*(\theta_i))= \sum^n_{i=1}\lambda_i\cdot \theta_i$$
So we proved that it is a connection. 
\end{proof}
\begin{lemma}
Let $G$ be a Lie group. Then, the Maurer-Cartan form $\omega$ (see Definition \ref{def of maurer cartan}) fulfills the Maurer-Cartan equation, that is:
$$d\omega+\frac{1}{2}[\omega\wedge\omega]=0$$
\end{lemma}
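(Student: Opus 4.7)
The plan is to evaluate both sides of the Maurer-Cartan equation on an arbitrary pair of left-invariant vector fields and show they cancel. Since left-invariant vector fields span $T_gG$ at every $g\in G$, vanishing on such pairs is enough to conclude that the $\mathfrak{g}$-valued $2$-form $d\omega+\frac{1}{2}[\omega\wedge\omega]$ is identically zero.

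First I would unpack what $\omega$ does on a left-invariant vector field. If $X\in\fg$, viewed as the left-invariant vector field $X_g=(\Lie_g)_\ast X_e$, then by Definition \ref{def of maurer cartan},
$$\omega(X_g)=(\Lie_{g^{-1}})_\ast X_g=(\Lie_{g^{-1}})_\ast(\Lie_g)_\ast X_e=X_e.$$
In other words, the $\fg$-valued function $g\mapsto \omega(X_g)$ is \emph{constant} and equal to $X\in\fg$ (under the identification of Remark \ref{asso de MaurerCartan}). The analogous statement holds for $Y\in\fg$.

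Next I would apply the standard intrinsic formula for the exterior derivative of a $1$-form together with the definition of the Lie bracket on $\fg$. For left-invariant $X,Y$,
$$d\omega(X,Y)=X\bigl(\omega(Y)\bigr)-Y\bigl(\omega(X)\bigr)-\omega([X,Y])=0-0-[X,Y]=-[X,Y],$$
using that $\omega(X)$ and $\omega(Y)$ are constant and that the Lie bracket of left-invariant vector fields is again left-invariant, so $\omega([X,Y])=[X,Y]$ as an element of $\fg$. Then, using the wedge product convention of Definition \ref{wedge products} with $j=k=1$,
$$[\omega\wedge\omega](X,Y)=\sum_{\sigma\in\mathfrak{S}_2}\text{sign}(\sigma)\,[\omega(v_{\sigma(1)}),\omega(v_{\sigma(2)})]=[X,Y]-[Y,X]=2[X,Y],$$
so $\frac{1}{2}[\omega\wedge\omega](X,Y)=[X,Y]$. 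Adding these two contributions gives $0$, as desired.

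Finally, I would close by invoking the density argument: at every point $g\in G$, any two tangent vectors extend uniquely to left-invariant vector fields (Remark \ref{asso de MaurerCartan}), and both $d\omega$ and $[\omega\wedge\omega]$ are $\mathscr{C}^\infty(G)$-bilinear and alternating in their arguments as $2$-forms. Hence the pointwise identity $d\omega+\frac{1}{2}[\omega\wedge\omega]=0$ follows from the computation on left-invariant pairs. The only real obstacle is book-keeping: making sure that the convention in Definition \ref{wedge products} produces the factor $2$ (and not $1$) on $[\omega\wedge\omega]$ so that the coefficient $\frac{1}{2}$ in the statement exactly cancels the $-[X,Y]$ coming from $d\omega$; everything else is a direct application of the intrinsic $d$ formula plus the definition of $\omega$.
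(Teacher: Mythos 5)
Your proposal is correct and follows essentially the same route as the paper: evaluate on left-invariant vector fields, use the intrinsic formula for $d$ of a $1$-form together with the constancy of $\omega$ on such fields to get $d\omega(X,Y)=-[X,Y]$, and check that the convention of Definition \ref{wedge products} yields $[\omega\wedge\omega](X,Y)=2[X,Y]$. The only cosmetic difference is that the paper carries out the computation componentwise in a basis of $\fg$, while you argue directly with the $\fg$-valued form; both are fine.
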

\begin{proof}
Let us consider first $d\omega$. Let $v$ and $w$ be two tangent vectors on the point $g\in G$, and $\fX$ and $\fY$ be the corresponding left-invariant vector fields of $v$ and $w$. Let furthermore $\{e_1, \dots, e_n\}$ be a basis of $\fg$ (each $e_i$ is thus a left-invariant vector field). We can then write $\omega=\sum_{i=1}^n \omega_i\cdot e_i$. Furthermore, denote $\fX=\sum_{i=1}^n \fX_i\cdot e_i$ and $\fY=\sum_{i=1}^n \fY_i\cdot e_i$ with $\fX_i$ and $\fY_i$ being real numbers. By definition, $\omega_i(\fX)=\fX_i$. Then:
\begin{align*}
    d\omega(v, w)=& \sum_{i=1}^{n}d\omega_i(v, w)\cdot e_i
    = \sum_{i=1}^{n}d(\omega_i)_g(\fX, \fY)\cdot e_i\\
    =& \sum_{i=1}^{n}(\fX_g(\omega_i(\fY))-\fY_g(\omega_i(\fX))-(\omega_i)_g([\fX, \fY]))\cdot e_i
\end{align*}
Since $\fY$ is left-invariant, $\omega_i(\fY_g)=\fY_i\;\; \forall g\in G$, thus $\omega_i(\fY)$ is constant and $\fX(\omega_i(\fY))=0$. Analogously, we get $\fY(\omega_i(\fX))=0$. Finally:
\begin{align*}
    d\omega(v,w)=& \sum_{i=1}^{n} -\omega_i([\fX, \fY]_g)\cdot e_i
    =  \sum_{i=1}^{n} -\omega_i ([\fX, \fY]_e)\cdot e_i \\=& -\omega([\fX, \fY]_e)
    = -[\fX, \fY]= -[\omega(v), \omega(w)]
\end{align*}
Let us now consider the part $[\omega\wedge\omega]$:
\begin{align*}
    [\omega\wedge\omega]=& \left([\omega(v), \omega(w )]- [\omega(w), \omega(v)]\right)\\
    =& \left( \omega(v)\circ \omega(w) -\omega(w)\circ \omega(v) - \omega(w)\circ \omega(v) +\omega(v)\circ \omega(w)\right)\\
    =& 2\cdot \omega(v)\circ \omega(w) -2\cdot \omega(w)\circ \omega(v) = 2\cdot [\omega(v), \omega(w)]
\end{align*}
So we obtain $d\omega+\frac{1}{2}[\omega\wedge\omega]=0$. 
\end{proof}
\begin{corollary}\label{mc equation pullback}
Let $\m$ be a manifold and $f: \m\longrightarrow G$ a smooth map, $G$ being a Lie group equipped with the Maurer-Cartan form $\omega$. Then, $f^{\ast}\omega$ also fulfils the Maurer-Cartan equation.
\end{corollary}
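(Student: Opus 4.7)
The plan is to reduce the statement to the Maurer-Cartan equation for $\omega$ itself, which was just established in the preceding lemma, by using the naturality of both the exterior derivative and the Lie-algebra-valued wedge product $[\,\cdot\wedge\cdot\,]$ under pullback. Concretely, I would verify two compatibility properties: first, that $d(f^*\omega) = f^*(d\omega)$, and second, that $[f^*\omega \wedge f^*\omega] = f^*[\omega \wedge \omega]$.

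The first property is standard and, in the setting of $\fg$-valued forms, follows by fixing any basis $\{e_1,\dots,e_n\}$ of $\fg$, writing $\omega = \sum_i \omega_i\otimes e_i$, and applying the ordinary commutation $d\circ f^* = f^*\circ d$ component-wise; this is consistent with the definition of the exterior derivative on vector-valued forms given earlier. The second property follows directly from the explicit formula in Definition \ref{wedge products}: the pullback $f^*$ only reindexes tangent vectors via $f_*$, while $[\,\cdot\,,\cdot\,]$ is applied pointwise in $\fg$ and thus survives the pullback unchanged. Explicitly, for tangent vectors $v_1,v_2$ on $\m$,
\begin{align*}
[f^*\omega \wedge f^*\omega](v_1,v_2) &= [\omega(f_*v_1),\omega(f_*v_2)] - [\omega(f_*v_2),\omega(f_*v_1)] \\
&= f^*[\omega\wedge\omega](v_1,v_2).
\end{align*}

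Combining these, I would simply compute
$$d(f^*\omega) + \tfrac{1}{2}[f^*\omega \wedge f^*\omega] = f^*(d\omega) + \tfrac{1}{2}f^*[\omega\wedge\omega] = f^*\!\left(d\omega + \tfrac{1}{2}[\omega\wedge\omega]\right) = f^*(0) = 0,$$
using $\R$-linearity of $f^*$ and the Maurer-Cartan equation for $\omega$ on $G$ from the preceding lemma. There is really no obstacle to anticipate: the content of the corollary is entirely the naturality of $d$ and $[\,\cdot\wedge\cdot\,]$ with respect to pullbacks of Lie-algebra-valued forms, applied to an identity that already holds on $G$ itself.
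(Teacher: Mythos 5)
Your proposal is correct and follows exactly the paper's own argument: both establish $d(f^*\omega)=f^*(d\omega)$ and $[f^*\omega\wedge f^*\omega]=f^*[\omega\wedge\omega]$ (the latter by unwinding Definition \ref{wedge products} on a pair of tangent vectors) and then pull back the Maurer--Cartan equation from $G$. No gaps.
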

\begin{proof}
We already know that $f^{\ast}d\omega= d f^{\ast}\omega$. Furthermore, for $v$ and $w$ tangent vectors at $p\in \m$:
$$f^{\ast}[\omega\wedge\omega](v, w)= \omega(f_{\ast}v)\circ \omega(f_{\ast}w)-\omega(f_{\ast}w)\circ \omega(f_{\ast}v)= [f^{\ast}\omega\wedge f^{\ast}\omega]$$
Thus $d f^{\ast}\omega +\frac{1}{2}[f^{\ast}\omega\wedge f^{\ast}\omega]= f^*(d\omega+\frac{1}{2}[\omega\wedge\omega]) =0 $. 
\end{proof}

Now, we can introduce a canonical connection $1$-form on $EG_{\bullet}$. 
\begin{definition}\label{pullback omega} 
Let $G$ be a Lie group and $\text{id}\times \gamma: \foo{}^{\ast}\times EG_{\bullet}\longrightarrow \foo{}^{\ast}\times BG_{\bullet}$ be the pseudo-realization of the construction \ref{simplicial principal bundle}. Then, we define on each $\foo{}^n\times EG_n$ a $\fg$-valued 1-form $\hat{\omega}_n$, defined as:
\begin{equation}\label{eq: 1form simp}
\hat{\omega}_n=\sum^n_{j=0}t_j\cdot \omega_j
\end{equation}
with $t_j$ being the $j$-th barycentric coordinate of $\foo{}^n$ and $\omega_j$ the pullback of the Maurer-Cartan form on $G$ along the $j$-th projection $\pi_j: EG_n\cong G^{n+1}\longrightarrow G$. 
\end{definition}
\newpage
\begin{lemma}
The form (\ref{eq: 1form simp}) is a well-defined form in the sense of the definition \ref{pseudo-real form}, i.e. that $\hat{\omega}=\{\hat{\omega}_n\}$ is a differential form on $|EG_{\bullet}|$. 
\end{lemma}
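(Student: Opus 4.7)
The plan is to verify directly the two compatibility equations of Definition \ref{pseudo-real form}, namely
$(\tau_j\times \text{id})^{\ast}\hat{\omega}_i = (\text{id}\times d_j)^{\ast}\hat{\omega}_{i-1}$ on $\foo{}^{i-1}\times EG_i$ and
$(\eta_j\times \text{id})^{\ast}\hat{\omega}_{i-1} = (\text{id}\times s_j)^{\ast}\hat{\omega}_{i}$ on $\foo{}^{i}\times EG_{i-1}$.
Since $\hat{\omega}_n$ is a sum of products of a barycentric coordinate with a factor pulled back from $EG_n$, and since $(\tau_j\times\text{id})^*$ and $(\eta_j\times\text{id})^*$ only affect the barycentric part while $(\text{id}\times d_j)^*$ and $(\text{id}\times s_j)^*$ only affect the $EG$ part, the proof reduces to bookkeeping on two kinds of indices.

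First I would treat the face equation. Using the formulas (\ref{eq: maps on the simplex}), the map $\tau_j$ pulls the coordinates $(t_0,\dots,t_i)$ of $\foo{}^i$ back to the coordinates $(s_0,\dots,s_{i-1})$ of $\foo{}^{i-1}$ by $t_k=s_k$ for $k<j$, $t_j=0$, and $t_k=s_{k-1}$ for $k>j$, so
\[
(\tau_j\times\text{id})^{\ast}\hat{\omega}_i=\sum_{k<j}s_k\,\omega_k+\sum_{k>j}s_{k-1}\,\omega_k.
\]
On the other side, since $d_j(g_0,\dots,g_i)=(g_0,\dots,\widehat{g_j},\dots,g_i)$ one has $\pi_k\circ d_j=\pi_k$ for $k<j$ and $\pi_k\circ d_j=\pi_{k+1}$ for $k\geq j$; hence $(\text{id}\times d_j)^{\ast}\omega_k$ (viewed on $EG_i$) equals $\omega_k$ for $k<j$ and $\omega_{k+1}$ for $k\geq j$. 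Reindexing the resulting sum yields exactly the same expression.

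Next I would do the degeneracy equation in the same spirit. Under $\eta_j$ the coordinates pull back as $t_k=s_k$ for $k<j$, $t_j=s_j+s_{j+1}$, and $t_k=s_{k+1}$ for $k>j$, so
\[
(\eta_j\times\text{id})^{\ast}\hat{\omega}_{i-1}=\sum_{k<j}s_k\,\omega_k+s_j\,\omega_j+s_{j+1}\,\omega_j+\sum_{l>j+1}s_l\,\omega_{l-1}.
\]
On the other hand $s_j(g_0,\dots,g_{i-1})=(g_0,\dots,g_j,g_j,g_{j+1},\dots,g_{i-1})$ gives $\pi_k\circ s_j=\pi_k$ for $k\leq j$ and $\pi_k\circ s_j=\pi_{k-1}$ for $k>j$, so $(\text{id}\times s_j)^{\ast}\omega_k$ equals $\omega_k$ for $k\leq j$ and $\omega_{k-1}$ for $k>j$; splitting the sum $\sum_{k=0}^i s_k\omega'_k$ at $k=j$ and $k=j+1$ reproduces the previous display.

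There is no real obstacle here: the form $\hat{\omega}_n$ was designed precisely so that the barycentric coefficients and the Maurer-Cartan pullbacks shift in compatible ways under $\tau_j/d_j$ and $\eta_j/s_j$. The only thing to be careful about is the convention that $d_j$ removes the $j$-th entry while $\tau_j$ inserts a zero at position $j$ (and similarly that $s_j$ duplicates while $\eta_j$ contracts), so that the index shifts match. Once the two computations above are carried out, the differential form $\hat{\omega}=\{\hat{\omega}_n\}$ satisfies both gluing conditions and thus defines an element of $\Omega^1(|EG_\bullet|,\fg)$.
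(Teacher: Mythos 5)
Your proof is correct and follows essentially the same route as the paper: a direct verification of the two gluing conditions of Definition \ref{pseudo-real form}, tracking how $\tau_j,\eta_j$ shift the barycentric coefficients and how $d_j,s_j$ shift the projections $\pi_k$ (the paper phrases the same bookkeeping by evaluating on tangent vectors $v\times(w_0,\dots,w_n)$ rather than by pulling back coordinates, but the computation is identical). The index conventions you single out as the only delicate point are indeed the crux, and your shifts match the paper's.
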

\begin{proof}
Let us first work on $\foo{}^{n-1}\times EG_n$. Let $v\times (w_0, \dots, w_n)$ be a tangent vector at the point $(t_0, \dots, t_{n-1})\times (g_0, \dots, g_n)$. Then:
\begin{align*}
    (\text{id}\times d_i)^*\hat{\omega}_{n-1} (v\times (w_0, \dots, w_n))&= \hat{\omega}_{n-1} (v\times (w_0, \dots,\widehat{w_i}, \dots, w_n))\\&= \sum_{j=0}^{i-1}t_j\cdot \omega(w_j) + \sum_{j=i}^{n-1}t_j\cdot \omega(w_{j+1})
\end{align*}
On the other side, we have:
\begin{align*}
    (\tau_i\times \text{id})^*\hat{\omega}_{n}(v\times (w_0, \dots, w_n)) &= \hat{\omega}_{n}((\tau_i)_*v\times (w_0, \dots, w_n))\\
    &= \sum_{j=0}^{i-1}t_j\cdot \omega(w_j) + \sum_{j=i}^{n-1}t_j\cdot \omega(w_{j+1})
\end{align*}
Thus : $(\tau_j\times \text{id})^{\ast}\hat{\omega}_i= (\text{id}\times d_j)^{\ast} \hat{\omega}_{i-1}$. Let us now work on $\foo{}^n\times EG_{n-1}$. Let $v\times (w_0, \dots, w_{n-1})$ be a tangent vector at the point $(t_0, \dots, t_n)\times (g_0, \dots, g_{n-1})$. Then:
\begin{align*}
    (\text{id}\times s_i)^* \hat{\omega}_n (v\times (w_0, \dots, w_{n-1})) &= \hat{\omega}_n (v\times (w_0,\dots, w_i, w_i \dots, w_{n-1}))\\ &= \sum_{j=0}^{i-1}t_j\cdot \omega(w_j) + (t_i+t_{i+1})\cdot \omega(w_i) + \sum_{j=i+2}^{n}t_j\cdot \omega(w_{j-1})
\end{align*}
On the other hand, we get: 
\begin{align*}
    (\eta_i\times \text{id})^* \hat{\omega}_{n-1} (v\times (w_0, \dots, w_{n-1})) &= \hat{\omega}_{n-1} ((\eta_i)_*v\times (w_0,\dots w_{n-1}))\\&= \sum_{j=0}^{i-1}t_j\cdot \omega(w_j) + (t_i+t_{i+1})\cdot \omega(w_i) + \sum_{j=i+2}^{n}t_j\cdot \omega(w_{j-1})
\end{align*}
So we have: $(\eta_j\times \text{id})^{\ast}\hat{\omega}_{i-1}= (\text{id}\times s_j)^{\ast} \hat{\omega}_{i}$.
So the well-definiteness is checked.
\end{proof}
\begin{lemma}
The differential form $\hat{\omega}_n$ is a connection on the $G$-principal bundle $EG_n\longrightarrow  BG_n$. 

\end{lemma}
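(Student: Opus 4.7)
The plan is to verify the two defining properties of a connection form from Definition \ref{definition of connection} applied to the trivially-extended principal bundle $\text{id}\times \gamma_n : \foo{}^n\times EG_n \longrightarrow \foo{}^n\times BG_n$, where the right $G$-action is $(t,(g_0,\dots,g_n))\cdot g = (t,(g_0 g,\dots,g_n g))$.

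For the first property, I would compute the fundamental vector field $\varphi_x(X)$ for $x=(t,(g_0,\dots,g_n))$ and $X\in\fg$ by differentiating the curve $s\mapsto x\cdot g(s)$ with $g(0)=e$, $g'(0)=X$: this yields the tangent vector $(0,(L_{g_0})_\ast X,\dots,(L_{g_n})_\ast X)$, where $L_{g_j}$ is left multiplication. Then $\pi_{j\ast}\varphi_x(X) = (L_{g_j})_\ast X$, so by the very definition of the Maurer--Cartan form $\omega$ (Definition \ref{def of maurer cartan}) we get $\omega_j(\varphi_x(X)) = \omega((L_{g_j})_\ast X) = X$. Summing, $\hat{\omega}_n(\varphi_x(X)) = \sum_{j=0}^n t_j\cdot X = X$, since the $t_j$ are barycentric coordinates. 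Hence $\hat{\omega}_n\circ \varphi_x = \text{id}$.

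For the second property, the key computation is showing $R_g^\ast\omega = Ad_{g^{-1}}\omega$ on $G$. I would derive it from $\omega_h(v)=(L_{h^{-1}})_\ast v$ by writing $(R_g^\ast\omega)_h(v) = (L_{g^{-1}h^{-1}})_\ast R_{g\ast}v$, then using the commutativity $L\circ R = R\circ L$ to reorganize this as $(L_{g^{-1}}\circ R_g)_\ast\omega_h(v)$; since $L_{g^{-1}}\circ R_g$ is exactly the conjugation $\psi(g^{-1})$ in the sense of Definition \ref{adjoint representation}, its derivative is $Ad_{g^{-1}}$. Since $\pi_j$ commutes with right multiplication by $g$ on each factor, this immediately gives $R_g^\ast \omega_j = Ad_{g^{-1}}\omega_j$. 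As the coordinates $t_j$ are untouched by $R_g$, linearity yields $R_g^\ast\hat{\omega}_n = Ad_{g^{-1}}\hat{\omega}_n$, and applying $Ad_g$ on both sides closes the argument.

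Neither step should present any genuine obstacle; the whole statement reduces to the standard fact that the Maurer--Cartan form itself is a connection on the trivial $G$-bundle over a point, plus the observation that the convex combination with weights $t_j$ (which are $G$-invariant and sum to $1$) preserves both connection axioms, much in the spirit of Lemma \ref{addition of connections}. The only point requiring mild care is bookkeeping with left- vs.\ right-translations when computing the fundamental vector field and when deriving $R_g^\ast\omega = Ad_{g^{-1}}\omega$; I would make sure to pull back from the right point and keep the order of $L$ and $R$ straight.
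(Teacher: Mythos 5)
Your proposal is correct and follows essentially the same route as the paper: both arguments reduce to checking that each pullback $\omega_j$ of the Maurer--Cartan form satisfies the two connection axioms (your pushforward computation of $\varphi_x(X)$ and the identity $R_g^\ast\omega = Ad_{g^{-1}}\omega$ via $L_{g^{-1}}\circ R_g = \psi(g^{-1})$ is a coordinate-free rephrasing of the paper's curve-based calculation), and then conclude by taking the convex combination with the barycentric weights $t_j$, which is exactly the content of Lemma \ref{addition of connections} that the paper invokes at this point.
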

\begin{proof}
Let us first consider $\omega_i$. It is a $\fg$-valued 1-form on $\foo{}^n\times EG_n$. Let us prove first that $\omega_i\circ \varphi_x=\text{id}$ as in Definition \ref{definition of connection}. Consider a smooth curve $g(t)\subset G$ such that $g(0)=e$, and thus representing a tangent vector $\bar{g}\in T_e G$. Then $\varphi_x(\bar{g})$, for $x=p\times (g_0, \dots , g_n)\in \foo{}^n\times EG_n$, will be the tangent vector represented by $x\cdot g(t)= p\times(g_0\cdot g(t), \dots, g_n\cdot g(t))$. Applying $\omega_i$ on it is the same as applying the Maurer-Cartan form $\omega$ of $G$ on $g_i\cdot g(t)$. The result will be represented by the curve $(g_i\cdot g(0))^{-1}\cdot g_i\cdot g(t)= g(t)$. Thus:
$$\omega_i\circ \varphi_x=\text{id}$$
Let us now prove that $Ad_g(R_g^{\ast}(\omega_i))=\omega_i$ for all $g\in G$. Let $p(t)\times (g_0(t), \dots, g_n(t))$ be a curve in $\foo{}^n\times EG_n$, representing the tangent vector $\bar{g}$. Then $\omega_i(\bar{g})= \omega((\pi_i)_*(\bar{g}))$ is the left-invariant vector field on $G$ represented at $e$ by the curve $(g_i(0))^{-1}\cdot g_i(t)$. Applying $R_g^*$, we obtain the left-invariant vector field represented at $g$ by the curve $(g_i(0))^{-1}\cdot g_i(t)\cdot g$ and thus at $e$ by the curve $g^{-1}\cdot (g_i(0))^{-1}\cdot g_i(t)\cdot g$. Finally, applying $Ad_g$ gives us the left-invariant vector field defined at $e$ by: 
$$g\cdot g^{-1}\cdot (g_i(0))^{-1}\cdot g_i(t)\cdot g\cdot g^{-1}= (g_i(0))^{-1}\cdot g_i(t) $$
Thus:
$$Ad_g(R_g^{\ast}(\omega_i))=\omega_i$$
It follows that $\omega_i$ is a connection for each $i$, thus applying Lemma \ref{addition of connections} and knowing that $\sum_{i=0}^n t_i=1$ by definition, it follows that $\hat{\omega}_n$ is a connection.
\end{proof}
The next step for obtaining the characteristic class is to calculate the curvature of this connection:
\begin{construction}\label{connection on ngn}
The curvature $\kappa_n$ of the differential form $\hat{\omega}_n$ can be calculated as following:
\begin{align*}
    \kappa_n =& d\omega_n +\frac{1}{2}[\omega_n\wedge\omega_n]\\
    =& d\left(\sum_{j=0}^{n} t_j\wedge \omega_j \right)+\frac{1}{2} \left[\left(\sum_{i=0}^{n} t_i\cdot \omega_i\right)\wedge\left(\sum_{j=0}^{n} t_j\cdot \omega_j\right)\right]\\
    =& \sum_{j=0}^{n} \left(dt_j\wedge \omega_j + t_j\wedge d\omega_j\right) + \frac{1}{2}\sum_{\substack{i=0\\ j=0}}^n t_i\cdot t_j\cdot[\omega_i\wedge \omega_j]\\
    =& \sum_{j=0}^n dt_j\wedge \omega_j + \sum_{j=0}^n\left(t_j\cdot d\omega_j +\frac{1}{2} t_j^2[\omega_j\wedge\omega_j]\right)+  \sum_{0\leq i<j \leq n} t_i\cdot t_j [\omega_i\wedge\omega_j]
\end{align*}
Where the last part of the last line is obtained using the fact that $[\omega_i\wedge \omega_j]= [\omega_j\wedge \omega_i]$ (see Lemma \ref{commutation of wedge product}).
Since $\omega_i$ fulfills the Maurer-Cartan equation (Corollary \ref{mc equation pullback}), it follows that $d\omega_i=-\frac{1}{2} [\omega_i\wedge\omega_i]$ for all $i$. Thus:
\begin{equation}\label{eq: curvature}
    \kappa^n= \sum_{j=0}^n dt_j\wedge \omega_j + \frac{1}{2}\sum_{j=0}^n (t_j^2-t_j)\cdot[\omega_j\wedge\omega_j]+ \sum_{0\leq i<j \leq n} t_i\cdot t_j [\omega_i\wedge\omega_j] 
\end{equation}

We further know that $t_0= 1 - \sum_{i=1}^n t_i$. Thus:
$$dt_0\wedge \omega_0= d\left(1 - \sum_{i=1}^n t_i\right)\wedge \omega_0 = -\left(\sum_{i=1}^n dt_i\right)\wedge \omega_0= -\sum_{i=1}^n dt_i\wedge \omega_0$$
For the first term of (\ref{eq: curvature}), it follows that:
$$\sum_{j=0}^n dt_j\wedge\omega_j= \sum_{j=1}^n dt_j\wedge (\omega_j - \omega_0)$$
If we consider the second term of (\ref{eq: curvature}):
\begin{align*}
     \frac{1}{2}(t_0^2-t_0)\cdot [\omega_0\wedge \omega_0]=& \frac{1}{2} \left(\left(1 - \sum_{i=1}^n t_i\right)^2-1 + \sum_{i=1}^n t_i\right)\cdot [\omega_0\wedge \omega_0]\\
    =& \frac{1}{2} \left(-\sum_{i=1}^n t_i + \left(\sum_{i=1}^n t_i\right)^2\right)\cdot [\omega_0\wedge \omega_0]\\
    =& \frac{1}{2} \left(-\sum_{i=1}^n t_i + \sum_{i=1}^n t_i^2 + 2\cdot \sum_{1\leq i<j \leq n} t_i\cdot t_j\right)\cdot [\omega_0\wedge \omega_0]\\
    =& \frac{1}{2}\underbrace{  \sum_{i=1}^n (t_i^2-t_i)\cdot [\omega_0\wedge \omega_0]}_{\text{\barroman{I}}}  +  \underbrace{\sum_{1\leq i<j \leq n} (t_i\cdot t_j) \cdot [\omega_0\wedge \omega_0]}_{\text{\barroman{II}}}
\end{align*}

We obtain for the third term of (\ref{eq: curvature}):
\begin{align*}
    &\sum_{0\leq i<j \leq n} t_i\cdot  t_j [\omega_i\wedge\omega_j] + \text{\barroman{II}}= \sum_{1\leq i<j \leq n} t_i\cdot t_j [\omega_i\wedge\omega_j] + \sum_{1\leq i<j \leq n} t_i\cdot t_j \cdot [\omega_0\wedge \omega_0] + \sum_{j=1}^n t_0 \cdot t_j [\omega_0\wedge \omega_j]\\
    =&\sum_{1\leq i<j \leq n} t_i\cdot t_j [\omega_i\wedge\omega_j] + \sum_{1\leq i<j \leq n} (t_i\cdot t_j) \cdot [\omega_0\wedge \omega_0] + \sum_{j=1}^n \left(t_j - \sum_{i=1}^n t_j\cdot t_i\right) [\omega_0\wedge \omega_j]\\
    =&\sum_{1\leq i<j \leq n} \left((t_i\cdot t_j)\cdot [\omega_i\wedge\omega_j] +  (t_i\cdot t_j) \cdot [\omega_0\wedge \omega_0] - (t_i\cdot t_j) \cdot [\omega_0\wedge \omega_i]- (t_i\cdot t_j) \cdot [\omega_0\wedge \omega_j]\right)+ \\&\qquad\qquad\qquad\qquad\qquad\qquad\qquad\qquad\qquad\qquad\qquad\qquad\qquad\qquad\qquad\qquad \left.\;\;\sum_{j=1}^n \left(t_j - t_j^2\right) [\omega_0\wedge \omega_j]\right. \\
\end{align*}
Knowing that:
\begin{equation}\label{eq: curv2}
    [\omega_i - \omega_0 \wedge \omega_j - \omega_0]= [\omega_i \wedge \omega_j ] - [\omega_0 \wedge \omega_j ] - \underbrace{[\omega_i \wedge \omega_0 ]}_{=  [\omega_0 \wedge \omega_i ]} + [\omega_0 \wedge \omega_0 ]
\end{equation}
we obtain: 
$$\sum_{0\leq i<j \leq n} t_i\cdot  t_j [\omega_i\wedge\omega_j] + \text{\barroman{II}}= \sum_{1\leq i<j \leq n} t_i\cdot  t_j [\omega_i- \omega_0\wedge\omega_j-\omega_0] - \underbrace{\sum_{j=1}^n \left(t_j^2 - t_j\right) [\omega_0\wedge \omega_j]}_{\barroman{III}}$$
Finally, we get, again because of (\ref{eq: curv2}):
$$\frac{1}{2}\sum_{j=1}^n (t_j^2-t_j)\cdot[\omega_j\wedge\omega_j]  +  \frac{1}{2}\text{\barroman{I}}-\text{\barroman{III}}=\frac{1}{2} \sum_{j=1}^n (t_j^2-t_j)\cdot[\omega_j- \omega_0\wedge\omega_j- \omega_0]$$
Thus the curvature of $\hat{\omega}_n$ is given by:
$$\kappa_n = \sum_{j=1}^n dt_j\wedge (\omega_j - \omega_0) +\frac{1}{2} \sum_{j=1}^n (t_j^2-t_j)\cdot[\omega_j- \omega_0\wedge\omega_j- \omega_0] + \sum_{1\leq i<j \leq n} t_i\cdot  t_j [\omega_i- \omega_0\wedge\omega_j-\omega_0]$$
\end{construction}
Now, the end of the construction is follows:
\begin{definition}[The Shulman's construction]\label{The Shulman's construction}
Let $G$ be a Lie group and let $f\in I^k(\fg)$. Then we can associate to $f$ a sequence of differential forms given by:
$$\chi_n(f)= \int_{\foo{}^n}f\circ \bigwedge^k\kappa_n$$
for each $n$, with $\kappa_n$ being equal:
$$\kappa_n = \sum_{j=1}^n dt_j\wedge (\omega_j - \omega_0) +\frac{1}{2} \sum_{j=1}^n (t_j^2-t_j)\cdot[\omega_j- \omega_0\wedge\omega_j- \omega_0] + \sum_{1\leq i<j \leq n} t_i\cdot  t_j [\omega_i- \omega_0\wedge\omega_j-\omega_0]$$
\end{definition}
\begin{remark}
The construction is in fact the following one: with $\hat{\omega}$, we defined a connection $1$-form for the simplicial $G$-principal bundle $\gamma:EG _\bullet\longrightarrow BG_{\bullet}$. As explained in Definition \ref{simplicial chern weil}, we can construct a characteristic class of $f\in I^k(\fg)$. For that, we need to calculate $f\circ \bigwedge^k\kappa_n$ on $EG_\bullet$. The idea of Shulman was to change the definition of differential form at this moment, that is: since the current definition follows Definition \ref{pseudo-real form}, we use now the map defined in Lemma \ref{chain complex map} to get a differential form in the definition given by Definition \ref{lalaoui}. Since the two definitions give rise to the same cohomology, and the aim of the Chern-Weil homomorphism is to get a cohomology class, this is still well-defined.
\end{remark}
\begin{theorem}[Shulman's Theorem]\label{Shulman theorem}
Let $f\in I^k(\fg)$. Then Shulman's construction fulfils the following equations:
\begin{enumerate}
    \item $\chi_n(f)=0$ if $n>k$, with $k$ the degree of $f$.
    \item $0= \delta\chi_{n-1}(f) + (-1)^n d\chi_n(f)$, with $\delta$ as in Definition \ref{definition of normalized complex}. 
    \item $s_i \chi_n(f)=0$ for all $i\in [ 0, n ]$. 
    \item $\chi_0(f)=0$.
\end{enumerate}
\end{theorem}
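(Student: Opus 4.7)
The plan is to handle the four claims roughly in reverse order of depth. Claims (1) and (4) are degree/Maurer--Cartan observations, claim (3) is a direct citation of Lemma \ref{well defined integral}, and claim (2) is the substantive part, following from the simplicial Chern--Weil closedness combined with the chain-map property of fiber integration (Lemma \ref{chain complex map}).

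For claim (1), inspect the explicit formula for $\kappa_n$ obtained at the end of Construction \ref{connection on ngn}: every summand contains at most one factor $dt_j$. Hence any monomial appearing in $\bigwedge^k\kappa_n$ contains at most $k$ distinct $dt_j$'s, whereas the fiber integration $\int_{\foo{}^n}$ only picks out contributions containing the top form $dt_1\wedge\cdots\wedge dt_n$ on $\foo{}^n$, which requires $n$ distinct $dt_j$'s. When $n>k$ this is impossible, so the integrand contributes zero. For claim (4), note that $\foo{}^0$ is a single point on which $t_0=1$, so $\hat{\omega}_0$ collapses to the Maurer--Cartan form $\omega$ on $EG_0=G$; the Maurer--Cartan equation then gives $\kappa_0 = d\omega+\tfrac{1}{2}[\omega\wedge\omega]=0$, and therefore $f\circ\bigwedge^k\kappa_0=0$. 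Claim (3) is the statement that the fiber integration of any form on the geometric realization lands in the normalized total complex, which is exactly the content of Lemma \ref{well defined integral} applied to $\phi=\{f\circ\bigwedge^k\kappa_n\}$.

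The heart of the theorem is claim (2), and the plan is to show that $\{\chi_n(f)\}_n$ is a closed element of $\tot{}^{2k}(BG_\bullet)$; unpacking the definition of the total differential $D$ then shows that its $n$-th component is precisely $\delta\chi_{n-1}(f)+(-1)^n d\chi_n(f)$, which must therefore vanish. To prove closedness, first observe that $\hat{\omega}_n$ is a connection on each $\foo{}^n\times EG_n\to\foo{}^n\times BG_n$, so by Theorem \ref{chern weil theorem teil 1} applied level-wise, $f\circ\bigwedge^k\kappa_n$ descends to a closed $2k$-form on $\foo{}^n\times BG_n$ (Bianchi identity together with $Ad$-invariance of $f$). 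The step that needs genuine care is checking that these descended forms are compatible with the face and degeneracy maps, so that $\phi:=\{f\circ\bigwedge^k\kappa_n\}$ is a well-defined form on $|BG_\bullet|$ in the sense of Definition \ref{pseudo-real form}; this follows from the already-verified compatibility of $\hat{\omega}$ itself, since both $d+\tfrac{1}{2}[\,\cdot\wedge\cdot\,]$ and the multilinear pairing $f$ commute with pullback. Once this verification is in hand, Lemma \ref{chain complex map} gives
$$D\Bigl\{\int_{\foo{}^n}\phi_n\Bigr\} = \Bigl\{\int_{\foo{}^n}d\phi_n\Bigr\} = 0,$$
and reading off the $n$-th component finishes claim (2). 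The only genuine obstacle is thus this descent-and-gluing bookkeeping; I expect it to require no more than direct substitution using the compatibility properties of $\hat{\omega}$ already established just before Construction \ref{connection on ngn}.
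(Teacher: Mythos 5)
Your proposal is correct, and for the substantive part (claim 2) it is the same argument as the paper's: identify the relation with $D\chi(f)=0$, invoke Lemma \ref{chain complex map} to reduce to $\{\int_{\fooo{}^n} d(f\circ\bigwedge^k\kappa_n)\}=0$, and get closedness of $f\circ\bigwedge^k\kappa_n$ from the level-wise Chern--Weil theorem since it is the pullback of the closed descended form. (One small label: the lemma is applied to a form on $|EG_\bullet|$, so $\chi(f)$ naturally lands in $\tot{}^{2k}(EG_\bullet)$ as in the paper's corollary, not $\tot{}^{2k}(BG_\bullet)$; since the forms are basic this is harmless, but the bookkeeping you flag is done upstairs.) Claims (1) and (4) are where you diverge in presentation. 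For (1) the paper runs an induction on the number of vertical arguments that $\bigwedge^k\kappa_n$ can absorb, while you count $dt_j$-factors; these are dual formulations of the same observation ($\omega_j$ kills vertical vectors, so each factor of $\kappa_n$ contributes at most one fiber direction), and both are fine. For (4) the paper asserts that integration over the point $\foo{}^0$ gives zero, which as stated is a weak justification (integration over a $0$-dimensional fiber is evaluation, not zero); your argument via $t_0=1$, $\hat\omega_0=\omega_0$ and the Maurer--Cartan equation, i.e.\ $\kappa_0=0$ (equivalently, all sums in the explicit formula for $\kappa_n$ are empty at $n=0$), is the correct reason and is the sounder of the two.
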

\begin{proof}
For the first point, let us do that inductively over $k$. Because of the integration along the fibers, $\chi_n(f)$ is given by:
$$\chi_n(f)(u_1, \dots, u_{2k-n})= \int_{\foo{}^n}f\circ \bigwedge^k \kappa_n (\underbrace{\square, \dots, \square}_{\text{n-times}}, \underbrace{\Tilde{u}_1, \dots, \Tilde{u}_{2k-n}}_{\text{lifts of the $u_i$'s}})$$
where the squares $\square$ will only contain vertical vectors of $\foo{}^n\times EG_n$, with $\foo{}^n$ being the fiber. \\
In the case of $k=1$, we have $f\circ \kappa_n$. Take $v$ as a vertical vector. Then, with $\pi_i: EG_n \longrightarrow G$, we have $(\pi_i)_*(v)=0$ and thus $\omega_i(v)=0$, since $\omega_i$ is a pullback through $\pi_i$. It follows then easily from the formula of the curvature $\kappa_n$ that $\kappa_n(v_1, v_2)= 0$ for two vertical vectors $v_1, v_2$. Since $n$ vertical vectors are necessary for computing $\chi_n$, we have $\chi_n(f)=0$ for $n>1$.\\
Suppose that $\bigwedge^{k-1}\kappa_n (u_1, \dots, u_{2k-2}) = 0$ if $k$ of the $u_i$'s are vertical. Then, $\bigwedge^{k}\kappa_n= \kappa_n \wedge \left(\bigwedge^{k-1}\kappa_n\right)$ is given by a sum over the set of permutations, where summands have the following form, up to the choice of the permutation and a coefficient:
$$\kappa_n(u_{\sigma(1)}, u_{\sigma(2)})\otimes \bigwedge^{k-1}\kappa_n (u_{\sigma(3)}, \dots, u_{\sigma(2k)})$$
If there is $k+1$ vertical vectors, either both $u_{\sigma(1)}$ and $u_{\sigma(2)}$ are vertical, giving $\kappa_n(u_{\sigma(1)}, u_{\sigma(2)})=0$ or there is at least $k$ vertical vectors in $\{u_{\sigma(3)}, \dots, u_{\sigma(2k)}\}$ implying $\bigwedge^{k-1}\kappa_n (u_{\sigma(3)}, \dots, u_{\sigma(2k)})=0$. Thus $\chi_n(f)=0$ for $n>k$. And so we proved the first point by induction. \\
For the second point: The statement is the same as stating that $D(\chi(f))=0$. Because of Lemma \ref{chain complex map}, we know that:
$$\left\{\int_{\fooo{}^n}d\left(f\circ \bigwedge^k\kappa_n\right)\right\}=D(\chi(f))$$
By the Chern-Weil theorem, $\phi^n_{CW}(f, \hat{\omega}_n)$, the descent of $f\circ \bigwedge^k\kappa_n$, is closed. Since $f\circ \bigwedge^k\kappa_n= \gamma_n^*(\phi^n_{CW}(f, \hat{\omega}_n))$ and since the exterior differential commutes with the pullback, we get that $f\circ \bigwedge^k\kappa_n$ is also closed, and thus $D(\chi(f))=0$. So the second statement follows. \\
For the third point, it follows directly from Lemma \ref{well defined integral}.\\
For the fourth point, it comes directly from the fact that the integration of any differential form of $\foo{}^0$, which is a point, is equal to $0$. \\
So the theorem is proved. 
\end{proof}
\begin{corollary}
It follows from this theorem that
$$\chi(f)= (0, \chi_1(f), \dots , \chi_k (f), 0, \dots, 0)\in \Omega^{2k}(EG_0)\times \dots\times \Omega^{2k-i}(EG_i)\times \dots\times \Omega^0(EG_{2k})$$
is a closed form of $F^k\tot{}^{2k}(EG_{\bullet})$.
\end{corollary}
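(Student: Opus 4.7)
The plan is to verify the two assertions of the corollary — that $\chi(f)$ lies in $F^k\tot{}^{2k}(EG_{\bullet})$ and that it is closed under the total differential $D$ — by unpacking the relevant definitions and matching each requirement to one of the four points of Shulman's Theorem \ref{Shulman theorem}.

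First I would check membership in the filtered total complex. Since $\chi_n(f) = \int_{\fooo{}^n} f \circ \bigwedge^k \kappa_n$ is obtained by integrating a $2k$-form on $\foo{}^n \times EG_n$ along the $n$-dimensional fibre $\foo{}^n$, a straightforward degree count gives $\chi_n(f) \in \Omega^{2k-n}(EG_n)$, so the tuple sits inside $\tot{}^{2k}(EG_{\bullet})$ once we verify normalization. The normalization condition $s_j^*\chi_n(f)=0$ is exactly point 3 of Shulman's theorem (it also follows from Lemma \ref{well defined integral} applied to $f\circ\bigwedge^k\kappa_n$ as a form on $|EG_{\bullet}|$). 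Finally, to live in the filtration piece $F^k$ we need all components of horizontal degree $p > k$ (equivalently vertical degree below $k$) to vanish; this is precisely point 1 of Shulman, which yields $\chi_n(f)=0$ for $n>k$, while point 4 takes care of the $n=0$ slot.

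Second I would compute $D\chi(f)$ componentwise from the formula in the definition of the total differential. The $n$-th slot of $D\chi(f)$ equals $d\chi_0(f)$ when $n=0$ and $\delta\chi_{n-1}(f) + (-1)^n d\chi_n(f)$ for $n\geq 1$. The $n=0$ case vanishes by point 4 since $\chi_0(f)=0$. For $1 \leq n \leq k$, vanishing is literally point 2 of Shulman. At the boundary $n=k+1$, point 2 reads $0 = \delta\chi_k(f) + (-1)^{k+1}d\chi_{k+1}(f)$, and since $\chi_{k+1}(f)=0$ by point 1, this reduces to $\delta\chi_k(f)=0$, which is exactly the $(k+1)$-th slot of $D\chi(f)$. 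For $n > k+1$ both $\chi_{n-1}(f)$ and $\chi_n(f)$ vanish by point 1, so the slot is trivially zero. Hence $D\chi(f)=0$.

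In short, essentially all the content is packaged into Shulman's theorem itself, and this corollary is just a bookkeeping exercise matching each bullet point of that theorem to one of the conditions defining $F^k\tot{}^{2k}(EG_{\bullet})$, the normalization, and the differential $D$. There is no real obstacle; the only place where one needs to pay attention is the boundary index $n=k+1$, where one must note that point 2 of Shulman is still valid there (and in fact delivers $\delta\chi_k(f)=0$ as a free consequence), so that closedness extends all the way to the top of the total degree.
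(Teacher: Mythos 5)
Your proof is correct and matches the paper's intent exactly: the paper states this corollary without proof as an immediate consequence of Theorem \ref{Shulman theorem}, and your write-up is precisely the bookkeeping (degree count from integration along $\foo{}^n$, point 3 for normalization, point 1 for the filtration condition and the vanishing of slots beyond $k$, points 2 and 4 plus the boundary case $n=k+1$ for $D\chi(f)=0$) that the paper leaves implicit. No gaps.
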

\begin{remark}
    In Shulman's thesis \cite{shulman}, the theorem is stated as follows: The construction presented in Definition \ref{The Shulman's construction} is a well-defined cocycle (equivalent to the points 3 and 2 of the Theorem \ref{Shulman theorem}) which stops at the diagonal. A differential form $\phi\in \tot{}^k(\m_{\bullet})$ stopping at the diagonal is a form such that $\phi_n\in \Omega^n(\m_{k-n})=0$ if $n>\frac{k}{2}$. Since the differential form of Definition \ref{The Shulman's construction} is of degree $2k$, it follows that this condition is equivalent to the first point of Theorem \ref{Shulman theorem}.
\end{remark}
\section{Application on the matrix groups}
As a matter of example, we would like to present in this part the calculation of the first Pontryagin class of matrix Lie groups such as the special unitary group. We will at the end relate our result to the 2-shifted symplectic form defined in \cite{zhu} on $BG_{\bullet}$.
\subsection{The matrix groups}
To do that correctly, it might be useful to recall some facts about the matrix Lie groups. 
We set here a short recall regarding the Lie algebras of matrix Lie groups. For more details or the proofs of the statements, see \cite{matrix}. 
\begin{definition}
A \underline{matrix Lie group} is a Lie subgroup $G$ of $GL(n)$, the Lie group of invertible matrices. We allow us to consider both real and complex-valued matrices. 
\end{definition}
The main particularity of the matrix Lie groups is the possibility to represent both the elements of the group and their tangent vectors with matrices. Indeed let $A_t= (a_{i,j}(t))$ be a curve in $G$, then the corresponding tangent vector will be $A= \left(\frac{d}{dt}a_{i,j}(t)\middle|_{t=0}\right)$.

It might be interesting to know which kind of matrices are these tangent vectors, and how to describe them easily. To do that, we introduce the following operation: 
\begin{definition}
We call the \underline{matrix exponential} the following map:
$$exp: gl(n)\longrightarrow gl(n)\qquad \qquad A\mapsto \sum_{i=0}^{\infty}\frac{A^i}{i!}$$
where $gl(n)$ is the space of all the matrices of dimension $n\times n$. 
\end{definition}
So we come to the following property:
\begin{lemma}
Let $G$ be any matrix Lie group. Then its canonical Lie algebra is isomorphic to the Lie algebra given by:
$$\left\{ A\in gl(n)\middle| e^{t\cdot A}\in G \;\; \forall \; t\in \R\right\}$$
with the Lie brackets given by the commutator $[A, B]= A\cdot B - B\cdot A$. Here, $gl(n)$ denotes the set of all $n\times n $ matrices. 
\end{lemma}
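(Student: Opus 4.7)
The plan is to construct a Lie algebra isomorphism between $\fg$, viewed via Remark \ref{asso de MaurerCartan} as $T_e G$, and the set $\mathfrak{g}' := \{A \in gl(n) \mid e^{tA} \in G \text{ for all } t \in \R\}$ equipped with the commutator bracket. I would first produce a set-theoretic bijection $\Phi : \mathfrak{g}' \to T_e G$, then transfer the vector space structure along it, and finally check bracket compatibility.

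To define $\Phi$, I send $A \in \mathfrak{g}'$ to $\frac{d}{dt}\big|_{t=0} e^{tA} \in T_e G$; this makes sense because $t \mapsto e^{tA}$ is a smooth curve lying in $G$. Viewed inside $T_e GL(n) = gl(n)$, this velocity is literally the matrix $A$, so $\Phi$ coincides with the inclusion $\mathfrak{g}' \hookrightarrow gl(n)$ followed by the canonical identification $gl(n) \cong T_e GL(n)$. Injectivity is then immediate. For surjectivity, given $v \in T_e G$, the left-invariant vector field on $G$ extending $v$ has a maximal integral curve $\gamma : \R \to G$ through $e$, which is automatically a smooth one-parameter subgroup. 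Composing with the inclusion $G \hookrightarrow GL(n)$ gives a one-parameter subgroup of $GL(n)$ with initial velocity $v$; by the uniqueness of such subgroups in $GL(n)$, this curve must be $t \mapsto e^{tv}$. Hence $e^{tv} \in G$ for all $t$, so $v \in \mathfrak{g}'$ and $\Phi(v) = v$. Linearity then comes for free: since $\Phi$ is the ambient inclusion into $gl(n)$, the identifications $\Phi(A+B) = A+B = \Phi(A) + \Phi(B)$ and $\Phi(cA) = cA = c\,\Phi(A)$ hold in $gl(n)$, forcing $\mathfrak{g}'$ to be a linear subspace and $\Phi$ to be a linear isomorphism.

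For the brackets, I would carry out the standard computation on $GL(n)$: in the global linear coordinates of $GL(n) \subset gl(n)$, the left-invariant vector field extending $A \in gl(n)$ is $X^A_g = g \cdot A$, and differentiating the matrix-valued function $g \mapsto g \cdot B$ in the direction $X^A|_g = g \cdot A$ and then antisymmetrizing yields $[X^A, X^B]_e = AB - BA$. Because $G \hookrightarrow GL(n)$ is a Lie group homomorphism (an embedding of Lie subgroups), its derivative at $e$ is a Lie algebra homomorphism $T_e G \hookrightarrow gl(n)$, so the bracket on $\fg$ seen through $T_e G$ is the restriction of the commutator, which is exactly the bracket on $\mathfrak{g}'$.

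The main obstacle is the surjectivity of $\Phi$: one must know that every $v \in T_e G$ integrates to a curve that coincides with the matrix exponential $e^{tv}$ and stays in $G$. This combines two ingredients, namely the general existence of one-parameter subgroups in any Lie group integrating a prescribed tangent vector at $e$, and the classification of one-parameter subgroups of $GL(n)$ as matrix exponentials. If one dropped the a priori Lie subgroup hypothesis and worked only with a closed subgroup of $GL(n)$, one would instead have to invoke Cartan's closed subgroup theorem to first equip $G$ with its manifold structure before running the same argument.
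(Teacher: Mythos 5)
The paper does not actually prove this lemma; it simply writes ``See \cite{matrix}'' and defers entirely to Hall's book, so there is no argument in the text to compare yours against line by line. Your proof is the standard differential-geometric one and is correct: the map $A\mapsto \frac{d}{dt}\big|_{t=0}e^{tA}$ is the inclusion into $gl(n)\cong T_eGL(n)$, surjectivity onto $T_eG$ follows from integrating left-invariant vector fields to one-parameter subgroups and matching them against $t\mapsto e^{tv}$ in $GL(n)$, and the bracket is transported by the fact that the differential of the inclusion $G\hookrightarrow GL(n)$ is a Lie algebra homomorphism. Two ingredients are invoked without proof --- the completeness of left-invariant vector fields (so that $\gamma$ is defined on all of $\R$ and is a genuine one-parameter subgroup) and the uniqueness of one-parameter subgroups with prescribed initial velocity --- but both are standard and it is reasonable to cite them. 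It is worth noting that your route is actually better adapted to the paper's conventions than the cited reference: the paper defines a matrix Lie group as a Lie subgroup of $GL(n)$ (not necessarily closed or embedded), and your argument only uses that the inclusion is a smooth group homomorphism, whereas Hall works with closed subgroups and reaches the tangent-space description via the closed subgroup theorem --- a distinction you correctly flag in your closing paragraph.
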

\begin{proof}
See \cite{matrix}. 
\end{proof}
Let us see the following example to understand it:
\begin{example}\label{lie algebra of so(n)}
We want to obtain the Lie algebra of $\text{SO}(n)$. It is the set of all matrices with $e^{tA}\in \text{SO}(n)$ for all $t$, i.e. $(e^{tA})^T\cdot e^{tA}=e^{tA^T}\cdot e^{tA}=\text{id}$, and $\text{det}(e^{tA})=e^{\text{trace}(tA)}=1$. The last one is equivalent to $\text{trace}(tA)= t\cdot \text{trace}(A)=0 \;\forall t\in \R$ and thus $\text{trace}(A)=0$. The first one implies:
$$\frac{d}{dt}e^{tA^T}\cdot e^{tA}= (A^T+A)\cdot e^{tA^T}\cdot e^{tA}= 0= \frac{d}{dt}\text{id}$$
For $t=0$, we have $A+A^T=0$, thus $A$ is skew-symmetric. In the other direction, if $A$ is skew-symmetric, $A+A^T=0$, thus $(A^T+A)\cdot e^{tA^T}\cdot e^{tA}= 0$ and thus $e^{tA^T}\cdot e^{tA}$ is constant. Since $e^{0}\cdot e^{0}=\text{id}$, we have $e^{tA^T}\cdot e^{tA}=\text{id}$ for all $t$; furthermore, the skew-symmetric matrices have the trace equal to $0$, implying $\text{det}(e^{tA})=1$ . It follows that the Lie algebra of $\text{SO}(n)$ is the set of skew-symmetric matrices. \\
Analogously, we can see that the Lie algebra of $O(n)$ is also the set of skew-symmetric matrices, or that the Lie algebra of $\text{SU}(n)$ is the set of all anti-Hermitian matrices with trace equal to $0$.  
\end{example}

Finally, we get the two following maps for the matrix Lie groups. 
\begin{lemma}
Let $G$ be a matrix Lie group and $\fg$ its Lie algebra. Then, for $S\in G$ and $A\in \fg$, we have:
$$Ad(S)(A)= S\cdot A\cdot S^{-1}$$
\end{lemma}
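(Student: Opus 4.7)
The plan is to unwind the definition of $Ad(S)$ from Definition~\ref{adjoint representation} using the curve representation of Lie algebra elements that is available for matrix Lie groups. Since $\fg\cong T_e G$ (as noted in Remark~\ref{asso de MaurerCartan}) and we are dealing with matrices, any $A\in\fg$ can be written as $A=\frac{d}{dt}A_t\big|_{t=0}$ for a smooth curve $A_t\subset G$ with $A_0=e$ equal to the identity matrix; the canonical choice is $A_t=e^{tA}$, which lies in $G$ for all $t\in\R$ by the lemma characterizing $\fg$ via the matrix exponential.

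First I would apply the conjugation map $\psi(S):h\mapsto S\cdot h\cdot S^{-1}$ to the curve $A_t$, producing a new smooth curve $\psi(S)(A_t)=S\cdot A_t\cdot S^{-1}$ in $G$ which again passes through $e$ at $t=0$. By Definition~\ref{adjoint representation}, $Ad(S)(A)$ is precisely the tangent vector of this new curve at $t=0$, so the computation reduces to differentiating this matrix product at $t=0$.

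Then I would compute that derivative directly: since matrix multiplication is bilinear and the matrices $S$, $S^{-1}$ are independent of $t$, one has
$$\frac{d}{dt}\left(S\cdot A_t\cdot S^{-1}\right)\bigg|_{t=0}=S\cdot\left(\frac{d}{dt}A_t\bigg|_{t=0}\right)\cdot S^{-1}=S\cdot A\cdot S^{-1},$$
which is the desired formula.

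There is no serious obstacle here; the only subtlety worth checking is that the output $S\cdot A\cdot S^{-1}$ actually lies in $\fg$, which follows either abstractly from the fact that $Ad(S)$ is well-defined as a map $\fg\to\fg$ by construction, or concretely from the observation $S\cdot e^{tA}\cdot S^{-1}=e^{t\cdot SAS^{-1}}\in G$ for all $t\in\R$, combined with the matrix Lie algebra characterization recalled just above.
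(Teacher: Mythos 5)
Your proposal is correct and follows essentially the same route as the paper: represent $A$ by the curve $e^{tA}$, conjugate by $S$, and differentiate at $t=0$. The only cosmetic difference is that the paper carries out the differentiation via the power-series identity $S\cdot e^{tA}\cdot S^{-1}=e^{t\cdot SAS^{-1}}$, whereas you differentiate the matrix product directly by bilinearity — both yield $S\cdot A\cdot S^{-1}$ immediately.
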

\begin{proof}
Since $\left.\frac{d}{dt}e^{tA}\right|_{t=0}=A$, it follows that $e^{tA}$ is a smooth curve representing the tangent vector $A$. Thus:
$$Ad(S)(A)= \left.\frac{d}{dt}S\cdot e^{tA} \cdot S^{-1}\right|_{t=0}= \left.\frac{d}{dt} \sum_{i=0}^{\infty}\frac{S\cdot t^i\cdot A^i\cdot S^{-1}}{i!}\right|_{t=0}=\left. \frac{d}{dt} \sum_{i=0}^{\infty}\frac{(S\cdot t\cdot A\cdot S^{-1})^{i}}{i!}\right|_{t=0}=S\cdot A\cdot S^{-1}$$
\end{proof}
\begin{lemma}\label{matrix maurer cartan form}
Let $G$ be any matrix Lie group. Then, for $A$ a tangent vector at the point $S\in G$, the Maurer-Cartan form of $G$ is given by:
$$\omega_S(A)= S^{-1}\cdot A$$
\end{lemma}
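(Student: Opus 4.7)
The plan is to unwind the definition of the Maurer-Cartan form directly, exploiting the fact that on a matrix Lie group the left multiplication $\Lie_{S^{-1}}$ is simply a (restriction of a) linear map on $gl(n)$, hence is its own derivative.

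First I would recall from Definition \ref{def of maurer cartan} that for a tangent vector $v$ at $g\in G$, one has $\omega_g(v) = (\Lie_{g^{-1}})_{\ast} v$, viewing the result as an element of $\fg \cong T_e G$. Applied to our situation with $g = S$ and $v = A$, this means I must compute the pushforward of $A$ by the left multiplication map $\Lie_{S^{-1}}: G \longrightarrow G, \; T \mapsto S^{-1}\cdot T$.

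Next I would represent the tangent vector $A$ at $S$ by a smooth curve $A_t \subset G$ with $A_0 = S$ and $\left.\frac{d}{dt}A_t\right|_{t=0} = A$ (such a curve exists since tangent vectors on matrix Lie groups are realized by curves of matrices, as recalled at the start of the subsection). Then by definition of the pushforward,
$$(\Lie_{S^{-1}})_{\ast} A = \left.\frac{d}{dt}\Lie_{S^{-1}}(A_t)\right|_{t=0} = \left.\frac{d}{dt}\bigl(S^{-1}\cdot A_t\bigr)\right|_{t=0}.$$
Since matrix multiplication by the fixed matrix $S^{-1}$ on the left is a linear (hence smooth) operation on $gl(n)$ whose derivative is itself, one gets
$$\left.\frac{d}{dt}\bigl(S^{-1}\cdot A_t\bigr)\right|_{t=0} = S^{-1}\cdot \left.\frac{d}{dt}A_t\right|_{t=0} = S^{-1}\cdot A,$$
which, as a tangent vector at the identity, is an element of $\fg$. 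Combining with the first step yields $\omega_S(A) = S^{-1}\cdot A$, as claimed.

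There is essentially no obstacle here: the only subtlety is to make sure that the derivative is taken inside $G$ (not in the ambient $gl(n)$), but since $\Lie_{S^{-1}}$ maps $G$ to $G$ and is the restriction of a linear map on $gl(n)$, the chain rule allows us to compute the derivative in $gl(n)$ and then restrict, which is exactly what was done above.
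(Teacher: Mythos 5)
Your proof is correct and follows essentially the same route as the paper's: represent $A$ by a curve, push it forward along $\Lie_{S^{-1}}$ to get the curve $S^{-1}\cdot A_t$, and differentiate using the linearity of left matrix multiplication. The only difference is that you spell out the linearity/chain-rule justification more explicitly, which the paper leaves implicit.
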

\begin{proof}
Let $A_t$ be a curve representing $A$. Then, the Maurer-Cartan form sends $A$ to the tangent vector represented by the curve $S^{-1}\cdot A_t$, that is $\frac{d}{dt}S^{-1}\cdot A_t= S^{-1}\cdot A$. 
\end{proof}
\begin{remark}
The map of the lemma Lemma \ref{matrix maurer cartan form} can be denoted $S^{-1}\cdot d_S$. This notation is useful to recall that it is a differential form. 
\end{remark}

Let us now introduce the Spin group. This group is strongly related to $\text{SO}(n)$ and we shall see that we can almost consider it as a matrix Lie group since most of the properties on these last ones can be adapted to the Spin group. \\
To present the definition of the Spin group, it is necessary to present shortly the concept of Clifford algebra:
\begin{definition}
Let $V$ be a vector space. A quadratic form $Q$ on $V$ is a map from $V$ to $\R$, with:
$$Q(a\cdot v)= a^2 \cdot Q(v)\qquad \text{and}\qquad Q(v+w)- Q(v)-Q(w)\; \text{is bilinear}$$
where $v, w\in V$ and $a\in \R$.
\end{definition}
\begin{definition}
Let $V$ be a vector space with $Q$ a given quadratic form on it. Then, we define the Clifford algebra corresponding to $(V, Q)$ to be the largest associative algebra generated by each element of $V$ and $1_{Cl}$, the last one being the neutral element of the multiplication, and we require the multiplication to fulfil:
$$v\cdot v= -Q(v)\cdot 1_{Cl}$$
\end{definition}
We come now to the definition of the Spin group itself. 

\begin{definition}
We define the Spin group of a Clifford algebra as the following subset:
$$\{v_1\cdots v_{2k}| k\in \N,\;v_1,\dots , v_{2k}\in V,\; Q(v_i)=1\, \forall i \}\cup \{-1_{Cl}, 1_{Cl}\}$$
A special case of the Spin group, called the real Spin group and denoted $\text{Sp}(n)$, is the Spin group that we obtain if we set $V=\R^n$ and:
$$Q\left(\sum_{i=1}^n \lambda_i\cdot e_i\right)= \sum_{i=1}^n \lambda_i^2$$
\end{definition}
This definition can be seen more conveniently and geometrically using the following lemma:
\begin{lemma}
We have the following exact sequence:
$$0\longrightarrow \Z /2\Z \longrightarrow \text{\normalfont Sp}(n)\longrightarrow \text{\normalfont SO}(n)\longrightarrow 0$$
where the first map sends $\pm 1 \mapsto \pm 1_{Cl}$ and the second one sends:
$$v_1\cdots v_{2k}\mapsto R(v_1)\circ\dots \circ R(v_{2k})$$
where $R(v_i)$ is the matrix representing the map given by the reflection through the hyperplane perpendicular to $v_i$, while $\pm 1_{Cl}$ are sent to the identity. 
\end{lemma}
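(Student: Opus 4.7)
The plan is to realize the map $\text{Sp}(n)\to \text{SO}(n)$ as a twisted conjugation action of the Clifford algebra on $V=\R^n$, and then verify exactness at each of the three positions separately.

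First I would establish the key algebraic identity: for $v\in V$ with $Q(v)=1$, one has $v^2=-1_{Cl}$ in the Clifford algebra, so $v$ is invertible with $v^{-1}=-v$. Decomposing an arbitrary $w\in V$ as $w_{\parallel}+w_{\perp}$ along and perpendicular to $v$ (using the bilinear form obtained by polarizing $Q$), a direct computation using $vw_{\perp}=-w_{\perp}v$ and $vw_{\parallel}=w_{\parallel}v=-\langle v,w\rangle\, 1_{Cl}$ gives $v\,w\,v = -w_{\parallel}+w_{\perp}=R(v)(w)$. Iterating this identity yields, for $x=v_1\cdots v_{2k}\in\text{Sp}(n)$,
\begin{align*}
R(v_1)\circ\cdots\circ R(v_{2k})(w) \;=\; v_1\cdots v_{2k}\, w\, v_{2k}\cdots v_1 \;=\; x\,w\,x^{-1},
\end{align*}
since $(v_1\cdots v_{2k})^{-1}=(-1)^{2k}v_{2k}\cdots v_1=v_{2k}\cdots v_1$. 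In particular the prescribed map is intrinsic (does not depend on the chosen factorization of $x$) and is a group homomorphism, which I shall denote $\rho$. Because each $R(v_i)$ has determinant $-1$ and $2k$ of them are composed, the image lies in $\text{SO}(n)$.

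Surjectivity would follow from the Cartan--Dieudonn\'e theorem: every element of $\text{O}(n)$ is a product of at most $n$ hyperplane reflections, and elements of $\text{SO}(n)$ require an even number; after rescaling each normal vector to unit length, the corresponding Clifford product gives a preimage in $\text{Sp}(n)$. For exactness in the middle, clearly $\rho(\pm 1_{Cl})=\text{id}$, so $\{\pm 1_{Cl}\}\subseteq\ker\rho$. Conversely, if $x\in\text{Sp}(n)$ satisfies $xw=wx$ for every $w\in V$, then $x$ commutes with the whole algebra generated by $V$, hence is central in the Clifford algebra. For the positive-definite form $Q$, a routine basis computation shows that the center, intersected with the even part where $\text{Sp}(n)$ lives, is exactly $\R\cdot 1_{Cl}$ (the potential extra central element $e_1\cdots e_n$ appearing when $n$ is odd lies in the odd part and is thus excluded). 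So $x=\lambda\cdot 1_{Cl}$ for some $\lambda\in\R$. Applying the anti-involution $(v_1\cdots v_{2k})^{t}=v_{2k}\cdots v_1$ and using $v_i^2=-1_{Cl}$, one computes $x\cdot x^{t}=1_{Cl}$, forcing $\lambda^2=1$ and $x=\pm 1_{Cl}$. Injectivity of the first map $\pm 1\mapsto\pm 1_{Cl}$ is immediate from $1_{Cl}\neq -1_{Cl}$.

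The main obstacle I expect is the centrality step, namely verifying cleanly that the even part of the center of the Clifford algebra of $(\R^n,Q)$ is just $\R\cdot 1_{Cl}$, and then combining this with the spinor-norm computation $x\cdot x^{t}=1_{Cl}$ to rule out scalars other than $\pm 1_{Cl}$ inside $\text{Sp}(n)$; the computation itself is standard but has to be handled with attention to the parity of $n$.
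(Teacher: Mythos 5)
Your argument is correct, but note that the paper itself does not prove this lemma at all: it simply refers the reader to \cite{spin}, so there is no in-text proof to compare against. What you have written is the standard self-contained argument that such references give, and it is sound: the identity $vwv=R(v)(w)$ for $Q(v)=1$ (with $v^{-1}=-v$), the resulting formula $R(v_1)\circ\cdots\circ R(v_{2k})(w)=xwx^{-1}$ which simultaneously shows the map is independent of the chosen factorization and is a homomorphism, Cartan--Dieudonn\'e for surjectivity onto $\text{SO}(n)$ (even number of reflections, each of determinant $-1$), and the kernel computation via the center of the Clifford algebra restricted to the even part together with the reversal anti-involution giving $x\,x^{t}=1_{Cl}$ and hence $\lambda^2=1$. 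The two places that genuinely need care are exactly the ones you flag: the even part of the center is $\R\cdot 1_{Cl}$ (the extra central element $e_1\cdots e_n$ for $n$ odd is excluded by parity), and the well-definedness of $\rho$ on elements admitting several factorizations, which the lemma's statement silently assumes and your conjugation formula resolves. One small bookkeeping remark: with the paper's convention $v\cdot v=-Q(v)\cdot 1_{Cl}$ and its unnormalized polarization $Q(v+w)-Q(v)-Q(w)$, the anticommutation relation carries a factor of $2$, so your $\langle v,w\rangle$ should be read as the symmetric form normalized so that $\langle v,v\rangle=Q(v)$; with that understood, $vwv=-w_{\parallel}+w_{\perp}$ checks out.
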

\begin{proof}
The proof of this can be found in \cite{spin}
\end{proof}
This geometric consideration is particularly useful. Indeed, we can remark that some of the differential structures of the Spin group are analogous to the ones of $\text{SO}(n)$. 
\begin{remark}\label{spin double cover}
The group $\text{Sp}(n)$ is a double covering of $\text{SO}(n)$. It implies that for an enough small neighbourhood $U$ of $\text{SO}(n)$, the preimage of $U$ through the projection $\pi: \text{Sp}(n)\longrightarrow \text{SO}(n)$ is $U\sqcup U$. It means that for $p\in \text{Sp}(n)$, $T_p \text{Sp}(n) \cong T_{\pi(p)}\text{SO}(n)$, and the last one is isomorphic to the set of skew-symmetric matrices, as seen in Example \ref{lie algebra of so(n)}. Furthermore, let $v$ be a tangent vector at the point $p$ in $\text{Sp}(n)$, corresponding to the tangent vector $A$ in $\text{SO}(n)$ at the point $\pi(p)$. Let $v(t)$ be the curve representing $v$. Then, the Maurer-Cartan form of $v$ will be the tangent vector at $e_{\text{Sp}(n)}$ given by the curve $p^{-1}\cdot v(t)$. It corresponds on $\text{SO}(n)$ to the tangent vector represented by the curve $\pi(p^{-1}\cdot v(t))=\pi(p^{-1})\cdot \pi(v(t))$, that is $\pi(p)^{-1}\cdot A_t$, representing the tangent $\pi(p)^{-1}\cdot A$. Thus the Maurer-Cartan form on $\text{Sp}(n)$ is very analogous to the one of the matrix Lie groups, since we can associate its Lie algebra and its Maurer-Cartan form to the one of $\text{SO}(n)$.
\end{remark}
\subsection{The Pontryagin polynomial}
In this subsection, we will introduce a polynomial on $\fg$, called the Pontryagin polynomial, which allows us to calculate the Pontryagin class using the Chern-Weil homomorphism. 
\begin{definition}
Let $G$ be a real Lie matrix group, with $n\times n$ matrices. Then, we can define the \underline{Pontryagin (homogeneous) invariant polynomial} on $\fg$ of order $k/2$, denoted $p_{\frac{k}{2}}$, as the only polynomial fulfilling:
$$\text{det}\left(t\cdot \text{id}-\frac{1}{2\pi}A\right)= \sum_{k=0}^{n} p_{\frac{k}{2}} (\underbrace{A, \dots, A}_{\text{k-times}}) t^{n-k} $$
\end{definition}
As seen in Remark \ref{polynomials}, $p_{\frac{k}{2}}(A, \dots, A)$ should be in $\R[x_1, \dots, x_{n^2}]^k$, where the $x_i$ are the entries of the matrix, and then $p_{\frac{k}{2}}$ can be obtained by polarization. We can check that:
\begin{lemma}\label{matrix of p1}
This definition is correct, that is, we have an invariant homogeneous polynomial on $\fg$. 
\end{lemma}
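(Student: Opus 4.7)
The plan is to verify two separate claims: first, that the formula genuinely defines a homogeneous polynomial of degree $k$ on $\fg$ in the sense of Remark \ref{polynomials}, and second, that the resulting polynomial is invariant under the adjoint action of $G$.

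For the first claim, I would expand $\det(t\cdot\text{id}-\frac{1}{2\pi}A)$ via the Leibniz formula. The resulting expression is a polynomial of degree $n$ in $t$, and collecting powers of $t$ shows that the coefficient of $t^{n-k}$ is a homogeneous polynomial of degree $k$ in the $n^2$ matrix entries of $A$, i.e.\ an element of $\R[x_1,\dots,x_{n^2}]^k$. In the language of Remark \ref{polynomials}, this is precisely $\bar p_{k/2}$, and its polarization supplies the unique symmetric multilinear map $p_{k/2}\colon \fg^k\to\R$. One should also record that this map takes values in $\R$ rather than $\C$, which follows from the fact that for $A\in\fg\subseteq gl(n,\R)$ the characteristic polynomial of a real matrix is real-coefficiented (for groups sitting inside $GL(n,\C)$ one similarly obtains $\C$-valued polynomials).

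For the second claim, I would use the matrix description $\text{Ad}_g(A)=gAg^{-1}$ established earlier and the multiplicativity of the determinant to compute
$$\det\!\left(t\cdot\text{id}-\frac{1}{2\pi}\text{Ad}_g(A)\right)=\det\!\left(g\bigl(t\cdot\text{id}-\tfrac{1}{2\pi}A\bigr)g^{-1}\right)=\det\!\left(t\cdot\text{id}-\frac{1}{2\pi}A\right).$$
Comparing the coefficient of $t^{n-k}$ on both sides gives the ``diagonal'' invariance $p_{k/2}(\text{Ad}_g A,\dots,\text{Ad}_g A)=p_{k/2}(A,\dots,A)$ for all $g\in G$ and $A\in\fg$. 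To upgrade this to full invariance of the symmetric multilinear form, I would apply the polarization identity of Remark \ref{polynomials} and use the linearity of $\text{Ad}_g$: since
$$\lambda_1\text{Ad}_g(v_1)+\dots+\lambda_k\text{Ad}_g(v_k)=\text{Ad}_g(\lambda_1 v_1+\dots+\lambda_k v_k),$$
the mixed partial derivatives $\frac{\partial^k}{\partial\lambda_1\cdots\partial\lambda_k}$ of the two sides of the diagonal identity agree, yielding $p_{k/2}(\text{Ad}_g v_1,\dots,\text{Ad}_g v_k)=p_{k/2}(v_1,\dots,v_k)$.

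No step here is truly difficult: the whole argument rests on conjugation-invariance of the characteristic polynomial together with the naturality of polarization under linear changes of variables. The only point requiring a small amount of care is this last naturality claim, which is what allows one to pass from invariance of $A\mapsto p_{k/2}(A,\dots,A)$ to invariance of the full multilinear form $p_{k/2}(v_1,\dots,v_k)$, and this works precisely because $\text{Ad}_g$ is a linear operator on $\fg$.
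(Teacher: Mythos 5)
Your proof is correct and follows essentially the same route as the paper: Leibniz expansion of the determinant to identify the coefficient of $t^{n-k}$ as an element of $\R[x_1,\dots,x_{n^2}]^k$, conjugation-invariance of the characteristic polynomial via $Ad_S(A)=SAS^{-1}$ for the diagonal identity, and polarization together with linearity of $Ad_S$ to upgrade to full multilinear invariance. The remark on real-valuedness is a small addition not in the paper's proof, but otherwise the two arguments coincide.
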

\begin{proof}
Because of the remark \ref{polynomials}, it is enough to prove that $A\mapsto p_{\frac{k}{2}}(A,\dots, A)$ is a homogeneous polynomial in $n^2$ variables of degree $k$. If we write the matrix $t\cdot \text{id}-\frac{1}{2\pi}A$, we get:
$$
\begin{pmatrix}
 t-\frac{1}{2\pi}a_{1,1}& -\frac{1}{2\pi}a_{1,2} &\cdots & \cdots & -\frac{1}{2\pi}a_{ 1,n}\\
-\frac{1}{2\pi}a_{2,1} &  t-\frac{1}{2\pi}a_{2,2} &&&\vdots\\
\vdots & & \ddots & &\vdots\\
\vdots & & & \ddots & -\frac{1}{2\pi}a_{n-1, n}\\
-\frac{1}{2\pi}a_{1, n} & \cdots & \cdots & -\frac{1}{2\pi}a_{n, n-1}&  t-\frac{1}{2\pi}a_{n,n}
\end{pmatrix}
$$
Thus the determinant will be a sum of summands of the form $\pm\prod (t-\frac{1}{2\pi} a_{i,i})\cdot \prod -\frac{1}{2\pi}a_{i,j}$, with a total of $n$ factors for each summand. It follows then easily that restricting to the summands being factors of $t^{n-k}$, we obtain a sum of the form $t^{n-k} \cdot \sum \pm \left(\frac{1}{2\pi}\right)^k(a_{i_1, j_1}\cdots a_{i_k, j_k})$. This is a well-defined element of $\R[x_1, \dots, x_{n^2}]^k$, and we can thus apply the polarization on it as in Remark \ref{polynomials}. Thus we proved that $p_{\frac{k}{2}}$ is well-defined as a homogeneous polynomial on $\fg$. Furthermore, this polynomial is invariant, since:
$$\text{det}(t\cdot \text{id}-\frac{1}{2\pi}Ad_S(A))= \text{det}(t\cdot \text{id}-\frac{1}{2\pi}S\cdot A\cdot S^{-1})=\text{det}(S\cdot(t\cdot \text{id}-\frac{1}{2\pi} A)\cdot S^{-1})= \text{det}(t\cdot \text{id}-\frac{1}{2\pi}A)$$
Thus, $p_{\frac{k}{2}}(A, \dots, A)= p_{\frac{k}{2}}(Ad_S A, \dots,Ad_S A)$. Let $f(A)= p_{\frac{k}{2}}(A, \dots, A)$ Finally, using the polarization such as the linearity of the adjoint representation, we have:

\begin{align*}
    p_{\frac{k}{2}}(Ad_S A_1, \dots,Ad_S A_k)&= \frac{1}{k!}\frac{\partial}{\partial \lambda_1}\dots \frac{\partial}{\partial \lambda_k}f( Ad_S(\lambda_1\cdot A_1)+\dots +  Ad_S(\lambda_k\cdot A_k))\\&= \frac{1}{k!}\frac{\partial}{\partial \lambda_1}\dots \frac{\partial}{\partial \lambda_k}f(Ad_S(\lambda_1\cdot A_1+\dots + \lambda_k\cdot A_k))\\
    &== \frac{1}{k!}\frac{\partial}{\partial \lambda_1}\dots \frac{\partial}{\partial \lambda_k}f(\lambda_1\cdot A_1+\dots + \lambda_k\cdot A_k)
    \\&=p_{\frac{k}{2}}(A_1, \dots, A_k)
\end{align*}
Thus $p_{\frac{k}{2}}\in I^k(\fg)$.
\end{proof}
\begin{remark}
Very often, we only consider $p_{k/2}$ for $k$ even. Indeed, if we choose $k$ odd, it can be proved that the Chern-Weil class generated by $p_{k/2}$ is 0, for any matrix Lie group and any $G$-principal bundle. For more information, see \cite{dupont}. 
\end{remark}
The definition we gave corresponds generally to the real matrix Lie groups. We also have:
\begin{remark}\label{chern class}
It is also possible to define an analogous homogeneous polynomial, which is specific to complex matrix Lie groups. It is called the \underline{Chern polynomial} and is defined as: 
$$\text{det}\left(t\cdot \text{id}-\frac{1}{2\pi i}A\right)= \sum_{k=0}^{n} c_{k} (\underbrace{A, \dots, A}_{\text{k-times}}) t^{n-k}$$
Remark that the images of the polynomial are in $\C$. The Chern polynomial of degree $2k$ does only differ from the Pontryagin one of degree $k= 2k/2$ by a coefficient $i^{2k}= (-1)^k$. See also that if we work with $\text{U}(n)$ or $\text{SU}(n)$, whose Lie algebras are respectively the Hermitian matrices ($A= -\bar{A}^T$) and the Hermitian matrices with the trace equal to zero, the values of $c_k$ are again real. Indeed:
$$\text{det}\left(t\cdot \text{id}-\frac{1}{2\pi i}A\right)= \text{det}\left(t\cdot \text{id}+\frac{1}{2\pi i}\bar{A}^T\right)=\text{det}\left(\left(\overline{t\cdot \text{id}-\frac{1}{2\pi i}A}\right)^T\right) =\overline{\text{det}\left(t\cdot \text{id}-\frac{1}{2\pi i}A\right)}$$
implying that the determinant is real for all $t$. Thus, the coefficients $c_k(A, \dots, A)$ are also real. Since the Pontryagin polynomial differs only by a factor $\pm 1$ of the Chern one, it follows that the Pontryagin polynomial gives also real results for $\text{U}(n)$ or $\text{SU}(n)$, and can be applied on them. 
\end{remark}
\begin{construction}\label{first pontryagin}
Let us see as an example, which will be useful later, the formula of $p_1$, the first Pontryagin class. We need thus to calculate the coefficient for $t^{n-2}$ of the polynomial $\text{det}(t\cdot \text{id}-\frac{1}{2\pi}A)$.  Let us denote $e_{i,j}$ the entries of the matrix $t\cdot \text{id}-\frac{1}{2\pi}A$. Then, the determinant is given by $\sum_{\sigma}\text{sign}(\sigma) \prod_{i=1}^n e_{i, \sigma(i)}$. Since we want to have the summands such that we obtain $t^{n-2}$, it implies that we should multiply at least $n-2$ elements of the diagonal. It follows thus that $\sigma$ is only a transposition of two elements or the identity. If we restrict the sum to these cases, we have:
$$\underbrace{\prod_{i=1}^n\left(t-\frac{1}{2\pi}a_{i,i}\right)}_{=\frac{1}{4\pi^2} \left(\sum_{i<j}  a_{i,i}\cdot a_{j,j}\right)\cdot t^{n-2} + \ast}- \underbrace{\sum_{i< j}\frac{1}{2\pi}a_{i,j}\cdot \frac{1}{2\pi}a_{j,i}\cdot \prod_{k\neq i, j} \left(t-\frac{1}{2\pi}a_{k,k}\right)}_{=\frac{1}{4\pi^2}\sum_{i<j} a_{i,j}\cdot a_{j,i}\cdot t^{n-2} + \ast  }$$
Where the $\ast$ means the summands that are multiplied by another power of $t$. Thus, we deduce that the polynomial $A\mapsto p_1(A, A)$ is given by:
$$\frac{1}{4\pi^2} \sum_{i<j}  a_{i,i}\cdot a_{j,j} - a_{i,j}\cdot a_{j,i}$$
Now, applying the polarization on it, we have:
\begin{align*}
    p_1(A, B)&= \frac{1}{2}\frac{\partial}{\partial \lambda_1}\frac{\partial}{\partial \lambda_2 }\frac{1}{4\pi^2} \sum_{i<j}  (\lambda_1 a_{i,i}+\lambda_2 b_{i,i})\cdot (\lambda_1 a_{j,j}+\lambda_2 b_{j,j}) - (\lambda_1 a_{i,j}+\lambda_2 b_{i,j})\cdot (\lambda_1 a_{j,i}+\lambda_2 b_{j,i})\\
    &= \frac{1}{8\pi^2} \sum_{i<j} a_{i,i}\cdot b_{j, j}+ b_{i, i}\cdot a_{j, j} - a_{i, j}\cdot b_{j,i} - a_{j, i}\cdot b_{i, j}\\
    &=\frac{1}{8\pi^2}\left( \underbrace{\sum_{i<j} a_{i,i}\cdot b_{j, j}+ b_{i, i}\cdot a_{j, j} + \sum_{i=1}^n a_{i, i}\cdot b_{i, i}}_{\text{Tr}(A)\cdot \text{Tr}(B)}-  \underbrace{\left(\sum_{i<j}a_{i, j}\cdot b_{j,i} + a_{j, i}\cdot b_{i, j}+\sum_{i=1}^n a_{i, i}\cdot b_{i, i}\right)}_{\text{Tr}(A\cdot B)}\right)\\
    &= \frac{1}{8\pi^2} (\text{Tr}(A)\cdot \text{Tr}(B) - \text{Tr}(A\cdot B)))
\end{align*}

\end{construction}

\begin{remark}
The second Chern polynomial is thus the opposite of the first Pontryagin polynomial, that is:
$$\frac{1}{8\pi^2} (\text{Tr}(A\cdot B) -\text{Tr}(A)\cdot \text{Tr}(B)  ))$$
\end{remark}
\subsection{The first Pontryagin class of the classifying space of a matrix Lie group}
We will now consider the main point of this section: 
\begin{lemma}\label{main calcul}
Let $G$ be any real matrix Lie group. Applying the Shulman's construction for the first Pontryagin class on $G$ will give:
\begin{align*}
    \chi_1=& \frac{1}{24\pi^2} \text{Tr} ( \bar{\omega}_1 \cdot \bar{\omega}_1 \cdot  \bar{\omega}_1 )\\
    \chi_2=& \frac{1}{8\pi^2}\left(\tr(\bar{\omega}_1\cdot \bar{\omega}_2) - \tr(\bar{\omega}_1)\cdot \tr(\bar{\omega}_2) \right)\\
    \chi_n=&0 \qquad\qquad \forall n \neq 1, 2 
\end{align*}
where $\bar{\omega}_i= \omega_i- \omega_0$.
\end{lemma}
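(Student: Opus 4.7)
The plan is to first reduce the claim to computing $\chi_1$ and $\chi_2$, and then to evaluate each of these two integrals explicitly using the matrix structure.

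For the reduction, I would invoke Theorem \ref{Shulman theorem}: the first point gives $\chi_n(p_1)=0$ for all $n>2$, since $p_1$ is a homogeneous invariant polynomial of degree $k=2$, and the fourth point gives $\chi_0(p_1)=0$. So only the cases $n=1$ and $n=2$ require actual computation. Then I would record three ingredients that trivialize most of the work in the matrix setting. First, Construction \ref{first pontryagin} gives $p_1(A,B)=\frac{1}{8\pi^2}(\tr(A)\tr(B)-\tr(A\cdot B))$, which by the convention of Remark \ref{notation of polynomial} extends to matrix-valued $2$-forms as $p_1(\kappa,\kappa)=\frac{1}{8\pi^2}\bigl(\tr(\kappa)\wedge\tr(\kappa)-\tr(\kappa\wedge\kappa)\bigr)$ with matrix-multiplication wedge. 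Second, for matrix Lie algebras $[X,Y]=XY-YX$, so for $1$-forms $\phi,\psi$ we have $[\phi\wedge\psi]=\phi\wedge\psi+\psi\wedge\phi$ (matrix-mult wedges); in particular $\frac{1}{2}[\bar\omega_i\wedge\bar\omega_i]=\bar\omega_i\wedge\bar\omega_i$. Third, by $\tr(AB)=\tr(BA)$ combined with $1$-form antisymmetry, $\tr(\alpha\wedge\beta)=-\tr(\beta\wedge\alpha)$ for matrix-valued $1$-forms, so $\tr(\bar\omega_i\wedge\bar\omega_i)=0$ and $\tr(\bar\omega_i\wedge\bar\omega_j)+\tr(\bar\omega_j\wedge\bar\omega_i)=0$.

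For $\chi_1$, write $\kappa_1=dt_1\wedge\bar\omega_1+(t_1^2-t_1)\,\bar\omega_1\wedge\bar\omega_1$. The trace identities above force $\tr(\kappa_1)=dt_1\wedge\tr(\bar\omega_1)$, hence $\tr(\kappa_1)\wedge\tr(\kappa_1)=0$. In $\kappa_1\wedge\kappa_1$ the $dt_1$-containing part comes only from the two cross products between $dt_1\wedge\bar\omega_1$ and $(t_1^2-t_1)\bar\omega_1\wedge\bar\omega_1$; after commuting $dt_1$ to the front and using cyclicity of trace each yields $(t_1^2-t_1)\,dt_1\wedge\tr(\bar\omega_1^{\,3})$. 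Integration $\int_0^1 2(t_1^2-t_1)\,dt_1=-\tfrac13$ then gives $\chi_1=\frac{1}{24\pi^2}\tr(\bar\omega_1\cdot\bar\omega_1\cdot\bar\omega_1)$. For $\chi_2$, decompose $\kappa_2=dt_1\wedge\bar\omega_1+dt_2\wedge\bar\omega_2+B$ where $B$ gathers all terms free of $dt_j$'s. Only the coefficient of $dt_1\wedge dt_2$ matters after fibre integration over $\foo{}^2$, which rules out $B^2$ (no $dt$'s) and $A_i\wedge B+B\wedge A_i$ (only one $dt_i$); the surviving terms are $A_1\wedge A_2+A_2\wedge A_1$. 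Again $\tr(B)=0$ gives $\tr(\kappa_2)=dt_1\wedge\tr(\bar\omega_1)+dt_2\wedge\tr(\bar\omega_2)$, and expanding produces
\[
\tr(\kappa_2)\wedge\tr(\kappa_2)\big|_{dt_1dt_2}=-2\,dt_1\wedge dt_2\wedge\tr(\bar\omega_1)\tr(\bar\omega_2),
\]
\[
\tr(\kappa_2\wedge\kappa_2)\big|_{dt_1dt_2}=-2\,dt_1\wedge dt_2\wedge\tr(\bar\omega_1\,\bar\omega_2).
\]
Since $\int_{\fooo{}^2}dt_1\wedge dt_2=\tfrac12$, this yields $\chi_2=\frac{1}{8\pi^2}\bigl(\tr(\bar\omega_1\cdot\bar\omega_2)-\tr(\bar\omega_1)\cdot\tr(\bar\omega_2)\bigr)$, as claimed.

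The main obstacle is not any single deep step but the bookkeeping: one must consistently distinguish Lie-bracket wedges $[\cdot\wedge\cdot]$ from matrix-multiplication wedges, and correctly track the signs produced when form antisymmetry is combined with non-commutativity of matrix multiplication. The organising principle that makes this feasible is to systematically replace brackets by matrix products and to discard at each stage every term that does not contain the appropriate top form $dt_1\wedge\cdots\wedge dt_n$ on the fibre $\foo{}^n$, so that very few terms actually survive.
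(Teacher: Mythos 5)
Your proposal is correct and yields the paper's formulas; the skeleton is the same (Shulman's theorem for $n\neq 1,2$; split $\kappa_n$ into its $dt$-part and its $dt$-free part; only the two cross terms survive at $n=1$, giving $2\int_0^1(t^2-t)\,dt=-\tfrac13$, and only the $dt_1\wedge dt_2$ term at $n=2$, giving $\int_{\fooo{}^2}dt_1\wedge dt_2=\tfrac12$), but the computational technique is genuinely different. The paper unwinds Definition \ref{wedge products} literally, evaluating $p_1\circ\kappa_n\wedge\kappa_n$ on explicit tuples of vertical and horizontal vectors and tracking signs through sums over $\mathfrak{S}_4$ and $\mathfrak{S}_3$; there the vanishing of the $\tr(\bullet)\cdot\tr(\bullet)$ part at $n=1$ comes from the trace of a commutator being zero, and $(dt_i\wedge\bar{\omega}_i)\wedge(dt_i\wedge\bar{\omega}_i)=0$ is proved by a linear-dependence argument on vertical vectors. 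You instead compute with matrix-valued forms: $[\phi\wedge\psi]=\phi\wedge\psi+\psi\wedge\phi$ for matrix Lie algebras, the graded identity $\tr(\alpha\wedge\beta)=-\tr(\beta\wedge\alpha)$ for matrix-valued $1$-forms (so that $\tr\kappa_n=\sum_j dt_j\wedge\tr\bar{\omega}_j$), and the rewriting $p_1(\kappa,\kappa)=\frac{1}{8\pi^2}\bigl(\tr(\kappa)\wedge\tr(\kappa)-\tr(\kappa\wedge\kappa)\bigr)$. This makes every vanishing visible by inspection ($dt_i\wedge dt_i=0$, or the wrong number of $dt$'s for the fibre integral) and shortens the argument considerably; the only thing you owe the reader is a one-line check that these graded identities are compatible with the $\frac{1}{j!\cdot k!}$ normalization of Definition \ref{wedge products} and with the notation $\tr(\bar{\omega}_1\cdot\bar{\omega}_1\cdot\bar{\omega}_1)=\tr\circ\text{Mult}\circ\bigwedge^3\bar{\omega}_1$ of Remark \ref{notation of polynomial} --- they are, and your integrals and signs all come out matching the paper's.
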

\begin{proof}
As seen in the construction \ref{first pontryagin}, the first Pontryagin class is defined by the polynomial
$$p_1(A, B)= \frac{1}{8\pi^2} (\text{Tr}(A)\cdot \text{Tr}(B) - \text{Tr}(A\cdot B)))$$
Let us furthermore denote $\kappa_n$ the curvature of the canonical connection on $EG_n$, as defined in Construction \ref{connection on ngn}. For simplicity, we decompose it into three parts:
$$\kappa_n = \underbrace{\sum_{j=1}^n dt_j\wedge (\omega_j - \omega_0)}_{=: \kappa_n^1} + \underbrace{\frac{1}{2}\sum_{j=1}^n (t_j^2-t_j)\cdot[\omega_j- \omega_0\wedge\omega_j- \omega_0]}_{=: \kappa_n^2} + \underbrace{\sum_{1\leq i<j \leq n} t_i\cdot  t_j [\omega_i- \omega_0\wedge\omega_j-\omega_0]}_{=:\kappa_n^3}$$
From now on, let us denote $\omega_i-\omega_0$ by $\bar{\omega}_i$.
We have to consider:
$$\int_{\fooo{}^n}p_1\circ \bigwedge^2 \kappa_n$$
Because of Shulman's theorem, it is enough to consider it for $n=1$ and $n=2$. \\
\newmoon For the case $n=1$:\\
Let us consider $p_1 \circ \kappa_1\wedge\kappa_1$, defined on $\foo{}^1\times EG_1$. Since we will integrate this form along $\foo{}^1$, it means that we have to consider the $1$-form:
$p_1 \circ \kappa_1\wedge\kappa_1(\square, w_2, w_3, w_4)$, defined on $\foo{}^1$, for given $w_i$ which are horizontal tangent vectors of $\foo{}^1\times EG_1$. Let thus $w_1$ be a vertical vector of $\foo{}^1\times EG_1$, and let us consider $p_1 \circ \kappa_1\wedge\kappa_1(w_1, w_2, w_3, w_4)$. The part $\kappa_1\wedge\kappa_1$ can be decomposed as:
$$\kappa_1\wedge\kappa_1 = \sum_{i, j=1}^3 \kappa_1^i\wedge \kappa_1^j$$ 
\newpage
First of all, $\kappa_1^3$ is simply $0$ because the condition of the sum cannot be fulfilled for $n=1$. Secondly:
$$\kappa_1^1\wedge\kappa_1^1( w_1, w_2, w_3, w_4)= \left(dt_1\wedge\bar{\omega}_1\wedge dt_1\wedge \bar{\omega}_1\right)(w_1, w_2, w_3, w_4) $$
Recall that $dt_1(w)=0$ for a horizontal tangent vector. Since there is twice $dt_1$ and only one non-horizontal vector, it follows that each permutation for the wedge product will give $0$ (refer to the definition \ref{wedge products}). Thus $\kappa_1^1\wedge \kappa_1^1= 0$. With the same reasoning, it follows that:
$$\kappa_1^2\wedge\kappa_1^2(w_1, w_2, w_3, w_4)=  \frac{1}{4}(t_1^2-t_1)^2\bigwedge^2\cdot[\bar{\omega}_1\wedge\bar{\omega}_1](w_1, w_2, w_3, w_4) =0$$
since $[\omega_1- \omega_0\wedge\omega_1- \omega_0](v, w)=0$ if $v$ or $w$ is a vertical tangent vector. It comes from the fact that $(\omega_1- \omega_0)(v)=0 $ for a vertical vector and from the definition \ref{wedge products}. Thus, we have:
$$\kappa_1\wedge\kappa_1(w_1, w_2, w_3, w_4)= (\kappa_1^1\wedge \kappa_1^2 + \kappa^2_1\wedge \kappa_1^1)(w_1, w_2, w_3, w_4)$$
Let us now do the composition $\widetilde{p_1^1}\circ\kappa_1^1\wedge \kappa_1^2$, where $p_1^1$ is $\tr(\bullet)\cdot \tr(\bullet)$, and $\widetilde{p_1^1}$ the map induced by $p_1^1$ following the universal property of the tensor product (see the notation of Remark \ref{notation tensor poly}). Then, using the definition of the wedge product and applying $\widetilde{p^1_1}$, it will be equal to: 
$$\frac{1}{4}\sum_{\sigma\in \mathfrak{S}_{4}}\text{sign}(\sigma)\tr\left(\kappa_1^1(w_{\sigma(1)}, w_{\sigma(2)})\right)\cdot \tr\left(\kappa_1^2(w_{\sigma(3)}, w_{\sigma(4)})\right)$$
The second part of the equation can be written as: 
\begin{equation}\label{eq: first part 0}
    \begin{aligned}
    \tr\left(\kappa_1^2(w_{\sigma(3)}, w_{\sigma(4)})\right)&= \frac{1}{2} (t_1^2-t_1)\cdot \tr\left([\bar{\omega}_1\wedge \bar{\omega}_1](w_{\sigma(3)}, w_{\sigma(4)})\right)\\
    &\underset{\ref{wedge products}}{=} (t^2_1 -t_1)\cdot \tr (\bar{\omega}_1(w_{\sigma(3)})\cdot \bar{\omega}_1(w_{\sigma(4)}) - \bar{\omega}_1(w_{\sigma(4)})\cdot \bar{\omega}_1(w_{\sigma(3)}))
\end{aligned}
\end{equation}
But it can be easily proved that $\tr(A\cdot B)= \tr( B\cdot A)$ for $A,B$ any square matrices. Since each $\bar{\omega}_i(w_{\sigma(j)})$ is a matrix, the equation (\ref{eq: first part 0}) gives $0$, and since we can do the same for $\widetilde{p_1^1}\circ\kappa_1^2\wedge \kappa_1^1$, it implies that $\widetilde{p_1^1}\circ \kappa_1\wedge \kappa_1= 0$. \\
Let us now consider $\widetilde{p_1^2}\circ\kappa_1^1\wedge \kappa_1^2$, where $p_1^2$ is the map $\tr(\bullet\cdot \bullet)$. We have thus the following, using the definition of the wedge product:
\begin{equation}\label{eq: random eq}
    \frac{1}{4}\sum_{\sigma\in \mathfrak{S}_4} \text{sign}(\sigma)\tr\left(\kappa_1^1(w_{\sigma(1)}, w_{\sigma(2)})\cdot \kappa_1^2(w_{\sigma(3)},w_{\sigma(4)})\right)
\end{equation}
\newpage
Let us recall that $[\omega_1- \omega_0\wedge\omega_1- \omega_0](v, w)=0$ if $v$ or $w$ is a vertical tangent vector. Thus, if the summand is non-zero, $w_1$ should be one of the entries of $\kappa_1^1$, and thus $\sigma(1)=1$ or $\sigma(2)=1$. We can thus re-write the formula (\ref{eq: random eq}) in:
\begin{align*}
    \frac{1}{4}\sum_{\sigma\in \mathfrak{S}_3} \text{sign}(\sigma)\left(\tr\left(\kappa_1^1(w_1, w_{\sigma(1)+1})\cdot \kappa_1^2(w_{\sigma(2)+1},w_{\sigma(3)+1})\right) -  \tr\left(\kappa_1^1( w_{\sigma(1)+1}, w_1)\cdot \kappa_1^2(w_{\sigma(2)+1},w_{\sigma(3)+1})\right)\right)\\
    = \frac{1}{2}\sum_{\sigma\in \mathfrak{S}_3} \text{sign}(\sigma)\tr\left(\kappa_1^1(w_1, w_{\sigma(1)+1})\cdot \kappa_1^2(w_{\sigma(2)+1},w_{\sigma(3)+1})\right)
\end{align*}
where the last equation is true because of $\kappa_1^1( w_{\sigma(1)+1}, w_1)=-\kappa_1^1(w_1, w_{\sigma(1)+1})$ and of the linearity of $\tr$. Furthermore, if we write $\kappa_1^1$ and then $\kappa^2_1$ in equation, we have:
\begin{align*}
    \frac{1}{2}\sum_{\sigma\in \mathfrak{S}_3} \text{sign}(\sigma)\tr\left(\left(dt_1(w_1)\cdot \bar{\omega}_1(w_{\sigma(1)+1}) - \underbrace{dt_1(w_{\sigma(1)+1})}_{=0 \; \text{since hor. vector}}\cdot \bar{\omega}_1(w_1))\right)\cdot \kappa_1^2(w_{\sigma(2)+1},w_{\sigma(3)+1})\right)\\ = \frac{1}{4}\sum_{\sigma\in \mathfrak{S}_3} \text{sign}(\sigma)(t_1^2-t_1)dt_1(w_1)\cdot\tr\left( \bar{\omega}_1(w_{\sigma(1)+1})  \cdot [\bar{\omega}_1\wedge \bar{\omega}_1](w_{\sigma(2)+1},w_{\sigma(3)+1})\right)
\end{align*}
Let us now integrate this formula along $\foo{}^1$. It gives:
\begin{align*}
    \left(\frac{1}{4}\sum_{\sigma\in \mathfrak{S}_3} \text{sign}(\sigma)\cdot\tr\left( \bar{\omega}_1(w_{\sigma(1)+1})  \cdot [\bar{\omega}_1\wedge \bar{\omega}_1](w_{\sigma(2)+1},w_{\sigma(3)+1})\right)\right) \int_{\fooo{}^1}(t_1^2-t_1) dt_1\\
    =\left(\frac{1}{4}\sum_{\sigma\in \mathfrak{S}_3} \text{sign}(\sigma)\cdot\tr\left( \bar{\omega}_1(w_{\sigma(1)+1})  \cdot [\bar{\omega}_1\wedge \bar{\omega}_1](w_{\sigma(2)+1},w_{\sigma(3)+1})\right)\right) \left[\frac{1}{3}t_1^3-\frac{1}{2}t^2\right]_0^1\\
    = -\frac{1}{24}\sum_{\sigma\in \mathfrak{S}_3} \text{sign}(\sigma)\cdot\tr\left( \bar{\omega}_1(w_{\sigma(1)+1})  \cdot [\bar{\omega}_1\wedge \bar{\omega}_1](w_{\sigma(2)+1},w_{\sigma(3)+1})\right)
\end{align*}
If we now write $[\bar{\omega}_1\wedge \bar{\omega}_1](v,w)$ in an equation, using the definition of the wedge product, we obtain $[\bar{\omega}_1(v), \bar{\omega}_1(w)] - [\bar{\omega}_1(w), \bar{\omega}_1(v)]$, and developing the commutator: $2\bar{\omega}_1(v)\cdot \bar{\omega}_1(w)- 2\bar{\omega}_1(w)\cdot \bar{\omega}_1(v)$. Thus, we get:
\begin{align*}
    -\frac{1}{12}\sum_{\sigma\in \mathfrak{S}_3}& \text{sign}(\sigma)\cdot\tr\left( \bar{\omega}_1(w_{\sigma(1)+1})  \cdot \left(\bar{\omega}_1 (w_{\sigma(2)+1})\cdot \bar{\omega}_1 (w_{\sigma(3)+1}) - \bar{\omega}_1 (w_{\sigma(3)+1})\cdot \bar{\omega}_1 (w_{\sigma(2)+1}\right)\right)\\
    &=-\frac{1}{12}\sum_{\sigma\in \mathfrak{S}_3} \text{sign}(\sigma)\cdot\tr\left( \bar{\omega}_1(w_{\sigma(1)+1})  \cdot \bar{\omega}_1 (w_{\sigma(2)+1})\cdot \bar{\omega}_1 (w_{\sigma(3)+1})\right) - \\& \qquad\qquad\qquad\left. \frac{1}{12}\cdot \underbrace{\sum_{\sigma\in \mathfrak{S}_3}  -\text{sign}(\sigma)\cdot\tr\left( \bar{\omega}_1(w_{\sigma(1)+1})  \cdot \bar{\omega}_1 (w_{\sigma(3)+1})\cdot \bar{\omega}_1 (w_{\sigma(2)+1})\right)}_{\substack{=\sum_{\sigma\circ \tau\in \mathfrak{S}_3}  \text{sign}(\sigma\circ \tau)\cdot\tr\left( \bar{\omega}_1(w_{\sigma\circ \tau(1)+1})  \cdot \bar{\omega}_1 (w_{\sigma\circ \tau(2)+1})\cdot \bar{\omega}_1 (w_{\sigma\circ \tau(3)+1})\right),\\ \;\text{for}\; \tau(1,2,3)=(1,3,2)}} \right.\\
    &=-\frac{1}{6}\sum_{\sigma\in \mathfrak{S}_3} \text{sign}(\sigma)\cdot\tr\left( \bar{\omega}_1(w_{\sigma(1)+1})  \cdot \bar{\omega}_1 (w_{\sigma(2)+1})\cdot \bar{\omega}_1 (w_{\sigma(3)+1})\right)\\
     &=-\frac{1}{6}\tr\circ\text{Mult} \left(\sum_{\sigma\in \mathfrak{S}_3} \text{sign}(\sigma)\cdot \bar{\omega}_1(w_{\sigma(1)+1})  \otimes \bar{\omega}_1 (w_{\sigma(2)+1})\otimes \bar{\omega}_1 (w_{\sigma(3)+1})\right)\\
     &=-\frac{1}{6}\tr\circ\text{Mult} \left(\sum_{\substack{\sigma\in \mathfrak{S}_3\\ \sigma(2)<\sigma(3)}} \text{sign}(\sigma)\cdot \bar{\omega}_1(w_{\sigma(1)+1})  \otimes  \Bigl(\bar{\omega}_1 (w_{\sigma(2)+1})\otimes \bar{\omega}_1 (w_{\sigma(3)+1})-\right.\\
      &\qquad\qquad\qquad\qquad\qquad\qquad\qquad\qquad\qquad\qquad\qquad\qquad\qquad \bar{\omega}_1 (w_{\sigma(3)+1})\otimes \bar{\omega}_1 (w_{\sigma(2)+1})\Bigr) \Biggr)\\
     &=-\frac{1}{6}\tr\circ\text{Mult} \left(\sum_{\substack{\sigma\in \mathfrak{S}_3\\ \sigma(2)<\sigma(3)}}\text{sign}(\sigma)\cdot \bar{\omega}_1(w_{\sigma(1)+1})  \otimes \bar{\omega}_1\wedge \bar{\omega}_1 (w_{\sigma(2)+1},w_{\sigma(3)+1} )\right) \\
     &=-\frac{1}{6}\tr\circ\text{Mult} \left(\frac{1}{2}\sum_{\sigma\in \mathfrak{S}_3}\text{sign}(\sigma)\cdot \bar{\omega}_1(w_{\sigma(1)+1})  \otimes \bar{\omega}_1\wedge \bar{\omega}_1 (w_{\sigma(2)+1},w_{\sigma(3)+1} )\right)\\
    &=-\frac{1}{6}\tr\left(\text{Mult}\bigwedge^3\bar{\omega}_1 (w_2, w_3, w_4)\right)
\end{align*}
Analogously, we can do the same calculation for $\Tilde{p^2_1}\circ \kappa_1^2\wedge\kappa_1^1$ and obtain the same result (this comes from the symmetry of $p_1^2$). We get thus:
$$\int_{\fooo{}^1}\frac{1}{8\pi^2}(\tr(\bullet_1)\cdot \tr(\bullet_2)-\tr(\bullet_1\cdot \bullet_2))\circ \bigwedge^2\kappa_1= \frac{1}{8\pi^2}\cdot 2\cdot \frac{1}{6}\tr\circ \text{Mult}\left(\bigwedge^3\bar{\omega}_1\right)$$
We can thus write, under the notation of Remark \ref{notation of polynomial}:

$$\frac{1}{24\pi^2} \text{Tr} ( \bar{\omega}_1 \cdot \bar{\omega}_1 \cdot  \bar{\omega}_1 ) $$
\newmoon For the case $n=2$:
As before, we consider $\kappa_2\wedge\kappa_2$, and it can be decomposed in:
$$\kappa_2\wedge\kappa_2 = \sum_{i, j=1}^3 \kappa_2^i\wedge \kappa_2^j$$
We will this time integrate along $\foo{}^2$. It means that we should consider $p_1\circ \kappa_2\wedge\kappa_2(w_1, w_2, w_3, w_4)$ with $w_1, w_2$ vertical tangent vectors and $w_3, w_4$ horizontal tangent vectors. Since $[\bar{\omega}_i\wedge\bar{\omega}_j](v,w)=0$ as soon as $v$ or $w$ is vertical, it follows that for the vectors $w_1, w_2, w_3, w_4$, the forms $\kappa_2^2\wedge\kappa_2^2$, $\kappa_2^3\wedge\kappa_2^2$, $\kappa_2^2\wedge\kappa_2^3$ and $\kappa_2^3\wedge\kappa_2^3$ will simply give $0$. Analogously, $\kappa_2^1\wedge\kappa_2^2$ will be a linear combination of: 
$$dt_i\wedge \bar{\omega}_i\wedge [\bar{\omega}_j\wedge \bar{\omega}_j]$$
With the definition of wedge product (Definition \ref{wedge products}), it can be easily seen that this form will give $0$ if it receives two vertical vectors among the four entries. The same reasoning can be held for $\kappa_2^1\wedge\kappa_2^3$, $\kappa_2^3\wedge\kappa_2^1$ and $\kappa_2^2\wedge\kappa_2^1$. Thus, we can reduce our study to the case of $\kappa_2^1\wedge \kappa_2^1$, which can be written as:
$$\sum_{i, j=1}^2(dt_i\wedge \bar{\omega}_i)\wedge (dt_j \wedge \bar{\omega}_j)$$
If we develop it, we get:
\begin{align*}
    \sum_{i, j=1}^2(dt_i\wedge \bar{\omega}_i)\wedge &(dt_j \wedge \bar{\omega}_j)(w_1, w_2, w_3, w_4)\\
    &= \frac{1}{4}\sum_{i, j=1}^2 \sum_{\sigma\in \mathfrak{S}_4}\text{sign}(\sigma) (dt_i\wedge \bar{\omega}_i)(w_{\sigma(1)}, w_{\sigma(2)}) \otimes (dt_j\wedge \bar{\omega}_j)(w_{\sigma(3)}, w_{\sigma(4)})\\
    &=\frac{1}{4}\sum_{i, j=1}^2 \sum_{\sigma\in \mathfrak{S}_4} \text{sign}(\sigma)\left(dt_i(w_{\sigma(1)}) \cdot \bar{\omega}_i(w_{\sigma(2)}) - dt_i(w_{\sigma(2)}) \cdot \bar{\omega}_i(w_{\sigma(1)})\right) \otimes\\ &\qquad\qquad\qquad\qquad\qquad\qquad\qquad\left.\left(dt_j(w_{\sigma(3)}) \cdot \bar{\omega}_j(w_{\sigma(4)}) - dt_j(w_{\sigma(4)}) \cdot \bar{\omega}_j(w_{\sigma(3)})\right)\right.
\end{align*}
Remark furthermore that each vertical vector $v$ can be written $(v^1, v^2)$, for $v=v^1\cdot \frac{\partial}{\partial t_1}+ v^2\cdot \frac{\partial}{\partial t_2}$. Let $v$ and $w$ be two vertical vectors. Then, either $v^1= 0$ (then, $dt_1\wedge \bar{\omega}_1(v, u)= 0$ for any $u$, since $dt_1(v)=0$, and $\bar{\omega}_1(v)=0$), or $w - \frac{w^1}{v^1}\cdot v$ is well-defined, and $dt_1\wedge \bar{\omega}_1(w - \frac{w^1}{v^1}\cdot v, u)= 0$ for the same reason as before. Thus, if $w_1^1\neq 0$:
$$(dt_1\wedge \bar{\omega}_1)\wedge (dt_1 \wedge \bar{\omega}_1)(w_1, -\frac{w_2^1}{w_1^1}w_1, w_3, w_4)=0$$
since the two first vectors are colinear, and:
$$(dt_1\wedge \bar{\omega}_1)\wedge (dt_1 \wedge \bar{\omega}_1)(w_1, w_2, w_3, w_4)=(dt_1\wedge \bar{\omega}_1)\wedge (dt_1 \wedge \bar{\omega}_1)(w_1, w_2-\frac{w_2^1}{w_1^1}w_1, w_3, w_4)=0$$
If $w_1^1=0$, we follow directly that $(dt_1\wedge \bar{\omega}_1)\wedge (dt_1 \wedge \bar{\omega}_1)(w_1, w_2, w_3, w_4)=0$. Thus, in general, $(dt_1\wedge \bar{\omega}_1)\wedge (dt_1 \wedge \bar{\omega}_1)(w_1, w_2, w_3, w_4)=0$ for $w_1, w_2$ vertical tangent vectors. Analogously, we show that $(dt_2\wedge \bar{\omega}_2)\wedge (dt_2 \wedge \bar{\omega}_2)(w_1, w_2, w_3, w_4)=0$. \\
So consider specifically $(dt_1\wedge\bar{\omega}_1)\wedge(dt_2\wedge\bar{\omega}_2)$: 
\begin{align*}
    &\frac{1}{4}\sum_{\sigma\in \mathfrak{S}_4}\text{sign}(\sigma) \left(\underbrace{dt_1(w_{\sigma(1)}) \cdot \bar{\omega}_1(w_{\sigma(2)})}_{\barroman{I}} - \underbrace{dt_1(w_{\sigma(2)}) \cdot \bar{\omega}_1(w_{\sigma(1)})}_{\barroman{II}}\right) \otimes\\
    &\qquad\qquad\qquad\qquad\qquad\qquad\qquad\left.\left(\underbrace{dt_2(w_{\sigma(3)}) \cdot \bar{\omega}_2(w_{\sigma(4)})}_{\barroman{III}} - \underbrace{dt_2(w_{\sigma(4)}) \cdot \bar{\omega}_2(w_{\sigma(3)})}_{\barroman{IV}}\right)\right.\\
    &=\frac{1}{4}\sum_{\sigma\in \mathfrak{S}_4}\text{sign}(\sigma) \barroman{I}\otimes \barroman{III} -\frac{1}{4}\sum_{\sigma\in \mathfrak{S}_4}\text{sign}(\sigma) \barroman{I}\otimes \barroman{IV} - \frac{1}{4}\sum_{\sigma\in \mathfrak{S}_4}\text{sign}(\sigma) \barroman{II}\otimes \barroman{III}+ \frac{1}{4}\sum_{\sigma\in \mathfrak{S}_4}\text{sign}(\sigma) \barroman{II}\otimes \barroman{IV}
\end{align*}
Remark that, for the permutation $\tau$: $(1,2,3,4)\mapsto(1,2,4,3)$, we have $-dt_2(w_{\sigma(4)}) \cdot \bar{\omega}_2(w_{\sigma(3)})= \text{sign}(\tau) dt_2(w_{\sigma\circ \tau(3)}) \cdot \bar{\omega}_2(w_{\sigma\circ \tau(4)})$, and thus:
$$-\frac{1}{4}\sum_{\sigma\in \mathfrak{S}_4}\text{sign}(\sigma) \barroman{I}\otimes \barroman{IV}= \frac{1}{4}\sum_{\sigma\circ \tau\in \mathfrak{S}_4}\text{sign}(\sigma\circ \tau) \barroman{I}\otimes \barroman{III}= \frac{1}{4}\sum_{\sigma\in \mathfrak{S}_4}\text{sign}(\sigma) \barroman{I}\otimes \barroman{III}$$
We can prove similarly that $\barroman{I}\otimes \barroman{III}$ is equal to $-\barroman{II}\otimes \barroman{III}$ and $\barroman{II}\otimes \barroman{IV}$. Thus, the whole sum is equal to:
$$\sum_{\sigma\in \mathfrak{S}_4}\text{sign}(\sigma) \barroman{I}\otimes \barroman{III}= \sum_{\sigma\in \mathfrak{S}_4}\text{sign}(\sigma)dt_1(w_{\sigma(1)}) \cdot \bar{\omega}_1(w_{\sigma(2)})\otimes dt_2(w_{\sigma(3)}) \cdot \bar{\omega}_2(w_{\sigma(4)})$$
If we apply the same path of calculation for $(dt_2\wedge\bar{\omega}_2)\wedge (dt_1\wedge\bar{\omega}_1)$, we obtain:
\begin{align*}
    \sum_{\sigma\in \mathfrak{S}_4}\text{sign}(\sigma)dt_2&(w_{\sigma(1)}) \cdot \bar{\omega}_2(w_{\sigma(2)})\otimes dt_1(w_{\sigma(3)}) \cdot \bar{\omega}_1(w_{\sigma(4)})\\
    &=\sum_{\sigma\in \mathfrak{S}_4}\text{sign}(\sigma)dt_2(w_{\sigma(3)}) \cdot \bar{\omega}_2(w_{\sigma(4)})\otimes dt_1(w_{\sigma(1)}) \cdot \bar{\omega}_1(w_{\sigma(2)})
\end{align*}
Since the both $p^1_1$ and $p^2_1$ are symmetric, it follows that $\tilde{p^i_1}\circ (dt_2\wedge\bar{\omega}_2)\wedge (dt_1\wedge\bar{\omega}_1) = \tilde{p^i_1}\circ (dt_1\wedge\bar{\omega}_1)\wedge (dt_2\wedge\bar{\omega}_2)$ for $i=1,2$. Let $A$ denote either $\tilde{p^1_1}$ and $\tilde{p^2_1}$. Then:

\begin{align*}
    A\circ \kappa_2\wedge\kappa_2(w_1, w_2, w_3, w_4)&=2\cdot \sum_{\sigma\in \mathfrak{S}_4}\text{sign}(\sigma)A \left(dt_1(w_{\sigma(1)}) \cdot \bar{\omega}_1(w_{\sigma(2)})\otimes dt_2(w_{\sigma(3)})\cdot \bar{\omega}_2(w_{\sigma(4)})\right)\\
   &= 2\cdot \sum_{\sigma\in \mathfrak{S}_4}\text{sign}(\sigma)dt_1(w_{\sigma(1)}) \cdot  dt_2(w_{\sigma(3)}) \cdot A\left(\bar{\omega}_1(w_{\sigma(2)})\otimes \bar{\omega}_2(w_{\sigma(4)})\right)
\end{align*}
Thus we get:
\begin{align*}
   &-2\cdot\sum_{\sigma\in \mathfrak{S}_4}\text{sign}(\sigma)dt_1(w_{\sigma(1)})\cdot dt_2(w_{\sigma(2)}) \cdot A(\Bar{\omega}_1 (w_{\sigma(3)})\otimes \Bar{\omega}_2 (w_{\sigma(4)}))\\
    &=-\sum_{\sigma\in \mathfrak{S}_4}\text{sign}(\sigma)\left(dt_1(w_{\sigma(1)})\cdot dt_2(w_{\sigma(2)}) \cdot A(\Bar{\omega}_1 (w_{\sigma(3)})\otimes \Bar{\omega}_2 (w_{\sigma(4)})) \right.\\ &\qquad \qquad \qquad\qquad \qquad \qquad\left.- dt_1(w_{\sigma(2)})\cdot dt_2(w_{\sigma(1)}) \cdot A(\Bar{\omega}_1 (w_{\sigma(3)})\otimes \Bar{\omega}_2 (w_{\sigma(4)}))\right)\\
    &= -\sum_{\sigma\in \mathfrak{S}_4}\text{sign}(\sigma) dt_1\wedge dt_2(w_{\sigma(1)}, w_{\sigma(2)}) \cdot A(\Bar{\omega}_1 (w_{\sigma(3)})\otimes \Bar{\omega}_2 (w_{\sigma(4)}))
\end{align*}

Remark that $dt_1\wedge dt_2(v, w)=0$ if $v$ or $w$ is a horizontal tangent vector. Thus, the non-zero summands are those such that $\sigma(1)=1$ and $\sigma(2)=2$ or those such that $\sigma(1)=2$ and $\sigma(2)=1$. It gives us:
\begin{align*}  
-\sum_{\sigma\in \mathfrak{S}_2}\text{sign}(\sigma) dt_1\wedge dt_2(w_1, w_2) \cdot &A(\Bar{\omega}_1 (w_{\sigma(1)+2}), \Bar{\omega}_2 (w_{\sigma(2)+2}))+\\& \sum_{\sigma\in \mathfrak{S}_2}\text{sign}(\sigma) dt_1\wedge dt_2(w_2, w_1) \cdot A(\Bar{\omega}_1 (w_{\sigma(1)+2}), \Bar{\omega}_2 (w_{\sigma(2)+2}))
\end{align*}
which is equal to:
$$-2\cdot \sum_{\sigma\in \mathfrak{S}_2}\text{sign}(\sigma) dt_1\wedge dt_2(w_1, w_2) \cdot A(\Bar{\omega}_1 (w_{\sigma(1)+2}), \Bar{\omega}_2 (w_{\sigma(2)+2}))$$
We can now integrate it along $\foo{}^2$. We have then:
$$-2\cdot\sum_{\sigma\in \mathfrak{S}_2}\text{sign}(\sigma) \cdot A(\Bar{\omega}_1 (w_{\sigma(1)+2}), \Bar{\omega}_2 (w_{\sigma(2)+2}))\cdot \int_{\fooo{}^2}dt_1\wedge dt_2 $$
with:
$$\int_{\fooo{}^2}dt_1\;\wedge dt_2= \int_0^1\left(\int_0^{1-t_1} 1 dt_2\right)dt_1= \int_0^1 [t]^{1-t_1}_0 dt_1= \int_0^1 1-t_1 dt_1= \left[t_1- \frac{1}{2}\cdot t_1^2\right]_0^1= \frac{1}{2}$$
We get finally:
\begin{align*}
    -\sum_{\sigma\in \mathfrak{S}_2}\text{sign}(\sigma) \cdot A(\Bar{\omega}_1 (w_{\sigma(1)+2}), \Bar{\omega}_2 (w_{\sigma(2)+2})) &=-A\left(\sum_{\sigma\in \mathfrak{S}_2}\text{sign}(\sigma) \cdot \bar{\omega}_1(w_{\sigma(1)+2}) \otimes \bar{\omega}_2(w_{\sigma(2)+2})\right)\\
    &= -A \circ (\bar{\omega}_1\wedge \bar{\omega}_2) (w_3, w_4)\\
    &= -A(\bar{\omega}_1,\bar{\omega}_2)(w_3, w_4)
\end{align*}
Thus:
$$\int_{\fooo{}^2}\widetilde{p_1^i}\circ \kappa_2\wedge \kappa_2= - p_1^i (\bar{\omega}_1,\bar{\omega}_2) $$
And finally:
$$\int_{\fooo{}^2}{p}_1\circ \kappa_2\wedge \kappa_2= \frac{1}{8\pi^2}\left((\tr(\bar{\omega}_1\cdot \bar{\omega}_2)) - (\tr(\bar{\omega}_1)\cdot \tr(\bar{\omega}_2)) \right)$$

\end{proof}
We can extend this result to some other groups: 
\begin{corollary}
The same result can be applied to $\text{U}(n)$ and $\text{SU}(n)$.
\end{corollary}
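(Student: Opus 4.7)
The plan is to observe that the proof of Lemma \ref{main calcul} goes through verbatim for $\text{U}(n)$ and $\text{SU}(n)$, because it relies on only three ingredients, none of which is sensitive to whether the matrix entries are real or complex. First, the explicit formula $\omega_S(A) = S^{-1}\cdot A$ for the Maurer-Cartan form established in Lemma \ref{matrix maurer cartan form} holds for any matrix Lie subgroup of $GL(n,\C)$, as its derivation never uses reality of entries. Second, the expression $p_1(A,B) = \frac{1}{8\pi^2}(\tr(A)\cdot\tr(B) - \tr(AB))$ obtained in Construction \ref{first pontryagin} is an algebraic identity that makes sense over $\C$. Third, the cyclicity $\tr(AB) = \tr(BA)$ used repeatedly to discard summands holds for complex matrices as well.

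The only genuine subtlety I have to address is whether $p_1$ is truly a well-defined (real-valued) invariant polynomial on the Lie algebras of $\text{U}(n)$ and $\text{SU}(n)$, since the Pontryagin polynomial was introduced above only for real matrix Lie groups. Here I would invoke Remark \ref{chern class}: the Chern polynomial $c_k$ takes real values on the Lie algebras of $\text{U}(n)$ and $\text{SU}(n)$ (consisting of anti-Hermitian matrices, with the additional trace-zero condition for $\text{SU}(n)$), and the Pontryagin polynomial $p_k$ differs from $c_{2k}$ only by the factor $(-1)^k$. Hence $p_1$ is an element of $I^2(\fg)$ for $\fg = \mathfrak{u}(n)$ or $\mathfrak{su}(n)$, and its Chern-Weil class on the classifying bundle is well-defined.

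With these two observations in place, the conclusion follows at once: every single step of the proof of Lemma \ref{main calcul} -- the decomposition $\kappa_n = \kappa_n^1 + \kappa_n^2 + \kappa_n^3$, the vanishing of the wedge products that contain too many vertical (or too few horizontal) entries, the elementary integrals $\int_0^1 (t_1^2 - t_1)\, dt_1 = -\tfrac{1}{6}$ and $\int_{\fooo{}^2} dt_1 \wedge dt_2 = \tfrac{1}{2}$, and the final cyclic rearrangement of traces -- transports unchanged, yielding the same formulas for $\chi_1$ and $\chi_2$ and the same vanishing for $\chi_n$ with $n \neq 1,2$. I do not expect any real obstacle: the corollary amounts to noting that Lemma \ref{main calcul} was proved under hypotheses strictly weaker than ``real matrix Lie group'', and $\text{U}(n)$ and $\text{SU}(n)$ satisfy those weaker hypotheses once the $p_1$-valued-in-$\R$ issue is settled via Remark \ref{chern class}.
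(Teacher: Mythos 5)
Your proposal is correct and follows the same route as the paper, which simply cites Remark \ref{chern class} to settle the one genuine issue (that $p_1$ remains a real-valued invariant polynomial on $\mathfrak{u}(n)$ and $\mathfrak{su}(n)$). The additional verification that the computations of Lemma \ref{main calcul} carry over verbatim to complex matrices is a useful elaboration but not a different argument.
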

\begin{proof}
Follows directly from Remark \ref{chern class}. 
\end{proof}

\begin{corollary}
The same result can be applied to $\text{Sp}(n)$. 
\end{corollary}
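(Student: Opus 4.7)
The plan is to transfer the computation from $\text{SO}(n)$ to $\text{Sp}(n)$ via the covering homomorphism $\pi\colon \text{Sp}(n) \to \text{SO}(n)$ set up in Remark \ref{spin double cover}. Since, in the paper's notation, $\text{Sp}(n)$ denotes the real Spin group and is not literally a subgroup of any $GL(N)$, Lemma \ref{main calcul} does not apply verbatim; however, Remark \ref{spin double cover} already provides everything needed to \emph{simulate} the matrix formalism on $\text{Sp}(n)$: the linear isomorphism $d\pi_e\colon \mathfrak{sp}(n) \xrightarrow{\sim} \mathfrak{so}(n)$ identifies the Lie algebra with the space of skew-symmetric matrices of Example \ref{lie algebra of so(n)}, and the Maurer-Cartan form on $\text{Sp}(n)$ corresponds, under $d\pi$, to the matrix form $\pi(p)^{-1}\cdot d\pi(p)$ on $\text{SO}(n)$.

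First I would use the isomorphism $d\pi_e$ to transport the Pontryagin polynomial: define $p_1^{\text{Sp}}(A, B) := p_1^{\text{SO}}(d\pi(A), d\pi(B))$. This polynomial is $\text{Ad}$-invariant because the adjoint action factors through the covering, namely $\text{Ad}^{\text{Sp}}_g = \text{Ad}^{\text{SO}}_{\pi(g)}$ under $d\pi$. Next I would observe that $\pi$ induces, by componentwise application, a simplicial morphism $E\pi\colon E\text{Sp}(n)_\bullet \to E\text{SO}(n)_\bullet$ (and similarly $B\pi$ on the bases), which is compatible with the face and degeneracy maps and is a local diffeomorphism at each simplicial level.

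The main step is to compare the Shulman data on the two sides. Each $i$-th projection $\text{Sp}(n)^{n+1} \to \text{Sp}(n)$ factors through $\pi$ over the analogous projection on $\text{SO}(n)^{n+1}$; combined with Remark \ref{spin double cover}, this means that the pullback Maurer-Cartan forms $\omega_i^{\text{Sp}}$ correspond, under $d\pi$, to the pullbacks of $\omega_i^{\text{SO}}$ under $E\pi_n$. Consequently the connection form $\hat{\omega}_n^{\text{Sp}}$ and its curvature $\kappa_n^{\text{Sp}}$ agree, under $d\pi$, with the pullbacks of $\hat{\omega}_n^{\text{SO}}$ and $\kappa_n^{\text{SO}}$. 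Since pullback by a local diffeomorphism commutes with the exterior product, the Lie bracket, the polynomial evaluation (which was defined so as to intertwine $p_1^{\text{Sp}}$ with $p_1^{\text{SO}}$), and the integration along the fibre $\fooo{}^n$, the resulting Shulman forms $\chi_n^{\text{Sp}}(p_1)$ coincide with the pullbacks of $\chi_n^{\text{SO}}(p_1)$. This immediately yields, with $\bar{\omega}_i$ now interpreted on $E\text{Sp}(n)_n$ via $d\pi$, the three identities
\begin{align*}
    \chi_1 &= \tfrac{1}{24\pi^2}\,\tr(\bar{\omega}_1 \cdot \bar{\omega}_1 \cdot \bar{\omega}_1),\\
    \chi_2 &= \tfrac{1}{8\pi^2}\bigl(\tr(\bar{\omega}_1 \cdot \bar{\omega}_2) - \tr(\bar{\omega}_1)\cdot\tr(\bar{\omega}_2)\bigr),\\
    \chi_n &= 0 \qquad (n \ne 1, 2).
\end{align*}

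The hard part is conceptual rather than computational: one must verify that every manipulation in the proof of Lemma \ref{main calcul} is natural with respect to $d\pi$, i.e. it uses only the Lie algebra structure and the trace identities $\tr(AB) = \tr(BA)$ read on the $\mathfrak{so}(n)$-side, rather than any property of $\text{Sp}(n)$ as an honest matrix group. Once this naturality is checked --- which is immediate, since $d\pi$ is an isomorphism of Lie algebras and the trace and determinant used to define $p_1$ live entirely in $\mathfrak{gl}(n)$ --- no new calculation is required.
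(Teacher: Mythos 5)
Your proposal is correct and follows essentially the same route as the paper: the paper's proof is the single line ``Follows directly from Remark \ref{spin double cover}'', i.e.\ it transfers the computation from $\text{SO}(n)$ to the Spin group via the double covering, exactly as you do. You have simply made explicit the naturality checks (transport of $p_1$ along $d\pi_e$, compatibility of the Maurer--Cartan forms, curvatures and fibre integration with pullback by the local diffeomorphism) that the paper leaves implicit.
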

\begin{proof}
Follows directly from Remark \ref{spin double cover}. 
\end{proof}

Let us recall that the last calculation gives a differential form on $EG_{\bullet}$. We need descend it to $BG_{\bullet}$.
\begin{lemma}
The characteristic class on $BG_{\bullet}$ given by $f\in I^k(\fg)$ can be described as:

$$\left[\left(0,\frac{1}{24\pi^2}\text{Tr}(\omega^l \cdot\omega^l  \cdot\omega^l ),  \frac{1}{8\pi^2}\left(\tr(d_2^*(\omega^l\circ\text{inv}))\cdot \tr(d_0^*\omega^l)- \tr(d_2^*(\omega^l\circ\text{inv})\cdot d_0^*\omega^l )\right), 0, \dots \right)\right]$$
where the brackets means the cohomology class, $\omega^l$ the Maurer-Cartan form and where $inv$ is the map sending the tangent vector $X$ over the point $A\in G$ to
$$\left.\frac{d}{dt}exp(t\cdot X)^{-1}\right|_{t=0}$$
with $exp$ denoting the matrix exponential. This is a tangent vector over the point $A^{-1}$.
\end{lemma}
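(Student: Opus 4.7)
The plan is to use the Chern--Weil theorem to descend each form $\chi_n$ from Lemma \ref{main calcul} from $EG_n$ to a form $\phi_n$ on $BG_n$, and then to compute $\phi_n$ explicitly by pulling back along a section of $\gamma_n$. By Theorem \ref{chern weil theorem teil 1} applied to the bundle $\text{id}_{\fooo{}^n}\times\gamma_n\colon \foo{}^n\times EG_n\to \foo{}^n\times BG_n$ with connection $\hat{\omega}_n$, the form $p_1\circ\bigwedge^2\kappa_n$ is the pullback of some form on $\foo{}^n\times BG_n$. Since integration along the $\foo{}^n$-fibre commutes with $(\text{id}\times\gamma_n)^*$, each $\chi_n$ equals $\gamma_n^*\phi_n$ for a unique $\phi_n\in\Omega^{4-n}(BG_n)$, and hence $\phi_n = s_n^*\chi_n$ for any smooth section $s_n\colon BG_n\to EG_n$. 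For $n=0$ and $n\geq 3$, Theorem \ref{Shulman theorem} gives $\chi_n=0$, hence $\phi_n=0$.

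For $n=1$, I take $s_1\colon G\to G^2$, $h\mapsto (e,h)$; then $\gamma_1\circ s_1 = \text{id}_G$. Composition with the first projection $G^2\to G$ is constant at $e$, so $s_1^*\omega_0=0$, while composition with the second projection is the identity, so $s_1^*\omega_1=\omega^l$. Hence $s_1^*\bar{\omega}_1 = \omega^l$ and
$$\phi_1 = s_1^*\chi_1 = \frac{1}{24\pi^2}\tr(\omega^l\cdot\omega^l\cdot\omega^l),$$
which is the $n=1$ component of the claimed formula.

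For $n=2$, I take $s_2\colon G^2\to G^3$, $(h_1,h_2)\mapsto (e,h_1,h_2 h_1)$; a direct check gives $\gamma_2\circ s_2 = \text{id}_{G^2}$. Again $s_2^*\omega_0=0$ and $s_2^*\omega_1 = d_2^*\omega^l$, with $d_2\colon G^2\to G$ the first projection. The key step is to compute $s_2^*\omega_2 = m^*\omega^l$ for $m(h_1,h_2)=h_2 h_1$: using the product rule and Lemma \ref{matrix maurer cartan form}, I obtain
$$s_2^*\omega_2(V_1,V_2) = h_1^{-1}h_2^{-1}V_2 h_1 + h_1^{-1}V_1.$$
Cyclicity of the trace, together with the identity $(\omega^l\circ\text{inv})_A(X) = -XA^{-1}$, then yields
\begin{align*}
\tr(s_2^*\bar{\omega}_1) &= -\tr(d_2^*(\omega^l\circ\text{inv})),\\
\tr(s_2^*\bar{\omega}_2) &= \tr(d_0^*\omega^l) - \tr(d_2^*(\omega^l\circ\text{inv})),\\
\tr(s_2^*\bar{\omega}_1\wedge s_2^*\bar{\omega}_2) &= -\tr(d_2^*(\omega^l\circ\text{inv})\wedge d_0^*\omega^l).
\end{align*}
The cross term $\tr(d_2^*(\omega^l\circ\text{inv}))\wedge \tr(d_2^*(\omega^l\circ\text{inv}))$ vanishes as the wedge of a real $1$-form with itself. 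Substituting these identities into the formula for $\chi_2$ from Lemma \ref{main calcul} recovers the $n=2$ component of the stated expression. Closedness of the cocycle $(0,\phi_1,\phi_2,0,\dots)$ in $\tot{}^4(BG_\bullet)$ then follows from $D\chi=0$ (Theorem \ref{Shulman theorem}) and the injectivity of $\gamma^*$ on forms.

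The main obstacle is the $n=2$ computation: one must identify $s_2^*\omega_2$ via the Darboux derivative of the multiplication and carefully track signs and cyclicity so that the pullback matches the specific mixture of $d_0^*\omega^l$ and $d_2^*(\omega^l\circ\text{inv})$ appearing in the statement, rather than an expression involving only the left-invariant Maurer--Cartan form.
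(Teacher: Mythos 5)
Your proposal is correct and follows essentially the same route as the paper: both descend $\chi_1$ and $\chi_2$ by evaluating on an explicit preimage of tangent vectors in $EG_n$, using that $p_1\circ\bigwedge^2\kappa_n$ is basic and that fibre integration commutes with $\gamma_n^*$. The only divergence is the choice of lift --- the paper uses $(A,B)\mapsto(A^{-1},e,B)$, which produces $\bar{\omega}_1=-d_2^*(\omega^l\circ\mathrm{inv})$ on the nose, whereas your section $(h_1,h_2)\mapsto(e,h_1,h_2h_1)$ gives $\bar{\omega}_1=d_2^*\omega^l$ and so needs cyclicity of the trace to reach the stated expression; your three trace identities and the vanishing of the $\tr(d_2^*(\omega^l\circ\mathrm{inv}))\wedge\tr(d_2^*(\omega^l\circ\mathrm{inv}))$ term all check out.
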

\begin{proof}
In the following let $exp(t\cdot X)$ denotes the representating curve obtained with the matrix exponential.\\
For the first part: We have $\frac{1}{24\pi^2}\text{Tr}(\bar{\omega}_1\cdot\bar{\omega}_1\cdot\bar{\omega}_1 )$ on $\Omega^3(EG_1)$ and we want to find the unique preimage of it through $\gamma^*$ in $\Omega^3(BG_1)$ ($\gamma^*$ is injective since $\gamma$ is a submersion). Let $A\in BG_1\cong G$ and $X\in T_A G$ in matrix form. Then, a preimage of $X$ through $\gamma_*$ is given by $(0, X)$. Then, $\bar{\omega}_1(0,X)$ is given by:
$$\omega_1(0,X)-\omega_0(0,X)=A^{-1}\cdot X$$
where the right side is the left Maurer-Cartan form, as the one defined in Definition \ref{def of maurer cartan}. Thus, the whole form is given by:
$$\frac{1}{24\pi^2}\text{Tr}(\omega^l \cdot\omega^l  \cdot\omega^l )$$
For the second point: We have $\frac{1}{8\pi^2}\left((\tr(\bar{\omega}_1\cdot \bar{\omega}_2)) - (\tr(\bar{\omega}_1)\cdot \tr(\bar{\omega}_2)) \right)$. Let $(X, Y)$ be a tangent vector of $BG_2$ at the point $(A, B)$, which is represented by the curve $(exp(t\cdot X), exp(t\cdot Y))$. Then, one possible preimage of this curve is $((exp(t\cdot X)^{-1}, \text{id}, (exp(t\cdot Y))$, which represents the tangent vector $(X', 0, Y)$ over the point $(A^{-1}, \text{id}, B)$, where $X'\in T_{A^{-1}}G$ corresponds to $exp(t\cdot X)^{-1}$. Then, $\omega_1-\omega_0 (X', 0, Y)$ is equal on $B G_2$ to $\omega^l(0)- \omega^l(X')= -d_2^*(\omega^l\circ\text{inv})(X, Y)$. Furthermore, $\omega_2-\omega_0 (X', 0, Y)$ is equal on $BG_2$ to $\omega^l(Y)- \omega^l(X')= d_0^*\omega^l-d_2^*(\omega^l\circ\text{inv})(X, Y)$ We obtain thus:  
\begin{align*}
   \frac{1}{8\pi^2}&\left(\tr(-d_2^*(\omega^l\circ\text{inv})\cdot (d_0^*\omega^l- d_2^*(\omega^l\circ\text{inv}))) - \tr(-d_2^*(\omega^l\circ\text{inv}))\cdot \tr(d_0^*\omega^l- d_2^*(\omega^l\circ\text{inv})) \right)\\
   &=\frac{1}{8\pi^2}\left(\tr(d_2^*(\omega^l\circ\text{inv})\cdot d_2^*(\omega^l\circ\text{inv})) - \tr(d_2^*(\omega^l\circ\text{inv})\cdot d_0^*\omega^l) \right)\\
   &\qquad\qquad \qquad+ \frac{1}{8\pi^2} \left(\tr(d_2^*(\omega^l\circ\text{inv}))\cdot \tr(d_0^*\omega^l)-\tr(d_2^*(\omega^l\circ\text{inv}))\cdot \tr( d_2^*(\omega^l\circ\text{inv}))  \right)
\end{align*}
Recall that by definition of the notation, $\tr(d_2^*(\omega^l\circ\text{inv})\cdot d_2^*(\omega^l\circ\text{inv}))= \tr(*\cdot *)\circ (d_2^*(\omega^l\circ\text{inv}))\wedge (d_2^*(\omega^l\circ\text{inv}))$. It gives us, for $v$ and $w$ tangent vectors:
\begin{align*}
    \tr(d_2^*(\omega^l&\circ\text{inv})\cdot d_2^*(\omega^l\circ\text{inv}))(v, w)\\&= \tr(d_2^*(\omega^l\circ\text{inv})(v)\cdot d_2^*(\omega^l\circ\text{inv})(w)) - \tr(d_2^*(\omega^l\circ\text{inv})(w)\cdot d_2^*(\omega^l\circ\text{inv})(v))=0
\end{align*}
because of the symmetry of $\tr(*\cdot *)$. By the same way, it follows that $\tr(d_2^*(\omega^l\circ\text{inv}))\cdot \tr( d_2^*(\omega^l\circ\text{inv}))=0$. So we proved the statement. 
\end{proof}
In the case of unitary matrices, we can adapt this formula, especially we can remove the inelegant map $\text{inv}$:
\begin{lemma}\label{descend for unitary}
    Let $G$ be a matrix Lie group which is a subgroup of the unitary matrices, such as $U(n)$, $SU(n)$, $O(n)$ or $SO(n)$. Then, the map $\omega^l\circ \text{inv}: TG \longrightarrow \fg$ is equal to $-\omega^r$, with $\omega^r$ being the right Maurer-Cartan form, that is, for $X\in T_A G$, $\omega^r(X)=(\mathcal{R}_{A^{-1}})_*X$, which is equal to $X\cdot A^{-1}$, with a proof analogous to Lemma \ref{matrix maurer cartan form}.  
\end{lemma}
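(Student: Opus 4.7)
The plan is first to clarify the definition of $\text{inv}$: although the stated formula $\left.\frac{d}{dt}\exp(t\cdot X)^{-1}\right|_{t=0}$ is only sensible at $A=I$, the intended meaning---consistent with the claim that the image lies over $A^{-1}$---is the differential at $A$ of the group inversion map $g\mapsto g^{-1}$. Under this reading, if $X\in T_A G$ is represented by a curve $A(t)$ with $A(0)=A$ and $A'(0)=X$, then $\text{inv}(X)\in T_{A^{-1}}G$ is represented by $A(t)^{-1}$. The auxiliary formula $\omega^r(X)=X\cdot A^{-1}$ itself is established by repeating the argument of Lemma \ref{matrix maurer cartan form}: the curve $\mathcal{R}_{A^{-1}}(A(t))=A(t)\cdot A^{-1}$ has derivative $X\cdot A^{-1}\in T_IG=\fg$ at $t=0$.

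Next I would compute $\text{inv}(X)$ in matrix form. Differentiating $A(t)\cdot A(t)^{-1}=I$ at $t=0$ yields
$$X\cdot A^{-1}+A\cdot \left.\tfrac{d}{dt}A(t)^{-1}\right|_{t=0}=0,$$
hence $\text{inv}(X)=-A^{-1}XA^{-1}\in T_{A^{-1}}G$. For $G$ a subgroup of the unitary matrices one has the cleaner alternative description $\text{inv}(X)=X^{\ast}$: since $A^{-1}=A^{\ast}$ implies $A(t)^{-1}=A(t)^{\ast}$, the inversion map agrees with conjugate transposition, and the two matrix expressions for $\text{inv}(X)$ coincide because differentiating $A(t)^{\ast}A(t)=I$ gives $X^{\ast}A+A^{\ast}X=0$, whence multiplying on the left by $A$ and using $AA^{\ast}=I$ yields $AX^{\ast}=-XA^{-1}$, equivalently $X^{\ast}=-A^{-1}XA^{-1}$.

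Finally, applying $\omega^l$ via Lemma \ref{matrix maurer cartan form} and recalling that $\text{inv}(X)$ is a tangent vector at $A^{-1}$, I obtain
$$\omega^l(\text{inv}(X))=(A^{-1})^{-1}\cdot(-A^{-1}XA^{-1})=-X\cdot A^{-1}=-\omega^r(X),$$
which is the claim. The main obstacle is purely expository: unwinding the definition of $\text{inv}$ as written, since the literal formula produces a tangent vector at $I$ rather than at $A^{-1}$. Once this is sorted out, the identity reduces to a one-line matrix computation. Notably the computation above holds for every matrix Lie group, not only the unitary ones; the unitary hypothesis becomes essential only if one wishes to use the compact rewriting $\text{inv}(X)=X^{\ast}$, which is presumably what makes this reformulation convenient for the subsequent descent computations on $BG_{\bullet}$.
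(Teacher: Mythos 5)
Your proof is correct, but it takes a genuinely different route from the paper's. The paper left-translates the inverted curve back to the identity, writes $A\cdot \exp(tX)^{-1}=(\exp(tX)\cdot A^{-1})^{-1}$, and then uses the unitary structure twice: once to replace the inverse by the conjugate transpose, and once to use that $\omega^r(X)\in\fg$ is skew-Hermitian, so that $(\omega^r(X))^{\ast}=-\omega^r(X)$. You instead compute the differential of inversion directly, $\text{inv}(X)=-A^{-1}XA^{-1}\in T_{A^{-1}}G$, and then apply Lemma \ref{matrix maurer cartan form} at the point $A^{-1}$ to get $\omega^l(\text{inv}(X))=A\cdot(-A^{-1}XA^{-1})=-XA^{-1}=-\omega^r(X)$. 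Your observation that this argument needs no unitarity is right: the identity $\omega^l\circ\text{inv}=-\omega^r$ holds for every matrix Lie group, and the hypothesis in the statement is only used by the paper's particular method of proof (and, as you note, to make the auxiliary rewriting $\text{inv}(X)=X^{\ast}$ available). Your preliminary clarification of what $\text{inv}$ must mean — the differential of $g\mapsto g^{-1}$ at $A$, with $\exp(tX)$ read as a representing curve through $A$ rather than the literal matrix exponential — matches the paper's intended usage and is a fair point to make explicit, since the formula as printed would produce a tangent vector at the identity.
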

\begin{proof}
    Let $X$ be a tangent vector of $G$ at the point $A$. Then, $X$ is represented by the curve $exp(t\cdot X)$. Then, $\omega^l\circ \text{inv}(X)$ has the curve $A\cdot exp(t\cdot X)^{-1}= (exp(t\cdot X)\cdot A^{-1})^{-1}$. Since we are working with unitary matrices, we can write the last as $(exp(t\cdot X)\cdot A^{-1})^{*}$ where $*$ means the conjugate transposition of the matrix. Then, we can see that:
    $$\left.\frac{d}{dt}(exp(t\cdot X)\cdot A^{-1})^{*}\right|_{t=0}= \left(\left.\frac{d}{dt}(exp(t\cdot X)\cdot A^{-1})\right|_{t=0}\right)^{*}= (\omega^r(X))^*$$
    since the derivation along $t$ commutes with the transposition (the transposition only changes the place of the coordinates and $\frac{d}{dt}$ is applied coordinate-wise), and with the conjugation ($\text{cong}(\frac{d}{dt}(a(t)+i\cdot b(t)))= \frac{d}{dt}(a(t)-i\cdot b(t))$). Since $\omega^r(X)$ is a skew-hermitian matrix, we can write:
    $$(\omega^r(X))^*= - \omega^r(X)$$   
    So we obtain the statement.
\end{proof}
\begin{corollary}\label{pontryagine for comparison}
    In the case of a matrix Lie group $G$ being a subset of $U(n)$, we can re-write the characteristic class corresponding to the first Pontryagin polynomial as:
    $$\left[\left(0,\frac{1}{24\pi^2}\text{Tr}(\omega^l \cdot\omega^l  \cdot\omega^l ),  \frac{1}{8\pi^2}\left( \tr(d_2^*\omega^r\cdot d_0^*\omega^l) - \tr(d_2^*\omega^r)\cdot \tr(d_0^*\omega^l) \right), 0, \dots \right)\right]$$
    In the specific case of $O(n)$, $SO(n)$ or $SU(n)$, we can even reduce it to:
    $$\left[\left(0,\frac{1}{24\pi^2}\text{Tr}(\omega^l \cdot\omega^l  \cdot\omega^l ), \frac{1}{8\pi^2}\tr(d_2^*\omega^r\cdot d_0^*\omega^l), 0, \dots \right)\right]$$
\end{corollary}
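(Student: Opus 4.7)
The plan is to obtain both formulas by direct substitution into the expression produced by the preceding lemma, followed by a vanishing argument for the trace-zero cases.

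For the first identity: the preceding lemma expresses the component on $BG_2$ as
\[
\frac{1}{8\pi^2}\Bigl( \tr(d_2^*(\omega^l\circ\text{inv}))\cdot \tr(d_0^*\omega^l) - \tr(d_2^*(\omega^l\circ\text{inv})\cdot d_0^*\omega^l) \Bigr).
\]
I would invoke Lemma \ref{descend for unitary} to replace $\omega^l\circ\text{inv}$ by $-\omega^r$ throughout, valid whenever $G\subseteq U(n)$. Using linearity of the trace (both the trace functional itself and the symmetric bilinear pairing $(X,Y)\mapsto \tr(X\cdot Y)$ of Remark \ref{notation of polynomial}), each of the two summands picks up a single minus sign. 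After distributing the signs one arrives at
\[
\frac{1}{8\pi^2}\Bigl( \tr(d_2^*\omega^r\cdot d_0^*\omega^l) - \tr(d_2^*\omega^r)\cdot \tr(d_0^*\omega^l) \Bigr),
\]
which is the first claimed formula. The components in degrees $0$, $1$, and $\geq 3$ are unchanged (the first is already zero and the component on $BG_1$ involves no occurrence of $\text{inv}$).

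For the second identity: I would appeal to the structural description of the Lie algebras of $O(n)$, $SO(n)$ and $SU(n)$. As recalled in Example \ref{lie algebra of so(n)}, the Lie algebras of $O(n)$ and $SO(n)$ consist of skew-symmetric real matrices, and that of $SU(n)$ of anti-Hermitian matrices of trace zero; in each case the trace vanishes identically on $\fg$. Since $\omega^l$ and $\omega^r$ are $\fg$-valued, the scalar $1$-forms $\tr(\omega^l)$ and $\tr(\omega^r)$ are identically zero. Because the trace is a fixed linear functional and hence commutes with smooth pullbacks, $\tr(d_0^*\omega^l) = d_0^*\tr(\omega^l) = 0$ and likewise $\tr(d_2^*\omega^r) = 0$. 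The product-of-traces summand therefore vanishes, leaving only $\frac{1}{8\pi^2}\tr(d_2^*\omega^r\cdot d_0^*\omega^l)$, as claimed.

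No substantial obstacle is expected: the first step is a sign-bookkeeping exercise, and the second is a direct consequence of the traceless character of the classical Lie algebras in question. The only subtlety is being consistent with the convention of Remark \ref{notation of polynomial}, where expressions like $\tr(d_2^*\omega^r\cdot d_0^*\omega^l)$ denote the polynomial $\tr(X\cdot Y)$ composed with the wedge product of matrix-valued forms; this composition is bilinear, so factoring out scalars such as $-1$ from its arguments is unproblematic.
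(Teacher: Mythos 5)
Your argument is correct and matches the paper's own (much terser) proof: the first formula is obtained by substituting $\omega^l\circ\text{inv}=-\omega^r$ from Lemma \ref{descend for unitary} into the preceding lemma, and the second follows because the Lie algebras of $O(n)$, $SO(n)$ and $SU(n)$ consist of traceless matrices, killing the product-of-traces term. Your sign bookkeeping and the pullback-commutes-with-trace remark are both fine; no gaps.
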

\begin{proof}
    The first part follows directly from the lemma \ref{descend for unitary}. The second part follows directly from the first one, and from the fact that the Lie algebras of these Lie groups contain only matrices whose trace is equal to $0$.
\end{proof}
\begin{corollary}
    Using the association made in Remark \ref{spin double cover} between $\text{Sp}(n)$ and $SO(n)$, we obtain the same formula of the characteristic class for $\text{Sp}(n)$. 
\end{corollary}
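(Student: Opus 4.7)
The plan is to mimic the proof for $SO(n)$, using Remark \ref{spin double cover} to transfer every piece of data. Since $\pi: \text{Sp}(n)\to SO(n)$ is a double cover and a Lie group homomorphism, its differential $d\pi_e$ is an isomorphism of Lie algebras; both sides are identified with the space of skew-symmetric matrices, and under this identification the Pontryagin polynomial $p_1$ is literally the same polynomial. Moreover, Remark \ref{spin double cover} tells us that the Maurer-Cartan form on $\text{Sp}(n)$ corresponds tangent-vector-by-tangent-vector to the Maurer-Cartan form on $SO(n)$ via $d\pi$, and the same naturally holds for the right Maurer-Cartan form.

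First, I would observe that the canonical connection $\hat{\omega}_n^{\text{Sp}(n)} = \sum_{j=0}^n t_j\cdot \omega_j^{\text{Sp}(n)}$ on $E\text{Sp}(n)_n = \text{Sp}(n)^{n+1}$ agrees pointwise with the pullback of $\hat{\omega}_n^{SO(n)}$ along the componentwise covering $\pi^{n+1}: \text{Sp}(n)^{n+1}\to SO(n)^{n+1}$. Since $\pi^{n+1}$ is a local diffeomorphism, this pullback commutes with the exterior derivative and with the Lie-bracket wedge product $[\square\wedge\square]$, so the curvatures also correspond via $\pi^{n+1}$. Consequently, $p_1\circ \bigwedge^2 \kappa_n^{\text{Sp}(n)}$ has exactly the same local expression in terms of the forms $\bar{\omega}_i^{\text{Sp}(n)}$ as the corresponding object on $SO(n)$ has in terms of $\bar{\omega}_i^{SO(n)}$.

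Second, integration along $\foo{}^n$ operates only on the $t_i$-factors and is unaffected by the identification with $SO(n)$. Hence every step of the calculation carried out in Lemma \ref{main calcul} goes through verbatim, producing the same expressions for $\chi_1$ and $\chi_2$, and the descent procedure of Corollary \ref{pontryagine for comparison} yields the claimed formula, with $\omega^l$ and $\omega^r$ now denoting the left and right Maurer-Cartan forms on $\text{Sp}(n)$. The main subtlety, and the only real obstacle, is that $\pi$ is not a morphism of $G$-principal bundles in the sense of the excerpt (the structure group itself changes), so one cannot invoke Lemma \ref{pullback of simplicial chern weil} directly; instead, one must verify by hand that the Shulman construction is natural under this local-diffeomorphism covering of Lie groups, which reduces entirely to the Maurer-Cartan and Lie-algebra correspondences recorded in Remark \ref{spin double cover}.
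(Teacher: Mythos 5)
Your proposal is correct and takes essentially the same route as the paper: the characteristic-class formula is transferred to $\text{Sp}(n)$ through the double cover of Remark \ref{spin double cover}, under which the Lie algebras, Maurer--Cartan forms, and hence the whole Shulman computation of Lemma \ref{main calcul} correspond verbatim. The paper's own proof is a one-liner whose only substantive content is that the matrix exponential is unavailable on the Spin group but was used merely to choose representative curves; your form-level pullback argument along the componentwise covering makes the transfer explicit and sidesteps that issue entirely.
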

\begin{proof}
    Follows from Remark \ref{spin double cover}. The fact that the matrix exponential is not defined on $\text{Spin}(n)$ is not a problem, since we simply used it in the previous proofs to obtain a curve representing the tangent vector. Any other (non-canonical) choice of curve allows the same constructions.
\end{proof}
\subsection{Symplectic structure on $BG_{\bullet}$}
In the paper \cite{zhu}, M. Cueca and C. Zhu presented a canonical $2$-shifted symplectic structure on $ BG_{\bullet}$. We can resume their work as: 
\begin{definition}
    Let $\fg$ be a Lie algebra. If we equip $\fg$ with a symmetric and non-degenerate bilinear map $\langle*, *\rangle : \fg^2\longrightarrow \R$, such that:
    $$\langle a, [b,c]
    \rangle=\langle c, [a,b]
    \rangle$$
    then we say that $(\fg, \langle*, *\rangle)$ is a \underline{quadratic Lie algebra}. 
\end{definition}
\begin{theorem}
    Let $G$ be a Lie group, such that its corresponding Lie algebra $\fg$ is quadratic with a certain $\langle*, *\rangle$. Then, the following differential form on $ BG_{\bullet}$ gives us a $2$-shifted symplectic structure, i.e. a well-defined closed and non-degenerate form:
    \begin{equation}\label{eq: form final}
        \left(0, -\frac{1}{6}\cdot \langle \omega^l , [\omega^l \wedge \omega^l ]\rangle, \langle d_0^*\omega^l, d^*_2 \omega^r\rangle, 0, \dots \right)
    \end{equation}
\end{theorem}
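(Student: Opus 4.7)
The plan is to verify three independent properties of the given triple $\alpha=(0,\alpha_1,\alpha_2,0,\dots)$ with $\alpha_1=-\tfrac{1}{6}\langle\omega^l,[\omega^l\wedge\omega^l]\rangle\in\Omega^2(BG_1)$ and $\alpha_2=\langle d_0^*\omega^l,d_2^*\omega^r\rangle\in\Omega^1(BG_2)$: well-definedness in the normalized complex, closedness under the total differential $D$, and non-degeneracy in the $2$-shifted sense. First I would verify that $\alpha$ lies in the correct filtration piece $F^1\tot{}^3(BG_\bullet)$ and that the normalization condition $s_j^*\alpha_i=0$ holds. For $\alpha_1$ the check is trivial since $s_0:\{*\}\to G$ pulls back a $2$-form to a point. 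For $\alpha_2$ one uses the simplicial identities $d_0 s_0 = \text{id}$, $d_2 s_0 = s_0 d_1$ (constant at $e$), $d_0 s_1 = s_0 d_0$ (constant at $e$), and $d_2 s_1=\text{id}$; since $\omega^l$ and $\omega^r$ vanish when pulled back along a constant map, both $s_0^*\alpha_2$ and $s_1^*\alpha_2$ vanish.

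Next I would verify $D\alpha=0$, which decomposes into three equations under the decomposition of $D$ from Definition \ref{The Shulman's construction}'s surrounding material. The condition $d\alpha_1=0$ is the classical fact that the Cartan $3$-form on $G$ associated to an ad-invariant bilinear form is closed: expanding $d\alpha_1$ and substituting the Maurer--Cartan equation $d\omega^l=-\tfrac{1}{2}[\omega^l\wedge\omega^l]$ gives an expression proportional to $\langle[\omega^l\wedge\omega^l],[\omega^l\wedge\omega^l]\rangle$ and $\langle\omega^l,[[\omega^l\wedge\omega^l]\wedge\omega^l]\rangle$, and the Jacobi identity combined with the cyclic invariance $\langle a,[b,c]\rangle=\langle c,[a,b]\rangle$ makes these cancel. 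The top condition $\delta\alpha_2=0$ is checked on $BG_3=G^3$ by writing out the four face maps $d_i:G^3\to G^2$, using invariance of $\langle\cdot,\cdot\rangle$ together with the formula $m^*\omega^l=\mathrm{Ad}_{h^{-1}}\omega^l_g+\omega^l_h$ (and its mirror for $\omega^r$) to rewrite the terms involving the multiplication face maps.

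The mixed cocycle condition $\delta\alpha_1+d\alpha_2=0$ on $BG_2$ is what I expect to be the main obstacle. Writing $\omega_g=p_1^*\omega^l$, $\omega_h=p_2^*\omega^l$, the three terms of $\delta\alpha_1$ involve pulling back the Cartan form along $d_0$, $d_1$, $d_2$; only $d_1$ is non-trivial, and it produces, via $m^*\omega^l=\mathrm{Ad}_{h^{-1}}\omega_g+\omega_h$, a collection of crossed terms of the shape $\langle\mathrm{Ad}_{h^{-1}}\omega_g,[\omega_h\wedge\omega_h]\rangle$ and $\langle\omega_h,[\mathrm{Ad}_{h^{-1}}\omega_g\wedge\mathrm{Ad}_{h^{-1}}\omega_g]\rangle$. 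On the other hand, $d\alpha_2=d\langle\omega_h,\omega^r_g\rangle$ expands using both Maurer--Cartan equations ($d\omega^l=-\tfrac12[\omega^l\wedge\omega^l]$, $d\omega^r=+\tfrac12[\omega^r\wedge\omega^r]$). The heart of the computation is to use the identity $\mathrm{Ad}_{h^{-1}}\omega^r_g=\omega^l_g - m^*\omega^l + \omega^l_h$ (or a rearrangement relating $\omega^l$, $\omega^r$ and $\mathrm{Ad}$) together with cyclic invariance of $\langle\cdot,[\cdot,\cdot]\rangle$ to reduce every term to an $\mathrm{Ad}$-invariant pairing that matches on both sides.

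Finally, for non-degeneracy as a $2$-shifted symplectic form, I would identify the tangent complex of $BG$ at its unique point with $\fg[1]$ (concentrated in degree $-1$); the induced pairing on $T_{*}BG\otimes T_{*}BG$ coming from $\alpha_2$ evaluated at the identity reduces, via the chain of isomorphisms $T_{(e,e)}(G\times G)\cong\fg\oplus\fg$, to the bilinear form $\langle\cdot,\cdot\rangle$ on $\fg$, which is non-degenerate by hypothesis. Hence $\alpha$ induces a quasi-isomorphism between the tangent complex and the appropriately shifted cotangent complex, giving the $2$-shifted symplectic structure. The cleanest way to package all of this is to assemble the three closedness checks into one calculation using index-free notation for the bracketed wedge products and exploiting the cyclic invariance of the quadratic form at each step.
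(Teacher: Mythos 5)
The paper does not actually prove this theorem: its ``proof'' is a citation of \cite{zhu} and \cite{weinstein}. Your proposal therefore does genuinely more than the text, and the strategy you outline --- normalization, the three components of the cocycle condition $D\alpha=0$, and non-degeneracy read off from the tangent complex $\fg[1]$ of $BG_{\bullet}$ --- is exactly the shape of the argument in those references. The decomposition into $d\alpha_1=0$ (closedness of the Cartan $3$-form via the Maurer--Cartan equation, the Jacobi identity and the cyclic invariance $\langle a,[b,c]\rangle=\langle c,[a,b]\rangle$), $\delta\alpha_2=0$ on $G^3$, and the mixed condition $\delta\alpha_1+d\alpha_2=0$ on $G^2$ is the correct one, and the identities you name ($m^{*}\omega^l$ in terms of $\mathrm{Ad}$ and the two factors, $\omega^r=\mathrm{Ad}_g\,\omega^l$, the Maurer--Cartan equations for $\omega^l$ and $\omega^r$) are the right tools to close the computation.

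Two corrections. First, your degree bookkeeping is off: $\langle\omega^l,[\omega^l\wedge\omega^l]\rangle$ pairs a $1$-form with a $2$-form and is therefore a $3$-form on $BG_1=G$, and $\langle d_0^{*}\omega^l,d_2^{*}\omega^r\rangle$ is a $2$-form on $BG_2=G^2$; the element lives in $F^2\tot{}^{4}(BG_{\bullet})$, not $F^1\tot{}^{3}$, consistent with a $2$-shifted $2$-form having total degree $4$. The three cocycle equations you derive are unaffected (since $BG_0$ is a point the extra component vanishes identically), but the slip should be fixed. Second, the two computations you yourself flag as the substance of the proof, $\delta\alpha_2=0$ and $\delta\alpha_1+d\alpha_2=0$, are only named, not carried out, and they are precisely where the nerve convention bites: with this paper's convention $d_1(g_1,g_2)=g_2\cdot g_1$, so the multiplication identity reads $m^{*}\omega^l=\mathrm{Ad}_{g_1^{-1}}(p_2^{*}\omega^l)+p_1^{*}\omega^l$ rather than the version you wrote, and the paper's own remark following the theorem warns that reversing the convention swaps $d_0^{*}\omega^l$ and $d_2^{*}\omega^r$ in $\alpha_2$. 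A complete write-up must fix one convention and track it consistently through both computations; as a plan, however, your outline is sound and goes beyond what the paper provides.
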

\begin{proof}
    See \cite{zhu}, \cite{weinstein}. 
\end{proof}
\begin{remark}
    In \cite{zhu}, the second non-zero part of the form is given by $\langle d_2^*\omega^l, d^*_0 \omega^r\rangle$. This difference simply comes from an alternative definition of the nerve of a category. Following our definition, the right notation is the one given in (\ref{eq: form final}). 
\end{remark}
We want to show in the following that in some cases, such as for $G=SU(n)$, the representative of the first Pontryagin class on $BG_n$ we calculated is equal up to a coefficient to this canonical symplectic form, if we equip $\fg$ with the Killing form (see the next definitions) to obtain a quadratic Lie algebra.
\\ To define the Killing form, we first need the following recall:
\begin{definition}
    We define the adjoint representation of $\fg$ as the following map:
    $$\mathfrak{ad}: \fg \longrightarrow \text{Der}(\fg)\qquad \mathfrak{ad}(x)= [x, *] =: \mathfrak{ad}_x$$
\end{definition}
\begin{remark}
    It can be proved that this map is the derivation of $Ad$ (as defined in \ref{adjoint representation}) the adjoint representation of $G$. 
\end{remark}
\begin{definition}
    The Killing form is a bilinear map on $\fg$ given as:
    $$\mathcal{K}(X, Y):= \tr_{\R}(\mathfrak{ad}_X\circ \mathfrak{ad}_Y)$$
\end{definition}
Recall that the Lie algebra corresponding to $SU(n)$ is the space of skew-hermitian matrices with the trace equal to $0$. This space is denoted $\mathfrak{su}(n)$. 
\begin{lemma}\label{killing form}
   On $\mathfrak{su}(n)$, for $n\geq 2$, we have:
   $$\mathcal{K}(X, Y)= 2 n \cdot \tr(X\cdot Y)$$
\end{lemma}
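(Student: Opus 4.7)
The plan is to reduce the computation to the complexification $\mathfrak{su}(n)\otimes_{\R}\C = \mathfrak{sl}(n,\C)$, where the Killing form is evaluated by a short operator-theoretic trick. First I would observe that every $\R$-basis of $\mathfrak{su}(n)$ is automatically a $\C$-basis of $\mathfrak{sl}(n,\C)$, and that for $X\in\mathfrak{su}(n)$ the matrix of $\mathfrak{ad}_X$ in such a basis is the same whether we read it $\R$-linearly or $\C$-linearly. Consequently $\tr_{\R}(\mathfrak{ad}_X\circ \mathfrak{ad}_Y)$ on $\mathfrak{su}(n)$ coincides with $\tr_{\C}(\mathfrak{ad}_X\circ \mathfrak{ad}_Y)$ on $\mathfrak{sl}(n,\C)$, so it is enough to prove $\tr_{\C}(\mathfrak{ad}_X \mathfrak{ad}_Y) = 2n\,\tr(XY)$ for $X,Y\in \mathfrak{sl}(n,\C)$.

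Next, I would actually carry out the computation on the slightly larger algebra $\mathfrak{gl}(n,\C)=M_n(\C)$ and descend at the end. Writing $\mathfrak{ad}_X = L_X - R_X$ with $L_X,R_X$ the left and right multiplication operators on $M_n(\C)$, a direct expansion gives
$$\mathfrak{ad}_X\circ \mathfrak{ad}_Y \;=\; L_{XY} \,-\, L_X R_Y \,-\, R_X L_Y \,+\, R_{YX}.$$
Working in the basis $\{E_{ij}\}$ of $M_n(\C)$, one checks mechanically that $\tr(L_A)=\tr(R_A)=n\cdot\tr(A)$ and $\tr(L_A R_B)=\tr(R_A L_B)=\tr(A)\cdot\tr(B)$. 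Summing up, the trace of $\mathfrak{ad}_X\mathfrak{ad}_Y$ on $\mathfrak{gl}(n,\C)$ equals $2n\,\tr(XY) - 2\,\tr(X)\tr(Y)$, which for traceless $X,Y$ collapses to $2n\,\tr(XY)$.

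To descend from $\mathfrak{gl}(n,\C)$ to $\mathfrak{sl}(n,\C)$ I would use the $\mathfrak{ad}_X$-stable decomposition $\mathfrak{gl}(n,\C) = \mathfrak{sl}(n,\C)\oplus \C\cdot I$: commutators are always traceless, so $\mathfrak{ad}_X$ preserves $\mathfrak{sl}(n,\C)$, and the central line $\C\cdot I$ is killed by $\mathfrak{ad}_X$. Hence $\mathfrak{ad}_X\mathfrak{ad}_Y$ is block diagonal with a trivial lower block, so the trace on $\mathfrak{gl}(n,\C)$ agrees with the trace on $\mathfrak{sl}(n,\C)$, yielding $\mathcal{K}(X,Y)=2n\,\tr(XY)$. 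The only real obstacle is bookkeeping: four different traces appear in the argument (the matrix trace on $\C^n$, the $\C$-traces of operators on $\mathfrak{gl}(n,\C)$ and on $\mathfrak{sl}(n,\C)$, and the $\R$-trace on $\mathfrak{su}(n)$), and one must keep them carefully distinguished; once that is done the computations themselves are routine.
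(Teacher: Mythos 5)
Your proof is correct, but it takes a genuinely different route from the one in the paper. The paper computes $\tr_{\R}(\mathfrak{ad}_X\circ\mathfrak{ad}_Y)$ head-on in an explicit real basis of $\mathfrak{su}(n)$, expanding $[X,[Y,\mathbb{1}_{i,j}]]$ entrywise and summing the diagonal contributions while tracking real and imaginary parts separately; it is elementary and self-contained but long and bookkeeping-heavy. You instead pass to the complexification, using that $\mathfrak{su}(n)$ is a real form of $\mathfrak{sl}(n,\C)$ (so an $\R$-basis of the former is a $\C$-basis of the latter, and the real trace of $\mathfrak{ad}_X\circ\mathfrak{ad}_Y$ equals the complex trace of its $\C$-linear extension), and then evaluate the trace on $\mathfrak{gl}(n,\C)$ via the identity $\mathfrak{ad}_X\circ\mathfrak{ad}_Y=L_{XY}-L_XR_Y-R_XL_Y+R_{YX}$ together with $\tr(L_A)=\tr(R_A)=n\tr(A)$ and $\tr(L_AR_B)=\tr(A)\cdot\tr(B)$, obtaining $2n\tr(XY)-2\tr(X)\tr(Y)$; the descent from $\mathfrak{gl}$ to $\mathfrak{sl}$ via the $\mathfrak{ad}$-stable splitting $\mathfrak{gl}(n,\C)=\mathfrak{sl}(n,\C)\oplus\C\cdot I$ is also correct, since $\mathfrak{ad}_X$ kills the center and commutators are traceless. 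Each step you flag as needing care (the four distinct traces, the real-form identification) is indeed the only delicate point, and you handle it properly. What your approach buys is brevity, a reusable general formula on $\mathfrak{gl}(n,\C)$, and immunity to sign errors in the entrywise expansion; what the paper's approach buys is that it never invokes complexification or the invariance of the trace under $\C$-linear extension, staying entirely within the real matrix computation a reader with only the paper's prerequisites can follow.
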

\begin{proof}
    Let $X$, $Y$ and $Z$ be three $n\times n$ matrices, with respective entries $x_{ij}$, $y_{ij}$ and $z_{ij}$. Then, we calculate easily that:
    $$(Y\cdot Z)_{ij}= \sum_{r=1}^n y_{ir}\cdot z_{rj}\qquad (X\cdot Y\cdot Z)_{ij}=\sum_{k=1}^n\sum_{r=1}^n x_{ik}\cdot y_{kr}\cdot z_{rj}$$
    Thus, we obtain that:
    \begin{equation}\label{eq: developp of Killing}
        \begin{aligned}
            ([X, [Y, Z]])_{ij}=& (X\cdot Y\cdot Z - X\cdot Z\cdot Y - Y\cdot Z\cdot X + Z\cdot Y\cdot X)_{ij}\\
        =& \sum_{k=1}^n\sum_{r=1}^n x_{ik}\cdot y_{kr}\cdot z_{rj} - x_{ik}\cdot z_{kr}\cdot y_{rj} - y_{ik}\cdot z_{kr}\cdot x_{rj} + z_{ik}\cdot y_{kr}\cdot x_{rj}
        \end{aligned}
    \end{equation}
    We want here the trace of the map $Z\mapsto [X, [Y, Z]]$ for given $X$ and $Y$. To do that, we have to find a real basis of $\mathfrak{su}(n)$. Remark that for $X\in \mathfrak{su}(n)$, $X$ is uniquely defined by the values of $x_{ij}$ for $i\leq j$ as it is skew-hermitian. Since $x_{ij}$ for $i<j$ can be any complex number, independent from each other, it follows that we need two real coordinates $x_{ij}^R$ and $x_{ij}^I$ for each $x_{ij}$ corresponding to the real and the imaginary part. In the case of $x_{ii}$, we know for a matrix in $\mathfrak{su}(n)$ that they are all imaginary (and thus described by a unique real coordinate) and that their sum is equal to zero, implying that the diagonal of $X$ is uniquely defined by the imaginary parts of $x_{1,1}, \dots, x_{n-1, n-1}$. Counting all these parameters, we deduce that the real dimension of $\mathfrak{su}(n)$ is $n^2-1$. \\
    Let $A$ be any matrix over any vector space $V$ with the basis $(e_1, \dots, e_n)$. Then the element $a_{ii}$ of the matrix is given by the $i$-th element of $A\cdot e_i$.  In the same way, to calculate the elements of the diagonal of the matrix representing the map $Z\mapsto [X, [Y, Z]]$, we need to consider the real parts of the $i, j$-th entries of $[X, [Y, \mathbb{1}_{i, j}]]$ such as the imaginary parts of the $i, j$-th entries of $[X, [Y, i\cdot \mathbb{1}_{i, j}]]$ for all $i<j$ such as the imaginary parts of the $i, i$-th entries of $[X, [Y, i\cdot \mathbb{1}_{i, i} - i\cdot \mathbb{1}_{n, n}]]$ for $i<n$. Here $\mathbb{1}_{i, j}$ is the matrix such that $a_{k, l}=1$ if $k=i, l=j$ and $a_{k,l}=0$ else. \\
    Let us start with the case $i<j$. If we consider the real part, and thus calculate the $i,j$-th entry of $[X, [Y, \mathbb{1}_{i, j}]]$, we obtain using the equation (\ref{eq: developp of Killing}):
    \begin{equation}\label{eq: first of Killing}
        \sum_{k=1}^n (x_{ik}\cdot y_{ki}) - x_{ii}\cdot  y_{jj} - y_{ii}\cdot x_{jj} + \sum_{k=1}^n ( y_{jk}\cdot x_{kj})
    \end{equation}
    For $a, b$ two complex numbers, the real part of $a\cdot b$ is $a^R\cdot b^R- a^I\cdot b^I$. If we now restrict to the real part of the formula (\ref{eq: first of Killing}), we have:
    \begin{equation}\label{eq: result real part}
        \sum_{k=1}^n (x_{ik}^R\cdot y_{ki}^R- x_{ik}^I\cdot y_{ki}^I) - \underbrace{x_{ii}^R\cdot  y_{jj}^R}_{=0, \text{ since in $\mathfrak{su}(n)$}} + x_{ii}^I\cdot  y_{jj}^I - \underbrace{y_{ii}^R\cdot x_{jj}^R}_{=0, \text{ since in $\mathfrak{su}(n)$}} + y_{ii}^I\cdot x_{jj}^I + \sum_{k=1}^n ( y_{jk}^R\cdot x_{kj}^R- y_{jk}^I\cdot x_{kj}^I)
    \end{equation}
    If we consider the imaginary part, and thus calculate the $i,j$-th entry of $[X, [Y, i\cdot \mathbb{1}_{i, j}]]$, the equation (\ref{eq: developp of Killing}) gives us: 
    $$\sum_{k=1}^n (i\cdot x_{ik}\cdot y_{ki}) - i\cdot x_{ii}\cdot  y_{jj} - i\cdot y_{ii}\cdot x_{jj} + \sum_{k=1}^n ( i\cdot y_{jk}\cdot x_{kj})$$
    Considering the imaginary part of it, we obtain again (\ref{eq: result real part}) (we are multiplying the formula (\ref{eq: first of Killing}) with $i$, thus the imaginary part is equal to the real part of (\ref{eq: first of Killing})), that is:
    $$\sum_{k=1}^n (x_{ik}^R\cdot y_{ki}^R- x_{ik}^I\cdot y_{ki}^I)  + x_{ii}^I\cdot  y_{jj}^I  + y_{ii}^I\cdot x_{jj}^I + \sum_{k=1}^n ( y_{jk}^R\cdot x_{kj}^R- y_{jk}^I\cdot x_{kj}^I)$$
    Let us now consider the case of $i=j$. 
    $$\sum_{k=1}^n (i\cdot  x_{ik}\cdot y_{ki}) -2i\cdot x_{ii}\cdot y_{ii} +i\cdot x_{in}\cdot y_{ni}+i\cdot y_{in}\cdot x_{ni}+ \sum_{k=1}^n (i\cdot y_{ik}\cdot x_{ki})$$
    If we consider the imaginary part:
    \begin{align*}
        \sum_{k=1}^n (x_{ik}^R\cdot y_{ki}^R - x_{ik}^I\cdot y_{ki}^I ) +& \sum_{k=1}^n (x_{ki}^R\cdot y_{ik}^R - x_{ki}^I\cdot y_{ik}^I)\\
        &+2 x_{ii}^I\cdot y_{ii}^I - x_{in}^I\cdot y_{ni}^I - y_{in}^I\cdot x_{ni}^I +x_{in}^R\cdot y_{ni}^R + y_{in}^R\cdot x_{ni}^R
    \end{align*}
    And this whole formula is equal to:
    $$2\cdot \sum_{k=1}^n (x_{ik}^R\cdot y_{ki}^R - x_{ik}^I\cdot y_{ki}^I )+2 x_{ii}^I\cdot y_{ii}^I - 2\cdot x_{in}^I\cdot y_{ni}^I+2\cdot x_{in}^R\cdot y_{ni}^R$$
    since in $\mathfrak{su}(n)$, $x_{ij}^R= -x_{ji}^R$ and $x_{ij}^I=x_{ji}^I$. If we now sum all the elements of the diagonal we have obtained, we get:
    \begin{align*}
        \sum_{1\leq i<j\leq n} &\left(2\cdot \sum_{k=1}^n (x_{ik}^R\cdot y_{ki}^R- x_{ik}^I\cdot y_{ki}^I)  + 2\cdot x_{ii}^I\cdot  y_{jj}^I  + 2\cdot y_{ii}^I\cdot x_{jj}^I + 2\cdot\sum_{k=1}^n ( y_{jk}^R\cdot x_{kj}^R- y_{jk}^I\cdot x_{kj}^I)\right) \\
        &\qquad\qquad\qquad\qquad+ \sum_{i=1}^{n-1} \left(2\cdot \sum_{k=1}^n (x_{ik}^R\cdot y_{ki}^R - x_{ik}^I\cdot y_{ki}^I )+2 x_{ii}^I\cdot y_{ii}^I - 2\cdot x_{in}^I\cdot y_{ni}^I+2\cdot x_{in}^R\cdot y_{ni}^R\right)
    \end{align*}
    Remark that:
    \begin{align*}
        \sum_{1\leq i<j\leq n}(2\cdot x_{ii}^I\cdot  y_{jj}^I  + 2\cdot y_{ii}^I\cdot x_{jj}^I ) +\sum_{i=1}^{n-1} 2 x_{ii}^I\cdot y_{ii}^I  &= 2\cdot \sum_{i=1}^n\sum_{j=1}^n x_{ii}^I\cdot y_{jj}^I - 2\cdot x_{nn}^I\cdot y_{nn}^I\\
        &=  2\cdot \sum_{i=1}^n x_{ii}^I\cdot \underbrace{\sum_{j=1}^n y_{jj}^I}_{=0 \text{ since }Y\in \mathfrak{su}(n)}- 2\cdot x_{nn}^I\cdot y_{nn}^I\\
        &= - 2\cdot x_{nn}^I\cdot y_{nn}^I
    \end{align*}
    The formula becomes now:
    \begin{equation}\label{eq: sum of both}
        \begin{aligned}
            \sum_{1\leq i<j\leq n} &\left(2\cdot \sum_{k=1}^n (x_{ik}^R\cdot y_{ki}^R- x_{ik}^I\cdot y_{ki}^I)+ 2\cdot\sum_{k=1}^n ( y_{jk}^R\cdot x_{kj}^R- y_{jk}^I\cdot x_{kj}^I)\right)\\&\qquad\qquad+\sum_{i=1}^{n-1} \left(2\cdot \sum_{k=1}^n (x_{ik}^R\cdot y_{ki}^R - x_{ik}^I\cdot y_{ki}^I ) - 2\cdot x_{in}^I\cdot y_{ni}^I+2\cdot x_{in}^R\cdot y_{ni}^R\right)- 2\cdot x_{nn}^I\cdot y_{nn}^I
        \end{aligned}
    \end{equation}
    Since we are in $\mathfrak{su}(n)$, $x_{ij}^R= -x_{ji}^R$ and $x_{ij}^I=x_{ji}^I$ and thus, the second part of (\ref{eq: sum of both}) is equal to:
    $$\sum_{i=1}^{n-1}\left( 2\cdot \sum_{k=1}^n (x_{ik}^R\cdot y_{ki}^R - x_{ik}^I\cdot y_{ki}^I )\right) -\sum_{k=1}^{n-1}( 2\cdot x_{nk}^I\cdot y_{kn}^I)+\sum_{k=1}^{n-1}(2\cdot x_{nk}^R\cdot y_{kn}^R)- 2\cdot x_{nn}^I\cdot y_{nn}^I +\underbrace{2\cdot x_{nn}^R\cdot y_{nn}^R}_{=0}$$
    which is simply equal to:
    \begin{equation}\label{first of final}
       2\cdot\sum_{i=1}^{n}  \sum_{k=1}^n (x_{ik}^R\cdot y_{ki}^R - x_{ik}^I\cdot y_{ki}^I )= 2\cdot \tr(X^R\cdot Y^R)- 2\cdot \tr(X^I\cdot Y^I) 
    \end{equation}
     where $X^R+i\cdot X^I=X$ are respectively the real and the imaginary part of $X$. If we come back to the first part of the formula (\ref{eq: sum of both})), using that $y_{j,k}^R\cdot x_{k,j}^R- y_{j,k}^I\cdot x_{k,j}^I = x_{j,k}^R\cdot y_{k,j}^R - x_{j,k}^I\cdot y_{k,j}^I$ by the properties of $\mathfrak{su}(n)$, we obtain:
     \begin{equation}\label{eq: count sum}
         \sum_{1\leq i< j\leq n} \left(2\cdot \sum_{k=1}^n (x_{ik}^R\cdot y_{ki}^R- x_{ik}^I\cdot y_{ki}^I)+ 2\cdot\sum_{k=1}^n (  x_{j,k}^R\cdot y_{k,j}^R - x_{j,k}^I\cdot y_{k,j}^I)\right)
     \end{equation}
    For a given $i$, $(x_{ik}^R\cdot y_{ki}^R- x_{ik}^I\cdot y_{ki}^I)$ comes in the first sum once for each $j>i$, so $n-i$ times. For a given $j$, $x_{j,k}^R\cdot y_{k,j}^R - x_{j,k}^I\cdot y_{k,j}^I$ comes in the second sum once for each $i<j$ so $j-1$ times. Thus, for a given $t$, $(x_{tk}^R\cdot y_{kt}^R- x_{tk}^I\cdot y_{kt}^I)$ comes $n-t+t-1=n-1$ times as summand in (\ref{eq: count sum}). Thus we can write it as:
    \begin{equation}\label{second of final}
        2\cdot (n-1)\sum_{t=1}^n \sum_{k=1}^n(x_{tk}^R\cdot y_{kt}^R- x_{tk}^I\cdot y_{kt}^I)= 2\cdot(n-1)\cdot \left(\tr(X^R\cdot Y^R)- \tr(X^I\cdot Y^I)\right)
    \end{equation}
    Thus the addition of the two parts (\ref{first of final}) and (\ref{second of final}) gives us:
    $$\mathcal{K}(X,Y)= 2\cdot n \cdot\left( \tr(X^R\cdot Y^R)- 2\cdot n \cdot  \tr(X^I\cdot Y^I)\right) $$
    Finally, let us see that:
    \begin{align*}
        \tr(X^R\cdot Y^I)&= \sum_{i=1}^n\sum_{k=1}^n x_{ik}^R\cdot y_{ki}^I\\
        &= \sum_{1\leq i<k\leq n}\left(x_{ik}^R\cdot y_{ki}^I +\underbrace{x_{ki}^R}_{= - x_{ik}^R}\cdot \underbrace{y_{ik}^I}_{=y_{ki}^I}\right) +\sum_{i=1}^n \underbrace{x_{ii}^R}_{=0}\cdot y_{ii}^I\\
        &=0
    \end{align*}
    and analogous for $\tr(X^I\cdot Y^R)$. Thus: 
    \begin{align*}
      \mathcal{K}(X,Y)&= 2\cdot n \cdot\left( \tr(X^R\cdot Y^R) +i\cdot\underbrace{\tr(X^R\cdot Y^I)}_{=0} + i\cdot  \underbrace{\tr(X^I\cdot Y^R)}_{=0}+ (i)^2 \cdot 2\cdot n  \tr(X^I\cdot Y^I)\right)\\&= 2\cdot n \cdot\tr(X\cdot Y)  
    \end{align*}
    So the statement is proved. 
\end{proof}
This formula is not true for all the matrix spaces, since the base and the dimension of the space change. For example, for $\mathfrak{so}(n)$, the space of skew-symmetric matrices, which is the Lie algebra of $SO(n)$, $O(n)$ and $\text{Spin}(n)$, we have:
\begin{lemma}
    On $\mathfrak{so}(n)$, for $n\geq 3$, the Killing form is given by:
    $$\mathcal{K}(X, Y)= (n-2)\cdot \tr(X\cdot Y)$$
\end{lemma}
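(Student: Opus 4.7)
The plan is to mimic the strategy of the preceding lemma for $\mathfrak{su}(n)$, but with a basis adapted to the real skew-symmetric structure. A natural real basis of $\mathfrak{so}(n)$ is $F_{kl} = E_{kl} - E_{lk}$ for $1 \leq k < l \leq n$, so that $\dim_{\R}\mathfrak{so}(n) = \binom{n}{2}$. For any $Z \in \mathfrak{so}(n)$, the coefficient of $F_{kl}$ in its expansion is precisely $z_{kl}$, hence the trace of the operator $\phi: Z \mapsto [X,[Y,Z]]$ is
$$
\mathcal{K}(X,Y) \;=\; \sum_{k<l} \bigl([X,[Y,F_{kl}]]\bigr)_{kl}.
$$

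Next I would expand $[X,[Y,F_{kl}]]$ using the formula already established in the previous lemma. Substituting $z_{ab} = \delta_{ak}\delta_{bl}-\delta_{al}\delta_{bk}$ into
$$
([X,[Y,Z]])_{ij} = \sum_{p,r}\bigl(x_{ip}y_{pr}z_{rj} - x_{ip}z_{pr}y_{rj} - y_{ip}z_{pr}x_{rj} + z_{ip}y_{pr}x_{rj}\bigr)
$$
and then setting $(i,j) = (k,l)$, most terms collapse. Using $k<l$ (so $\delta_{kl}=0$) and the fact that the diagonal of any skew-symmetric real matrix vanishes (killing terms like $x_{kk}y_{ll}$), I expect the result to simplify to
$$
([X,[Y,F_{kl}]])_{kl} \;=\; (XY)_{kk} + (YX)_{ll} + 2\,x_{kl}\,y_{kl}.
$$

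Finally, I would sum over $k<l$. For the combinatorial count: each diagonal entry $(XY)_{mm}$ is picked up $(n-m)$ times in the first term (when $k=m$) and $(m-1)$ times in the second (when $l=m$), and $(YX)_{mm} = (XY)_{mm}$ for $X,Y$ skew-symmetric; so the first two terms contribute $(n-1)\,\tr(XY)$. For the cross term, skew-symmetry gives $\tr(XY) = \sum_{i,j} x_{ij}y_{ji} = -\sum_{i,j}x_{ij}y_{ij} = -2\sum_{k<l}x_{kl}y_{kl}$, so $2\sum_{k<l}x_{kl}y_{kl} = -\tr(XY)$. Adding these yields $\mathcal{K}(X,Y) = (n-1)\tr(XY) - \tr(XY) = (n-2)\tr(XY)$, as claimed.

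I do not expect a serious obstacle: everything reduces to careful index bookkeeping. The only delicate point is correctly extracting the $F_{kl}$-coefficient (as opposed to summing all matrix entries), and making sure the identity $(XY)_{ll} = (YX)_{ll}$ on $\mathfrak{so}(n)$ is invoked before combining the diagonal contributions; once that is observed, the counting argument $(n-m)+(m-1)=n-1$ finishes the proof cleanly.
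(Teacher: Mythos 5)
Your proposal is correct, and it follows exactly the strategy the paper intends: compute $\tr(\mathfrak{ad}_X\circ\mathfrak{ad}_Y)$ by expanding $[X,[Y,\cdot]]$ on an explicit real basis and summing the diagonal coefficients, which is the real-matrix analogue of the $\mathfrak{su}(n)$ computation (the paper's own proof is only the one-line remark that one ``ignores the imaginary parts''). All the individual steps check out --- the extraction $([X,[Y,F_{kl}]])_{kl}=(XY)_{kk}+(YX)_{ll}+2x_{kl}y_{kl}$, the identity $(YX)_{ll}=(XY)_{ll}$ from $YX=(XY)^{T}$, the count $(n-m)+(m-1)=n-1$, and the sign $2\sum_{k<l}x_{kl}y_{kl}=-\tr(XY)$ --- so your writeup is in fact more complete than the proof given in the paper.
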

\begin{proof}
   The main idea of the proof is analogous to the lemma \ref{killing form}, where we do ignore the imaginary parts. 
\end{proof}
 
\begin{remark}
    It is easy to check that the Killing form on $\mathfrak{su}(n)$ is symmetric, non-degenerate and fulfils the equation:
    $$2n \cdot \tr(A\cdot [B,  C])= 2n\cdot \tr(A\cdot B \cdot C) - 2n \cdot \tr(A\cdot C\cdot B) = 2n\cdot \tr(C\cdot A\cdot B ) - 2n \cdot \tr( C\cdot B\cdot A) = 2n\cdot  \tr(C\cdot [A, B])$$
    implying that $(\mathfrak{su}(n), c\cdot \mathcal{K})$ is a quadratic Lie algebra for any $c\in \R\setminus \{0\}$. \\
    The Killing form on $\mathfrak{so}(n)$ has the same properties. 
\end{remark}
Let us apply it to the symplectic form presented in \cite{zhu} to prove the following:
\begin{theorem}\label{final theorem}
    Let $G=SU(n)$ with $n\geq 2$. Let respectively $\Psi$ be the symplectic form as in (\ref{eq: form final}) (using $c\cdot \mathcal{K}$ for $\langle*, *\rangle$, with $c\in \R\setminus \{0\}$) and let $\Phi$ be the representative of the first Pontryagin class for $BG_{\bullet}$ as in Corollary \ref{pontryagine for comparison}. Then, we have: 
    $$-\frac{1}{16 n c \pi^2} \Psi= \Phi$$
\end{theorem}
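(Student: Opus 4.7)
The plan is to reduce the theorem to two independent verifications, one per nonzero component of $\Psi$ and $\Phi$ in the total complex $\tot{}^4(BG_\bullet)$, since both forms vanish outside the slots $C^{1, 3}(BG_\bullet)$ and $C^{2, 2}(BG_\bullet)$. As the two slots live in distinct summands of the total complex, it suffices to prove the two corresponding equalities of differential forms on $G$ and $G \times G$ respectively.

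For the slot on $BG_1 \cong G$: first expand $[\omega^l \wedge \omega^l](v, w) = 2 [\omega^l(v), \omega^l(w)]$ from Definition \ref{wedge products}. Then one computes
\[
(\omega^l \wedge [\omega^l \wedge \omega^l])(v_1, v_2, v_3) = \sum_{\sigma \in \mathfrak{S}_3} \mathrm{sign}(\sigma)\, \omega^l(v_{\sigma(1)}) \otimes [\omega^l(v_{\sigma(2)}), \omega^l(v_{\sigma(3)})].
\]
Using $\mathcal{K}(X, Y) = 2n \tr(XY)$ from Lemma \ref{killing form}, developing the commutator, and performing the change of variable $\sigma \mapsto \sigma \circ (2\;3)$ in the second resulting sum (which introduces a sign flip that cancels the minus coming from the commutator), the two sums merge into $4n \tr(\omega^l \cdot \omega^l \cdot \omega^l)$. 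Multiplying by the coefficient $-\frac{1}{6}$ from $\Psi$, by the scalar $c$, and by the global rescaling $-\frac{1}{16 n c \pi^2}$ of the statement yields $\frac{1}{24 \pi^2} \tr(\omega^l \cdot \omega^l \cdot \omega^l)$, matching the $BG_1$ entry of $\Phi$ from Corollary \ref{pontryagine for comparison}.

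For the slot on $BG_2 \cong G \times G$: directly expand
\[
\mathcal{K}(d_0^* \omega^l, d_2^* \omega^r)(v, w) = 2n \bigl[\tr(d_0^* \omega^l(v)\, d_2^* \omega^r(w)) - \tr(d_0^* \omega^l(w)\, d_2^* \omega^r(v))\bigr].
\]
Applying cyclicity of the trace to each term and comparing with the analogous expansion of $\tr(d_2^* \omega^r \cdot d_0^* \omega^l)$ shows that $\mathcal{K}(d_0^* \omega^l, d_2^* \omega^r) = -2n \tr(d_2^* \omega^r \cdot d_0^* \omega^l)$. Multiplying by $-\frac{1}{16 n c \pi^2} \cdot c$ then gives $\frac{1}{8 \pi^2} \tr(d_2^* \omega^r \cdot d_0^* \omega^l)$, which is the $BG_2$ entry of $\Phi$.

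The main obstacle is essentially conventional: one must carefully propagate the normalization $\frac{1}{j!\, k!}$ in nested wedge products (in particular the factor $2$ relating $[\omega^l \wedge \omega^l]$ to the pointwise commutator), track the sign change between $\tr(\alpha \cdot \beta)$ and $\tr(\beta \cdot \alpha)$ for $1$-forms, and make sure the coefficient $2n$ from the Killing form on $\mathfrak{su}(n)$ cancels cleanly against the $n$ in the denominator of the rescaling $\frac{1}{16 n c \pi^2}$. Beyond these bookkeeping issues the argument is a purely algebraic verification with no further geometric input required.
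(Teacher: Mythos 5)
Your proposal is correct and follows essentially the same route as the paper: a component-wise comparison in the two nonzero slots of $\tot{}^4(BG_{\bullet})$, using $\mathcal{K}=2n\tr$ on $\mathfrak{su}(n)$ from Lemma \ref{killing form} and careful bookkeeping of the $\frac{1}{j!\,k!}$ wedge normalization. The only (immaterial) difference is in the degree-$3$ slot, where the paper first collapses the $\mathfrak{S}_3$-sum via the invariance identity $\langle a,[b,c]\rangle=\operatorname{sign}(\sigma)\langle a_{\sigma(1)},[a_{\sigma(2)},a_{\sigma(3)}]\rangle$ before inserting the Killing form, while you insert the trace first and use its cyclicity together with the substitution $\sigma\mapsto\sigma\circ(2\,3)$ — an equivalent computation.
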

\begin{proof}
    It is enough to prove the two following points:
    \begin{itemize}
        \item $\frac{1}{96 nc \pi^2}\cdot \langle \omega^l , [\omega^l \wedge \omega^l ]\rangle = \frac{1}{24 \pi^2}\tr(\omega^l\cdot \omega^l\cdot \omega^l)$
        \item $-\frac{1}{16 nc \pi^2}\langle d_0^*\omega^l, d^*_2 \omega^r\rangle= \frac{1}{8\pi^2}\tr(d_2^*\omega^r\cdot d_0^*\omega^l)$
    \end{itemize}
    For the first point: Recall by definition:
    \begin{align*}
        \langle \omega^l , [\omega^l \wedge \omega^l ]\rangle(v_1, v_2, v_3)&= \langle *, * \rangle\circ \omega^l \wedge [\omega^l \wedge \omega^l](v_1, v_2, v_3)\\
        &= \langle *, * \rangle\circ \frac{1}{2}\sum_{\sigma\in \mathfrak{S}_3}\text{sign}(\sigma) \omega^l(v_{\sigma(1)})\otimes [\omega^l\wedge\omega^l ](v_{\sigma(2)}, v_{\sigma(3)})\\
       &= \langle *, * \rangle\circ \sum_{\sigma\in \mathfrak{S}_3}\text{sign}(\sigma) \omega^l(v_{\sigma(1)})\otimes [\omega^l(v_{\sigma(2)})\wedge\omega^l(v_{\sigma(3)})]\\
    \end{align*}
    where the last equation comes from the definition of the $[*\wedge*]$-product. Now, we see, using the property of the quadratic map $\langle*, * \rangle$, that $\langle a,[b,c]\rangle = \langle c,[a,b]\rangle = -\langle c, [b, a]\rangle$. Thus:
    \begin{align*}
        \langle a_1, [a_2, a_3]\rangle = -\langle a_1, [a_3, a_2]\rangle = \langle a_2, [a_3, a_1]\rangle = -\langle a_2, [a_1, a_3]\rangle= \langle a_3, [a_1, a_2]\rangle= - \langle a_3, [a_2, a_1]\rangle
    \end{align*}
    And thus, for any $\sigma\in \mathfrak{S}_3$, we have:
    $$\langle a_1, [a_2, a_3]\rangle= \text{sign}(\sigma)\langle a_{\sigma(1)}, [a_{\sigma(2)}, a_{\sigma(3)}]\rangle$$
    And then we finish with:
    $$\langle \omega^l , [\omega^l \wedge \omega^l ]\rangle(v_1, v_2, v_3)= 6\cdot \langle \omega^l(v_1) , [\omega^l(v_2), \omega^l(v_3) ]\rangle$$
    Then, applying the Killing form for $\mathfrak{su}(n)$, we obtain: 
    \begin{align*}
        12n c\cdot \tr(\omega^l(v_1)\cdot [\omega^l(v_2), \omega^l(v_3)])&= 12n c\cdot \tr(\omega^l(v_1)\cdot \omega^l(v_2)\cdot  \omega^l(v_3)) - 12nc \cdot \tr(\omega^l(v_1)\cdot \omega^l(v_3)\cdot  \omega^l(v_2))
    \end{align*}
    Because of the symmetry of $\tr(*\cdot *)$, we have for any square matrices $A, B$ and $C$:
    $$\tr(ABC)= \tr(BCA)= \tr(CAB)\qquad \text{and} \qquad \tr(BAC)=\tr(ACB)=\tr(CBA)$$
    Thus $\tr(A_1A_2A_3)=\tr(A_{\sigma(1)}A_{\sigma(2)}A_{\sigma(3)})$ if $\text{sign}(\sigma)=1$. We obtain:
    \begin{align*}
        12nc \cdot \tr(\omega^l(v_1)\cdot [\omega^l(v_2), \omega^l(v_3)])&= 4nc \cdot \sum_{\sigma\in \mathfrak{S}_3}\text{sign}(\sigma)\tr(\omega^l(v_{\sigma(1)})\cdot \omega^l(v_{\sigma(2)}) \cdot \omega^l(v_{\sigma(3)}))\\&= 4nc\cdot \tr(\omega^l\cdot \omega^l\cdot \omega^l)(v_1, v_2, v_3)
    \end{align*}
    Finally, we multiply by $\frac{1}{96n c\pi^2}$:
    $$\frac{1}{96 nc \pi^2}\cdot \langle \omega^l , [\omega^l \wedge \omega^l ]\rangle = \frac{1}{24 \pi^2}\tr(\omega^l\cdot \omega^l\cdot \omega^l)$$
    So we proved the first point. \\
    Let us come to the second point:
    \begin{align*}
        \langle d_0^*\omega^l, d^*_2 \omega^r\rangle(v_1, v_2)&= \langle*,*\rangle\circ d_0^*\omega^l\wedge  d^*_2 \omega^r (v_1, v_2)\\
        &= 2nc \cdot \tr(*\cdot *)\circ \left(d_0^*\omega^l (v_1)\otimes  d^*_2 \omega^r(v_2) - d_0^*\omega^l(v_2)\otimes  d^*_2 \omega^r(v_1)\right)\\
        &= 2n c\cdot \tr(d_0^*\omega^l (v_1)\cdot   d^*_2 \omega^r(v_2)) - 2n c\cdot\tr(d_0^*\omega^l(v_2)\cdot  d^*_2 \omega^r(v_1) ) \\
        &= 2n c\cdot \tr( d^*_2 \omega^r(v_2)\cdot d_0^*\omega^l (v_1)) - 2n c\cdot\tr(  d^*_2 \omega^r(v_1) \cdot d_0^*\omega^l(v_2) ) \\
        &=  -2nc \cdot \tr(*\cdot *) \circ  d^*_2 \omega^r \wedge d_0^* \omega^l (v_1, v_2)\\
        &= -2nc\cdot \tr( d^*_2 \omega^r \cdot d_0^* \omega^l)(v_1, v_2)
    \end{align*}
    Now, we multiply by the coefficient $-\frac{1}{16nc\pi^2}$:
    $$-\frac{1}{16 nc \pi^2}\langle d_0^*\omega^l, d^*_2 \omega^r\rangle= \frac{1}{8\pi^2}\tr(d_2^*\omega^r\cdot d_0^*\omega^l)$$
    And so we prove the statement.
\end{proof}
Remark that with $c= - \frac{1}{16n \pi^2}$, we get the identity $\Psi= \Phi$. 
\begin{corollary}
    Let $G=O(n)$, $SO(n)$ or $\text{Spin}(n)$ for $n\geq 3$. Let respectively $\Psi$ be the symplectic form as in (\ref{eq: form final}) (using $c\cdot \mathcal{K}$ for $\langle*, *\rangle$, with $c\in \R\setminus \{0\}$) and let $\Phi$ be the first Pontryagin class for $BG_{\bullet}$. Then, we have: 
    $$-\frac{1}{8 (n-1)c \pi^2} \Psi= \Phi$$
\end{corollary}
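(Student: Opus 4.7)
The plan is to follow the same blueprint as the proof of Theorem \ref{final theorem}, simply replacing the Killing form $2n\tr(XY)$ for $\mathfrak{su}(n)$ by the Killing form $(n-2)\tr(XY)$ for $\mathfrak{so}(n)$ (which applies to all three groups $O(n)$, $SO(n)$ and, via Remark \ref{spin double cover}, $\text{Spin}(n)$, since their Lie algebras and Maurer-Cartan forms are all identified with those of $\mathfrak{so}(n)$). As in Theorem \ref{final theorem}, the only non-trivial components of both $\Psi$ and $\Phi$ live on $BG_1$ and $BG_2$, so the claim $\alpha \Psi = \Phi$ amounts to two independent proportionality checks, and the statement is established provided both yield the same constant $\alpha$.

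For the $BG_1$ component, I would first reprove the full antisymmetry $\langle a_1,[a_2,a_3]\rangle = \text{sign}(\sigma)\langle a_{\sigma(1)},[a_{\sigma(2)},a_{\sigma(3)}]\rangle$ for $\sigma\in\mathfrak{S}_3$ from the defining cyclic identity of a quadratic Lie algebra, exactly as in Theorem \ref{final theorem}. This collapses $\langle\omega^l,[\omega^l\wedge\omega^l]\rangle(v_1,v_2,v_3)$ to $6\langle\omega^l(v_1),[\omega^l(v_2),\omega^l(v_3)]\rangle$. Substituting $\langle X,Y\rangle = c(n-2)\tr(X\cdot Y)$, expanding the commutator, and regrouping the resulting six trace monomials according to permutations of $\mathfrak{S}_3$ (using cyclicity of the trace to collapse the three even and three odd permutations) produces a multiple of $\tr(\omega^l\cdot\omega^l\cdot\omega^l)$. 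Dividing by $-1/6$ and comparing to $\frac{1}{24\pi^2}\tr(\omega^l\cdot\omega^l\cdot\omega^l)$ then reads off the proportionality constant on this component.

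For the $BG_2$ component I would directly expand $\langle d_0^*\omega^l, d_2^*\omega^r\rangle$ via Definition \ref{wedge products}, obtaining $c(n-2)\bigl[\tr(d_0^*\omega^l(v_1)\cdot d_2^*\omega^r(v_2)) - \tr(d_0^*\omega^l(v_2)\cdot d_2^*\omega^r(v_1))\bigr]$, and then reorder each matrix product under the trace to recognize $-c(n-2)\tr(d_2^*\omega^r\cdot d_0^*\omega^l)$. Matching to $\frac{1}{8\pi^2}\tr(d_2^*\omega^r\cdot d_0^*\omega^l)$ gives the proportionality constant on this second component. At this point I would verify that both constants coincide, which produces the claimed $\alpha$ and finishes the argument.

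The main obstacle is purely bookkeeping, but with a delicate point: the analogous mechanism for $SU(n)$ in Theorem \ref{final theorem} produced the constant $-\tfrac{1}{8Kc\pi^2}$ where $K$ is the multiplicative coefficient relating $\mathcal{K}$ to $\tr(XY)$. A naive transcription to $\mathfrak{so}(n)$ with $K=n-2$ from the Killing form lemma would suggest $-\tfrac{1}{8(n-2)c\pi^2}$ rather than the stated $-\tfrac{1}{8(n-1)c\pi^2}$, so the central subtlety of the proof is to pin down which of the two independent proportionality checks gives the $(n-1)$ prefactor -- either by revisiting the trace identity for $\mathfrak{so}(n)$ (where the diagonal-trace-free condition forces an extra term to appear in the direct matrix computation), or by adjusting the normalization of one of the two constituent pieces of $\Psi$. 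Completing the proof rigorously reduces to tracking this factor cleanly through both sides, exactly as was done for $\mathfrak{su}(n)$.
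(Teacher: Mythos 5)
Your approach is exactly the paper's: the printed proof of this corollary is a single sentence saying ``same as Theorem \ref{final theorem}, with the other Killing form,'' and your two-component check is precisely what that entails. The one place where you hesitate is the place where you should not: there is no hidden subtlety producing an $(n-1)$. If you track the proof of Theorem \ref{final theorem} with a general pairing $\langle X,Y\rangle = Kc\,\tr(X\cdot Y)$, both components give the \emph{same} proportionality constant $-\tfrac{1}{8Kc\pi^2}$ (the $BG_1$ computation uses only the cyclic identity of the quadratic form and cyclicity of the trace, and the $BG_2$ computation uses only cyclicity of the trace and the vanishing of $\tr$ on the Lie algebra, all of which hold verbatim for $\mathfrak{so}(n)$ since skew-symmetric matrices are automatically trace-free). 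For $\mathfrak{su}(n)$ this gives $K=2n$ and hence $-\tfrac{1}{16nc\pi^2}$, matching the theorem; for $\mathfrak{so}(n)$ the paper's own lemma gives $K=n-2$, hence $-\tfrac{1}{8(n-2)c\pi^2}$. The $(n-1)$ in the statement is therefore inconsistent with the lemma $\mathcal{K}=(n-2)\tr(\cdot\,\cdot)$ (which is the standard value for $\mathfrak{so}(n)$), and is best read as a typo for $(n-2)$; note that the final remark of the section, which sets $K=1$, $c=1$ and obtains $-\tfrac{1}{8\pi^2}$, confirms the general pattern $-\tfrac{1}{8Kc\pi^2}$. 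Your suggestion that the trace-free condition might inject an extra term into the $\mathfrak{so}(n)$ computation does not materialize: that condition only enters to kill the $\tr(\cdot)\tr(\cdot)$ cross-terms, exactly as for $\mathfrak{su}(n)$. So your proof is complete once you commit to the coefficient your own computation produces.
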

\begin{proof}
    The proof is the same as for the theorem \ref{final theorem}, but one should take care that the formula of the Killing form is different
\end{proof}
Remark that with $c= - \frac{1}{8(n-1) \pi^2}$, we get the identity $\Psi= \Phi$.
\begin{remark}
    If we use the definition $\langle*, *\rangle:= \tr(*\cdot *)$ instead of the Killing form $c\cdot \mathcal{K}$, we obtain the equality
$$-\frac{1}{8 \pi^2} \Psi= \Phi$$
for $Su(n)$ such as $O(n)$, $SO(n)$ or $\text{Spin}(n)$. Remark that $\frac{1}{8 \pi^2}$ is simply the coefficient arising naturally from the Pontryagin polynomial.
\end{remark}
\newpage

\newpage~
\thispagestyle{empty}
\vspace*{1cm}
\vfill  
~\\
\begin{large} 
\textbf{Selbstst\"andigkeitserkl\"arung}
\end{large}
~\\
Hiermit erkl\"are ich, dass ich die vorliegende Arbeit
selbstst\"andig und nur unter Verwendung der angegebenen
Quellen und Hilfsmittel verfasst habe.\\
~\\
G\"ottingen, den 15.12.2022 \hspace*{4cm} UNTERSCHRIFT\\\\

\hspace*{8cm}Milor, Abel Henri Guillaume\\

\hspace*{8cm} \includegraphics[scale=0.4]{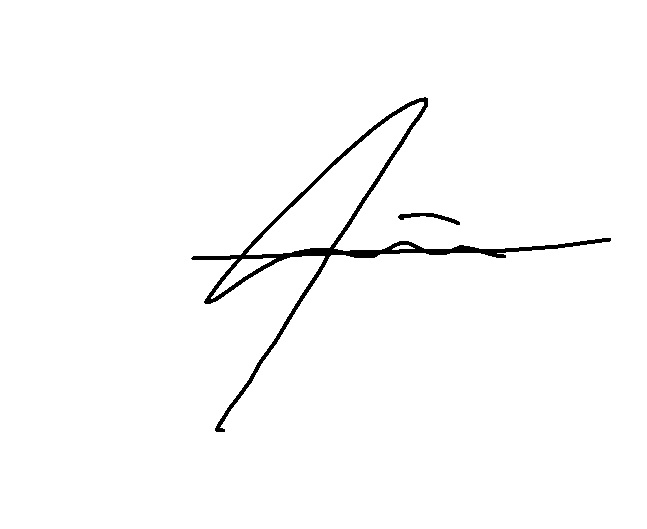}

\end{document}